\newtheorem{thm}{Theorem}[section]
\newtheorem{cor}[thm]{Corollary}
\newtheorem{prop}[thm]{Proposition}
\newtheorem{lem}[thm]{Lemma}
\theoremstyle{definition}
\newtheorem{defn}[thm]{Definition}
\providecommand{\customgenericname}{}
\newcommand{\newcustomtheorem}[2]{%
  \newenvironment{#1}[1]
  {%
   \renewcommand\customgenericname{#2}%
   \renewcommand\theinnercustomgeneric{##1}%
   \innercustomgeneric
  }
  {\endinnercustomgeneric}
}
\theoremstyle{remark}
\newtheorem{rem}[thm]{Remark}
\newtheorem*{rem*}{Remark}
\newcommand{\bms}{m^{\operatorname{BMS}}}
\newcommand{\Z}{\mathbb Z}
\newcommand{\R}{\mathbb R}
\newcommand{\C}{\mathbb C}
\newcommand{\D}{\mathrm D}
\newcommand{\N}{\mathbb N}
\renewcommand{\S}{\mathbb S}
\renewcommand{\H}{\mathbb H}
\newcommand{\calH}{\mathcal H}
\newcommand{\calP}{\mathcal P}
\newcommand{\calG}{\mathcal G}
\newcommand{\calB}{\mathcal B}
\newcommand{\calC}{\mathcal C}
\newcommand{\LL}{\tilde{L}}
\newcommand{\h}{h}
\newcommand{\T}{\operatorname{T}}
\newcommand{\SO}{\operatorname{SO}(d+1,1)}
\newcommand{\SOm}{\operatorname{SO}(m,1)}
\newcommand{\uLip}{\lVert u\rVert_{\text{Lip}}}
\newcommand{\ulip}{\lvert u\rvert_{\text{Lip}}}
\newcommand{\ub}{\lVert u\rVert_b}
\newcommand{\height}{h}
\newcommand{\dd}{\mathrm{d}}
\renewcommand{\r}{r}
\newcommand{\eps}{\epsilon}
\newcommand{\three}{3}
\newcommand{\four}{4}
\newcommand{\five}{5}
\newcommand{\six}{6}
\newcommand{\seven}{7}
\newcommand{\eight}{8}
\newcommand{\nine}{9}
\newcommand{\ten}{10}
\newcommand{\ele}{11}
\newcommand{\twl}{12}
\newcommand{\thi}{13}
\newcommand{\fou}{14}
\newcommand{\fif}{15}
\newcommand{\sixt}{16}
\newcommand{\sev}{17}
\newcommand{\eig}{18}
\newcommand{\nin}{19}
\newcommand{\twi}{20}
\newcommand{\RN}[1]{%
  \textup{\uppercase\expandafter{\romannumeral#1}}%
}
\let\c@equation\c@thm
\numberwithin{equation}{section}
\begin{document}
\title{Exponential mixing of geodesic flows for geometrically finite hyperbolic manifolds with cusps}
\author{Jialun Li and Wenyu Pan}

\date{}
\maketitle
\begin{abstract}
Let $\Gamma$ be a geometrically finite discrete subgroup in $\operatorname{SO}(d+1,1)^{\circ}$ with parabolic elements. We establish exponential mixing of the geodesic flow on the unit tangent bundle $\T^1(\Gamma\backslash \mathbb{H}^{d+1})$ with respect to the Bowen-Margulis-Sullivan measure, which is the unique probability measure on $\T	^1(\Gamma\backslash \mathbb{H}^{d+1})$ with maximal entropy. As an application, we obtain a resonance-free region for the resolvent of the Laplacian on $\Gamma\backslash \mathbb{H}^{d+1}$. Our approach is to construct a coding for the geodesic flow and then prove a Dolgopyat-type spectral estimate for the corresponding transfer operator. 
\end{abstract}

\section{Introduction}
\subsection{Exponential mixing of the geodesic flow}
Let $\mathbb{H}^{d+1}$ be the hyperbolic $(d+1)$-space. Let $G=\operatorname{SO}(d+1,1)^{\circ}$, which is the group of orientation preserving isometries of $\mathbb{H}^{d+1}$. Let 
$\Gamma<G$ be a non-elementary, torsion-free, geometrically finite discrete subgroup with parabolic elements. Denote by $\delta$ the critical exponent of $\Gamma$, which is defined as the abscissa of convergence of the Poincar\'e series $\sum_{\gamma\in \Gamma}e^{-sd(o,\gamma o)}$. Set $M=\Gamma\backslash \H^{d+1}$, so $M$ contains cusps. We consider the geodesic flow $(\mathcal{G}_t)_{t\in \mathbb{R}}$ acting on the unit tangent bundle $\T^1(M)$ over $M$. The invariant measure for the flow we will work with is the Bowen-Margulis-Sullivan measure $m^{\operatorname{BMS}}$, which is supported on the non-wandering set of the geodesic flow and is known to be the unique probability measure with maximal entropy $\delta$ \cite{OtPe}. 
%

Our main result is establishing exponential mixing of the geodesic flow.
\begin{thm}
\label{main thm}
The geodesic flow is exponentially mixing with respect to $\bms$: there exists $\eta>0$ such that for any functions $\phi, \psi\in C^1(\T^1(M))$ and any $t>0$, we have
\begin{equation*}
\int_{\T^1(M)} \phi\cdot\psi\circ\calG_t\ \dd\bms =\bms (\phi) \bms (\psi)+O(\lVert \phi \rVert_{C^1} \lVert \psi\rVert_{C^1}e^{-\eta t}),
\end{equation*}
 where $\|\cdot\|_{C^1}$ is the $C^1$-norm with respect to the Riemannian metric on $\T^1(M)$.
\end{thm}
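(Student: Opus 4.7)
The plan is to reduce Theorem~\ref{main thm} to a Dolgopyat-type spectral estimate for a transfer operator coming from a symbolic model of the geodesic flow, in the spirit of Pollicott, Dolgopyat, and Stoyanov. First I would construct a Markov Poincar\'e section $\Sigma$ for $\mathcal{G}_t$ on the non-wandering set, together with a first-return map $R\colon\Sigma\to\Sigma$ and roof function $\tau$, so that $(\mathcal{G}_t,\bms)$ is isomorphic to the suspension of $(R,\tau)$ over a $\bms$-disintegrated measure on $\Sigma$. Since cusp excursions are of unbounded duration, the alphabet must be countable and $\tau$ unbounded; using Stratmann--Velani and Schapira-type cusp-thickness estimates for the Patterson--Sullivan measure, I would show that under the induced BMS measure the tails of $\tau$ decay exponentially, and that an inducing (Young-tower) step yields a uniformly hyperbolic, exponentially tailed return system to which the rest of the machinery can be applied.

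Second, I would convert the correlation $\int\phi\cdot(\psi\circ\mathcal{G}_t)\,\dd\bms$ into spectral data of a transfer operator. Let $\mathcal{L}_s$ denote the Ruelle operator on the induced base weighted by $e^{-s\tau}$ and twisted by the Busemann cocycle of the Patterson--Sullivan density, so that $\mathcal{L}_\delta$ has leading eigenvalue $1$ with a spectral gap on a Lipschitz-type Banach space $\mathcal{B}$ adapted to the cusp geometry. An inverse-Laplace / contour-shift argument then reduces exponential mixing to the following Dolgopyat bound: there exist $\eta_0>0$, $C>0$, and $\rho<1$ such that
\begin{equation*}
\lVert \mathcal{L}_{\delta+a+ib}^n \rVert_{\mathcal{B}} \le C(1+|b|)\rho^n \qquad \text{for all } |a|\le\eta_0,\ |b|\ge 1,\ n\ge 1.
\end{equation*}
Following Dolgopyat--Stoyanov, this bound is established by constructing a family of $L^2$-contracting operators that exploit oscillation of $e^{-ib\tau}$, the required input being a non-local integrability (UNI) condition for the temporal distance cocycle. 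I would verify UNI using Zariski density of $\Gamma$ in $G$ together with a non-concentration estimate for the Patterson--Sullivan measure on proper real-algebraic subvarieties of the limit set.

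The main obstacle is the geometry of the cusps. In the convex cocompact case (Stoyanov, Sarkar--Winter, Oh--Winter), one has a compact Markov base, a bounded roof function, and uniform hyperbolicity, all of which fail here. Overcoming this would require: (i) a weighted Lipschitz norm on $\mathcal{B}$, built from a Stratmann--Velani-type height function, on which $\mathcal{L}_s$ remains bounded despite unbounded $\tau$; (ii) an analysis of the induced transfer operator showing that the Dolgopyat cancellation mechanism survives when the symbolic space is non-compact, with the loss on long symbols absorbed by the exponentially small BMS mass of deep cusp excursions; and (iii) transferring the resulting spectral gap from the induced system back to $\mathcal{G}_t$ via the usual semiflow-suspension correspondence, together with a Paley--Wiener estimate to pass from resolvent bounds to decay of correlations at rate $\eta$.
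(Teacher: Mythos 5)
Your high-level roadmap --- a symbolic model via a Poincar\'e section / boundary expanding map, exponential tails for the roof function coming from cusp-thickness estimates of the Patterson--Sullivan measure, a Dolgopyat-type bound on a twisted transfer operator with UNI supplied by Zariski density, and transfer back via suspension and Paley--Wiener --- matches the paper's architecture closely. The gap is in what you treat as available machinery.

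You write that Stratmann--Velani / Schapira-type estimates plus ``a Young-tower inducing step'' yield a uniformly hyperbolic, exponentially tailed return system, as if the existence of such a section with the right tail bound were a known input. It is not: constructing this coding and proving the exponential tail is the bulk of the paper and its main new contribution. The authors build a countable full-branch expanding map $T$ on the boundary fundamental domain $\Delta_0$, each branch an element of $\Gamma$, and prove the exponential tail through a delicate inductive argument: a ``flower'' decomposition around parabolic fixed points, a separation lemma $d(p,p')>\sqrt{h_p h_{p'}}$, equivalence classes among bad parabolic points engineered so that summing over a class reconstitutes a full ball on which the doubling property applies, an energy-exchange recursion controlling the ratio $\mu(A_n)/\mu(B_n)$, and a recurrence lemma from ergodicity of the flow. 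Stratmann--Velani and the friendliness results supply the doubling and ball-measure estimates near cusps that this construction feeds on, but they do not by themselves produce the coding. You also assert the suspension is \emph{isomorphic} to $(\mathcal{G}_t,\bms)$; in fact the map $\Phi:\Lambda^R\to\T^1(M)$ is only a factor map, and establishing $\Phi_*\hat\nu^R=\bms$ requires a separate argument via local product structure and ergodicity. Two smaller points: the limit set is a fractal rather than a John domain, so the AGY Dolgopyat framework cannot be invoked verbatim and one must handle boundary effects by hand (friendliness of the PS measure, a two-regime split into bounded and unbounded iteration counts); and one needs to reduce explicitly to the Zariski dense case before Zariski density can be used to verify UNI.
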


	For a geometrically finite discrete subgroup $\Gamma$, Sullivan~\cite{Sul} proved the ergodicity of the geodesic flow with respect to $\bms$ and Rudolph~\cite{Rud} proved that the geodesic flow is mixing with respect to $\bms$.
	When $\delta>d/2$, Theorem \ref{main thm} was proved by Mohammadi-Oh~\cite{MoOh} and Edwards-Oh \cite{EO} using the representation theory of $L^2(M)$ and the spectral gap of Laplace operator~\cite{LP}.
	When $\Gamma$ is convex cocompact, i.e., geometrically finite without parabolic elements, Theorem~\ref{main thm} and its corollaries were proved by Naud \cite{Nau}, Stoyanov \cite{Sto} and Sarkar-Winter \cite{SaWi} building on the work of Dolgopyat \cite{Dol}. Therefore, the main contribution of our work lies in the groups with small critical exponent and with parabolic elements, completing the story of exponential mixing of the geodesic flow on a geometrically finite hyperbolic manifold.
	
	Using Roblin's transverse intersection argument \cite{Rob, OS, OhWi}, we obtain the decay of matrix coefficients (Theorem \ref{thm:matrix}) from Theorem \ref{main thm}. Theorem \ref{main thm} and \ref{thm:matrix} are known to have many immediate applications in number theory and geometry. To name a few, see \cite{MMO} for counting closed geodesics, \cite{KeOh} for shrinking target problems and \cite{BPP} for some general counting results.
	



\subsection{Resonance-free region}

Recall $M=\Gamma\backslash\H^{d+1}$. Consider the Laplace operator $\Delta_M$ on $M$. Lax and Phillips completely described its spectrum on $L^2(M)$ when $M$ has infinite volume \cite{LP}. The half line $[d^2/4,\infty)$ is the continuous spectrum and it contains no embedded eigenvalues. The rest of the spectrum (point spectrum) is finite and starting at $\delta (d/2-\delta)$ if $\delta>d/2$ and is empty if $\delta\leq d/2$. Let $S$ be the set of eigenvalues of $\Delta_M$. The resolvent of the Laplacian $$R_M(s)=(\Delta_M-s(d-s))^{-1}:L^2(M)\to L^2(M)$$ is well-defined and analytic on$\{\Re s>d/2,\ s(d-s)\notin S\}$. 
Guillarmou and Mazzeo showed that $R_M(s)$ has a meromorphic continuation to the whole complex plane as an operator from $C^\infty_c(M)$ to $C^\infty(M)$ with poles of finite rank \cite{GM}. These poles are called resonances. Patterson showed that on the line $\Re s= \delta$, the point $s=\delta$ is the unique pole of $\Gamma(s-\frac{d}{2}+1)R_M(s)$ and it is a simple pole \cite{Pat1}. We use Theorem \ref{main thm} to further obtain a resonance-free region.
\begin{thm}\label{cor:resonance}
	There exists $\eta>0$ such that on the half-plane $ \Re s>\delta-\eta$, $s=\delta$ is the only resonance for the resolvent $R_M(s)$ if $\delta\notin d/2-\N_{\geq 1}$; otherwise, $R_M(s)$ is analytic on $\Re s>\delta-\eta$.
\end{thm}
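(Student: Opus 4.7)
The strategy is to derive Theorem \ref{cor:resonance} by translating the exponential mixing of the geodesic flow through the classical dictionary between dynamical zeta functions and the Laplace resolvent on a hyperbolic manifold. The technical engine behind Theorem \ref{main thm} is a spectral estimate for a family of complex transfer operators $\calL_s$ associated with a symbolic coding of the geodesic flow; the same estimate controls the zeros of a Selberg-type zeta function $Z_\Gamma(s)$, which in turn encodes the resonances of $R_M(s)$.

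First, I would set up $Z_\Gamma(s)$ as the Euler product over primitive closed geodesics of $M$ (with the appropriate holonomy factors in dimension $d+1$), and, using the coding constructed for Theorem \ref{main thm}, establish a Ruelle--Fried type identity $Z_\Gamma(s)=\det(I-\calL_s)$ as a Fredholm determinant of the family $\calL_s$ acting on a suitable Banach space of H\"older functions. Second, I would invoke the correspondence between zeros of $Z_\Gamma(s)$ and resonances of $R_M(s)$, established by Patterson--Perry in the convex cocompact case and extended to the cusped setting by Bunke--Olbrich and Guillarmou--Mazzeo: away from the arithmetic progression $\{d/2-k:k\in\N_{\geq 1}\}$, zeros of $Z_\Gamma(s)$ in the critical strip coincide with resonances of $R_M(s)$ with multiplicity. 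In particular $s=\delta$ is a simple zero of $Z_\Gamma(s)$ (Patterson) corresponding to the simple pole of $R_M(s)$ at $\delta$; when $\delta\in d/2-\N_{\geq 1}$ this simple zero is absorbed by a trivial topological zero and $s=\delta$ ceases to be a resonance, which accounts for the dichotomy in the statement.

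Third, the Dolgopyat-type spectral estimate underlying Theorem \ref{main thm} says that for $\Re s$ in a small neighborhood of $\delta$ and $|\Im s|$ large, $\|\calL_s^n\|$ decays exponentially in $n$. Combined with Kato-type perturbation theory around the simple leading eigenvalue $1$ of $\calL_\delta$, this shows that $1$ is not an eigenvalue of $\calL_s$ for any $s$ in a strip $\{\delta-\eta<\Re s\leq\delta\}\setminus\{\delta\}$ for some $\eta>0$. Hence $Z_\Gamma(s)=\det(I-\calL_s)\neq 0$ in this strip, and by the second step this gives the claimed resonance-free region.

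The main obstacle is the first step in the presence of cusps: the symbolic coding from Theorem \ref{main thm} is countable-to-one with unbounded return times along cusp excursions, so $\calL_s$ acts on an infinite-dimensional function space. One must show that $\calL_s$ is determinant-class (in fact nuclear of order zero) on a suitable scale of weighted H\"older spaces, and match $\log\det(I-\calL_s)$ with the Euler product defining $Z_\Gamma(s)$ after summing the trace expansion over conjugacy classes that include arbitrarily many cusp excursions. Adapting the classical Ruelle--Fried identity to this non-compact symbolic dynamics is the technical bridge between the mixing estimate and the resonance statement; the subsequent spectral bookkeeping is then standard.
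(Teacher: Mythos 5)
Your route is genuinely different from the paper's, and it has a serious gap that the paper itself flags. The second step of your plan — that zeros of a Selberg-type zeta function $Z_\Gamma(s)$ in the critical strip coincide (up to the arithmetic progression $d/2-\N_{\geq 1}$) with resonances of $R_M(s)$ — is a theorem in the convex cocompact case (Patterson--Perry), but is \emph{not} available in the generality you need here. The paper says this explicitly right after stating the theorem: ``In the convex cocompact case, a resonance free region of the resolvent is closely related with a zero free region of the Selberg zeta function. But in the geometrically finite case, such relation is not well understood except for the surface case.'' So for $\Gamma < \operatorname{SO}(d+1,1)^{\circ}$ with cusps and $d>1$, you have no bridge from a zero-free strip of $Z_\Gamma$ to a resonance-free strip of $R_M$. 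You identified the difficulty of establishing the Ruelle--Fried identity $Z_\Gamma(s)=\det(I-\calL_s)$ over the non-compact countable-branch coding as ``the main obstacle,'' but even if you solved it, you would still be stuck at the translation from zeta zeros to resonances, which is the step that is actually missing from the literature in this setting.

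The paper sidesteps the zeta function entirely. It feeds Theorem~\ref{main thm} into the transverse-intersection machinery of Roblin/Oh--Winter to get exponential decay of matrix coefficients for $m^{\operatorname{Haar}}$ (Theorem~\ref{thm:matrix}), which by Mohammadi--Oh gives an orbit-counting asymptotic $N(T,x,y)=c_{x,y}e^{\delta T}+O(e^{(\delta-\eta)T})$ with a power-saving error (Corollary~\ref{cor:counting}). Writing the Poincar\'e series as $P_s(x,y)=\int_0^\infty \frac{1}{s}e^{-sT}N(T,x,y)\,\dd T$ and splitting off the main term, the error bound makes the remainder absolutely convergent and hence analytic on $\Re s>\delta-\eta$, so $P_s$ is meromorphic there with its only pole at $s=\delta$. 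Finally, Guillarmou--Mazzeo (Theorem~7.3 of~\cite{GM}) relates $P_s$ directly to the Schwartz kernel of $R_M(s)$, which transfers the analyticity of $P_s$ to the resolvent. This chain uses only objects (matrix coefficients, lattice points, the Poincar\'e series) for which the dictionary to $R_M(s)$ is already established in~\cite{GM} in the geometrically finite cusped setting, which is precisely what the zeta-function route lacks.
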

In the convex cocompact case, a resonance-free region of the resolvent is closely related to a zero free region of the Selberg zeta function. But in the geometrically finite case, such relation is not well understood except for the surface case.

\subsection{On the proof of the main theorem}
The proof of Theorem \ref{main thm} can be reduced to the case when $\Gamma$ is Zariski dense and then the proof falls into two parts:
 we code the geodesic flow and prove a Dolgopyat-type spectral estimate for the corresponding transfer operator. Ultimately, the obstructions to applying Dolgopyat’s original argument in our context are purely technical, but to overcome these obstructions in any context is the heart of the matter.

To prove exponential mixing using the symbolic-dynamic approach of Dolgopyat, one approach is to construct a section to the flow. In sum, one seeks a $2d$ submanifold $S$ in $\T^1(M)$ transversal to the geodesic flow which is a Poincar\'{e} section, on which the return map can be tightly organized. The challenge lies in that it is required to find a return map $R$ defined on a full measure subset $S_0$ of $S$, such that the map $F(v):=\calG_{R(v)}(v)$, $v\in S_0$, on $S_0$ is hyperbolic and can be modelled on a full shift of countable many symbols. 

We overcome this difficulty by connecting the return map on $S_0$ to an expanding map on the boundary $\partial\H^{d+1}$.
The precise description of the expanding map on the boundary is as follows. We consider the upper-half space model for $\mathbb{H}^{d+1}$ and without loss of generality, we may assume that $\infty$ is a parabolic fixed point of $\Gamma$. Let $\operatorname{Stab}_{\infty}(\Gamma)$ be the group of stabilizers of $\infty$ in $\Gamma$ and $\Gamma_{\infty}$ be a maximal normal abelian subgroup in $\operatorname{Stab}_{\infty}(\Gamma)$. Set $\Delta_{0}:=\Delta_{\infty}$ to be a fundamental domain of $\Gamma_{\infty}$ in $\partial \mathbb{H}^{d+1}\backslash \{\infty\}$ (see Section \ref{sec:cusps} for details). Denote by $\Lambda_{\Gamma}$ the limit set of $\Gamma$ and $\mu$ the Patterson-Sullivan measure, which is a finite measure supported on $\Lambda_{\Gamma}$.

\begin{customprop}{\ref{prop:coding}}
There are constants $C_1>0$, $\lambda,\,\epsilon_0\in (0,1)$, a countable collection of disjoint, open subsets $\{\Delta_j\}_{j\in\N}$ in $\Delta_0$ and an expanding map $T:\sqcup_j\Delta_j\to \Delta_0$ such that: 
		\begin{enumerate}
		\item $\sum_{j}\mu(\Delta_j)=\mu(\Delta_0)$.
		\item For each $j$, there is an element $\gamma_j\in \Gamma$ such that $\Delta_j=\gamma_j \Delta_0$ and $T|_{\Delta_j}=\gamma_j^{-1}$. 
		\item Each $\gamma_j$ is a uniform contraction: $|\gamma_j'(x)|\leq \lambda$ for all $x\in \Delta_0$.
		\item For each $\gamma_j$, $|\D(\log |\gamma_j'|)(x)|<C_1$ for all $x\in \Delta_0$, where $\D(\log |\gamma_j'|)(x)$ is the differential of the map $z\mapsto \log |\gamma_j'(z)|$ at $x$.
		\item Let $R$ be the function on $\sqcup_j \Delta_j$ given by $R(x)=\log|\D T(x)|$. Then
		$\int e^{\epsilon_o R}\dd\mu<\infty.$
	\end{enumerate}
\end{customprop}
The last property is known as the exponential tail property. Moreover, we show that the coding satisfies the \textbf{uniform nonintegrable condition (UNI)} (Lemma~\ref{lem:uni}). We use Proposition \ref{lem:loc} as a bridge to connect the geodesic flow $(\mathcal{G}_t)_{t\in \mathbb{R}}$ on $\T^1(M)$ and the expanding map $T$ on $\Delta_0$. We show that the geodesic flow is a factor of a hyperbolic skew product flow constructed using $T$. 

The construction of the coding starts with the following observation: locally, in a neighborhood of a parabolic fixed point, we can use the structure of the parabolic fixed points to find a ``flower" centered at this parabolic fixed point, and the ``flower" can be partition into a countable union of open sets of the form $\gamma(\Delta_0)$ for some $\gamma\in \Gamma$. Once we have the algorithm to find the partition for local regions, we still face the question of how to patch these flowers together. We introduce an inductive algorithm to find pairwise disjoint flowers.


But the bulk of the work lies in proving the exponential tail property. We show that this follows from Proposition \ref{keylemma} which says that the measure of the set that has not been partitioned at time $n$ decays exponentially. At the time $n$, the remaining part is a sheet with many holes, consisting of ``flowers"; while the Patterson-Sullivan measure is a measure supported on the fractal limit set, and we have limited knowledge of the regularity of this measure. It is interesting to figure out how to use minimal tools to get the required estimate.

When the non-wandering set of the geodesic flow is compact, the coding is well-studied and we have, for example, the Bowen-Series' coding \cite{BKS}, Bowen's coding \cite{Bowen} and Ratner's coding \cite{Rat}. When manifolds contain cusps, only some partial knowledge is available. Dal'bo-Peign\'{e} \cite{DaPe, DaPe1} and Babillot-Peign\'{e} \cite{BaPe} provided the coding for generalized Schottky groups. Stadlbauer \cite{Sta} and Ledrappier-Sarig \cite{LeSa} provided the coding for non-uniform lattices in $\operatorname{SO}(2,1)^{\circ}$. They made use of the fact that such a discrete subgroup is a free group and has a nice fundamental domain in $\mathbb{H}^2$. Our coding works for general geometrically finite discrete subgroups with parabolic elements and is partly inspired by the works of Lai-Sang Young \cite{You} and Burns-Masur-Matheus-Wilkinson \cite{BMMW}.


In a forthcoming joint work with Sarkar \cite{LPS}, we establish the exponential mixing for frame flows on geometrically finite hyperbolic manifolds with cusps. 
We prove this by using the coding of the geodesic flow constructed in this paper and then performing a frame flow version of Dolgopyat's method. 
The crucial cancellations of the summands of the transfer operators twisted by holonomy are obtained from the local non-integrability condition and the non-concentration property of Sarkar-Winter \cite{SaWi}. But the challenge in the presence of cusps is that the latter holds only on a certain \emph{good} subset. This is resolved by a large deviation property for symbolic recurrence to the good subset, which is inspired by the work of Tsujii-Zhang \cite{TsZh}. It is proved by studying the combinatorics of cusp excursions and showing an effective renewal theorem, as in the work of Li \cite{Li}, which uses the spectral gap of the transfer operator for the geodesic flow in Proposition \ref{L2contracting}.

In \cite{GLZ}, using the coding of Schottky groups, Guillop\'e-Lin-Zworski were able to study the Selberg zeta function through a dynamical zeta function. They gave a simple proof of the analytic continuation and a growth estimate of the Selberg zeta function. Hopefully, the coding constructed in our work will be helpful in the study of the Selberg zeta function for higher dimensional geometrically finite manifold.

Other applications include 
the Fourier decay of the Patterson-Sullivan measure. In \cite{BD}, Bourgain-Dyatlov proved Fourier decay of Patterson-Sullivan measures for convex cocompact Fuchsian groups. The first step of their proof is to use the coding of the limit set to construct an appropriate transfer operator. With our coding available, it is very likely to generalize the Fourier decay to geometrically finite discrete subgroups with parabolic elements.

Our proof of obtaining a Dolgopyat-type spectral estimate is influenced by the one in~\cite{ArMe, AGY, BaVa, Dol, Nau, Sto}. 
The key of Dolgopyat's approach is to estimate the decay of certain oscillatory integrals against the \textit{fractal} Patterson-Sullivan measure:
for function $f$ of the form $\sum_{j\in J}\exp(ib\tau_j(x))$, where $b$ is a real number, $\tau_j\in C^2(\Delta_{0})$ and $J$ is some index set, we have
$ |\int f\,d\mu|$
is bounded by some negative power of $|b|$.
We successfully attain this estimate by combining dynamics and the regularity properties of the Patterson-Sullivan measure, which we think are the essential ingredients to gain the decay.

Another possible argument is to analyze each $\int \exp(ib\tau_j)\ d\mu$ and show the decay. Such a result is known as the Fourier decay of the Patterson-Sullivan measure. This is especially challenging when the critical exponent $\delta$ is small. In \cite{JS}, Jordan-Sahlsten proved the Fourier decay of some fractal measures. Their idea is to approximate the fractal measure by the Lebesgue measure and use the Fourier decay of the Lebesgue measure, which is well-studied. But this approximation is sensitive to the Hausdorff dimension of the fractal sets. A similar idea also appears in a preprint by Kahlil \cite{Kha}. It is unclear whether their approach provides an alternative way to establish the Fourier decay of fractal measures without dimension restriction. 

 
There are works trying to use anisotropic Banach spaces to prove exponential mixing. The key is to show there exists $\eps>0$ such that the strip $\{-\eps<\Re s<0 \}$ is free of \textit{Pollicott-Ruelle resonances}. For a geometrically finite discrete subgroup with the critical exponent $\delta<d/2$, it might happen that there are resonances with large imaginary parts and real parts close to zero. Recently, there is a work in progress of Gou\"ezel-Tapie-Schapira on the Pollicott-Ruelle resonances for SPR manifolds, which include geometrically finite manifolds. They show the resonances are discrete, but it is not clear whether one can use this property to attain the required resonance-free region.

\textbf{Organization of the paper.}
\begin{itemize}
\item In Section \ref{sec:pre}, we gather the basic facts and preliminaries about hyperbolic spaces, geometrically finite discrete subgroups, the structure of cusps, Patterson-Sullivan measure, and Bowen-Margulis-Sullivan measure.

\item In Section \ref{sec:reduction zariski}, we prove that Theorem \ref{main thm} can be reduced to Zariski dense case. 

\item In Section \ref{sec:geo}, we state the results of the coding (Proposition \ref{prop:coding}, Lemma \ref{lem:uni}, \ref{lem:l1}). We construct a hyperbolic skew product flow and state the result that it is exponential mixing (Theorem \ref{thm:skew}). We show that the geodesic flow on $\T^1(M)$ is a factor of this hyperbolic skew product flow (Proposition \ref{lem:loc}) and deduce the exponential mixing of the geodesic flow from Theorem \ref{thm:skew}.

\item In Section \ref{sec:parmea}, we provide an explicit description of the action of an element $\gamma\in \Gamma$ on $\partial \mathbb{H}^{d+1}$ and the estimate on the norm of the derivative of $\gamma$ (Section \ref{sec:explicit}). We list the basics for the multi-cusp case (Section \ref{sec:multi}). The doubling property and the friendliness of Patterson-Sullivan measure are proved in Section \ref{sec:double} and \ref{sec:friendliness}.

\item In Section \ref{sec:code}, we start with the construction of the coding for one cusp case, which is also the first step for multi-cusp case. The main result is exponential decay of the remaining set (Proposition \ref{keylemma}). Section \ref{sec:sep}-\ref{sec:energy} are devoted to the proof the Proposition \ref{keylemma}. The coding for the multi-cusp case will be provided in Section \ref{sec:exptail} and \ref{sec:codmulti}. The results of the coding (Proposition \ref{prop:coding}, Lemma \ref{lem:uni}, \ref{lem:l1}) will be proved in Section \ref{sec:exptail}-\ref{sec:UNI}.

\item In Section \ref{sec:spegap}, we prove a Dolgopyat-type spectral estimate for the corresponding transfer operator and the main result is an $L^2$-contraction proposition (Proposition \ref{L2contracting}).

\item In Section \ref{sec:expmix}, we finish the proof of Theorem \ref{thm:skew}. 

\item In Section \ref{sec:res}, we prove the application of obtaining a resonance-free region for the resolvent $R_{M}(s)$ (Theorem \ref{cor:resonance}).
\end{itemize}

\textbf{Notation.}
 In the paper, given two real functions $f$ and $g$, we write $f\ll g$ if there exists a constant $C>0$ only depending on $\Gamma$ such that $f\leq Cg$. We write $f\approx g$ if $f\ll g$ and $g\ll f$.

\subsection*{Acknowledgement}
We would like to thank S\'{e}bastien Gou\"{e}zel and Carlos Matheus for helpful discussion and thank Amie Wilkinson for suggesting the paper by Lai-Sang Young. The second author would like to express her gratitude to Hee Oh for introducing her to this circle of areas.

Part of this work was done while two authors were in the Bernoulli center for the workshop: Dynamics, Geometry and Combinatorics, we would like to thank the organizers and the hospitality of the center.

\section{Preliminary of hyperbolic spaces and PS measure}\label{sec:pre}

\subsection{Hyperbolic spaces}
We will use the upper-half space model for $\H^{d+1}$: $$\H^{d+1}=\{x=(x_1,\ldots,x_{d+1})\in\R^{d+1}:\,x_{d+1}>0 \}.$$ Let $o=(0,\cdots,0,1)\in\H^{d+1}$. For $x\in \H^{d+1}$, write $h(x)$ for the height of the point $x$, which is the last coordinate of $x$. The Riemannian metric on $\H^{d+1}$ is given by $$\dd s^2=\frac{\dd x_1^2+\cdots+\dd x_{d+1}^2}{x_{d+1}^2}.$$ 
Let $\partial \H^{d+1}$ be the visual boundary. On $\partial\H^{d+1}=\R^d\cup\{\infty \}$, we have the spherical metric, denoted by $d_{\mathbb{S}^d}(\cdot,\cdot)$. 
We also have the Euclidean metric, denoted by 
$d_{E}(x,x')$ or $|x-x'|$ for any $x,x'\in \partial \H^{d+1}$. This metric will be used most frequently; we will simply write $d(\cdot, \cdot)$ when there is no confusion. 

For $g\in G$, it acts on $\partial\H^{d+1}$ conformally. For $x\in\partial\H^{d+1}$, let $|g'(x)|$ be the linear distortion of the conformal action of $g$ at $x$ with respect to the Euclidean metric. It is also the norm of the derivative seen as a linear map on tangent spaces. Let $|g'(x)|_{\S^d}$ be the norm with respect to the spherical metric.
We have the relation
\begin{equation}\label{equ:change}
|g'(x)|_{\S^d}=\frac{1+|x|^2}{1+|gx|^2}|g'(x)|.
\end{equation}
Another formula for $|g'(x)|_{\mathbb{S}^d}$ is 
\[ |g'(x)|_{\S^d}=e^{-\beta_x(g^{-1}o,o)},\]
where $\beta_x(\cdot,\cdot)$ is the Busemann function given by $\beta_x(z,z')=\lim_{t\to +\infty}d(z,x_t)-d(z',x_t)$ with $x_t$ an arbitrary geodesic ray tending to $x$.





We denote $\mathbb{H}^{d+1}\cup \partial \mathbb{H}^{d+1}$ by $\overline{\mathbb{H}^{d+1}}$. 

\subsection{Geometrically finite discrete subgroups}

Let $\Gamma$ be a torsion-free, non-elementary discrete subgroup in $G$. We list some basics of geometrically finite discrete subgroups.

The {\textbf{limit set}} of $\Gamma$ is the set $\Lambda_{\Gamma}$ of all the accumulation points of an orbit $\Gamma x$ for some $x\in \mathbb{H}^{d+1}$. As we assume $\Gamma$ is torsion-free, $\Lambda_{\Gamma}$ is contained in $\partial \mathbb{H}^{d+1}$. The {{convex hull}}, $\text{hull}(\Lambda_{\Gamma})$, of $\Lambda_{\Gamma}$ is the smallest convex subset in $\mathbb{H}^{d+1}$ which contains all the geodesics connecting any two distinct points of $\Lambda_{\Gamma}$. The {{convex core}} of $M$ is $C(M)=\Gamma\backslash \text{hull}(\Lambda_{\Gamma})\subset M$. 

A limit point $x\in\Lambda_{\Gamma}$ is called {\textbf{conical}} if there exists a geodesic ray tending to $x$ and a sequence of elements $\gamma_n\in\Gamma$ such that $\gamma_no$ converges to $x$, and the distance between $\gamma_no$ and the geodesic ray is bounded. A subgroup $\Gamma'$ of $\Gamma$ is called {{parabolic}} if $\Gamma'$ fixes only one point in $\partial\H^{d+1}$. A point $x\in\Lambda_\Gamma$ is called a {\textbf{parabolic fixed point}} if its stabilizer in $\Gamma$, ${\rm{Stab}}_{\Gamma}(x)$, is parabolic. A parabolic fixed point is called {\textbf{bounded parabolic}} if the quotient $\mathrm{Stab}_{\Gamma}(x)\backslash(\Lambda_{\Gamma}-\{x\})$ is compact.

A horoball based at $x\in \partial \mathbb{H}^{d+1}$ is the set $\{y\in\H^{d+1}:\,\ \beta_x(y,o)<t \}$ for some $t\in\R$. The boundary of a horoball is called a horosphere. We call a horoball $H$ based at a parabolic fixed point $x\in\Lambda_{\Gamma}$ a horocusp region, if we have $\gamma H\cap H=\emptyset$ for any $\gamma\in\Gamma-\mathrm{Stab}_{\Gamma}(x)$. Then the image of $H$ in $M$ under the quotient map, $\Gamma\backslash \Gamma H$, is isometric to $\mathrm{Stab}_{\Gamma}(x)\backslash H$ and is called a \textbf{proper horocusp} of $M$.

\begin{defn}[Geometrically finite discrete subgroup~\cite{Bow},~\cite{Ratc}]\label{def:geofinite} A non-elementary discrete subgroup $\Gamma<\operatorname{SO}(d+1,1)^{\circ}$ is called {\textbf{geometrically finite}} if it satisfies one of the following equivalent conditions:
\begin{enumerate}[(i)]
\item There is a (possibly empty) finite union $V$ of proper horocusps of $M$, with disjoint closures, such that $C(M)-V$ is compact.

\item Every limit point of $\Gamma$ is either conical or bounded parabolic.
\end{enumerate}
\end{defn}

\subsection{Structure of cusps}\label{sec:cusps}
Assume that $\Gamma$ is a geometrically finite discrete subgroup with parabolic elements and $\infty$ is a parabolic fixed point of $\Gamma$. Let $\Gamma_\infty^{'}=\mathrm{Stab}_\Gamma(\infty)$ be the parabolic subgroup of $\Gamma$ fixing $\infty$. Then $\Gamma_\infty^{'}$ acts on $\R^d$, part of $\partial \mathbb{H}^{d+1}$, isometrically with respect to the Euclidean metric. 
The following is a result of Bieberbach (see~\cite[Page 5]{GM} or~\cite[Section 2.2]{Bow}). 
\begin{lem}[Bieberbach]
\label{lem:biberbach}
	Consider the action of $\Gamma_{\infty}'$ on $\mathbb{R}^d$. Then there exist a maximal normal abelian subgroup $\Gamma_\infty \subset \Gamma_\infty^{'}$ of finite index and an affine subspace $Z\subset \mathbb{R}^d$ of dimension $k$, invariant under $\Gamma_{\infty}'$, such that $\Gamma_{\infty}$ acts as a group of translations of rank $k$ on $Z$. If $\mathbb{R}^d=Y\times Z$ is an orthogonal decomposition, with $Y\simeq \mathbb{R}^{d-k}$ and associated coordinates $(y,z)$, then we can write each element $\gamma\in \Gamma_\infty^{'}$ in the form
	\begin{equation*}
	\gamma(y,z)=(A_\gamma y, R_\gamma z+b_\gamma), \,\,\, b_{\gamma}\in \R^{k},\,\,\,A_\gamma\in O(n-k),\ R_\gamma\in O(k) 
	\end{equation*}
	where for each $\gamma$, $R_{\gamma}^m=\operatorname{Id}$ for some $m\in \mathbb{N}$, with $m=1$
	if $\gamma\in\Gamma_\infty$.
\end{lem}
The dimension $k$ is called the \textbf{rank} of the parabolic fixed point $\infty$.

Fix an orthogonal decomposition $\mathbb{R}^d=Y\times Z\simeq \mathbb{R}^{d-k}\times \mathbb{R}^k$. As $\Gamma_{\infty}$ acts on $\mathbb{R}^k$ as a group of translations, it admits a fundamental region $\Delta_{\infty}'$ which is an open $k$-dimensional parallelotope in $\R^k$. Since $\Gamma$ is geometrically finite, $\infty$ is a bounded parabolic fixed point. By definition, the quotient $\Gamma_{\infty}'\backslash (\Lambda_{\Gamma}-\{\infty\} )$ is compact; the quotient $\Gamma_{\infty}\backslash (\Lambda_{\Gamma}-\{\infty\} )$ is also compact as $\Gamma_{\infty}$ is a finite index subgroup of $\Gamma_{\infty}'$. Therefore, there exists a constant $C>0$ such that the set $B_Y(C)=\{y\in \R^{d-k}:\, |y|< C \}$ in $\R^{d-k}$ has the property that 
$$\Lambda_{\Gamma}\subset \{\infty\}\cup \left(\cup_{\gamma\in\Gamma_\infty}\gamma \left(\overline{B_Y(C/2)\times \Delta'_{\infty}}\right)\right),$$
\begin{defn}
	We call the open set $\Delta_{\infty}:=B_Y(C)\times \Delta_\infty'$ a \textbf{fundamental region} for the parabolic fixed point $\infty$.
\end{defn}

\begin{figure}
	\begin{center}
	\def\svgwidth{10cm}
\begingroup%
  \makeatletter%
  \providecommand\color[2][]{%
    \errmessage{(Inkscape) Color is used for the text in Inkscape, but the package 'color.sty' is not loaded}%
    \renewcommand\color[2][]{}%
  }%
  \providecommand\transparent[1]{%
    \errmessage{(Inkscape) Transparency is used (non-zero) for the text in Inkscape, but the package 'transparent.sty' is not loaded}%
    \renewcommand\transparent[1]{}%
  }%
  \providecommand\rotatebox[2]{#2}%
  \newcommand*\fsize{\dimexpr\f@size pt\relax}%
  \newcommand*\lineheight[1]{\fontsize{\fsize}{#1\fsize}\selectfont}%
  \ifx\svgwidth\undefined%
    \setlength{\unitlength}{420.43622457bp}%
    \ifx\svgscale\undefined%
      \relax%
    \else%
      \setlength{\unitlength}{\unitlength * \real{\svgscale}}%
    \fi%
  \else%
    \setlength{\unitlength}{\svgwidth}%
  \fi%
  \global\let\svgwidth\undefined%
  \global\let\svgscale\undefined%
  \makeatother%
  \begin{picture}(1,0.65078187)%
    \lineheight{1}%
    \setlength\tabcolsep{0pt}%
    \put(0,0){\includegraphics[width=\unitlength,page=1]{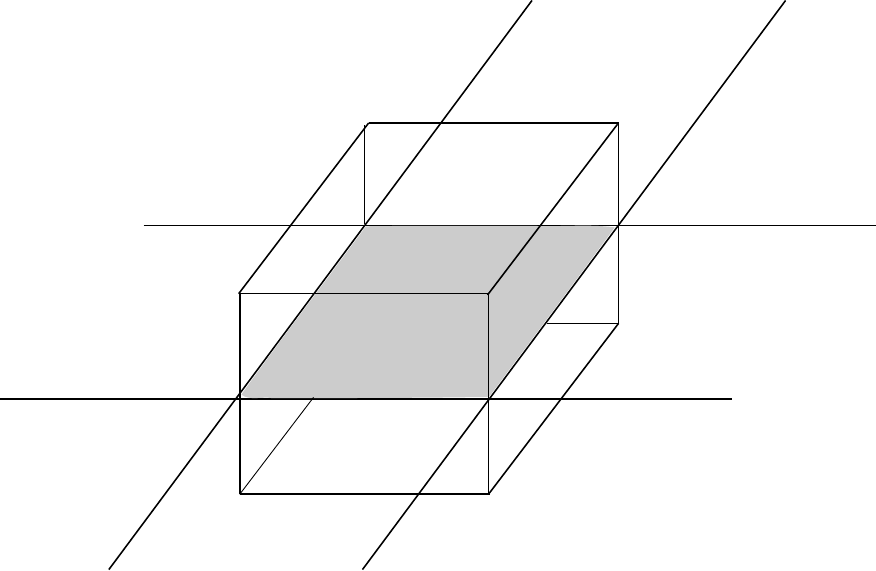}}%
    \put(0.42109719,0.34865532){\makebox(0,0)[lt]{\lineheight{1.25}\smash{\begin{tabular}[t]{l}$\Delta_\infty'$\end{tabular}}}}%
    \put(0.75081546,0.40652781){\makebox(0,0)[lt]{\lineheight{1.25}\smash{\begin{tabular}[t]{l}$\mathbb{R}^2$\end{tabular}}}}%
  \end{picture}%
\endgroup%

		\end{center}
	\caption{Here $\infty$ is a parabolic fixed point of rank 2 in $\partial\H^4$. The intersection $\Lambda_{\Gamma}\cap \R^3$ has bounded distance to $\R^2$. }
\end{figure}

\subsection{PS measure and BMS measure}\label{sec:PS}
\textbf{Patterson-Sullivan measure.} Recall $\delta$ is the critical exponent of $\Gamma$. Patterson~\cite{Pat2} and Sullivan~\cite{Sul1} constructed a $\Gamma$-invariant conformal density $\{\mu_y \}_{y\in\H^{d+1}}$ of dimension $\delta$ on $\Lambda_{\Gamma}$, which is a set of finite Borel measures such that for any $y,z\in \H^{d+1}$, $x\in \partial \H^{d+1}$ and $\gamma \in \Gamma$,
\begin{equation}
\label{ps quasi}
\frac{\dd\mu_y}{\dd\mu_z}(x)=e^{-\delta\beta_{x}(y,z)}\,\,\,\text{and}\,\,\, (\gamma)_{*}\mu_y=\mu_{\gamma y},
\end{equation}
where $\gamma_{*}\mu_{y}(E)=\mu_{y}(\gamma^{-1}E)$ for any Borel subset $E$ of $\partial \H^{d+1}$. This family of measures is unique up to homothety, and the action of $\Gamma$ on $\partial \H^{d+1}$ is ergodic relative to the measure class defined by these measures. 

As $\mu_y$'s are absolutely continuous with respect to each other, for most of the paper, we will consider $\mu_{o}$ and denote it by $\mu$ for short. We call it the Patterson-Sullivan measure (or PS measure). The following {\textbf{quasi-invariance}} property of the PS measure will be frequently used: for any Borel subset $E$ of $\partial \H^{d+1}$ and any $\gamma\in \Gamma$,
\begin{equation}
 \mu(\gamma E)=\int_E|\gamma'(x)|_{\S^d}^\delta\dd\mu(x).
 \end{equation}
 
\textbf{Bowen-Margulis-Sullivan measure.} Let $\partial^2 (\H^{d+1})=\partial\H^{d+1}\times\partial\H^{d+1}-\text{Diagonal}$.
The Hopf parametrization of $\T^1(\mathbb{H}^{d+1})$ as $\partial^2 (\H^{d+1})\times \mathbb{R}$ is given by
\begin{equation*}
v\mapsto (x,x_-,s=\beta_{x}(o,v_*)),
\end{equation*}
where $x$ (resp. $x_-$) is the forward endpoint (resp. backward) endpoint of $v$ under the geodesic flow, and $v_*\in \mathbb{H}^{d+1}$ is the based point of $v$. The geodesic flow on $\T^1(\H^{d+1})$ is represented by the translation on $\mathbb{R}$-coordinate. 

The Bowen-Margulis-Sullivan measure (or BMS measure) on $\operatorname{T}^1(\H^{d+1})$ is defined by 
\begin{equation*}
\dd\tilde{m}^{\operatorname{BMS}}(x,x_-,s)=e^{\delta \beta_{x}(o,x_*)} e^{\delta \beta_{x_-}(o,x_*)}\dd \mu(x) \dd\mu(x_-)\dd s,
\end{equation*}
where $x_*$ is the based point of the unit tangent vector given by $(x,x_-,s)$.
It is invariant under the geodesic flow $\calG_t$ from the definition.
The group $\Gamma$ acts on $\partial^2 (\H^{d+1})\times \mathbb{R}$ by
\begin{equation*}
\gamma (x,x_-,s)=(\gamma x, \gamma x_-,s-\beta_{x}(o,\gamma^{-1}o)). 
\end{equation*}
This formula, together with \eqref{ps quasi}, implies that $\tilde{m}^{\operatorname{BMS}}$ is left $\Gamma$-invariant; hence $\tilde{m}^{\operatorname{BMS}}$ induces a measure $m^{\operatorname{BMS}}$ on $\operatorname{T}^1(M)$, which is the Bowen-Margulis-Sullivan measure on $\operatorname{T}^1(M)$. For geometrically finite discrete subgroups, Sullivan showed that $m^{\operatorname{BMS}}$ is finite and ergodic with respect to the action of the geodesic flow~\cite{Sul}. Otal and Peign\'{e} showed that $m^{\operatorname{BMS}}$ is the unique measure supported on the non-wandering set of the geodesic flow with maximal entropy $\delta$~\cite{OtPe}. 
After normalization, we suppose that $m^{\operatorname{BMS}}$ is a probability measure.

\section{Reduction to Zariski dense case}
\label{sec:reduction zariski}
The group $\SO$ is Zariski closed and connected and the subgroup $\SO^\circ$ is its analytic connected component containing identity. For a subgroup $\Gamma$ of $\SO^\circ$, it is said to be Zariski dense in $\SO^\circ$ if it is Zariski dense in $\SO$. The proof of Theorem \ref{main thm} can be reduced to Zariski dense case. 
\begin{thm}
\label{zariski}
Assume that $\Gamma<\operatorname{SO}(d+1,1)^{\circ}$ is a Zariski dense, torsion-free, geometrically finite subgroup with parabolic elements. The geodesic flow $(\mathcal{G}_t)_{t\in \mathbb{R}}$ on $\operatorname{T}^1(M)$ is exponentially mixing with respect to $\bms$: there exists $\eta>0$ such that for any functions $\phi, \psi\in C^1(\T^1(M))$ and any $t>0$, we have
\begin{equation*}
\int_{\T^1(M)} \phi\cdot\psi\circ\calG_t\ \dd\bms =\bms (\phi) \bms (\psi)+O(\lVert \phi \rVert_{C^1} \lVert \psi\rVert_{C^1}e^{-\eta t}).
\end{equation*}
 \end{thm}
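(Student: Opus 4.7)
The plan is to reduce the dynamics of the geodesic flow on $\T^1(M)$ to a one-sided expanding system on a subset of $\partial\H^{d+1}$ and then carry out a Dolgopyat-type argument for the corresponding transfer operator. First I would construct a Poincar\'{e} section $\calS\subset\T^1(M)$ transverse to $(\calG_t)_{t\in\R}$ whose return map descends, via the visual projection, to a Markov expanding map $T:\sqcup_j\Delta_j\to\Delta_0$ with branches given by inverses of elements $\gamma_j\in\Gamma$ (Proposition \ref{prop:coding}). The construction proceeds inductively: at each step one refines the unpartitioned remainder of $\Delta_0$ using translates $\gamma\Delta_0$ with well-chosen $\gamma\in\Gamma$, exploiting separation of parabolic fixed points (Lemma \ref{lem:pdistance}), the doubling property and friendliness of the PS measure (Sections \ref{sec:double} and \ref{sec:friendliness}), and recurrence of the geodesic flow (Lemma \ref{lem:recurrence}) to guarantee that the measure of the unpartitioned remainder decays exponentially in the step count (Proposition \ref{keylemma}). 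The passage from the single-cusp to the multi-cusp case is handled using the equivalence-class device on parabolic fixed points, which compensates for the fact that the bad sets at each stage are neither balls nor affine neighborhoods.

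With the coding in hand, the next step is to realize the geodesic flow as a measure-theoretic factor of a suspension-type flow built from $T$. Concretely, one forms a hyperbolic skew product with base $(\sqcup_j\Delta_j, T)$, fibers along the stable direction, and roof function $R(x)=\log|\D T(x)|$, which has an exponential moment by Proposition \ref{prop:coding}(5); the BMS measure is the pushforward of the natural invariant measure under a semiconjugacy (Proposition \ref{lem:loc}). Exponential mixing for this skew product (Theorem \ref{thm:skew}) then implies Theorem \ref{zariski}, modulo checking that the factor map is regular enough to transfer decay from the skew product model to arbitrary $C^1$ pairs $(\phi,\psi)$ on $\T^1(M)$; this is accomplished by Lipschitz approximation along stable and unstable leaves, which is standard once the geometry of $\calS$ is under control.

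To prove exponential mixing of the skew product I would follow the Laplace-transform route: express the correlation function as a contour integral involving the twisted transfer operators $\calL_s$ of $T$, weighted by $e^{-sR}$ and tested against smooth fiberwise data, and shift the contour left of $\Re s=\delta$ using a spectral estimate for $\calL_s$ on a suitable anisotropic Banach space. The crucial input is a Dolgopyat-type $L^2$-contraction for $\calL_s$ when $|\Im s|$ is large (Proposition \ref{L2contracting}). This rests on the UNI condition (Lemma \ref{lem:uni}), which makes the phases produced by the two-variable temporal distance function genuinely nonstationary, together with the doubling and friendliness of the PS measure, used to implement Dolgopyat's cancellation scheme on the fractal limit set. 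Because $\Lambda_\Gamma\cap\Delta_0$ is not a John domain, the AGY framework does not directly apply, and I would replace it with a covering/bump-function construction tailored to the PS-measure regularity, producing the $L^2$-gain branch by branch and then summing against the exponential-tail bound on $R$ provided by Lemma \ref{lem:l1}.

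The main obstacle I anticipate is the joint handling of the cusp-induced unboundedness of $R$ and the non-John fractal geometry of $\mathrm{supp}(\mu)$. On the coding side, the exponential-tail estimate (Proposition \ref{keylemma}) is delicate because the remainder at step $n$ has complicated shape and must be dissected via the equivalence-class partition of parabolic fixed points. On the spectral side, the Dolgopyat cancellation must be carried out uniformly over infinitely many branches with unbounded hyperbolic distortion while remaining compatible with the friendliness constants of $\mu$; this interplay between the infinite symbolic alphabet coming from cusps and the oscillatory integral estimates on a fractal base is where I expect the bulk of the technical work to lie.
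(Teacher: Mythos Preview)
Your proposal is correct and follows essentially the same architecture as the paper: coding via the expanding boundary map (Proposition~\ref{prop:coding}) with exponential decay of the remainder (Proposition~\ref{keylemma}), realization of the geodesic flow as a factor of a hyperbolic skew-product suspension (Proposition~\ref{lem:loc}, Theorem~\ref{thm:skew}), and a Dolgopyat $L^2$-contraction for the twisted transfer operator (Proposition~\ref{L2contracting}) fed into the AGY Laplace-transform machinery. Two small corrections: the exponential tail of $R$ is Proposition~\ref{prop:coding}(5) (equivalently~\eqref{sum}), not Lemma~\ref{lem:l1}, which instead supplies the stable contraction needed to build the skew product; and the transfer of decay from $\Lambda^R$ to $\T^1(M)$ is not done by leafwise Lipschitz approximation but by a time-dependent cutoff $\tau_t$ that uses the exponential tail of $R$ to absorb the $e^{a}$-growth of the Lipschitz constant of $u\circ\Phi$ along the unbounded flow coordinate.
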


\begin{proof}[\textbf{From Theorem \ref{zariski} to Theorem \ref{main thm}}]
Suppose $\Gamma$ is not Zariski dense. Let $H$ be the Zariski closure of $\Gamma$ in $\SO$ and let $H_1$ be the Zariski connected component of $H$ containing the identity. Let $\Gamma_1=\Gamma\cap H_1$. Then $\Gamma_1$ is a finite index subgroup of $\Gamma$ and the Zariski closure of $\Gamma_1$ is $H_1$. We will only consider $\Gamma_1$ because the exponential mixing of $\Gamma$ follows from the same statement for $\Gamma_1$ by taking covering space.
	
	Let $H_o$ be the analytic connected component of $H_1$ containing identity. Since $\Gamma$ is non-elementary, the group $H_o$ doesn't fix any point on the boundary. By a classic result (see~\cite{BZ} for example), up to conjugacy, $H_o$ preserves a hyperbolic subspace $\H^m$ with $m\leq d$ and the restriction of $H_o$ to $\H^m$ contains $\SOm^{\circ}$ with compact kernel. Preserving subspace is a Zariski closed condition, we know that $H_1$ also preserves $\H^m$ and the restriction of $H_1$ to $\H^m$ satisfies the same properties as $H_o$. Since $\Gamma_1$ is a torsion free discrete subgroup, the restriction map $\Gamma_1\rightarrow \Gamma_1|_{\H^m}$ is injective. Then the Zariski closure of $\Gamma_1|_{\H^m}$ also contains $\SOm^{\circ}$. 
	At most passing to an index 4 subgroup, we can suppose that $\Gamma_1|_{\H^m}$ is a subgroup of $\SOm^\circ$. Hence $\Gamma_1|_{\H^m}$ is Zariski dense in $\SOm^{\circ}$ and geometrically finite. (Definition \ref{def:geofinite} (2) implies that $\Gamma_1|_{\H^m}$ is still geometrically finite.) The BMS measure $\bms$ of $\Gamma_1$ on the unit tangent bundle $\Gamma_1\backslash \T^1\H^{d+1}$ is actually supported on $\Gamma_1\backslash \T^1\H^m$, which is the Zariski dense case.
	\end{proof}

\section{The geodesic flow and the boundary map}
\label{sec:geo}
For the rest of the paper, our standing assumption is
\begin{align*}
	&\Gamma<G\,\,\,\text{Zariski dense, torsion-free, geometrically finite with parabolic elements}\\
	&\text{and}\,\,\,\infty\,\,\,\text{is a parabolic fixed point of}\,\,\, \Gamma.
\end{align*}
Let $\Delta_0:=\Delta_\infty$ be a fundamental region for the parabolic fixed point $\infty$ described in Section~\ref{sec:cusps}. In Section~\ref{sec:code}, we will construct a coding of the limit set satisfying the following properties.
\begin{prop}
\label{prop:coding}
	There are constants $C_1>0$, $\lambda,\,\epsilon_0\in (0,1)$, a countable collection of disjoint, open subsets $\{\Delta_j\}_{j\in\N}$ in $\Delta_0$ and an expanding map $T:\sqcup_j\Delta_j\to \Delta_0$ such that: 
		\begin{enumerate}
		\item $\sum_{j}\mu(\Delta_j)=\mu(\Delta_0)$.
		\item For each $j$, there is an element $\gamma_j\in \Gamma$ such that $\Delta_j=\gamma_j \Delta_0$ and $T|_{\Delta_j}=\gamma_j^{-1}$. 
		\item Each $\gamma_j$ is a uniform contraction: $|\gamma_j'(x)|\leq \lambda$ for all $x\in \Delta_0$.
		\item For each $\gamma_j$, $|\D(\log |\gamma_j'|)(x)|<C_1$ for all $x\in \Delta_0$, where $\D(\log |\gamma_j'|)(x)$ is the differential of the map $z\mapsto \log |\gamma_j'(z)|$ at $x$.
		\item Let $R$ be the function on $\sqcup_j \Delta_j$ given by $R(x)=\log|\D T(x)|$. Then
		$\int e^{\epsilon_o R}\dd\mu<\infty.$
	\end{enumerate}
\end{prop}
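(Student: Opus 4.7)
The plan is to build the partition $\{\Delta_j\}$ by an inductive refinement procedure on $\Delta_0$. At each stage $n$ we will have a disjoint collection $\calF_n$ of pieces $\gamma\Delta_0\subset\Delta_0$ already committed to the coding, together with a residual set $B_n:=\Delta_0\setminus\bigsqcup_{\gamma\in\calF_n}\gamma\Delta_0$ still awaiting assignment. The refinement consists of adjoining to $\calF_n$ the new elements $\gamma\in\Gamma$ for which $\gamma\Delta_0\subset B_n$ and the uniform contraction $|\gamma'|\le\lambda$ holds on $\Delta_0$, corresponding to geodesic excursions that have just returned to the thick part at stage $n$. The limiting family $\{\Delta_j\}$ satisfies (1)--(4) by construction provided $\mu(B_n)\to 0$; property (5) will translate to the exponential decay $\mu(B_n)\le Ce^{-cn}$, since $\int e^{\epsilon_0 R}\,\dd\mu$ decomposes as a weighted sum indexed by the stage at which each $\Delta_j$ was committed.

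To produce the $\gamma_j$ I would exploit the recurrence of the geodesic flow (Lemma~\ref{lem:recurrence}): for a generic $x\in\Delta_0$, the geodesic ending at $x$ lifts to one that returns to the thick part of the convex core, and each return corresponds to a $\gamma\in\Gamma$ whose inverse expands a neighborhood of $x$ inside $\Delta_0$. The uniform contraction (3) is then enforced by the stopping rule; (4) is a uniform Hessian-type estimate for $\log|\gamma'|$ that follows from the explicit expression for hyperbolic isometries in the upper-half-space model (Section~\ref{sec:explicit}) together with the bounded Euclidean extent of $\Delta_0$ transverse to the cusp direction.

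The main obstacle, and where essentially all the work lies, is the exponential decay of $\mu(B_n)$, which is the content of Proposition~\ref{keylemma}. The points remaining in $B_n$ are precisely those whose forward orbit has, up to stage $n$, spent all its time in horoballs based at parabolic fixed points of $\Gamma$ other than $\infty$; they are therefore concentrated in small neighborhoods of such parabolic fixed points inside $\Delta_0$. My plan follows the paper's three-ingredient approach: first, use the separation estimate between parabolic fixed points (Lemma~\ref{lem:pdistance}) to group the visible parabolic fixed points, at each scale, into equivalence classes, each fitting inside either a ball or a tubular neighborhood of an affine subspace of controlled dimension; second, apply the doubling and friendliness of $\mu$ (Sections~\ref{sec:double}--\ref{sec:friendliness}) to bound the measures of such sets; third, combine these to obtain a one-step loss $\mu(B_{n+1})\le (1-\theta)\mu(B_n)+\mathrm{err}_n$ with $\theta>0$ and the error exponentially small, then iterate. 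The equivalence-class device is what converts the a priori awkward shape of $B_n$ into the two geometric shapes, ball and affine tube, to which friendliness directly applies.

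Once the one-cusp case is in hand, the multi-cusp version (Section~\ref{sec:codmulti}) should follow by iterating the same procedure across the finitely many $\Gamma$-inequivalent cusps, allowing the committed elements $\gamma_j$ to be concatenations of excursions through several cusps in succession. The verification of (1)--(5) for the combined partition then reduces to concatenating exponential tails and checking that the constants $C_1$, $\lambda$, $\epsilon_0$ remain uniform across all cusps, which is a bookkeeping exercise given that there are only finitely many $\Gamma$-orbits of parabolic fixed points.
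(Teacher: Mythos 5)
Your plan captures the high-level inductive structure of the paper---carve out pieces $\gamma\Delta_0$ stage by stage, verify (2)--(4) from the explicit formulas for Möbius maps, and reduce (1) and (5) to exponential decay of the residual measure via Proposition~\ref{keylemma}---and that part is sound. But there are two genuine gaps in how you propose to reach that decay.

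First, the characterization of the residual set is wrong, and the error conceals the role of geodesic recurrence. You assert that the points still unassigned at stage $n$ are ``precisely those whose forward orbit has, up to stage $n$, spent all its time in horoballs,'' hence concentrated near parabolic fixed points, hence controllable by doubling and friendliness alone. This is not so: the residual set $\Omega_n$ also contains the thick part $A_n$, consisting of points whose forward geodesic at time $n$ is back in the compact core but which have not yet fallen into any \emph{good} cusp (one with $d(p,\partial\Omega_n)>h_n/(4\eta)$). That thick part is not near any parabolic fixed point and cannot be bounded by measure estimates; the paper removes it using the geodesic-flow recurrence (Lemma~\ref{lem:recurrence} $\Rightarrow$ Lemma~\ref{lem:parabolic} $\Rightarrow$ Lemma~\ref{lem:sub1ii}), which shows that every point of $A_n$ far from $\partial\Omega_n$ is carved out within a uniformly bounded number $N$ of further steps. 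You invoke recurrence only as the device that ``produces the $\gamma_j$,'' so in your outline there is no mechanism that makes the thick part shrink; the proposed one-step loss $\mu(B_{n+1})\le(1-\theta)\mu(B_n)+\mathrm{err}_n$ has no source for the $\theta$. Recurrence is the third main ingredient of the paper's argument, not a footnote; dropping it is the gap.

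Second, your adjunction rule---add every $\gamma$ with $\gamma\Delta_0\subset B_n$ and $|\gamma'|\le\lambda$---destroys the boundary structure that the friendliness estimates require. The paper does not commit individual tiles freely; it removes whole flowers $J_p=\gamma\,(B_Y(2/\eta)\times R_{p,\eta})^c$ around parabolic fixed points in $P_{n+1}$ selected so that distinct boundary components of $\Omega_n$ stay $h_n/(2\eta)$-apart (Lemma~\ref{lem:separation}). That keeps $\partial\Omega_n$ a disjoint union of $\partial\Delta_0$ and parallelotope images $\partial J_p$, which is exactly what Lemma~\ref{lem:jpr} and Lemma~\ref{lem:boud} need to control the near-boundary mass $\mu(\Omega_n')$. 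An unstructured $\calF_n$ leaves $\partial B_n$ with no such description, and the friendliness bounds you plan to invoke no longer apply. The same selection rule is what makes (3) and (4) uniform: it is not the ``bounded Euclidean extent of $\Delta_0$'' that gives the Lipschitz bound on $\log|\gamma_j'|$, but the inductive fact (Lemma~\ref{lem:l1}, feeding Lemma~\ref{lem:der}) that $\gamma_j^{-1}\infty$ lands in $\Lambda_-$, i.e.\ at Euclidean distance at least $1/(2\eta)$ from $\Delta_0$, which simultaneously forces $|\gamma_j'|\le 4\eta^2$ and $|\D\log|\gamma_j'||\le C_1$. Finally, the equivalence classes in Section~\ref{sec:equivalent classes} do not fit the bad set into tubes for friendliness; their purpose is to sum the partial balls $B_{p',n}\cap\Omega_n$ over an equivalence class so that, after transporting by the relevant $g_i\gamma$, Lemma~\ref{lem:rre} applies to a \emph{full} annulus in the energy-exchange estimate (Lemma~\ref{lem:sub2}). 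The friendliness estimates enter separately, through Lemma~\ref{lem:bou}.
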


Denote by $\mathcal{H}=\{\gamma_j\}_{j\in\N}$ the set of inverse branches of $T$. The last property is known as the exponential tail property and 
we will prove another form instead:
\begin{equation}
\label{sum}
\sum_{\gamma\in\calH }|\gamma'|_\infty^{\delta-\epsilon_o}<\infty, 
\end{equation}
where $|\gamma'|_\infty=\sup_{x\in\Delta_0}|\gamma'(x)|$. Proposition~\ref{prop:coding} (5) can be deduced from \eqref{sum} by separating the integral to the sum of integrals over $\Delta_j$ and using quasi-invariance of PS measure.

Using Proposition~\ref{prop:coding}, it can be shown that there exists a $T$-invariant ergodic probability measure $\nu$ on $\Delta_0$ which is absolutely continuous with respect to PS measure and the density function $\bar{f}_0$ is a positive Lipschitz function bounded away from $0$ and $\infty$ on $\Delta_0\cap\Lambda_\Gamma$ (see for example~\cite[Lemma 2]{You}).

The coding satisfies \textbf{uniform nonintegrable condition (UNI)}. 
Let 
\begin{align*}
&R_n(x):=\sum_{0\leq k\leq n-1}R(T^k(x))\,\,\,\text{for}\,\,\,x\,\,\,\text{with}\,\,\,T^k(x)\in \sqcup_j \Delta_j\,\,\,\text{for all}\,\,\,0\leq k\leq n-1,\\
&\mathcal{H}^n=\{\gamma_{j_1} \cdots \gamma_{j_n}:\,\gamma_{j_k}\in \mathcal{H}\,\,\,\text{for}\,\,\,1\leq k\leq n\}.
\end{align*}
For $\gamma\in\calH^n$, we have
$R_n(\gamma x)=-\log|\gamma'(x)|$. Set 
\begin{equation}
\label{constant c2}
C_2=C_1/(1-\lambda).
\end{equation}
 Then by Proposition~\ref{prop:coding} (3) and (4), we obtain for any $\gamma\in \mathcal{H}_n$,
\begin{equation}
\label{uniform contraction}
\sup_{x\in \Delta_0}\left| \D(\log |\gamma'|)(x)\right|\leq C_2.
\end{equation}

\begin{lem}[UNI]\label{lem:uni}
	There exist $\r>0$ and $\epsilon_0>0$ such that for any $C>1$ the following holds for any large $n_0$. There exist $j_0\in\N$ and $\{\gamma_{mj}:1\leq m\leq 2, 1\leq j\leq j_0\}$ in $\mathcal H_{n_0}$ such that for any $x\in \Lambda_\Gamma\cap\overline\Delta_0$ and any unit vector $e\in\R^d$ there exists $j\leq j_0$ such that for all $y\in B(x,\r)$
	\begin{equation}\label{equ:uni}
	|\partial_e(\tau_{1j}-\tau_{2j})(y)|\geq \epsilon_0,
	\end{equation}
	where $\tau_{mj}(x)=R_{n_0}(\gamma_{mj}x)$. 
	Moreover, for all $m,j$,
	\begin{equation}\label{equ:hm}
	|\D\tau_{mj}|_\infty\leq C_2, \,\,\,
	|\gamma_{mj}'|_\infty\leq \epsilon_0/C.
	\end{equation}
\end{lem}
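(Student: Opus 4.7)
The idea is to reduce the UNI inequality at length $n_0$ to a short-length directional non-integrability statement which, in turn, follows from Zariski density of $\Gamma$; we then propagate the short-length estimate to length $n_0$ by prepending a long contracting ``prefix'' common to both legs of each pair.

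Write each $\gamma_{mj} = c_j \alpha_{mj}$ with a short ``suffix'' $\alpha_{mj} \in \mathcal{H}_{k_0}$ (carrying the $m$-dependence) and a long ``prefix'' $c_j \in \mathcal{H}_{n_0 - k_0}$ common to $m = 1, 2$; the integer $k_0$ will be fixed depending only on $\epsilon_0$. Since every $\alpha \in \bigcup_k \mathcal{H}_k$ is conformal (so $\D\alpha(y) = |\alpha'(y)| O_\alpha(y)$ with $O_\alpha(y) \in O(d)$), the chain rule, the bound $|\D(\log|c_j'|)|_\infty \leq C_2$ from (\ref{uniform contraction}), and $|\alpha_{mj}'|_\infty \leq \lambda^{k_0}$ from Proposition \ref{prop:coding}~(3) combine to give
\[
\partial_e(\tau_{1j} - \tau_{2j})(y) \;=\; \partial_e \log\frac{|\alpha_{2j}'(y)|}{|\alpha_{1j}'(y)|} \;+\; O(C_2 \lambda^{k_0}).
\]
It therefore suffices to find, for each $(x, e)$, a pair $(\alpha_{1j}, \alpha_{2j}) \in \mathcal{H}_{k_0}^{\,2}$ with $|\partial_e \log(|\alpha_{2j}'|/|\alpha_{1j}'|)(y)| \geq 2\epsilon_0$ uniformly on $B(x, r)$; taking $k_0$ large enough that $C_2\lambda^{k_0} < \epsilon_0$ then absorbs the error.

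For the short-length UNI, fix $(x, e) \in (\Lambda_\Gamma \cap \overline{\Delta}_0) \times S^{d-1}$. Viewed as a function of $\alpha \in G$, $\partial_e \log|\alpha'(x)|$ is a non-constant rational function (since the conformal factor $|\alpha'(x)|$ is rational in the matrix entries of $\alpha$, and it is clearly non-constant because it vanishes at $\alpha = e$). If it were constant on the semigroup generated by $\mathcal{H}$, the equality would persist on the Zariski closure of that semigroup, contradicting non-constancy, \emph{provided} the closure is all of $G$. Hence some pair $(\alpha_1, \alpha_2)$ in the semigroup achieves a strict difference at $(x, e)$. By continuity of the derivative and compactness of $(\Lambda_\Gamma \cap \overline{\Delta}_0) \times S^{d-1}$, a finite family $\{(\alpha_{1j}, \alpha_{2j})\}_{j = 1}^{j_0}$ of pairs (arranged to share common word length $k_0$ by right-composition with a fixed element) gives a uniform lower bound $\geq 2\epsilon_0$ on balls of some uniform radius $r$. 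For each $j$, pick any prefix $c_j \in \mathcal{H}_{n_0 - k_0}$ such that $c_j\alpha_{1j}, c_j\alpha_{2j} \in \mathcal{H}_{n_0}$. Then Proposition \ref{prop:coding}~(3) gives $|\gamma_{mj}'|_\infty \leq \lambda^{n_0}$, so choosing $n_0 \geq \log(C/\epsilon_0)/\log(1/\lambda)$ yields $|\gamma_{mj}'|_\infty \leq \epsilon_0/C$. The bound $|\D\tau_{mj}|_\infty \leq C_2$ is (\ref{uniform contraction}) applied to $\gamma_{mj} \in \mathcal{H}_{n_0}$.

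The main obstacle is the quantitative uniformity of the Zariski density step: existence of a differentiating pair at a single $(x, e)$ is immediate, but extracting a uniform lower bound $2\epsilon_0$ on a uniform ball radius $r$ across the full parameter space requires compactness/continuity, plus controlling that the common word length $k_0$ stays bounded over the finite cover. A secondary obstacle is verifying that the semigroup $\bigcup_k \mathcal{H}_k$ is itself Zariski dense in $G$; this follows because its boundary orbits have full-measure support on $\Lambda_\Gamma$ (Proposition \ref{prop:coding}~(1)), so any proper algebraic subgroup containing the semigroup would trap $\Lambda_\Gamma$ in a proper subvariety of $\partial \H^{d+1}$, contradicting Zariski density of $\Gamma$.
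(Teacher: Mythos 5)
Your decomposition $\gamma_{mj}=c_j\alpha_{mj}$, with a common contracting prefix $c_j$ applied second and the $m$-dependence carried by the suffix $\alpha_{mj}$ applied first, captures the right structure of the paper's propagation step (absorbing the prefix contribution via the chain rule and \eqref{uniform contraction}). However, there are two genuine gaps.

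The ``arranged to share common word length $k_0$ by right-composition with a fixed element'' step does not work. Right-composing $\alpha_i\mapsto \alpha_i\beta$ gives
\[
\partial_e\log|(\alpha_i\beta)'(y)|=\D(\log|\alpha_i'|)(\beta(y))\cdot\D\beta(y)e+\partial_e\log|\beta'(y)|.
\]
If the same $\beta$ is used for both legs (impossible if $\alpha_1,\alpha_2$ already have different lengths, which is exactly when adjustment is needed), the second terms cancel in the difference but the first-term difference is scaled by $|\beta'(y)|\leq\lambda^{\mathrm{length}(\beta)}$, which shrinks the separation you are trying to preserve. If different $\beta_i$ are used, the $\partial_e\log|\beta_i'(y)|$ terms do not cancel and can swamp the original separation. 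Left-composition ($\alpha_i\mapsto\beta\alpha_i$) is the correct operation and produces an error $O(C_2\lambda^{\mathrm{length}(\alpha_i)})$, but this is only small if the $\alpha_i$ obtained from the Zariski density step were already long, which is not something that step provides for free; there is a circularity in fixing $k_0$ first and then looking for pairs of exactly that length. The paper avoids the whole issue by the exact formula of Lemma~\ref{lem:der}: $\partial_e\log|\gamma'(x)|$ depends on $\gamma$ \emph{only} through $\gamma^{-1}\infty$. Prepending any $\gamma_1$ replaces $\gamma^{-1}\infty$ by $\gamma^{-1}(\gamma_1^{-1}\infty)\in\gamma^{-1}\Lambda_-$, a set of small diameter, so the derivative stays pinned near the target value (Lemma~\ref{lem:Lam2}); this is exact, not just a chain-rule bound, and is the key fact your proof omits.

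Your Zariski-density argument is also too fast. That $\bigcup_k\mathcal{H}_k$ is not contained in a proper algebraic subgroup does not by itself say that ``$\Lambda_\Gamma$ is trapped in a proper subvariety.'' What the paper actually does (Lemma~\ref{lem:dense}) is: if the backward limit set $\Lambda_b$ lay in a proper affine subspace or sphere $A$, then the Zariski closure of $\Gamma_b$ (a group, by \cite[Lemma~6.15]{BQ}) would preserve $A$, hence so would $\Gamma_f$ and therefore $\Lambda_f\subset A$; but $\mu(\Lambda_f)=\mu(\Lambda_\Gamma\cap\Delta_0)>0$ (via the forward-orbit density argument using Proposition~\ref{prop:coding}(1),(3)), and by \cite[Corollary~1.4]{FS} the Patterson--Sullivan measure of any proper $A$ is zero. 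That passage through the measure theory of $\mu$ is what converts ``Zariski dense'' into a usable non-degeneracy statement on the boundary, and you should not expect to bypass it by rational-function considerations on $G$ alone, since the quantity $\partial_e\log|\alpha'(x)|$ is computed only on the fractal set $\Lambda_\Gamma\cap\overline\Delta_0$, not on all of $\partial\H^{d+1}$.
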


The expanding map in the coding gives a contracting action in a neighborhood of $\infty$.
\begin{lem}\label{lem:l1}
	There exist $0<\lambda<1$ and a neighbourhood $\Lambda_-$ of $\infty$ in $\Lambda_{\Gamma}$ such that $\Lambda_{-}$ is disjoint from $\overline{\Delta}_0$ and for any $\gamma\in\calH$ and any $y,y'\in \Lambda_-$, 
	\begin{equation}\label{equ:lam1}
	\gamma^{-1}(\Lambda_{-})\subset\Lambda_{-},\ \ 
	d_{\S^d}(\gamma^{-1}y,\gamma^{-1}y')\leq \lambda d_{\S^d}(y,y').
	\end{equation}
\end{lem}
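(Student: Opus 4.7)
The plan is to pick $\Lambda_-$ as a small spherical neighborhood of $\infty$ in $\Lambda_\Gamma$ and to reduce both statements in \eqref{equ:lam1} to a single derivative estimate at $\infty$. More precisely, I would set $\Lambda_-:=\Lambda_\Gamma\cap B_{\S^d}(\infty,\rho)$; since $\overline{\Delta}_0$ is a bounded subset of $\R^d$, it lies at positive spherical distance from $\infty$, so the disjointness condition $\Lambda_-\cap\overline{\Delta}_0=\emptyset$ holds for all sufficiently small $\rho>0$, which will be shrunk further below.

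The key identity is the Busemann form of the conformal derivative: applying $|g'(x)|_{\S^d}=e^{-\beta_x(g^{-1}o,o)}$ with $g=\gamma^{-1}$ and $x=\infty$, and noting that in the upper half-space model $\beta_\infty(\gamma o,o)=-\log \h(\gamma o)$, one gets $|(\gamma^{-1})'(\infty)|_{\S^d}=\h(\gamma o)$. The core step is then to show $\h(\gamma o)\leq\lambda_0<1$ uniformly over $\gamma\in\calH$. Writing $\gamma$ via its isometric sphere (center $c_\gamma:=\gamma^{-1}(\infty)$, radius $r_\gamma$), so that $|\gamma'(x)|=r_\gamma^2/|x-c_\gamma|^2$ and $\h(\gamma o)=r_\gamma^2/(|c_\gamma|^2+1)$, I would combine the uniform contraction $|\gamma'|\leq\lambda$ on $\Delta_0$ from Proposition~\ref{prop:coding}(3) with the boundedness of $\Delta_0$ and the fact that $c_\gamma$ lies outside $\overline{\Delta}_0$ (since $\infty\notin\gamma(\Delta_0)\subset\Delta_0$) to force this uniform bound. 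Continuity of the conformal derivative and the mean-value inequality along spherical geodesics then upgrade this pointwise bound to $|(\gamma^{-1})'|_{\S^d}\leq\lambda$ on $B_{\S^d}(\infty,\rho)$ for small $\rho$, giving the stated Lipschitz contraction.

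For the self-invariance $\gamma^{-1}(\Lambda_-)\subset\Lambda_-$, Proposition~\ref{prop:coding}(2) yields $\gamma(\overline{\Delta}_0)=\overline{\Delta}_j\subset\overline{\Delta}_0$, hence $\gamma^{-1}(\S^d\setminus\overline{\Delta}_0)\subset\S^d\setminus\overline{\Delta}_0$; combined with the spherical contraction, $\gamma^{-1}(\Lambda_-)$ is a spherical ball of radius $\leq\lambda\rho$ centered at $\gamma^{-1}(\infty)$, and I only need this image ball to sit inside $\Lambda_-$. This reduces to checking $\gamma^{-1}(\infty)\in\Lambda_-$ for every $\gamma\in\calH$, i.e.\ that each pole of an inverse branch is spherically close to $\infty$; I expect this to be built into the coding construction in Section~\ref{sec:code}. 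The main obstacle is the uniform bound $\h(\gamma o)\leq\lambda_0<1$ in the delicate regime where $c_\gamma$ is close to $\Delta_0$ and $r_\gamma$ is of order $\mathrm{diam}(\Delta_0)$: both numerator and denominator of $r_\gamma^2/(|c_\gamma|^2+1)$ are then comparable and a crude estimate can exceed $1$, so the required strict inequality must be extracted from the fine geometric structure of $\calH$ produced by the construction.
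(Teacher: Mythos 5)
You correctly identify the two statements that carry all the content — that $\gamma^{-1}\infty$ is spherically near $\infty$ for every $\gamma\in\calH$, and that the spherical derivative at $\infty$ is uniformly bounded below $1$ — but you leave both as gaps, and the second in fact reduces to the first. The paper takes $\Lambda_-=\Lambda_\Gamma\cap\{|x|>1/(2\eta)\}$, and the sole missing ingredient is Lemma~\ref{lem:l1'}: every $\gamma\in\calH$ is a composition of inverse branches of $T_0$, and each such branch sends $U_l$ into $U_i$; the geometric source of this is \eqref{flower3}, which forces $d(\gamma^{-1}\infty,\Delta_0)\geq 1/\eta$. That fact simultaneously gives the invariance $\gamma^{-1}\Lambda_-\subset\Lambda_-$ and dissolves the ``delicate regime'' you flag (a pole $c_\gamma=\gamma^{-1}\infty$ near $\Delta_0$ simply cannot arise for $\gamma\in\calH$). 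With it in hand, the paper does not pass through a pointwise estimate at $\infty$ followed by an upgrade: it writes, via Lemma~\ref{lem:explicit} and \eqref{equ:change},
\[
|(\gamma^{-1})'(x)|_{\S^d}=\frac{h_p}{d(x,p)^2}\cdot\frac{1+|x|^2}{1+|\gamma^{-1}x|^2},\qquad p=\gamma\infty\in\Delta_0,
\]
and bounds each factor directly on all of $\Lambda_-$: $\frac{1+|x|^2}{d(x,p)^2}\approx 1$ since $|x|\geq 1/(2\eta)$ and $p$ lies in the bounded set $\Delta_0$; $h_p\leq 1$ since $H_p$ is disjoint from $H_\infty$; and $1+|\gamma^{-1}x|^2\geq 1+1/(4\eta^2)$ by the already-established invariance, which supplies the decisive smallness, uniformly in $\gamma$.

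Two further problems with the route you sketch. Invoking Proposition~\ref{prop:coding}(3) to bound $h(\gamma o)$ is circular in the paper's logical order — (3) is itself proved from Lemma~\ref{lem:l1} — and even granting it, $|\gamma'|\leq\lambda$ on $\Delta_0$ only yields $h(\gamma o)\leq\lambda\, d(c_\gamma,\Delta_0)^2/(|c_\gamma|^2+1)$, which need not be $<1$ unless one either knows $\lambda$ is small relative to $\operatorname{diam}(\Delta_0)$ or, better, knows $c_\gamma$ is far from $\Delta_0$; the latter is exactly \eqref{flower3}. Finally, the proposed ``upgrade'' of a pointwise spherical-derivative bound at $\infty$ to a Lipschitz bound on a fixed spherical ball via continuity and the mean-value inequality is not automatic over the infinite family $\calH$: to make the modulus of continuity of $|(\gamma^{-1})'|_{\S^d}$ near $\infty$ uniform in $\gamma$ one ends up redoing the explicit computation above, which the paper performs directly.
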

The proofs of these results will be postponed to Section~\ref{sec:code}. Proposition~\ref{prop:coding} and Lemma~\ref{lem:l1} will be proved at the end of Section~\ref{sec:codmulti} and Lemma~\ref{lem:uni} will be proved in Section~\ref{sec:UNI}.



\subsection{A semiflow over hyperbolic skew product}



\textbf{Hyperbolic skew product.} We construct a hyperbolic skew product using Lemma~\ref{lem:l1}.
Let $\Lambda_{+}=\Lambda_{\Gamma}\cap\left(\sqcup_j\Delta_j\right)$ and $\Lambda_{-}$ be given as Lemma~\ref{lem:l1}. 
Define the map $\hat{T}$ on $\Lambda_+\times \Lambda_-$ by 
\begin{equation}
\label{equ:expanding}
 \hat{T}(x,x_-)=(\gamma_j^{-1}x,\gamma_j^{-1}x_-)\,\,\,\text{for}\,\,\, (x,x_-)\in \Lambda_+\times \Lambda_- \,\,\,\text{with}\,\,\, x\in\Delta_j,
\end{equation}
where $\gamma_j$ is given as in Proposition \ref{prop:coding} (2). Lemma~\ref{lem:l1} implies $\gamma^{-1}\Lambda_-\subset \Lambda_-$ for any $\gamma \in \mathcal{H}$. So $\hat{T}$ is well-defined. 

 Let $p:\Lambda_+\times \Lambda_-\to \Lambda_+$ be the projection to the first coordinate. 
This gives rise to a semiconjugacy between $\hat{T}$ and $T$. We equip $\Lambda_+\times\Lambda_-$ with the metric 
$$d((x,x_-),(x',x_-'))=d_E(x,x')+d_{\S^d}(x_-,x_-').$$ 
\eqref{equ:lam1} implies that the action of $\hat{T}$ on the fibre $\{ x\}\times\Lambda_-$ is contracting. Using this observation,we obtain
\begin{prop}\label{prop:dis}
\begin{enumerate}
\item
	 There exists a unique $\hat{T}$-invariant, ergodic probability measure $\hat\nu$ on $\Lambda_+\times \Lambda_-$ whose projection to $\Lambda_+$ is $\nu$. 
\item	
	 We have a disintegration of $\hat{\nu}$ over $\nu$: for any continuous function $w$ on $\Lambda_+\times\Lambda_-$,
	\begin{equation*}
	 \int_{\Lambda_+\times\Lambda_-} w\dd\hat{\nu}=\int_{\Lambda_+}\int_{\Lambda_-}w\dd\nu_x(x_-)\dd\nu(x). 
	\end{equation*}
	Moreover, there exists $C>0$ such that for any Lipschitz function $w$ on $\Lambda_+\times \Lambda_-$, defining $\bar{w}(x)=\int w\dd\nu_x$, we have
	\begin{equation*}
	 \|\bar{w}\|_{\rm Lip}\leq C\|w\|_{\rm Lip}. 
	\end{equation*}
\end{enumerate}	
\end{prop}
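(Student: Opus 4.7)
The plan is to construct $\hat\nu$ and its fiber measures $\{\nu_x\}$ directly from the fiberwise contraction in Lemma~\ref{lem:l1}. Let $\mathcal Lf(y)=\sum_{\gamma\in\mathcal H}\omega_\gamma(y)f(\gamma y)$ be the transfer operator of $T$ dual to Koopman with respect to $\nu$; by $T$-invariance of $\nu$, $\mathcal L1=1$. Proposition~\ref{prop:coding}(3)--(4), combined with the Lipschitz regularity of the density $\bar f_0$, yield a bounded-distortion estimate for the iterated weight uniform in $n$ and in $g\in\mathcal H_n$: $|\log\omega_g^{(n)}(y)-\log\omega_g^{(n)}(y')|\leq C\,d(y,y')$ for $y,y'\in\overline{\Delta_0}$, because the per-factor distortion constant $C_1$ is damped geometrically by the contraction $\lambda<1$ and sums to $C_2$ as in~\eqref{uniform contraction}.

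\textbf{Construction of $\nu_x$ and $\hat\nu$.} Fix $x_-^0\in\Lambda_-$. Since $g^{-1}$ is a $\lambda^n$-contraction of $\Lambda_-$ into itself for every $g\in\mathcal H_n$ (Lemma~\ref{lem:l1}), the weighted sum
\[
\nu_x^{(n)}:=\sum_{g\in\mathcal H_n}\omega_g^{(n)}(x)\,\delta_{g^{-1}x_-^0}
\]
is a well-defined probability measure on $\Lambda_-$ (total mass $\mathcal L^n1(x)=1$). Iterating the one-step recursion $\nu_x^{(k+1)}=\sum_j\omega_{\gamma_j}(x)(\gamma_j^{-1})_*\nu_{\gamma_jx}^{(k)}$ gives, for $m\geq n$,
\[
\nu_x^{(m)}=\sum_{g\in\mathcal H_n}\omega_g^{(n)}(x)(g^{-1})_*\nu_{gx}^{(m-n)},
\]
and each $(g^{-1})_*\nu_{gx}^{(m-n)}$ is supported in $g^{-1}(\Lambda_-)$, a set of diameter $\leq\lambda^n\operatorname{diam}(\Lambda_-)$ which also contains the Dirac point $g^{-1}x_-^0$; hence $|\nu_x^{(n)}(f)-\nu_x^{(m)}(f)|\leq\lambda^n\|f\|_{\rm Lip}\operatorname{diam}(\Lambda_-)$ for every Lipschitz $f$. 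The weak-$*$ limit $\nu_x:=\lim_n\nu_x^{(n)}$ therefore exists, is independent of $x_-^0$ by the same estimate, and $x\mapsto\nu_x$ is continuous. Define $\hat\nu:=\int\nu_x\,d\nu(x)$. Passing to the limit in the one-step recursion gives $\nu_y=\sum_j\omega_{\gamma_j}(y)(\gamma_j^{-1})_*\nu_{\gamma_jy}$, which integrated against $\nu$ is exactly $\hat T_*\hat\nu=\hat\nu$; the projection $p_*\hat\nu=\nu$ holds by construction.

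\textbf{Uniqueness and ergodicity.} For any $\hat T$-invariant probability $\hat\mu$ with $p_*\hat\mu=\nu$ and any Lipschitz $w$,
\[
\hat\mu(w)=\hat\mu(w\circ\hat T^n)=\int w\bigl(\hat T^n(x,x_-^0)\bigr)\,d\nu(x)+O\bigl(\lambda^n\|w\|_{\rm Lip}\bigr),
\]
using the fiber contraction to replace $x_-$ by $x_-^0$ up to error $\lambda^n\|w\|_{\rm Lip}\operatorname{diam}(\Lambda_-)$. The right-hand side depends only on $\nu$, forcing $\hat\mu=\hat\nu$ on Lipschitz functions, hence everywhere. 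Ergodicity of $\hat\nu$ then follows by the standard decomposition argument: in any ergodic decomposition $\hat\nu=\int\hat\nu_\omega\,d\mathbb P(\omega)$, each $p_*\hat\nu_\omega$ is a $T$-invariant probability whose average is $\nu$, so ergodicity of $\nu$ forces $p_*\hat\nu_\omega=\nu$ $\mathbb P$-a.s., and uniqueness then gives $\hat\nu_\omega=\hat\nu$ $\mathbb P$-a.s.

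\textbf{Lipschitz regularity of $\bar w$ and the main obstacle.} Split
\[
\bar w(x)-\bar w(x')=\int\bigl[w(x,\cdot)-w(x',\cdot)\bigr]d\nu_x+\int w(x',\cdot)\,d(\nu_x-\nu_{x'}).
\]
The first integral is bounded by $\|w\|_{\rm Lip}d(x,x')$. For the second, the uniform distortion estimate gives $|\omega_g^{(n)}(x)-\omega_g^{(n)}(x')|\leq C\omega_g^{(n)}(x)d(x,x')$, so $\|\nu_x^{(n)}-\nu_{x'}^{(n)}\|_{\rm TV}\leq Cd(x,x')$ uniformly in $n$, and the same bound passes to the limit. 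Since both measures are probabilities we may replace $w(x',\cdot)$ by $w(x',\cdot)-w(x',x_-^0)$, whose sup norm is at most $\|w\|_{\rm Lip}\operatorname{diam}(\Lambda_-)$, bounding the second integral by $C\|w\|_{\rm Lip}\operatorname{diam}(\Lambda_-)d(x,x')$. The main obstacle is precisely the uniformity in $n$ of the distortion estimate for $\omega_g^{(n)}$: since $g$ is a composition of $n$ inverse branches, naive estimates would blow up with $n$, but Proposition~\ref{prop:coding}(3)--(4) are tailored so that the per-factor bound from (4) is geometrically damped by the contraction from (3), yielding a constant independent of $n$ that propagates through convergence, invariance, and Lipschitz regularity.
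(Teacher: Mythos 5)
Your proof is correct and is a genuinely different route from what the paper does. The paper simply cites Kloeckner (``Extensions with shrinking fibers'') for existence, uniqueness, and ergodicity, and Butterley--Melbourne for the disintegration and the Lipschitz bound on $\bar w$, remarking only that the latter's Riemannian-manifold arguments carry over to the fractal setting. You instead give a self-contained construction from the ingredients already available in the paper: the uniform bounded distortion $|\log\omega_g^{(n)}(y)-\log\omega_g^{(n)}(y')|\le Cd(y,y')$ coming from Proposition~\ref{prop:coding}(3)--(4), the Lipschitz density $\bar f_0$, and the fiber contraction of Lemma~\ref{lem:l1}. The approximating measures $\nu_x^{(n)}=\sum_{g\in\mathcal H_n}\omega_g^{(n)}(x)\delta_{g^{-1}x_-^0}$ and the one-step recursion $\nu_y=\sum_j\omega_{\gamma_j}(y)(\gamma_j^{-1})_*\nu_{\gamma_jy}$ give an explicit construction of $\hat\nu$; the $\lambda^n$-contraction of $g^{-1}$ on $\Lambda_-$ gives both the $O(\lambda^n)$ Cauchy estimate and the uniqueness argument $\hat\mu(w)=\int w(\hat T^n(x,x_-^0))\,d\nu(x)+O(\lambda^n)$; and the total-variation bound $\|\nu_x-\nu_{x'}\|_{\rm TV}\le Cd(x,x')$ from the uniform distortion, together with centering $w(x',\cdot)$ at $w(x',x_-^0)$, gives the Lipschitz bound on $\bar w$. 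What this buys you is independence from the black-box references and the caveat about fractal versus Riemannian fibers; what the paper's approach buys is brevity. A few small points worth tightening: the bounded-distortion claim should note that the conversion between $|\gamma'|$ and $|\gamma'|_{\mathbb S^d}$ (equation~\eqref{equ:change}) contributes a uniformly Lipschitz correction on the compact set $\overline\Delta_0$; the passage to the limit in the one-step recursion uses dominated convergence over the weights $\omega_{\gamma_j}(y)$ and the uniform $\lambda^k$-smallness of the errors, which deserves a sentence; and the total-variation bound on $\nu_x-\nu_{x'}$ passes to the limit because TV is lower semicontinuous under weak-$*$ convergence (the delta masses of $\nu_x^{(n)}$ and $\nu_{x'}^{(n)}$ sit at the same points $g^{-1}x_-^0$, which is what makes the level-$n$ TV comparison sharp).
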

\begin{proof}
	For the first statement, see~\cite[Theorem A]{Kloeckner} or~\cite[Proposition 1]{BM}. For the second statement, see~\cite[Proposition 3, Proposition 6]{BM}, where they consider Riemannian manifold case but the same proofs also work in our fractal case.
\end{proof}
\begin{rem*}
	The measure $\hat\nu$ is actually independent of the choice of the stable direction $\Lambda_-$: any $\Lambda_-$ satisfying Lemma~\ref{lem:l1} will lead to the same measure $\hat{\nu}$.
\end{rem*}

\textbf{Hyperbolic skew product flow.}
Let $R:\Lambda_+\to \mathbb{R}_+$ be the function given in Proposition~\ref{prop:coding}. By abusing notation, define $R:\Lambda_+\times\Lambda_-\to \R_+$ by setting $R(x,x_-)=R(x)$. Define the space
\begin{equation*}
\Lambda^{R}=\{(x,x_-,s)\in \Lambda_+\times\Lambda_-\times \mathbb{R}:\,0\leq s< R(x,x_-)\}.
\end{equation*}
Let $R_n=\sum_{j=0}^{n-1}R\circ \hat{T}^j$. The hyperbolic skew product flow $\{\hat{T}_t\}_{t\geq 0}$ over $\Lambda^R$ is defined by $\hat{T}_t(x,x_-,s)=(\hat{T}^n(x,x_-),s+t-R_n(x,x_-))$ for $\hat{\nu}$-almost every $x$, where $n$ is the nonnegative integer such that $0\leq s+t-R_n(x,x_-)<R(\hat{T}^n(x,x_-))$. We equip $\Lambda^R$ with the measure $\dd\hat{\nu}^R:=\dd\hat{\nu}\times \dd t/\bar{R}$, where $dt$ is Lebesgue measure on $\mathbb{R}_+$ and $\bar{R}=\int_{\Lambda_+\times\Lambda_-}R\dd\hat{\nu}$. This is a $\hat{T}_t$-invariant ergodic measure. 

\begin{rem*}
We don't use the commonly used ``suspension space" construction to construct $\Lambda^R$. 
The reason is that we will use a cutoff function in the proof of Theorem~\ref{main thm} and such cutoff functions are ill-defined in the suspension space, which is a quotient space of $\Lambda_+\times \Lambda_-\times \mathbb{R}$.
\end{rem*}

For any $L^{\infty}$ function $w:\Lambda^{R}\to \mathbb{R}$, the Lipschitz norm of $w$ is defined by
\begin{equation}
\label{equ:function norm}
\|w\|_{\operatorname{Lip}}=|w|_\infty+\sup_{ (y,a)\neq(y',a')\in \Lambda^R}\frac{|w(y,a)-w(y',a')|}{d(y,y')+|a-a'|}. 
\end{equation}
In Section~\ref{sec:expmix}, we will prove that $\hat{T}_t$ is exponential mixing with respect to $\hat{\nu}^R$.
\begin{thm}\label{thm:skew}
	There exist $\epsilon_1>0$ and $C>1$ such that for any Lipschitz functions $u,w$ on $\Lambda^R$ and any $t>0$, we have
	\begin{equation*}
	\left|\int u\ w\circ\hat{T}_t\dd \hat \nu^R-\int u \dd \hat \nu^R\int w\dd \hat \nu^R\right|\leq Ce^{-\epsilon_1 t}\|u\|_{\operatorname{Lip}}\|w\|_{\operatorname{Lip}}. 
	\end{equation*}
\end{thm}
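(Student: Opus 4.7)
The plan is to deduce Theorem~\ref{thm:skew} from the Dolgopyat-type spectral estimate Proposition~\ref{L2contracting} by the transfer-operator / Laplace-transform scheme of~\cite{Dol, Nau, Sto, AGY, BaVa}, adapted to our infinite-alphabet fractal setting. The three pieces are: (i)~a reduction from the two-sided space $\Lambda^R$ to a one-sided observable on $\Lambda_+$ via the stable contraction of $\hat T$; (ii)~a symbolic representation of the correlation function as a geometric series in the twisted transfer operator $\calL_z$; and (iii)~a contour shift in $z$ converting the $L^2$ contraction of iterates of $\calL_z$ into exponential decay of correlations.

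Step~1 (Reduction to one-sided). Using the disintegration of $\hat\nu$ along stable fibres (Proposition~\ref{prop:dis}(2)) together with the fibrewise contraction of Lemma~\ref{lem:l1}, I would reduce the proof to the case where the observables depend only on $x\in\Lambda_+$ and on the flow coordinate $s$. The key input is that after $N$ forward iterations of $\hat T$ the stable fibres $\{x\}\times\Lambda_-$ have been contracted by $\lambda^N$, so any Lipschitz $w$ satisfies $\|w\circ \hat T^N-\bar w_N\|_\infty\leq \lambda^N\|w\|_{\rm Lip}$, where $\bar w_N$ is the conditional expectation along stable fibres; choosing $N\asymp t$ makes this error comparable to the target $e^{-\epsilon_1 t}$.

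Step~2 (Transfer operator and Laplace transform). A time-$t$ orbit of $\hat T_t$ corresponds to applying $\hat T^n$ for the largest $n$ with $R_n\leq t$, up to a fractional correction, so after Step~1 the correlation $C_t=\int u\,(w\circ\hat T_t)\,\dd\hat\nu^R$ splits into a sum over $n$. Its Laplace transform $\rho(z)=\int_0^\infty e^{-zt}\,C_t\,\dd t$ rewrites as $\langle u,(I-\calL_z)^{-1}\Phi_{z,w}\rangle$ for an explicit $\Phi_{z,w}$ of controlled Lipschitz norm depending linearly on $\|w\|_{\rm Lip}$, where
$$\calL_z f(x)=\sum_{\gamma\in\calH}|\gamma'(x)|^{\delta}\,e^{-zR(\gamma x)}\,f(\gamma x)$$
is the twisted transfer operator, analytic in $z$ on a strip $\Re z>-a_0$ for some $a_0>0$ by the exponential tail~\eqref{sum}. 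Step~3 (Spectrum of $\calL_z$ and contour shift). The goal is then to show that $(I-\calL_z)^{-1}$ extends meromorphically to a strip $\Re z>-\epsilon_1$, with the only pole a simple one at $z=0$ whose residue computes $\hat\nu^R(u)\,\hat\nu^R(w)$, and with a bound $\|(I-\calL_z)^{-1}\|\leq C(1+|{\rm Im}\,z|)^{K}$ on the line $\Re z=-\epsilon_1$ in a suitable Lipschitz Banach space. For large $|{\rm Im}\,z|$ this comes from iterating Proposition~\ref{L2contracting} to bound $\calL_z^{n_0}$ in $L^2$, upgraded to a Lipschitz bound by a Lasota-Yorke inequality based on property~(4) of Proposition~\ref{prop:coding}; for bounded $|{\rm Im}\,z|\neq 0$ it follows from the absence of peripheral eigenvalues of $\calL_{ib}$ ($b\neq 0$), a Livshits-style consequence of Zariski density and UNI (Lemma~\ref{lem:uni}); and near $z=0$ from analytic perturbation of the leading simple eigenvalue of $\calL_0$, whose top eigenfunction is the density $\bar f_0$. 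Inverting the Laplace transform and shifting the contour to $\Re z=-\epsilon_1$ yields the exponential decay.

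The heart of the difficulty lies in Step~3, specifically in upgrading the $L^2$ contraction of Proposition~\ref{L2contracting} to a polynomial-in-$|{\rm Im}\,z|$ bound on $(I-\calL_z)^{-1}$ in a Banach space of Lipschitz functions large enough to contain $\Phi_{z,w}$ yet small enough for quasi-compactness. Because $\calH$ is infinite with branch contractions of widely varying rate, the Lasota-Yorke estimate must be balanced against the exponential tail~\eqref{sum}, and because $\Lambda_\Gamma\cap\overline\Delta_0$ is fractal and not a John domain, we cannot quote the general framework of~\cite{AGY} directly; the doubling and friendliness of the PS measure have to be used explicitly when comparing Lipschitz norms on the fractal. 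A secondary subtlety is the no-eigenvalue statement for $\calL_{ib}$ with $b\neq 0$ bounded, which follows the approach of~\cite{Nau, SaWi} and uses Zariski density of $\Gamma$ together with UNI.
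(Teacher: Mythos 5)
Your proposal matches the paper's argument: the paper first establishes the one-sided expanding-semiflow statement (Theorem~\ref{semiflow}) by the Laplace-transform/transfer-operator scheme with Proposition~\ref{L2contracting} as the Dolgopyat input, following the scheme of \cite[Section 7]{AGY}, and then deduces Theorem~\ref{thm:skew} from it via Proposition~\ref{prop:dis}(2) and the stable-fibre contraction of Lemma~\ref{lem:l1}, following \cite[Section 8.2]{AGY}. These are exactly your Steps 2--3 and Step 1, respectively, presented in the opposite order.
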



\subsection{Exponential mixing of geodesic flow}
\textbf{The map from $\Lambda^R$ to $\T^1(M)$.} We construct a map from $\Lambda^R$ to $\T^1(M)$ which allows us to deduce the exponential mixing of the geodesic flow from that of $\hat{T}_t$.

Recall the Hopf parametrization in Section~\ref{sec:PS}. We introduce the following time change map to have the function $R$ given by derivative (see Proposition~\ref{prop:coding}):
\begin{equation*}
\tilde{\Phi}:\Lambda^R\to \partial^2(\H^{d+1})\times \mathbb{R},\,\,\,
(x,x_-,s)\mapsto (x,x_-,s-\log(1+|x|^2)).
\end{equation*}
The map $\tilde{\Phi}$ induces a map $\Phi:\Lambda^R\to \T^1(M)$, where we use the Hopf parametrization to identify $\T^1(M)$ with $\Gamma\backslash \partial^2(\H^{d+1})\times \mathbb{R}$. Note that $\Lambda_+\times \{\infty\}\times \{0\}$ is mapped to the unstable horosphere based at $\infty$ and passing $o$. The map $\Phi$ defines a semiconjugacy between two flows:
\begin{equation}
\label{equ:flow conj}
\Phi\circ \hat{T}_t=\mathcal{G}_t\circ \Phi,\,\,\,\text{for}\,\,\,t\geq 0.
\end{equation}
To see this, note that for any $(x,x_-,s)\in \Lambda^R$, we have the expresssion
\begin{align*}
&\hat{T}_t(x,x_-,s)=(\hat{T}^n(x,x_-),s+t-R_n(x,x_-)),\ \ \hat{T}^n(x,x_-)=\gamma^{-1}(x,x_-)\,\,\,\text{for some}\,\,\,\gamma\in \calH^n.
\end{align*}
By straightforward computation, we obtain
\begin{equation*}
\tilde{\Phi}\circ \hat{T}_t(x,x_-,s)=\mathcal{G}_t\circ \gamma^{-1}\tilde{\Phi}(x,x_-,s),
\end{equation*}
which leads to (\ref{equ:flow conj}) by passing to the quotient space.
\vspace{2mm}

\textbf{Relating $\hat{\nu}^{R}$ with $m^{\operatorname{BMS}}$.} 
 The map $\Phi$ is not injective in general. Nevertheless, we are able to use $(\Lambda^R,\hat{T}_t,\hat{\nu}^R)$ to study $(\T^1(M), \mathcal{G}_t,m^{\operatorname{BMS}})$. The main result is the following proposition.
\begin{prop}
\label{lem:loc}
The map $\Phi:(\Lambda^R,\hat{T}_t,\hat{\nu}^R)\to (\T^1(M),\mathcal{G}_t,m^{\operatorname{BMS}})$ is a factor map, i.e., 
\begin{equation*}
\Phi_*\hat{\nu}^R=m^{\operatorname{BMS}}\,\,\, \text{and}\,\,\,\Phi\circ \hat{T}_t=\mathcal{G}_t\circ \Phi\,\,\,\text{for all}\,\,\,t\geq 0.
\end{equation*}
\end{prop}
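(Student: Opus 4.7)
The plan is to leverage that the semiconjugacy $\Phi\circ\hat T_t=\mathcal G_t\circ\Phi$ has already been verified by direct computation in equation~\eqref{equ:flow conj}, so the only remaining content of the proposition is the measure identity $\Phi_*\hat\nu^R=m^{\operatorname{BMS}}$. I would show that both are $\mathcal G_t$-invariant Borel probability measures on $\T^1(M)$ lying in the same measure class, and then invoke the ergodicity of $m^{\operatorname{BMS}}$ (Sullivan) to force equality. That $\Phi_*\hat\nu^R$ is $\mathcal G_t$-invariant and a probability measure follows automatically from the semiconjugacy and the fact that $\hat\nu^R$ is a $\hat T_t$-invariant probability; its support lies inside the non-wandering set because every vector in $\tilde\Phi(\Lambda^R)$ has forward and backward endpoints in $\Lambda_\Gamma$.

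The core step is a local computation in Hopf coordinates on the universal cover. Using the disintegration $d\hat\nu=d\nu_x(x_-)\,d\nu(x)$ from Proposition~\ref{prop:dis} together with $d\nu=\bar f_0\,d\mu$, and the fact that $\tilde\Phi$ is merely a shift $s\mapsto s-\log(1+|x|^2)$ in the third coordinate, I obtain
\[
\tilde\Phi_*\,d\hat\nu^R = \bar R^{-1}\bar f_0(x)\,d\mu(x)\,d\nu_x(x_-)\,d\tilde s
\]
on $\widetilde{\mathcal F}:=\tilde\Phi(\Lambda^R)$. Meanwhile, in the same Hopf coordinates, $d\tilde m^{\operatorname{BMS}}=e^{\delta[\beta_x(o,v_*)+\beta_{x_-}(o,v_*)]}\,d\mu(x)\,d\mu(x_-)\,ds$, and the exponential weight is invariant along the geodesic flow. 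Consequently, the absolute continuity $\tilde\Phi_*\hat\nu^R\ll \tilde m^{\operatorname{BMS}}$ on $\widetilde{\mathcal F}$ reduces to showing $\nu_x\ll \mu|_{\Lambda_-}$ for $\nu$-almost every $x$.

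To obtain this absolute continuity of the conditional, I would use the natural-extension character of $\hat\nu$: backward iteration of $\hat T$ realizes $\nu_x$ as a weak-$*$ limit of measures of the form $\sum_{\gamma\in\mathcal H_n}w_\gamma(x)\,(\gamma_*\mu)|_{\Lambda_-}$, with weights $w_\gamma(x)$ built from the cocycle $\bar f_0$ and the derivatives $|\gamma'|^\delta$. The quasi-invariance $\gamma_*\mu\ll\mu$ (with density $|\gamma'|^\delta_{\mathbb S^d}$) keeps every term in the measure class of $\mu|_{\Lambda_-}$, while the exponential-tail bound Proposition~\ref{prop:coding}(5) and the Lipschitz regularity of $\bar f_0$ give absolute convergence of the series to a positive Radon--Nikodym density. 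Projecting down yields $\Phi_*\hat\nu^R\ll m^{\operatorname{BMS}}$; the Radon--Nikodym derivative is then $\mathcal G_t$-invariant, hence constant $m^{\operatorname{BMS}}$-a.e.\ by Sullivan's ergodicity of BMS, and matching the total mass forces this constant to equal $1$.

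The hardest part will be making the construction of $\nu_x$ explicit enough to see that it lies in the measure class of $\mu|_{\Lambda_-}$. A cleaner alternative I would consider is to argue by tiling: verify that the $\Gamma$-translates $\{\gamma\widetilde{\mathcal F}\}_{\gamma\in\Gamma}$ essentially cover, with disjoint interiors, the non-wandering set inside $\T^1(\H^{d+1})$; then $\tilde\Phi_*\hat\nu^R$ can be compared directly with $\tilde m^{\operatorname{BMS}}|_{\widetilde{\mathcal F}}$ via the change of coordinates governed by the inverse branches of $T$, bypassing any explicit description of the conditionals $\nu_x$.
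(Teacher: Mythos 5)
Your high-level strategy (show that $\Phi_*\hat\nu^R$ is a $\mathcal G_t$-invariant probability absolutely continuous with respect to $m^{\operatorname{BMS}}$, then invoke Sullivan's ergodicity to force equality) is a legitimate route and genuinely different from the paper's argument. The paper instead proceeds via Birkhoff's ergodic theorem applied to both systems together with a stable-manifold matching argument: Lemma~\ref{lem:V} produces a positive-measure set where $\Phi$ is a bijection, and Lemma~\ref{lem:stable} shows that any full-measure set $\mathcal Q'\subset\Lambda^R$ and any full-measure set $\mathcal Q\subset\T^1(M)$ contain points $x$ and $y$ with $\Phi(x)$ and $y$ on the same stable leaf; then forward time averages of any $C^1$ function agree, and this identifies the two measures. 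The crucial feature of that proof is that it only needs absolute continuity \emph{after projecting away} the $\Lambda_-$-coordinate. Concretely, the paper compares $p_*(m^{\operatorname{BMS}}$ pulled back to $U)$ and $p_*(\hat\nu^R|_B)$ as measures on $\Lambda_+\times\R$, both of which are visibly absolutely continuous with respect to $\dd\nu\otimes\dd t$; this is immediate from the product form of the BMS density and from Proposition~\ref{prop:dis}.

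The gap in your proposal is exactly the step you flag as ``the hardest part'': proving that the stable conditionals satisfy $\nu_x\ll\mu|_{\Lambda_-}$ for $\nu$-a.e.\ $x$. This is a strictly stronger statement than what the paper's argument uses, and your sketch of it does not close. You propose to realize $\nu_x$ as a weak-$*$ limit of measures $\sum_{\gamma\in\mathcal H_n} w_\gamma(x)(\gamma_*\mu)|_{\Lambda_-}$, each term of which is absolutely continuous with respect to $\mu|_{\Lambda_-}$ by quasi-invariance. However, a weak-$*$ limit of absolutely continuous measures need not be absolutely continuous; the approximating densities are supported on the shrinking sets $\gamma^{-1}\Lambda_-$ (which contract exponentially by Lemma~\ref{lem:l1}) and can concentrate. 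The exponential tail bound of Proposition~\ref{prop:coding}(5) controls the total mass of these terms, and the Lipschitz regularity of $\bar f_0$ controls the coefficients $w_\gamma(x)$, but neither gives the $L^1$-type convergence of densities that would be needed to pass absolute continuity to the limit. Absolute continuity of $\nu_x$ is indeed a \emph{consequence} of Proposition~\ref{lem:loc}, but deriving it independently would require a separate argument that is not supplied; as written, the proposal has not established $\Phi_*\hat\nu^R\ll m^{\operatorname{BMS}}$. The tiling alternative you mention in the final paragraph is closer in spirit to Lemma~\ref{lem:V} but would require handling overlaps of the $\Gamma$-translates of $\widetilde{\mathcal F}$, which is precisely the global bookkeeping the paper's Birkhoff argument avoids.
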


We need two lemmas to prove this proposition.
\begin{lem}\label{lem:V}
There exists a measurable subset $U$ in $\Lambda^R$ such that by setting $V=\Phi(U)$ in $\T^1(M)$, the restriction map of $\Phi$ on $U$ gives a bijection between $U$ and $V$. Moreover, the set $V$ is of positive $BMS$ measure.
\end{lem}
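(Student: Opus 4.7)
The plan is to take $U$ to be a small neighborhood of a suitably generic point of $\Lambda^R$, exploiting the fact that $\Gamma$ acts freely and properly discontinuously on $\T^1(\H^{d+1})$. Freeness is immediate since $\Gamma$ is torsion-free: any $\gamma\in\Gamma$ fixing a unit tangent vector fixes the underlying point of $\H^{d+1}$, and the stabilizer of a point in $G$ is a compact subgroup, so its intersection with the discrete torsion-free group $\Gamma$ is trivial. Combined with proper discontinuity, every $\tilde v\in\T^1(\H^{d+1})$ admits an open neighborhood $\tilde V$ with $\gamma\tilde V\cap\tilde V=\emptyset$ for all $\gamma\in\Gamma\setminus\{\mathrm{id}\}$.

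Concretely, I would pick $y_0=(x_0,x_{0-},s_0)\in\Lambda^R$ with $x_0$ in the support of $\mu|_{\Lambda_+}$, $x_{0-}$ in the support of $\mu|_{\Lambda_-}$, and $0<s_0<R(x_0,x_{0-})$. Such a point exists because $\mu(\Lambda_+)=\mu(\Delta_0)>0$ by Proposition~\ref{prop:coding}(1), $\mu(\Lambda_-)>0$ since $\infty\in\Lambda_-$ is in the support of $\mu$, and $R>0$. Set $\tilde v_0=\tilde\Phi(y_0)$, pick an open neighborhood $\tilde V_0\subset\partial^2(\H^{d+1})\times\R$ of $\tilde v_0$ whose $\Gamma$-translates are pairwise disjoint, and define $U=\tilde\Phi^{-1}(\tilde V_0)$. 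For injectivity: if $y_1,y_2\in U$ satisfy $\Phi(y_1)=\Phi(y_2)$, then $\tilde\Phi(y_1)$ and $\tilde\Phi(y_2)$ lie in a common $\Gamma$-orbit; since both sit inside $\tilde V_0$, the identifying element must be $\mathrm{id}$, and since $\tilde\Phi$ is itself injective we conclude $y_1=y_2$.

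It remains to check that $V=\Phi(U)$ has positive BMS measure. Since the projection $\pi\colon\partial^2(\H^{d+1})\times\R\to\T^1(M)$ maps $\tilde V_0$ bijectively to its image, $m^{\mathrm{BMS}}(V)=\tilde m^{\mathrm{BMS}}(\tilde V_0\cap\tilde\Phi(\Lambda^R))$. From the definition of $\tilde m^{\mathrm{BMS}}$ this reduces to showing that a small product neighborhood of $\tilde v_0$ in $\Lambda_+\times\Lambda_-\times\R$ has positive $\mu\otimes\mu\otimes\mathrm{Leb}$ measure, which follows directly from the choice of $y_0$ in the respective supports. The main delicacy is that $\tilde\Phi(\Lambda^R)$ is not open in $\partial^2(\H^{d+1})\times\R$, because $\Lambda_\pm$ are fractal subsets of $\Lambda_\Gamma$, so $U$ is open only in the relative topology inherited on $\Lambda^R$; this is harmless for both measurability and the measure computation.
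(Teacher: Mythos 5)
Your proof is correct and follows the same strategy as the paper's: exploit that $\pi$ is a covering map to get a neighborhood on which it is injective, pull back through the injective map $\tilde\Phi$, and use the local product structure of $\tilde m^{\mathrm{BMS}}$ for positivity. The only cosmetic difference is that the paper first restricts to the slab $\Lambda_+\times\Lambda_-\times[0,\epsilon)$ with $\epsilon<\inf R$, which sidesteps any appeal to the continuity of $R$ near the chosen base point, whereas you pick $s_0$ in the open interval $(0,R(x_0,x_{0-}))$ and implicitly rely on $R$ being continuous at $x_0$ (which holds since $\Delta_j$ is open); both work.
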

\begin{proof}
We make use of the following commutative diagram
\begin{equation*}
\begin{tikzcd}
\Lambda^R \arrow[d,"\Phi"] \arrow[r, "\tilde{\Phi}"] & \partial^2(\H^{d+1})\times \mathbb{R}\arrow[ld,"\pi"] \\
 \T^1(M) &
\end{tikzcd}
\end{equation*}
where $\pi$ is the covering map. Let $\epsilon>0$ be a number such that $\epsilon<\inf_{(x,x_-)\in \Lambda_+\times \Lambda_-}R(x,x_-)$. Set $S=\Lambda_+\times \Lambda_-\times [0,\epsilon)$. The restriction map $\tilde{\Phi}|_S$ gives a bijection between $S$ and its image. Pick any $x\in S$. As $\pi$ is a covering map, there exists an open set $W\subset \partial^2(\H^{d+1})\times \mathbb{R}$ containing $\tilde{\Phi}(x)$ such that the restriction map $\pi|_W$ is a bijection. The sets $U=\tilde{\Phi}^{-1}(W\cap \tilde{\Phi}(S))$ in $\Lambda^R$ and $V=\pi(W\cap \tilde{\Phi}(S))$ satisfy the proposition.
\end{proof}

\begin{lem}\label{lem:stable}
Let $\cal Q'$ be any subset in $\Lambda^R$ with full $\hat{\nu}^R$ measure and $\cal Q$ be any subset in $\T^1 (M)$ with full $m^{\operatorname{BMS}}$ measure. Then there exist $x\in \cal Q'$ and $y\in \cal Q$ such that $\Phi(x)$ and $y$ are in the same stable leaf.
\end{lem}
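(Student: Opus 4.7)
The plan is to combine the local product structures of $\hat\nu^R$ and $m^{\operatorname{BMS}}$ with a Fubini argument, exploiting the fact that the map $\Phi$ preserves the forward endpoint coordinate. First, I would refine the choice in Lemma~\ref{lem:V} by picking the base point $x \in S$ to lie in the support of $\hat\nu^R$; this guarantees both $\hat\nu^R(U) > 0$ and $m^{\operatorname{BMS}}(V) > 0$. After possibly shrinking, $U$ sits inside a product box $B_+ \times B_- \times I \subset \Lambda_+ \times \Lambda_- \times [0, \epsilon)$, and $\Phi|_U$ is a bijection onto $V$ that maps the fiber $\{x_+\} \times B_- \times I$ into the stable leaf of $\Phi(x_+,x_-,s)$ in $\T^1(M)$.

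On the $\Lambda^R$ side, Proposition~\ref{prop:dis}(2) gives the disintegration $\dd\hat\nu^R = \bar R^{-1}\,\dd\nu(x_+)\,\dd\nu_{x_+}(x_-)\,\dd t$. On the $\T^1(M)$ side, the BMS measure in Hopf coordinates on $V$ takes the product form $\rho(x_+,x_-,s)\,\dd\mu(x_+)\,\dd\mu(x_-)\,\dd s$ with a positive bounded continuous density $\rho$, and hence disintegrates along the foliation by stable leaves (parametrised by the forward endpoint $x_+$) with conditional measures equivalent to $\dd\mu(x_-)\,\dd s$ on the box. Applying Fubini to both measures: since $\cal Q'$ is $\hat\nu^R$-conull, for $\nu$-a.e.\ $x_+ \in B_+$ the fiber $\{x_+\} \times B_- \times I$ meets $\cal Q' \cap U$ in a conditionally full-measure subset; since $\cal Q$ is $m^{\operatorname{BMS}}$-conull, for $\mu$-a.e.\ $x_+ \in B_+$ the corresponding stable slice in $V$ meets $\cal Q$ in a conditionally full-measure subset.

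The critical compatibility comes from the paragraph following Proposition~\ref{prop:coding}, where $\nu$ is stated to be absolutely continuous with respect to $\mu$ (with a positive Lipschitz density bounded away from $0$ and $\infty$ on $\Delta_0 \cap \Lambda_\Gamma$). Hence the two ``almost-every $x_+$'' conditions are simultaneously satisfied on a positive $\nu$-measure set of $x_+$. Pick any such $x_+$ and any $x = (x_+,x_-,s) \in \cal Q' \cap U$. Then $\Phi(x)$, lifted to $\T^1(\H^{d+1})$, has forward endpoint $x_+$, so its stable leaf in $\T^1(M)$ contains, locally, the $\Phi$-image of the whole fiber $\{x_+\} \times B_- \times I$. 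By the choice of $x_+$ this slice meets $\cal Q$ on a conditionally full (in particular nonempty) set, and any $y$ in the intersection gives $x \in \cal Q'$, $y \in \cal Q$ on the same stable leaf.

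The main obstacle is the compatibility between the two disintegrations across $\Phi$: the transverse measures on the forward endpoints are $\nu$ on the $\Lambda^R$ side and $\mu$ on the BMS side, and one has to put them on a common footing. The absolute continuity $\nu \ll \mu$ on the limit set is exactly what makes the two ``almost every $x_+$'' quantifiers compatible, and this is essentially the only nontrivial measure-theoretic input beyond the standard local product structures already recorded in Section~\ref{sec:PS} and Proposition~\ref{prop:dis}.
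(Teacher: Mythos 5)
Your approach is fundamentally the same as the paper's: write both $\hat\nu^R$ and the pulled-back BMS measure on the local bijection box in local product form, project out the stable direction, and invoke Fubini together with the equivalence of $\nu$ and $\mu$ to produce a common base point for the two conull sets. The paper phrases the Fubini step via pushforwards under a projection $p$, you phrase it via conditional measures; these are the same argument.

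The one substantive gap is the choice of base for the disintegration. You fiber over the forward endpoint $x_+$ alone (base $\Lambda_+$, fiber $(x_-, t)$), so the ``stable slice'' you produce is the full product $\{x_+\}\times B_-\times I$, and your assertion that the stable leaf of $\Phi(x_+,x_-,s)$ contains the $\Phi$-image of that whole fiber holds only for the \emph{weak} stable foliation (modulo flowing along the geodesic direction). Concretely, your argument yields $x=(x_+,x_-,s)\in{\cal Q}'$ and $y=\Phi(x_+,x_-',s')\in{\cal Q}$ with unrelated time coordinates $s$ and $s'$. But the lemma is applied in Proposition~\ref{lem:loc} via $d(\calG_t y,\calG_t\Phi x)\to 0$, i.e.\ the two points must lie on a common \emph{strong} stable leaf, which forces $s=s'$. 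The paper avoids this by projecting to $\Lambda_+\times\R$ rather than to $\Lambda_+$ (forgetting only the $\Lambda_-$ coordinate, retaining the time coordinate), so Fubini returns a common pair $(x_+,t)$ and the resulting points $(x_+,x_-,t)$ and $\Phi(x_+,x_-',t)$ share the time coordinate. You should do the same: quantify over $\nu\otimes\mathrm{Leb}$-a.e.\ $(x_+,t)$, and fiber only over $\Lambda_-$. (One could alternatively flow $y$ in time and observe that the particular set ${\cal Q}$ appearing in Proposition~\ref{lem:loc} is flow-invariant, but the lemma as stated should deliver a strong stable match without that extra hypothesis.)
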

\begin{proof}
The idea of the proof is straightforward: we make use of the local product description of $\hat{\nu}^R$ and $m^{\operatorname{BMS}}$. 

Let $\Phi_U$ be the restriction of $\Phi$ on $U$. In view of Lemma~\ref{lem:V}, we can consider the measure $\Phi_U^*(m^{\operatorname{BMS}}|_V)$ on $U$, the pull back of $m^{\operatorname{BMS}}|_V$, and denote it by $m$ for simplicity. We can choose $U$ and $V$ sufficiently small so that $m$ is given by 
\begin{equation*}
\dd m(x,x_-,t)=cD(x,x_-)^{-2\delta}\dd\mu(x)\dd\mu(x_-)dt,
\end{equation*} 
where $c$ is a positive constant and $D(x,x_-)=e^{\beta_{x}(o,x_*)/2} e^{\beta_{x_-}(o,x_*)/2}$ known as the visual distance.
Let $p:\Lambda^R\to \Lambda_+\times \mathbb{R}$ be the projection map, forgetting the $\Lambda_-$-coordinate. Then the pushforward measure $p_*m$ is given by
\begin{equation*}
\dd p_*m(x,t)=c\dd\mu(x)\dd t\int_{\{x\}\times \Lambda_-\times \{t\}\cap U} D(x,x_-)^{-2\delta}\dd\mu(x_-).
\end{equation*}
So it is absolutely continuous with respect to the measure $\dd\nu\otimes \dd t$.

We can find a set of the form $B=B_+\times \Lambda_-\times (t_1,t_2)$ such that $\nu(B_+)>0$ and $m(B\cap U)>0$. The pushforward measure $p_*(\hat{\nu}^R|_B)$ is given by
\begin{equation}
\label{eqn:absolute continuity}
dp_*(\hat{\nu}^R|_B)=\dd\nu \otimes \dd t.
\end{equation}

On the one hand, we have that $p({\cal{Q}}'\cap B)$ is a conull set in $p(B)$ with respect to $p_*(\hat{\nu}^R|_B)$.

On the other hand, we consider $B':=\Phi_U^{-1}({\cal{Q}}\cap V)\cap B$. It is a set with positive $m$ measure and hence $p_*(B')$ is of positive $p_*(m)$ measure. The fact that $dp_*(m)$ is absolutely continuous with respect to $\dd\nu\otimes \dd t$ and (\ref{eqn:absolute continuity}) imply that $p_*(B')$ is of positive $p_*(\hat{\nu}^R|_B)$ measure. Therefore, 
$$p({\cal{Q}}'\cap B)\cap p(B')\neq \emptyset.$$
Let $(x,t)$ be a point in the intersection. Then the points $(x,x_-,t)\in \cal{Q}'$ and $\Phi(x,x_-',t)\in \cal{Q}$ satisfy the conditions of the lemma.
\end{proof}

\begin{proof}[Proof of Proposition~\ref{lem:loc}]
	Let $f$ be a $C^1$ function on $\T^1(M)$ with finite $C^1$-norm. Since $m^{\operatorname{BMS}}$ is ergodic~\cite{Sul}, by Birkhoff ergodic theorem, for $m^{\operatorname{BMS}}$-a.e. $y$ in $\T^1(M)$
	\begin{equation}\label{equ:bms}
	\lim_{T\rightarrow+\infty}\frac{1}{T}\int_{0\leq t\leq T} f(\calG_t y)\dd t=\int f\dd m^{\operatorname{BMS}}.
	\end{equation}
	Let $\cal Q$ be the set of points at which (\ref{equ:bms}) hold and it is a set of full $m^{\operatorname{BMS}}$ measure.
	
	We consider $f\circ \Phi$, which can be thought as the lifting of $f$ to $\Lambda^R$. It is $\hat{\nu}^R$-integrable. Since $\hat{T}_t$ is mixing with respect to $\hat{\nu}^R$, by Birkhoff ergodic theorem, for $\hat{\nu}^R$-a.e. $x$,
	\[ \lim_{T\rightarrow+\infty}\frac{1}{T}\int_{0\leq t\leq T} f\circ\Phi(\hat{T}_t x)\dd t=\int f\circ\Phi\dd \hat{\nu}^R. \]
	Using the semiconjugacy $\Phi\circ\hat{T}_t=\calG_t\circ\Phi$, we actually have
	\begin{equation}
	\label{equ:nu}
	\lim_{T\rightarrow+\infty}\frac{1}{T}\int_{0\leq t\leq T} f(\calG_t\Phi x)\dd t=\int f\circ\Phi\dd \hat{\nu}^R.
	\end{equation}
	Let $\cal Q'$ be the set of points at which (\ref{equ:nu}) hold and it is a set of full $\hat{\nu}^R$ measure.
	
	By Lemma~\ref{lem:stable}, there exist points $x\in \Lambda^R$ and $y\in \T^1(M)$ such that $\Phi(x)$ and $y$ are in the same stable leaf. Due to $d(\calG_t y,\calG_t\Phi x)\rightarrow 0$ as $t\rightarrow+\infty$ and the uniform continuity of $f$,
	\[ \lim_{T\rightarrow+\infty}\left(\frac{1}{T}\int_{0\leq t\leq T} f(\calG_t y)\dd t-\frac{1}{T}\int_{0\leq t\leq T} f(\calG_t\Phi x)\dd t\right)=0.\]
	Therefore, we can deduce that
	\[\int f\dd m^{\operatorname{BMS}}=\int f\circ\Phi\dd \hat{\nu}^R. \]
	The above equation holds for every $C^1$ function on $\T^1(M)$. The proof is complete.
\end{proof}

\textbf{Proof of Theorem~\ref{zariski}.} We are ready to prove Theorem~\ref{zariski}. With Theorem~\ref{thm:skew} and Proposition~\ref{lem:loc} available, the work lies in the comparing the norm of the functions on $\Lambda^{R}$ with that on $\T^1(M)$. This is not obvious. Consider two points of the form $(y,a)$ and $(y',a)$ in $\Lambda^R$. By (\ref{equ:function norm}), $d((y,a),(y',a))$ remains the same when $a$ changes. But if these two points are projected to $\T^1(M)$, changing $a$ means flowing these two points by the geodesic flow and $d(\Phi((y,a),\Phi(y',a)))$ will change. Moreover, the function $R$ used to define $\Lambda^R$ is unbounded, making the argument more complex.
\begin{proof}[Proof of Theorem~\ref{zariski}]
Let $u,v$ be any two $C^1$-functions on $\T^1(M)$ with finite $C^1$-norm. Without loss of generality, we may assume that $m^{\operatorname{BMS}}(u)=0$. Set $U=u\circ \Phi$ and $W=w\circ \Phi$. Using the semiconjugacy of $\Phi$, we obtain
\begin{equation*}
\int u\cdot w\circ \mathcal{G}_t \dd m^{\operatorname{BMS}}=\int U \cdot W\circ \hat{T}_t\dd\hat{\nu}^R.
\end{equation*}

We use a cutoff function to relate the norms of $U,W$ with those of $u,w$. Let $\epsilon>0$ be a constant less than $\epsilon_1/2$. Let $\tau_t$ be a decreasing Lipschitz function on $[0,\infty)$ such that $\tau_t=1$ on $[0,\epsilon t]$, $\tau_t =0$ on $(\epsilon t+1,\infty)$ and $|\tau_t|_{\operatorname{Lip}}<2$. Set $U_t=U\cdot \tau_t$ and $W_t=W\cdot \tau_t$. For any two points $(y,a)$ and $(y',a')$ (we may assume $a\geq a'$), we have
\begin{align*}
&|U_t(y,a)-U_t(y',a')|\\ 
\leq& |U_t(y,a)-U_t(y,a')|+|U_t(y,a')-U_t(y',a')|\\
\leq &\tau_t(a')|U(y,a)-U(y,a')|+|u|_{\infty} |\tau_t(a)-\tau_t (a')|+\tau_t(a')|U(y,a')-U(y',a')|\\
\ll & |u|_{C^1}|a-a'|+|u|_{\infty} |a-a'|+e^{\epsilon t} |u|_{C^1} d(y,y'),
\end{align*}
where to obtain the last inequality, we use the fact $d(\Phi(y,a'),\Phi(y',a'))\leq e^{a'}d(y,y')$ and $\tau_t\neq 0$ only on $[0,\epsilon t+1]$. Therefore, we have
\begin{equation}
\label{UT}
\lVert U_t\rVert_{\operatorname{Lip}}\ll e^{\epsilon t} \lVert u\rVert_{C^1}.
\end{equation}
A verbatim of the above argument also implies $\lVert W_t\rVert_{\operatorname{Lip}}\ll e^{\epsilon t} \lVert w\rVert_{C^1}$.

We also need the following $L^1$-estimate. Using the exponential tail condition (Proposition~\ref{prop:coding} (5)), we obtain
\begin{align}\label{ut}
\nonumber&\lvert U_t-U\rvert_{L^1(\hat{\nu}^R)}\leq |u|_{\infty} \int \max\{R(x)-\epsilon t,0\}\dd\nu(x)\\
\ll & |u|_{\infty} \int e^{\epsilon_o (R(x)-\epsilon t)}\dd\nu(x)\ll e^{-\epsilon_o \epsilon t}|u|_{\infty}. 
\end{align}
The similar estimate holds for $W_t-W$.
As $m^{\operatorname{BMS}}(u)=0$, we have 
\begin{equation}
\label{UL}
|\int U_t\dd\hat{\nu}^R|\ll e^{-\epsilon_o \epsilon t}|u|_{\infty}.
\end{equation}

Using Theorem~\ref{thm:skew} together with \eqref{UT}, \eqref{ut} and \eqref{UL}, we obtain
	\begin{align*}
	&|\int U \cdot W\circ \hat{T}_t\dd\hat{\nu}^R|\\
	\leq &|\int U_t\cdot W_t\circ\hat{T}_t\dd\hat{\nu}^R|+|\int (U-U_t)\cdot W_t\circ\hat{T}_t\dd\hat{\nu}^R|+|\int U\cdot (W-W_t)\circ\hat{T}_t\dd\hat{\nu}^R |\\
	\ll &|\int U_t\dd\hat{\nu}^R|\cdot|\int W_t\dd\hat{\nu}^R|+e^{-\epsilon_1 t}\|U_t\|_{\operatorname{Lip}}\|W_t\|_{\operatorname{Lip}}+|w|_\infty|U-U_t|_{L^1(\hat{\nu}^R)}+|u|_\infty|W-W_t|_{L^1(\hat{\nu}^R)}\\
	\ll&(e^{-(\epsilon_1-2\epsilon)t}+e^{-\epsilon_o\epsilon t} )|u|_{C^1}|w|_{C^1}.
	\end{align*}
	Due to $\epsilon<\epsilon_1/2$, the proof is complete.
\end{proof}
	

\section{Parabolic fixed points and Measure estimate}
\label{sec:parmea}

In this section, we provide a detailed description of the $\Gamma$-action on $\partial\H^{d+1}$ and different types of estimate for the PS measure.

\subsection{Explicit computation}
\label{sec:explicit}

Let $H_\infty$ be the horoball based at $\infty$ given by $\R^d\times \{x\in\R:\, x> 1\}$. For a horoball $H$, we define the height of the horoball by
\[ h(H):=\sup_{y\in H}h(y). \]

\begin{lem}\label{lem:gammax}
	Suppose $g\in G$ is not in $Stab_G(\infty)$. Let $p=g \infty$ and $p'=g ^{-1}\infty$. Then we have
	\begin{itemize}
	\item \begin{equation}\label{equ:xpp}
		h(gH_\infty)=h(g^{-1}H_\infty).
	\end{equation}
	\item For any $x\in \mathbb{H}^{d+1}\cup \partial \mathbb{H}^{d+1}$, we have
	\begin{equation*}
	g x=h(gH_\infty) \frac{x-(p',0)}{|x-(p',0)|^2}\begin{pmatrix} A & 0\\ 0 & 1\end{pmatrix}+(p,0)
	\end{equation*}
	\begin{equation}
	\label{equ:gamma-1}
	g ^{-1}x=h(gH_\infty)\frac{x-(p,0)}{|x-(p,0)|^2}\begin{pmatrix}A^{-1} & 0\\ 0 & 1\end{pmatrix}+(p',0),
	\end{equation}
	where $A\in \operatorname{SO}(d)$, and we view $x, (p,0)$ and $(p',0)$ as row vectors in $\mathbb{R}^{d+1}$.
	\end{itemize}
\end{lem}
\begin{proof}
	By Proposition A.3.9 (2) in~\cite{BP}, the action of $g $ on the upper half space is given by 
	\begin{equation*}
	g x=\lambda \iota(x) \begin{pmatrix}
	A & 0\\ 0 &1
	\end{pmatrix}+(b,0), 
	\end{equation*}
	where $A$ is in $\operatorname{SO}(d)$, $\lambda\in\R^+$, $b\in \R^d$ and $\iota(x)$ either equals $x$ or is given by an inversion with respect to a unit sphere centered at $\R^d\times\{0\}$. In fact, this is the Bruhat decomposition of $G$. Since $g $ does not fix $\infty$, $\iota(x)$ is an inversion. We have for any $x\in \mathbb{H}^{d+1}$
	\begin{equation*}
	g x=\lambda \frac{x-(x',0)}{|x-(x',0)|^2} \begin{pmatrix}
	A & 0\\ 0 &1
	\end{pmatrix}+(b,0)	
	\end{equation*}
	with $x'\in \mathbb{R}^d$. Hence $b=g \infty=p$ and $x'=g ^{-1}\infty=p'$.
	
	Note that
	\[ \height(g x)=\lambda \height(x)/|x-(p',0)|^2. \]
	Since $g $ maps the original horoball $H_\infty=\R^d\times \{ x> 1\}$ to the horoball $gH_\infty$ based at $p$, it follows from the above formula that
	\[h(gH_\infty)=\sup_{x\in \R^d\times \{1\} }\height (g x)=\lambda. \]
	
	To obtain the formula for $g ^{-1}x$, note that 
	\begin{equation*}
	x=g (g ^{-1}x)=h(gH_\infty) \frac{g ^{-1} x-(p',0)}{|g ^{-1} x-(p',0)|^2}\begin{pmatrix} A & 0\\ 0 & 1\end{pmatrix}+(p,0).
	\end{equation*}
	This yields 
	\begin{equation*}
	\frac{1}{h(gH_\infty)}\left(x-(p,0)\right) \begin{pmatrix} A^{-1} & 0\\ 0 & 1\end{pmatrix}+(p',0)=\frac{g ^{-1} x-(p',0)}{|g ^{-1} x-(p',0)|^2}+(p',0).
	\end{equation*}
	Applying both sides the inversion with respect to the unit sphere centered at $(p',0)$ and using the fact that $A\in \operatorname{SO}(d)$, we obtain the formula for $g ^{-1}x$. Meanwhile, we apply to $g ^{-1} x$ the argument used to get the formula for $g x$. Comparing the formulas given by these two methods, we have that $h(gH_\infty)=h(g^{-1}H_\infty)$.
	\qedhere
\end{proof}

\begin{lem}\label{lem:rg1}
	For a horoball $H$ based at $p\neq \infty$ and $g\in G$ not in $Stab_G(\infty)$, we have 
	\begin{align}
	\label{upper bound height}
	&h(g H)\geq \frac{h(g ^{-1}H_\infty) h(H)}{d(g ^{-1}\infty,p)^2+h(H)^2},\\
	\label{lower bound height}
	&h(g H)\leq \frac{h(g ^{-1}H_\infty) h(H)}{(d(g ^{-1}\infty,p)-h(H)/2)^2}.
	\end{align}
\end{lem}
\begin{proof}
	Using \eqref{equ:gamma-1}, we obtain
	\[h(g H)\geq \height(g (p,h(H)))=\frac{h(g ^{-1}H_\infty)h(H)}{d(p,g ^{-1}\infty)^2+h(H)^2}. \]
	
	For \eqref{lower bound height}, we have
	\begin{equation*}
	h(g H)=\sup_{y\in \partial H}h(g y)=\sup_{y\in \partial H} \frac{h(g ^{-1}H_\infty) h(y)}{|y-(g^{-1}\infty,0)|^2}.
	\end{equation*}
	Note that for every $y\in \partial H$, we have $|y-(g^{-1}\infty,0)|^2\geq d_{E}(y',g^{-1}\infty)^2\geq (d(g^{-1}\infty,p)-h(H)/2)^2$, where $y'$ is the projection of $y$ to $\partial \mathbb{H}^{d+1}$ and this yields \eqref{lower bound height}.
\end{proof}

Let $\calP$ be the set of parabolic fixed points in $\partial\H^{d+1}$. Two parabolic fixed points are called equivalent if they are in the same $\Gamma$-orbit. Let $\mathbf P$ be a complete set of inequivalent parabolic fixed points. As $\Gamma$ is geometrically finite, the set $\mathbf P$ is finite. Suppose that $\mathbf{P}=\{p_1,\cdots,p_j \}$ for some $j\in\N$, a complete set of inequivalent parabolic fixed points and set $p_1=\infty$.

Fix a collection of pairwise disjoint horoballs based at parabolic fixed points as follows. 
Without loss of generality, we may assume that $H_{\infty}$ is a horocusp region for $\infty$. For the parabolic fixed point $\gamma \infty$, we attach the horoball $H_{\gamma\infty}:=\gamma H_{\infty}$ to it. For other parabolic fixed point $p_i$ in $\mathbf{P}$, we fix a horoball $H_{p_i}$ based at $p_i$ which is a horocusp region for $p_i$. For other parabolic fixed points $\gamma p_i$ in the $\Gamma$ orbit of $p_i$, we attach the horoball $H_{\gamma p_i}:=\gamma H_{p_i}$ to it. By Definition \ref{def:geofinite}, we can choose the horoballs in such a way that they are pairwise disjoint.
For $p\in \mathcal{P}$, we define the \textbf{height} function $h(p)$ of $p$ as the height of $H_p$ based at $p$, that is
\[h(p):=h(H_p). \]

For $x\in\partial\H^{d+1}$ and $r>0$, set $B(x,r)$ to be the ball centered at $x$ of radius $r$ in $\partial\H^{d+1}$ with respect to the Euclidean metric.

\begin{lem}[Explicit computation]\label{lem:explicit} 
	Suppose $\gamma$ is not in $Stab_\infty(\Gamma)$. Then 
	for any $r>0$ and $x\in\partial\H^{d+1}$, 
	\begin{itemize}
		\item $\gamma^{-1}B(p,r)=B(p',h(p)/r)^c$,
		\item $|(\gamma^{-1})'(x)|=h(p)/d(x,p)^2,\,\,\, |\gamma'(x)|=h(p)/d(x,p')^2$,
	\end{itemize}
	where $p=\gamma\infty$ and $p'=\gamma^{-1}\infty$.
\end{lem}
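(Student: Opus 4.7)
The plan is to deduce Lemma \ref{lem:explicit} as a direct unpacking of the explicit formula for $\gamma^{-1}$ from Lemma \ref{lem:gammax}; no additional input is needed beyond that lemma. First I would restrict the formula
\[
\gamma^{-1}x = h_p\,\frac{x-(p,0)}{|x-(p,0)|^2}\begin{pmatrix} A^{-1} & 0\\ 0 & 1\end{pmatrix}+(p',0)
\]
to the boundary $\partial\H^{d+1} = \R^d\cup\{\infty\}$, where the action on $\R^d\setminus\{p\}$ becomes the composition of five elementary maps: translation by $-p$, the inversion $y\mapsto y/|y|^2$ through the origin, dilation by $h_p$, the orthogonal transformation $A^{-1}$, and translation by $p'$. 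The exceptional boundary points are handled separately by $\gamma^{-1}(p)=\infty$ and $\gamma^{-1}(\infty)=p'$.

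For the first assertion I would track $B(p,r)$ through this factorization. Translation by $-p$ sends $B(p,r)\setminus\{p\}$ to $B(0,r)\setminus\{0\}$; the inversion carries $B(0,r)\setminus\{0\}$ bijectively onto $B(0,1/r)^c$ in $\R^d$; scaling by $h_p$ turns this into $B(0,h_p/r)^c$; the orthogonal map $A^{-1}$ preserves this set because it is $\operatorname{O}(d)$-invariant around $0$; and translation by $p'$ produces $B(p',h_p/r)^c$ in $\R^d$. Adjoining the image of the center, $\gamma^{-1}(p)=\infty$, recovers $\gamma^{-1}B(p,r) = B(p',h_p/r)^c$ as a subset of $\partial\H^{d+1}$, where $B(\cdot,\cdot)^c$ is understood inside the one-point compactification.

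For the derivative identities I would differentiate the same formula. Translations and orthogonal maps are isometries and the dilation contributes exactly the factor $h_p$, so the only nontrivial piece is the Jacobian of $y\mapsto y/|y|^2$, a conformal map with linear distortion $1/|y|^2$. Chaining the factors gives $|(\gamma^{-1})'(x)| = h_p/|x-p|^2 = h_p/d(x,p)^2$. The companion identity $|\gamma'(x)|=h_p/d(x,p')^2$ falls out of the analogous explicit formula for $\gamma x$ in Lemma \ref{lem:gammax} by the same argument with the roles of $p$ and $p'$ swapped and $A^{-1}$ replaced by $A$; note that the same height constant $h_p$ appears in both formulas, as one can verify by computing $\gamma\circ\gamma^{-1}=\operatorname{Id}$ directly from the two expressions.

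There is no substantive obstacle here; the lemma is essentially a matter of reading off the ingredients of $\gamma^{-1}$. The only care required is the bookkeeping for the two exceptional points $p$ and $\infty$ and the convention that a complement of a ball is taken inside $\partial\H^{d+1}$ so that it contains $\infty$.
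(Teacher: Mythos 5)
Your proposal is correct and follows the same route as the paper: the paper's proof simply invokes the explicit formula \eqref{equ:gamma-1} from Lemma~\ref{lem:gammax} and the derivative of an inversion map, which is exactly the factorization into translation, inversion, dilation, and orthogonal map that you unpack in detail. The closing remark about verifying $\gamma\circ\gamma^{-1}=\operatorname{Id}$ is unnecessary since Lemma~\ref{lem:gammax} already produces both formulas with the same $h_p$, but it is harmless.
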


\begin{proof}
	The first equation follows from \eqref{equ:gamma-1} easily.
	In view of Lemma~\ref{lem:gammax}, the computation of the derivative of inversion maps gives the expression of $|\gamma'(x)|$ and $|(\gamma^{-1})'(x)|$.
\end{proof}	

\subsection{Multi-cusps}\label{sec:multi}
Recall that $\mathbf{P}=\{p_1,\cdots,p_j \}$. For each $p_i$, we consider a coordinate change transformation: 
let $g_i$ be an element in $G$ such that $g_ip_i=\infty$. This $g_i$ is not unique and we can choose a $g_i$ such that $g_iH_{p_i}=H_\infty=\R^d\times\{x> 1 \}$. We will frequently make use of the following commutative diagram:
\begin{equation}
\label{commutative diagram}
 \begin{tikzcd}
\H^{d+1} \arrow{r}{g_i} \arrow[swap]{d}{\Gamma} & \H^{d+1} \arrow{d}{g_i \Gamma g_{i}^{-1}} \\%
\H^{d+1} \arrow{r}{g_i}& \H^{d+1}.
\end{tikzcd}
\end{equation}
On the right hand side of the diagram, the acting group is $g_i\Gamma g_i^{-1}$ and $\infty$ is a parabolic fixed point of the group.

Once $g_i$'s are fixed, we consider the action of $g_i\Gamma g_i^{-1}$ and its parabolic fixed points $g_i\calP$. We think they are in $i$-th hyperbolic space.
Set the horoball $H_{g_ip}(i)=g_iH_p$ for $p\in\calP$
and define the height $$h(g_i p,i):=h(H_{g_ip}(i)).$$
If there is no confusion, we always abbreviate $H_{g_ip}(i)$ and $h(g_i p,i)$ to $H_{g_ip}$ and $h(g_i p)$.

The results in Section~\ref{sec:cusps} hold for each $p_i$. We have the group $(g_i\Gamma g_i^{-1})_\infty$, which is a maximal normal abelian subgroup in $\operatorname{Stab}_{g_i\Gamma g_i^{-1}}(\infty)$.
Write
\begin{equation}
\label{pistabilizer}
 \Gamma_{p_i}=g_i^{-1}(g_i\Gamma g_i^{-1})_\infty g_i.
 \end{equation}
Let $\Delta_{p_i}'$ be a fundamental region for the parabolic fixed point $\infty$ under the $g_i\Gamma g_i^{-1}$-action. We can choose $\Delta_{p_i}'$ in such a way that for $\Delta_{p_i}=g_i^{-1}\Delta_{p_i}'$
\begin{equation}\label{glpi}
\{p_i,\ 1\leq i\leq j\}\cap (\cup_{1\leq k\leq j}\overline{\Delta}_{p_k})=\emptyset.
\end{equation}
This choice is possible because, for each $p_i$, we can find a $\Delta_{p_i}'$ such that $\Delta_{p_i}$ sufficiently close to $p_i$ in $\partial \mathbb{H}^{d+1}$ under the spherical metric. 
Set $$\Delta=\cup_{1\leq k\leq j}\Delta_{p_k}.$$
By \eqref{glpi}, we have $\overline{\Delta}\cap\{\infty \}=\emptyset$. So the set $\overline{\Delta}$ is compact.

Consider any parabolic fixed point $p=\gamma p_i$ with $\gamma\in\Gamma$. We know that such $\gamma$ is not unique (any element in $\gamma \operatorname{Stab}_{\Gamma}(p_i)$ also works) and we fix a choice of $\gamma$ such that $\gamma^{-1}p_i\in\overline{\Delta}_{p_i}$. We call $\gamma$ \textbf{the representation} of $p$. Set $$x_p:=\gamma^{-1}p_i.$$

\begin{lem}\label{lem:height}
	There exists $C>1$ such that
	for $1< i\leq j$ and for any parabolic fixed point $p$ in $\Delta$, we have
	\begin{equation*}
	1/C\leq h(g_i p)/h(p)\leq C. 
	\end{equation*}
\end{lem}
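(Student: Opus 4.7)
The plan is to handle the case $i=1$ trivially and, for $i\neq 1$, use the inversion formula \eqref{equ:coordinate} to reduce the problem to two uniform estimates: (a) $|p-p_i|$ is bounded above and below by positive constants, and (b) $h_p$ is bounded above, both uniformly over parabolic fixed points $p\in\Delta$. When $i=1$, one may choose $g_1$ to be a Euclidean isometry of $\mathbb{R}^d$, which extends to an isometry of $\mathbb{H}^{d+1}$ preserving the height function, giving $h_{g_1p}=h_p$ immediately. For $i\neq 1$, the point $p_i\in\mathbb{R}^d$ is finite, and \eqref{equ:coordinate} gives
\[ h(g_iy)=\frac{h_{p_i}\,h(y)}{|y-(p_i,0)|^2},\qquad y\in\mathbb{H}^{d+1}, \]
so $h_{g_ip}=h_{p_i}\sup_{y\in H_p}h(y)/|y-(p_i,0)|^2$.

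For the lower bound on $h_{g_ip}/h_p$, I would substitute the top point $y=(p,h_p)\in\overline{H_p}$ to get $h_{g_ip}\geq h_{p_i}h_p/(|p-p_i|^2+h_p^2)$. For the upper bound, I would use that $\overline{H_p}$ is the Euclidean ball of diameter $h_p$ tangent to $\mathbb{R}^d$ at $p$, so $h(y)\leq h_p$ and the Euclidean distance from $(p_i,0)$ to $\overline{H_p}$ is at least $\sqrt{|p-p_i|^2+h_p^2/4}-h_p/2=|p-p_i|^2/(\sqrt{|p-p_i|^2+h_p^2/4}+h_p/2)$; together these yield $h_{g_ip}/h_p\leq h_{p_i}(\sqrt{|p-p_i|^2+h_p^2/4}+h_p/2)^2/|p-p_i|^4$. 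Both expressions are visibly $\asymp 1$ once (a) and (b) are in hand.

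The main obstacle is establishing (a), especially the uniform lower bound on $|p-p_i|$, which rests on the specific choice of the fundamental regions $\Delta_{p_k}$ ensured by \eqref{glpi}. The upper bound in (a) reduces to boundedness of $\Delta$ in $\mathbb{R}^d$: by Section~\ref{sec:cusps} the fundamental region $g_k\Delta_{p_k}$ can be chosen of the form $B_Y(C)\times\Delta'_\infty$, hence bounded; by \eqref{glpi} applied to $p_1=\infty$, the inversion centre $g_k\infty=g_kp_1$ of the Möbius map $g_k^{-1}$ (see \eqref{equ:coordinate2}) lies outside $\overline{g_k\Delta_{p_k}}$, so $\Delta_{p_k}=g_k^{-1}(g_k\Delta_{p_k})$ is bounded in $\mathbb{R}^d$, and hence so is $\Delta$. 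The lower bound uses \eqref{glpi} directly: $p_i\notin\overline{\Delta}$ and $\overline{\Delta}$ is compact, so $\inf_{p\in\Delta}|p-p_i|>0$. Finally (b) is a standard disjointness computation: since $\overline{H_p}$ and $\overline{H_{p_i}}$ are disjoint Euclidean balls of diameters $h_p$ and $h_{p_i}$, the distance between their centres gives $|p-p_i|^2\geq h_ph_{p_i}$, and combining with (a) yields $h_p\leq R^2/h_{p_i}$ for some constant $R$.
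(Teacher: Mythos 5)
Your proof is correct, but it handles the upper bound on $h_{g_ip}/h_p$ differently from the paper. The paper obtains both bounds by running the lower bound \eqref{upper bound height} (Lemma~\ref{lem:rg1}) twice, once in each direction: first $h_{g_ip}\geq h_{p_i}h_p/(d(p_i,p)^2+h_p^2)$ by applying $g_i$ to the top point of $H_p$, and then $h_p\geq h_{g_ip_1}h_{g_ip}/(d(g_ip_1,g_ip)^2+h_{g_ip}^2)$ by applying $g_i^{-1}$ to the top point of $H_{g_ip}$. This is economical: the only geometric input is \eqref{glpi} (once to bound $d(p_i,p)$, once to bound $d(g_ip_1,g_ip)$, both via boundedness of $\Delta$ and $g_i\Delta$ away from the respective inversion centres). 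Your argument keeps the same lower bound but derives the upper bound by a direct estimate on $\inf_{y\in\overline{H_p}}|y-(p_i,0)|$ using the Euclidean ball picture of horoballs, which then requires the additional step (your item~(b)) of showing $h_p$ is uniformly bounded for $p\in\Delta$ via the disjointness/separation of horoballs. That extra step is a true statement, used implicitly elsewhere in the paper, and your derivation of it from Lemma~\ref{lem:pdistance} combined with the bound on $|p-p_i|$ is fine; the paper's route just avoids having to state it. Both arguments ultimately reduce to \eqref{glpi} and the inversion formula \eqref{equ:coordinate}, and your isolation of the three supporting estimates (a),(b) is a perfectly valid, if slightly longer, organisation of the same facts.
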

\begin{proof}
	Consider the action of $g_i$ on $\partial \mathbb{H}^{d+1}$. Notice that $p_i=g_i^{-1}\infty$ and $h(p_i)=h(H_{p_i})=h(g_i^{-1}H_\infty)$. 
	Applying \eqref{upper bound height} to the horoball $H_p$ based at $p$ and the element $g_i$, we obtain
	\begin{equation*} 
	h(g_i p)=h(g_iH_{p})\geq \frac{h(p_i) h(p)}{d(p_i,p)^2+h(p)^2}. 
	\end{equation*}
	It follows from $\overline{\Delta}$ compact that $d(p_i,p)$ is bounded for $p\in\Delta$. Then $h(g_i p)\geq h(p)/C$.
	
	For the other inequality, notice that $g_ip_1=g_i\infty$ and $h(g_i p_1)=h(g_i H_\infty)$.
	Applying \eqref{upper bound height} with with the horoball $H_{g_ip}$ based at $g_ip$ and $g_i^{-1}$ we have
	\[h(p)=h(g_i^{-1}H_{g_ip})\geq \frac{h(g_i p_1)h(g_i p) }{d(g_ip_1,g_ip)^2+h(g_i p)^2 }. \]
	It follows from \eqref{glpi} that $g_i\overline{\Delta}\cap\{\infty \}=g_i\overline{\Delta}\cap \{g_ip_i \}=\emptyset$. So $g_i\overline{\Delta}$ is compact and
	 $d(g_ip_1,g_ip)$ is bounded for $p\in\Delta$. Then $h(p)\geq h(g_i p)/C$.
\end{proof}
\begin{lem}\label{lem:bilip}
	For $1< i\leq j$, the map $g_i:\Delta\to g_i\Delta$ is bi-Lipschitz.
\end{lem}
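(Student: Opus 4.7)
The plan is to use the explicit Bruhat formula \eqref{equ:coordinate}, which shows that on the boundary $\partial\H^{d+1}\setminus\{p_i\}=\R^d\setminus\{p_i\}$, the map $g_i$ is an inversion centered at $p_i$ of scale $h_{p_i}$, followed by a rotation and a translation. Differentiating gives $|g_i'(y)|=h_{p_i}/|y-p_i|^2$, so to conclude bi-Lipschitzness it suffices to produce uniform two-sided bounds $c\leq |y-p_i|\leq C$ for $y\in\Delta$, together with the analogue for $g_i^{-1}$; for the latter, \eqref{equ:coordinate2} shows that the singular point is now $g_i\infty$.

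I would first verify that each $\Delta_{p_k}$ is bounded in $\R^d$. For $k=1$ this is immediate from $\Delta_{p_1}=\Delta_\infty=B_Y(C)\times\Delta_\infty'$. For $k\geq 2$, the set $g_k\Delta_{p_k}$ is bounded of the same Biberbach form, and \eqref{glpi} (with $p_1=\infty$) rules out $\infty\in\overline{\Delta}_{p_k}$; equivalently, $g_k\overline{\Delta}_{p_k}$ stays a positive Euclidean distance from the singular point $g_k\infty$ of $g_k^{-1}$, so $\Delta_{p_k}=g_k^{-1}(g_k\Delta_{p_k})$ is bounded. Given this, \eqref{glpi} combined with the compactness of each $\overline{\Delta}_{p_k}$ yields the lower bound $|y-p_i|\geq c>0$ on $\Delta$, and the boundedness of $\Delta$ yields the upper bound. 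The analogous statements for $g_i\Delta$ versus $g_i\infty$ follow from the identity $|g_iy-g_i\infty|=h_{p_i}/|y-p_i|$ read off from the formula.

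Finally, to upgrade the pointwise derivative bound to an honest bi-Lipschitz estimate on the possibly disconnected set $\Delta$, I would work directly with the inversion $u\mapsto u/|u|^2$ on the annulus $\{r\leq |u|\leq R\}$. Either the straight segment from $u_1$ to $u_2$ stays inside the annulus, in which case integrating the bounded Jacobian along it gives the Lipschitz estimate, or else the segment enters $B(0,r/2)$, in which case $|u_1-u_2|\geq r/2$ and the crude bound $1/|u|\leq 1/r$ already provides the required ratio estimate. The same argument applied to $g_i^{-1}$ on $g_i\Delta$ completes the proof. The main (but minor) obstacle I foresee is the bookkeeping needed to extract the compactness statements from \eqref{glpi} for multiple parabolic fixed points; but this is essentially the same geometric input underlying the preceding Lemma~\ref{lem:height}, so no deeper difficulty is anticipated.
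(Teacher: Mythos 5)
Your proposal is correct and uses the same ingredients as the paper --- the explicit formulas \eqref{equ:coordinate}--\eqref{equ:coordinate2} and the separation \eqref{glpi} --- but routes them differently. The paper (implicitly) invokes the exact inversion identity $\bigl|a/|a|^2 - b/|b|^2\bigr| = |a-b|/(|a|\,|b|)$, which together with the lower bounds $|x-p_i|\geq c$ on $\Delta$ and $|x-g_ip_1|\geq c$ on $g_i\Delta$ coming from \eqref{glpi} gives the Lipschitz estimate for $g_i$ and for $g_i^{-1}$ in one line, with no reference to connectivity of the domain. You instead differentiate and integrate along segments, which is what forces the two-case analysis for the possibly disconnected set $\Delta$. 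That works, but the dichotomy as stated is not exhaustive: the complement of ``the segment stays inside the annulus $\{r\leq|u|\leq R\}$'' is not ``the segment enters $B(0,r/2)$'', since the segment may exit through the outer sphere $\{|u|=R\}$. This is harmless --- the derivative $1/|u|^2$ is bounded on all of $\{|u|\geq r/2\}$, so the correct split is simply whether the segment meets $B(0,r/2)$ or not --- but the cleaner move is to replace the integration step by the identity above, which removes the disconnectedness concern entirely. Your preliminary observation that \eqref{glpi} forces each $\Delta_{p_k}$ to be bounded (since $\infty=p_1\notin\overline{\Delta}_{p_k}$) is a correct and useful supplement that the paper leaves implicit.
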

\begin{proof}
	By \eqref{equ:gamma-1}, we have
	$$d(g_ix,g_iy)=h(p_i)d\left(\frac{x-p_i}{|x-p_i|^2},\frac{y-p_i}{|y-p_i|^2}\right)\leq C|x-y|=Cd(x,y),$$
	where the inequality due to \eqref{glpi}.
	
	For the other direction, we use \eqref{equ:gamma-1} to obtain
	\[ d(g_i^{-1}x,g_i^{-1}y)=h(p_i)d\left(\frac{x-g_ip_1}{|x-g_ip_1|^2},\frac{y-g_ip_1}{|y-g_ip_1|^2}\right)\leq C|x-y|=Cd(x,y), \]
	where the inequality is due to that $d(x,g_ip_1)$ is bounded below for $x\in g_i\Delta$ by \eqref{glpi}.
\end{proof}

\textbf{Patterson-Sullivan measure under conjugation.} In the presence of multi-cusps, 
 we need to consider the Patterson-Sullivan measure for the conjugation of $\Gamma$. Recall that $\{\mu_{y}\}_{y\in \H^{d+1}}$ is the $\Gamma$-invariant conformal density of dimension $\delta$ and we denoted $\mu_o$ by $\mu$ for short. For each $g_i$ with $1<i\leq j$, set $\Gamma_i=g_i\Gamma g_i^{-1}$. The limit set $\Lambda_{\Gamma_i}$ is $g_i\Lambda_{\Gamma}$ and the critical exponent of $\Gamma_i$ equals $\delta$. For every $y\in \H^{d+1}$, define the following measure
\begin{equation*}
\tilde{\mu}_{y}:=(g_i)_{*}\mu_{g_i^{-1}y},
\end{equation*}
where $(g_i)_{*}\mu_{g_i^{-1}y}(E)=\mu_{g_i^{-1}y}(g_i^{-1}E)$ for any Borel subset $E$ of $\partial \H^{d+1}$. It is easy to check that $\tilde{\mu}_{y}$ is supported on $\Lambda_{\Gamma_i}$ and for any $y,z\in \H^{d+1}$, $x\in \partial \H^{d+1}$ and $\gamma \in \Gamma_i$,
\begin{equation*}
\frac{\dd\tilde{\mu}_{y}}{\dd\tilde{\mu}_{z}}(x)=e^{-\delta\beta_{x}(y,z)}\,\,\, \text{and}\,\,\,(\gamma)_{*}\tilde{\mu}_{y}=\tilde{\mu}_{\gamma y}.
\end{equation*}
It follows from the uniqueness of $\Gamma_{i}$-invariant conformal density that this construction gives exactly the $\Gamma_i$-invariant conformal density on $\Lambda_{\Gamma_i}$ of dimension $\delta$. In later sections, we will denote $\tilde{\mu}_o$ by $\mu_{\Gamma_i}$ and the above analysis yields that for any Borel subset $E$ of $\partial\H^{d+1}$

\begin{equation}\label{conjugation}
	e^{-\delta d(o,g_io)} \mu(g_i^{-1}E)\leq \mu_{\Gamma_i}(E)\leq e^{\delta d(o,g_io)} \mu(g_i^{-1}E).
\end{equation}


\subsection{Doubling property of PS measure}
\label{sec:double}
We start with two results: Proposition~\ref{double} and Lemma~\ref{lem:bpr}, deduced from ~\cite[Theorem 2]{StrVel}. They used spherical metric, but locally it is equivalent to euclidean metric. 
\begin{prop}\label{double}
\begin{itemize}
	\item (Doubling property) For every $C>1$, there exists $\epsilon<1$ such that for every $x\in\Lambda_\Gamma\cap\Delta$ and $1/C\geq r>0$,
	\[\mu(B(x,r))>\epsilon \mu(B(x,Cr)). \]
	
	\item (Growth of measure) There exists $C_{\three}>1$, such that for every $x\in\Lambda_\Gamma\cap\Delta$ and $r<1/C_{\three}$, 
	\[2\mu(B(x,r))<\mu(B(x,C_{\three}r)). \]
	\end{itemize}
\end{prop}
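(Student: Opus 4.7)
The plan is to deduce both assertions from the Stratmann-Velani global measure formula \cite[Theorem 2]{StrVel}. For $x \in \Lambda_\Gamma$ and $r$ small, it provides an expression $\mu(B(x,r)) \asymp F(x,r)$ in which $F$ depends explicitly on $r$, on the parabolic fixed point $p$ (if any) whose horoball contains the shadow point $\xi_{x,r}$ on the geodesic from $o$ to $x$ at hyperbolic distance $-\log r$, on the rank $k(p)$ of $p$, and on the depth of $\xi_{x,r}$ into that horoball; the implied constants depend only on $\Gamma$. Since $\Gamma$ is geometrically finite the set of ranks $\{k(p_i) : p_i \in \mathbf{P}\}$ is finite, and Beardon's theorem gives $2\delta > k(p_i)$ for every $i$. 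The hypothesis $x \in \overline\Delta$, together with \eqref{glpi}, guarantees that $x$ stays bounded and uniformly away from $\mathbf{P}$, which makes all the estimates below uniform in $x$.

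For the doubling property I would compare $F(x,r)$ with $F(x,Cr)$: the shadow points $\xi_{x,r}$ and $\xi_{x,Cr}$ lie on a common geodesic and are separated by hyperbolic distance $\log C$. Along this segment the shadow point can enter and leave horoballs only a bounded number of times (bounded in terms of $\log C$ and the separation of the chosen horoballs), and within each regime $F$ changes by a multiplicative factor that is controlled in terms of $C$, $\delta$, and the finite set $\{k(p_i)\}$. Multiplying these bounds yields $F(x,Cr) \leq C'(C)\,F(x,r)$, whence $\mu(B(x,Cr)) \leq C''(C)\,\mu(B(x,r))$ with $C''$ independent of $x$ and $r$; this rearranges to the doubling statement with $\epsilon = 1/C''$.

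For the growth of measure the same comparison yields $F(x,C_3 r)/F(x,r) \geq c(C_3)$ with $c(C_3) \to \infty$ as $C_3 \to \infty$: the leading factor $(C_3 r)^\delta/r^\delta = C_3^\delta$ grows unboundedly while the corrections produced by crossing horoball boundaries are controlled uniformly in terms of $C_3$ and the (finite) set of ranks. Choosing $C_3$ large enough that $c(C_3)>2$ gives the claim. The main technical obstacle is the careful accounting of the regime transitions in the global measure formula as $r$ varies: across each entry into or exit from a horoball the two forms of $F$ agree up to a universal constant, and the number of such transitions over a change of $\log C$ (resp.\ $\log C_3$) in $-\log r$ is uniformly bounded in terms of $\Gamma$. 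This bookkeeping is where the content of the proof lies; once it is handled, both statements follow at once.
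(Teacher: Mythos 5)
Your overall plan --- deducing both assertions from the Stratmann--Velani global measure formula and invoking $2\delta > k(p)$ to control the rank-dependent correction --- is exactly the route the paper takes: the paper cites \cite[Theorem~2]{StrVel} for Proposition~\ref{double} and supplies no intermediate argument, so you are filling in details that are left implicit, and you correctly identify the two essential inputs.

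There is, however, a flaw in the bookkeeping. You claim that ``the number of such transitions over a change of $\log C$ $\ldots$ is uniformly bounded in terms of $\Gamma$.'' This is false: disjoint horoballs in the packing can be arbitrarily close to one another, so a geodesic segment of length $\log C$ may graze arbitrarily many of them, and a multiplicative loss per transition would compound without control. Fortunately the transition count is also irrelevant. The exponent in the global measure formula, $f(t) := \rho(\xi_{x,e^{-t}})(k(\xi_{x,e^{-t}})-\delta)$, depends only on the state of the single point $\xi_{x,e^{-t}}$, not on the intervening path, so what you should bound directly is $|f(-\log (Cr))-f(-\log r)|$. The depth $\rho$ is $1$-Lipschitz along the geodesic and vanishes at every horoball boundary; if the two endpoints lie in the same horoball (or in none) the bound $|k-\delta|\log C$ is immediate, and if they lie in distinct horoballs $H_1,H_2$ at depths $\rho_1,\rho_2$ then the segment must exit $H_1$ and later enter $H_2$, forcing $\rho_1+\rho_2\le \log C$. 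Either way $|f(-\log (Cr))-f(-\log r)|\le \big(\max_i|k(p_i)-\delta|\big)\log C$, so $\mu(B(x,Cr))/\mu(B(x,r))$ is comparable, uniformly in $x\in\Lambda_\Gamma\cap\Delta$ and $r$, to $C^{\delta}e^{\pm(\max_i|k(p_i)-\delta|)\log C}$. Since $2\delta>k(p_i)\ge 1$ yields $\max_i|k(p_i)-\delta|<\delta$, the lower bound $C^{\delta-\max_i|k(p_i)-\delta|}$ tends to infinity as $C\to\infty$, which gives both the doubling property and the growth of measure. Replacing the transition count with this two-endpoint comparison makes your proof correct.
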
 

\begin{lem}\label{lem:bpr}
	Let $p$ be a parabolic fixed point in $\Delta$ of rank $k$. For $0<r\leq h(p)$,
	\begin{equation}\label{equ:bpr}
	\mu(B(p,r))\approx r^{2\delta-k}h(p)^{k-\delta}.
	\end{equation}
\end{lem}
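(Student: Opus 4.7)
The plan is to establish the formula first for the finitely many representatives $p=p_i\in\mathbf{P}$ and then bootstrap to a general parabolic fixed point $p=\gamma_0 p_i\in\Delta$ via the quasi-invariance of $\mu$.

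For the representative case $p=p_i$, I would conjugate by $g_i$ sending $p_i$ to $\infty$. The transformation rule \eqref{conjugation} gives $\mu(B(p_i,r))\approx \mu_{\Gamma_i}(g_iB(p_i,r))$, and by Lemma~\ref{lem:explicit} applied to $g_i^{-1}$, the set $g_iB(p_i,r)$ is the complement in $\partial\H^{d+1}$ of the Euclidean ball $B(g_i\infty,h_{p_i}/r)$, i.e.\ a neighbourhood of $\infty$ at Euclidean scale $R:=h_{p_i}/r$. To evaluate its $\mu_{\Gamma_i}$-mass I would decompose $\Lambda_{\Gamma_i}\setminus\{\infty\}$ restricted to this neighbourhood using the fundamental-domain structure from Section~\ref{sec:cusps}: it is the disjoint union of translates $\gamma\overline{g_i\Delta_{p_i}}$ with $\gamma\in(g_i\Gamma g_i^{-1})_\infty$ whose Euclidean translation component $b_\gamma$ satisfies $|b_\gamma|\gtrsim R$. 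Since each such $\gamma$ is a Euclidean isometry, \eqref{equ:change} gives $|\gamma'|_{\S^d}\approx|b_\gamma|^{-2}$ uniformly on the bounded region $g_i\Delta_{p_i}$, and quasi-invariance then yields $\mu_{\Gamma_i}(\gamma\overline{g_i\Delta_{p_i}})\approx|b_\gamma|^{-2\delta}$. Summing over the rank-$k$ translation lattice,
\begin{equation*}
\mu_{\Gamma_i}(g_iB(p_i,r))\approx\sum_{|b_\gamma|\gtrsim R}|b_\gamma|^{-2\delta}\approx R^{k-2\delta},
\end{equation*}
where convergence uses Beardon's inequality $2\delta>k$. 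Undoing $R=h_{p_i}/r$ and absorbing the finitely many constants $h_{p_i}$ gives $\mu(B(p_i,r))\approx r^{2\delta-k}$ for $r\leq h_{p_i}$.

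For general $p=\gamma_0 p_i\in\Delta$ I would apply the quasi-invariance of $\mu$:
\begin{equation*}
\mu(B(p,r))=\int_{\gamma_0^{-1}B(p,r)}|\gamma_0'(x)|_{\S^d}^\delta\,\dd\mu(x).
\end{equation*}
For $r\leq h_p$ the Möbius map $\gamma_0^{-1}$ has bounded conformal distortion on $B(p,r)$, so $\gamma_0^{-1}B(p,r)$ is $\mu$-comparable (using doubling, Proposition~\ref{double}) to a Euclidean ball $B(p_i,r')$ with $r'\approx r/|\gamma_0'(p_i)|$, and $|\gamma_0'(x)|_{\S^d}\approx|\gamma_0'(p_i)|$ uniformly for $x$ in this ball (since $p_i,p$ lie in a bounded region of $\R^d$, spherical and Euclidean derivatives are comparable there via \eqref{equ:change}). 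Feeding the representative formula into this integral gives
\begin{equation*}
\mu(B(p,r))\approx|\gamma_0'(p_i)|^{\delta}\bigl(r/|\gamma_0'(p_i)|\bigr)^{2\delta-k}=r^{2\delta-k}|\gamma_0'(p_i)|^{k-\delta}.
\end{equation*}
The identification $|\gamma_0'(p_i)|\approx h_p/h_{p_i}$ follows from the fact that $\gamma_0$ carries the Euclidean horoball $H_{p_i}$ of diameter $\approx h_{p_i}$ conformally onto $H_p$ of diameter $h_p$; since $h_{p_i}$ is a bounded constant this delivers the claimed $\mu(B(p,r))\approx r^{2\delta-k}h_p^{k-\delta}$.

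The main obstacle I anticipate is the uniformity of the orbit-sum asymptotic $\sum_{|b_\gamma|\gtrsim R}|b_\gamma|^{-2\delta}\approx R^{k-2\delta}$: it is transparent for $R$ much larger than the diameter of the fundamental domain $g_i\Delta_{p_i}$, but for $R$ of order one (i.e.\ $r$ close to $h_p$) one needs a separate argument, for instance combining the doubling property of $\mu$ with a Sullivan-style shadow estimate at the base of the horoball. A secondary subtlety is the case $p_i=\infty$, for which the Euclidean horoball $H_{p_i}$ is a half-space rather than a Euclidean ball; there the identification $|\gamma_0'(p_i)|\approx h_p/h_{p_i}$ must be replaced by the direct formula $|\gamma_0'(x)|=h_p/d(x,p')^2$ from Lemma~\ref{lem:explicit} applied to $\gamma_0$ itself, which again delivers the correct power of $h_p$ after similar bookkeeping.
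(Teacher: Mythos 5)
Your proposal takes a genuinely different route from the paper's. The paper does not prove Lemma~\ref{lem:bpr} at all: it simply cites Stratmann--Velani \cite[Theorem~2]{StrVel} (the Global Measure Formula), of which the lemma is the specialization to a ball centred at a bounded parabolic point. You instead attempt a self-contained proof via a fundamental-domain orbit sum, which is in fact how such formulas are proved in the literature; so the two approaches differ at the level of the whole argument, and yours is the more elementary and self-contained one. The core computation --- conjugate to send the cusp to $\infty$, identify $g_iB(p_i,r)$ with the exterior $B(g_i\infty, h_{p_i}/r)^c$ via Lemma~\ref{lem:explicit}, tile by translates of $g_i\Delta_{p_i}$, use $|\gamma'|_{\S^d}\approx|b_\gamma|^{-2}$ and sum $\sum_{|b_\gamma|\gtrsim R}|b_\gamma|^{-2\delta}\approx R^{k-2\delta}$ --- is sound and delivers the right exponents. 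Your worry about the orbit-sum asymptotic at $R\approx 1$ is overcautious: for $R$ in a bounded range both sides are simply squeezed between positive constants (one translate below, the total mass above), and for $R$ large the Riemann-sum comparison to $\int_R^\infty t^{k-1-2\delta}\dd t$ is clean.

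The real gap is in your second step. You assert that for $r\leq h_p$ the map $\gamma_0^{-1}$ has ``bounded conformal distortion'' on $B(p,r)$, so that $\gamma_0^{-1}B(p,r)$ is comparable to a Euclidean ball around $p_i$ and $|\gamma_0'|_{\S^d}$ is approximately constant there. By Lemma~\ref{lem:explicit}, $|(\gamma_0^{-1})'(x)|=h_q/d(x,q)^2$ with $q=\gamma_0\infty$, so bounded distortion on $B(p,r)$ requires $d(p,q)\gg r$, i.e.\ $d(p,q)\gg h_p$. The separation Lemma~\ref{lem:pdistance} only gives $d(p,q)>\sqrt{h_ph_q}$, which yields $d(p,q)\gg h_p$ only if $h_q\gtrsim h_p$ --- and this is not automatic for an arbitrary $\gamma_0$ with $\gamma_0p_i=p$ (post-composing $\gamma_0$ with elements of $\operatorname{Stab}_\Gamma(p_i)$ moves $q=\gamma_0\infty$ into any neighbourhood of $p$, making $h_q$ arbitrarily small). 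So the bootstrap as stated is unjustified and can fail. The cleanest fix is to skip the two-step split: conjugate by $g_i$ so that $g_ip$ lies in the $(g_i\Gamma g_i^{-1})_\infty$-orbit of $\infty$, use Lemma~\ref{lem:bilip} to turn $B(p,r)$ into an honest ball $B(g_ip,r')$ with $r'\approx r$, then apply Lemma~\ref{lem:explicit} to the element of $g_i\Gamma g_i^{-1}$ sending $g_ip$ to $\infty$ to carry $B(g_ip,r')$ directly to $B(\xi',h_{g_ip}/r')^c$, and run the orbit sum there. This is exactly the maneuver the paper performs in the proof of Lemma~\ref{lem:rre}, and it also dissolves your ``$p_i=\infty$'' subtlety in one stroke, since $g_1=\mathrm{id}$ and the orbit sum for $g_1p$ is already the computation you wanted.
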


\begin{lem}
	\label{lem:annulusquasi}
	For every $C>1$, there exists $C'>1$ such that for every parabolic fixed point $p=\gamma\infty\in \Delta$ with $\gamma$ the representation, for any Borel subset $E\subset B(p, C h(p))$, we have
	\begin{equation*}
	h(p)^{\delta}\mu(\gamma^{-1}E)/C'\leq \mu(E)\leq C' h(p)^{\delta}\mu(\gamma^{-1}E).
	\end{equation*}
	\end{lem}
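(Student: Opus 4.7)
The plan is to apply the quasi-invariance of the Patterson--Sullivan measure and reduce the comparison to a uniform pointwise estimate of the conformal derivative. Writing $p'=\gamma^{-1}\infty$, the identity
\[
\mu(E)=\int_{\gamma^{-1}E}|\gamma'(x)|_{\S^d}^{\delta}\,\dd\mu(x)
\]
recalled in Section~\ref{sec:PS} reduces the lemma to showing that
\[
|\gamma'(x)|_{\S^d}\asymp h_p\qquad\text{for every }x\in\gamma^{-1}E,
\]
with implicit constants depending only on $C$ and $\Gamma$; integrating then yields both directions of the claimed two-sided bound.

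First I would locate $\gamma^{-1}E$ precisely. Applying Lemma~\ref{lem:explicit} with radii $Cr$ and $r/C$ gives
\[
\gamma^{-1}\bigl(B(p,Cr)-B(p,r/C)\bigr)=B(p',Ch_p/r)-B(p',h_p/(Cr)),
\]
so $d(x,p')\asymp h_p/r$ uniformly on $\gamma^{-1}E$, and in particular $d(x,p')\geq h_p/(Cr)\geq 1/C$ since $r\leq h_p$. Using $|(\gamma^{-1})'(y)|=h_p/d(y,p)^2$ together with the involution $|\gamma'(x)|_{\S^d}=|(\gamma^{-1})'(\gamma x)|_{\S^d}^{-1}$ and the spherical--Euclidean conversion $|g'|_{\S^d}=(1+|\cdot|^2)(1+|g\cdot|^2)^{-1}|g'|$, one obtains the explicit expression
\[
|\gamma'(x)|_{\S^d}=\frac{d(\gamma x,p)^2}{h_p}\cdot\frac{1+|x|^2}{1+|\gamma x|^2}.
\]

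Next I would handle the spherical correction factor. For $y=\gamma x\in E$ we have $d(y,p)\asymp r$, so $d(y,p)^2/h_p\asymp r^2/h_p$. Since $\gamma$ is a top representation, $p'\in\overline{\Delta}_\infty$; because $\Delta_\infty$ has the form $B_Y(C)\times\Delta'_\infty$ with $\Delta'_\infty$ a fundamental domain of a rank-$k$ translation lattice, $\overline{\Delta}_\infty$ is bounded and $|p'|=O(1)$. Similarly $p\in\Delta$ is bounded, and disjointness of $H_p$ from $H_\infty=\R^d\times\{x_{d+1}>1\}$ forces $h_p\leq 1$, so $r\leq h_p\leq 1$ gives $|y|=O(1)$ and $1+|y|^2\asymp 1$. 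The triangle inequality $\bigl|\,|x|-|p'|\,\bigr|\leq d(x,p')$ combined with $d(x,p')\asymp h_p/r\geq 1$ and $|p'|=O(1)$ yields $1+|x|^2\asymp (h_p/r)^2$ after checking separately the sub-regimes $d(x,p')\geq 2|p'|$ and $d(x,p')\leq 2|p'|$. Multiplying through,
\[
|\gamma'(x)|_{\S^d}\asymp \frac{r^2}{h_p}\cdot\frac{(h_p/r)^2}{1}=h_p,
\]
and integrating completes the proof.

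The main technical nuisance I expect is the regime $r\asymp h_p$, where $d(x,p')$ is only of order one and the spherical correction factor does not grow through $|x|$ being large. It is precisely the top representation hypothesis, which pins $|p'|$ down to a $\Gamma$-dependent constant, that keeps the comparison $1+|x|^2\asymp (h_p/r)^2$ uniform in $p$ in this borderline regime; the opposite regime $r\ll h_p$ is easier because $|x|\asymp d(x,p')\asymp h_p/r$ is large and the cancellation is automatic.
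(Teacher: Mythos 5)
Your proof is correct and takes essentially the same approach as the paper: apply quasi-invariance of the PS measure, use Lemma~\ref{lem:explicit} to compute the Euclidean derivative, and bound the spherical correction factor $(1+|x|^2)/(1+|\gamma x|^2)$ by locating $\gamma^{-1}E$ in an annulus around $p'=\gamma^{-1}\infty$ and using that both $p\in\Delta$ and $p'\in\overline{\Delta}_0$ are bounded (the latter being exactly what the top-representation hypothesis guarantees). The paper integrates over $E$ using $|(\gamma^{-1})'|$ while you integrate over $\gamma^{-1}E$ using $|\gamma'|_{\mathbb S^d}$, but these are reciprocal forms of the same computation and yield the same uniform pointwise comparison $|\gamma'(x)|_{\mathbb S^d}\asymp h_p$.
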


\begin{proof}
	As the PS measure is quasi-invariant, we have
	\begin{align*}
	\mu(\gamma^{-1}E)=\int_{x\in E} |(\gamma^{-1})' x|^{\delta} \left(\frac{1+| x|^2}{1+|\gamma^{-1} x|^2}\right)^{\delta}\dd\mu(x).
	\end{align*}
	By Lemma~\ref{lem:explicit}, we have $|(\gamma^{-1})' x|=h(p)/d(x,p)^2$. We also have
		 \begin{equation*}
	 	|d(\gamma^{-1}\infty,0)-h(p)/d(x,p)| \leq |\gamma^{-1} x|\leq d(\gamma^{-1}\infty,0)+h(p)/d(x,p).
	 \end{equation*}
	Due to $\gamma^{-1}\infty\in \overline{\Delta}$, if $h(p)/d(x,p)>\max\{2d(\gamma^{-1}\infty,0),1 \}$, then $1+|\gamma^{-1}x|^2\approx (h(p)/d(x,p))^2$. Otherwise, due to $h(p)/d(x,p)\geq 1/C$, we also have $1+|\gamma^{-1}x|^2\approx (h(p)/d(x,p))^2$.
	 The lemma follows from these bounds on $|(\gamma^{-1})' x|$ and $1+|\gamma^{-1}x|^2$. The computation also makes sense even if $x=p$, because the ratio $|(\gamma^{-1})'x|/(1+|\gamma^{-1}x|^2)$ is always bounded in $B(p,Ch(p))-\{p\}$ and we can extend it continous to $p$.
\end{proof}

Recall $\Delta_{0}:=\Delta_{\infty}=\Delta_{p_1}$.

\begin{lem}\label{lem:quasi-gammainfinity}
	There exists $C>0$ such that for any Borel set $E$ of diameter less than the diameter of $\Delta_{0}$ and $\gamma_1\in\Gamma_{\infty}$, we have that for any $x\in E$
	\[\frac{\mu(\gamma_1E)}{\mu(E)}\in (1/C,C) \left(\frac{1+| x|^2}{1+|\gamma_1 x|^2}\right)^{\delta}. \]
\end{lem}

\begin{proof}
	Due to the derivative of $\gamma_1$ and the quasi-invariance of PS measure, we obtain
	\[\mu(\gamma_1 E)=\int_{E}\left(\frac{1+| x|^2}{1+|\gamma_1 x|^2}\right)^{\delta}\dd\mu(x). \]
	Now for any $x,y$ in $E$, we have
	\begin{equation}
	\label{quasi estimate 1}
	\frac{1+|x|^2}{1+|y|^2}=1+\frac{(|x|-|y|)(|x|+|y|)}{1+|y|^2}\leq C', 
	\end{equation}
	with $C'>1$ only depending on the diameter of $E$. The same argument also gives the same upper bound for $(1+|y|^2)/(1+|x|^2)$. The set $\gamma_{1}E$ is a set with the same diameter as $E$. So we also have
	\begin{equation}
	\label{quasi estimate 2}
	\frac{1+|\gamma_1x|^2}{1+|\gamma_1y|^2}\in(1/C',C') 
	\end{equation}
	for any $x,y $ in $E$. The proof is complete by applying \eqref{quasi estimate 1} and \eqref{quasi estimate 2} to the formula of $\mu(\gamma_1 E)$.
\end{proof}

\begin{lem}\label{lem:rre}
	There exist constants $c>0$ and $C>1$ such that for every parabolic fixed point $p\neq \infty$, if $r\leq h(p)/C$, then
	\[\mu(B(p,r)-B(p,r/\sqrt{e}))\geq c\mu(B(p,r)). \]
\end{lem}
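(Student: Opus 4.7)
The plan is to apply Lemma~\ref{lem:bpr} at the two scales $r$ and $r/\sqrt{e}$ and subtract. Choose the constant $C$ in the hypothesis so that $C\geq\sqrt{e}$; then both $r$ and $r/\sqrt{e}$ lie in $(0,h_p]$, and Lemma~\ref{lem:bpr} applies at both scales. Let $k$ be the rank of $p$ and set $\alpha:=2\delta-k$. Because $\Gamma$ is geometrically finite there are only finitely many ranks of parabolic fixed points, and the finiteness of the Patterson--Sullivan measure together with Lemma~\ref{lem:bpr} at $r=h_p$ forces $\alpha>0$ for each such rank (otherwise the $h_p^{k-\delta}$ factor would yield infinite mass on $\partial\H^{d+1}$); hence $\alpha\geq\alpha_0>0$ uniformly over all parabolic fixed points.

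Writing the universal constants from Lemma~\ref{lem:bpr} as $A\leq B$, we then have
\[
A\,r^{\alpha}h_p^{k-\delta}\leq\mu(B(p,r))\leq B\,r^{\alpha}h_p^{k-\delta},\qquad
\mu(B(p,r/\sqrt{e}))\leq B\,e^{-\alpha/2}\,r^{\alpha}h_p^{k-\delta},
\]
and subtracting gives
\[
\mu(B(p,r)\setminus B(p,r/\sqrt{e}))\geq (A-B\,e^{-\alpha/2})\,r^{\alpha}h_p^{k-\delta},
\]
which is a uniform positive fraction of $\mu(B(p,r))$ provided $A>B\,e^{-\alpha_0/2}$. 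If the implicit constants in Lemma~\ref{lem:bpr} are this tight, the lemma follows at once with $c=A/B-e^{-\alpha_0/2}$.

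If instead the universal constants do not a priori satisfy $A>B\,e^{-\alpha_0/2}$, I would enlarge $C$ and iterate. Iterating Lemma~\ref{lem:bpr} $N$ times along the geometric chain $r,r/\sqrt{e},\ldots,r/e^{N/2}$ yields $\mu(B(p,r/e^{N/2}))\leq (B/A)\,e^{-N\alpha_0/2}\,\mu(B(p,r))$, which is at most $1/2$ for $N=N(\Gamma)$ sufficiently large; requiring $C\geq e^{N/2}$, the wide annulus $B(p,r)\setminus B(p,r/e^{N/2})$ then holds at least half of $\mu(B(p,r))$. Partitioning this into the $N$ thin sub-annuli $B(p,r/e^{(j-1)/2})\setminus B(p,r/e^{j/2})$ for $j=1,\ldots,N$, Lemma~\ref{lem:bpr} shows that consecutive sub-annulus measures differ only by a universal multiplicative factor, so the outermost one (our target $B(p,r)\setminus B(p,r/\sqrt{e})$) carries a uniform fraction of the total.

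The main obstacle is precisely this last step: verifying that a uniform positive fraction of $\mu(B(p,r))$ lies specifically in the outermost $\sqrt{e}$-annulus rather than merely in some wider annulus around $p$. What makes this work is the polynomial structure $r^{2\delta-k}$ of $\mu$ near parabolic fixed points provided by Lemma~\ref{lem:bpr}, together with the uniform lower bound $\alpha_0>0$ on the exponent. Without the polynomial behaviour, only the doubling property (Proposition~\ref{double}) would be available, and that alone gives only an upper bound on the annulus rather than the required lower bound.
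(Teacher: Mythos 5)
Your first, direct approach (subtract the two ball measures) correctly identifies why it can fail: the implicit constants $A\leq B$ in Lemma~\ref{lem:bpr} need not satisfy $A>Be^{-\alpha_0/2}$. But your fallback does not repair this. The claim that ``Lemma~\ref{lem:bpr} shows that consecutive sub-annulus measures differ only by a universal multiplicative factor'' is not something Lemma~\ref{lem:bpr} gives you. That lemma controls the measure of \emph{balls}, $\mu(B(p,s))\approx s^{2\delta-k}h_p^{k-\delta}$, but the measure of an annulus $A_j=B(p,r_{j})\setminus B(p,r_{j+1})$ is a \emph{difference} of two quantities each known only up to a multiplicative constant, and such a difference can collapse to zero. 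Concretely, it is perfectly consistent with $A\,s^{\alpha}h_p^{k-\delta}\leq\mu(B(p,s))\leq B\,s^{\alpha}h_p^{k-\delta}$ (with, say, $A=Be^{-\alpha/2}$) that $\mu(B(p,s))$ is constant on $(r/\sqrt{e},\,r]$, making the target annulus have zero measure and the ratio $\mu(A_1)/\mu(A_2)$ equal to zero; nothing in Lemma~\ref{lem:bpr} excludes this. So the pigeonholing over $N$ sub-annuli cannot localize mass to the outermost one; it can only tell you that \emph{some} sub-annulus is heavy, which is not what the lemma asserts.

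This is precisely the non-triviality the paper's proof addresses: one needs a \emph{lower} bound on the annulus measure itself, independent of the ball estimates. The paper obtains this by pulling the annulus back under $\gamma^{-1}$ (with $\gamma$ the top representation of $p$, after first reducing to $p\in\Delta_0$ and, in the multi-cusp case, conjugating by $g_i$), so that it becomes a Euclidean annulus $B(x_p,\sqrt{e}\,h_p/r)\setminus B(x_p,h_p/r)$ around the tessellated region; it then counts the number of translates of the fundamental domain $\Delta_0$ (or $\Delta_{p_i}$) contained in this annulus (getting $\gg h_p^k r^{-k}$ of them), lower-bounds the measure of each translate using the derivative formula of Lemma~\ref{lem:explicit} and the quasi-invariance of $\mu$, and sums, arriving directly at $\mu(B(p,r)\setminus B(p,r/\sqrt{e}))\gg r^{2\delta-k}h_p^{k-\delta}$. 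In other words, the lower bound in Lemma~\ref{lem:bpr} is effectively re-derived \emph{for the annulus} rather than for the ball, and that is the step your argument is missing. Your observation $2\delta-k>0$ is correct (it is a standard fact for geometrically finite groups), and the outer reduction is sound, but the lemma cannot be deduced from the $\approx$ in Lemma~\ref{lem:bpr} alone.
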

\begin{proof}
	Consider $p\in \overline{\Delta}_{0}$. Indeed, since $\Gamma_\infty\overline{\Delta}_{0}$ covers the intersection $\R^d\cap \Lambda_{\Gamma}$, we can always find a $\gamma_1$ in $\Gamma_{\infty}$ such that $\gamma_1 p\in\overline{\Delta_0}$. Then applying Lemma \ref{lem:quasi-gammainfinity} to $E=B(p,r)$ and $E=B(p,r)-B(p,r/\sqrt{e})$, we have	
	\[\frac{\mu(B(p,r)-B(p,r/\sqrt{e}))}{\mu(B(p,r))}\approx \frac{\mu(B(\gamma_1p,r)-B(\gamma_1p,r/\sqrt{e}))}{\mu(B(\gamma_1p,r))}. \]
	
	We only need to give a lower bound to $\mu(B(p,r)-B(p,r/\sqrt{e}))$ and then use \eqref{equ:bpr} to obtain Lemma \ref{lem:rre}.
	
	Assume $p\in \overline{\Delta_0}$ is of rank $k$. Consider the case when $p=\gamma \infty$ with $\gamma\in \Gamma$ the representation of $p$. 
	We claim that there exists a constant $C>1$ such that for $\gamma_1\in \Gamma_{\infty}$, 
	with $\gamma \gamma_1\Delta_0\subset B(p,r)-B(p,r/\sqrt{e})$, we have
	\begin{equation}\label{equ:gammah}
	\mu(\gamma \gamma_1\Delta_0)\gg \frac{h(p)^{\delta}}{(d(\gamma_1\Delta_0,x_p)+C)^{2\delta}}.
	\end{equation}
	Proof of the claim: By Lemma~\ref{lem:annulusquasi}, we have $\mu(\gamma \gamma_1\Delta_0)\approx h(p)^\delta\mu(\gamma_1\Delta_0)$. By Lemma \ref{lem:quasi-gammainfinity}, we have $$\mu(\gamma_1\Delta_0)\approx \left(\frac{1+| x|^2}{1+|\gamma_1 x|^2}\right)^{\delta}\mu(\Delta_0), $$
	for any $x\in\Delta_{0}$.
	Now, since $|\gamma_1x|\leq d(\gamma_1\Delta_{0},x_p)+|x_p|+C'$, so
	$$\mu(\gamma_1\Delta_0)\geq \mu(\Delta_{0}) /(d(\gamma_1\Delta_0,x_p)+C)^{2\delta}$$ for some constant $C>1$.
	
	By Lemma~\ref{lem:explicit}, we have $\gamma^{-1}(B(p,r)-B(p,r/\sqrt e))=B(x_p,\sqrt eh(p)/r)-B(x_p,h(p)/r)$. Let $C'=\operatorname{diam}(\Delta_0)$. Let $\mathbb{R}^k$ be the subspace described in Lemma~\ref{lem:biberbach}.
	For a set $E$ in $\R^d$, we define $\operatorname{Vol}_{\R^k}(E)$ as $\operatorname{Vol}(E\cap\R^k)$. Since $x_p\in\Lambda_{\Gamma}\cap\R^d$ has bounded distance to $\R^k$, the number of $\gamma_1\Delta_0$'s in such region is at least 
	\begin{equation}\label{equ:volest}
	\operatorname{Vol}_{\mathbb{R}^k}\left(B\left(x_p,\sqrt{e}h(p)/r-C'\right)-B\left(x_p,h(p)/r+C'\right)\right)/\operatorname{Vol}_{\mathbb{R}^k}(\Delta_0)\gg h(p)^kr^{-k}.
	\end{equation}
	Then \eqref{equ:volest} and \eqref{equ:gammah} imply
	\begin{align*}
	\mu(B(p,r)-B(p,r/\sqrt{e}))\geq \sum_{\gamma_1\Delta_0\subset B(x_p,\sqrt eh(p)/r)-B(x_p,h(p)/r)}\mu(\gamma \gamma_1\Delta_0)\gg h(p)^{k-\delta} r^{2\delta-k}.
	\end{align*}
	
	Consider general case when $p=\gamma p_i$ with $\gamma\in \Gamma$ the representation of $p$. We estimate the measure $\mu(\gamma \gamma_1 \Delta_{p_i})$ for any $\gamma_1\in \Gamma_{p_i}$ satisfying $\gamma \gamma_1\Delta_{p_i}\subset B(p,r)-B(p,r/\sqrt{e})$. Using ~\eqref{conjugation}, we have
	\begin{equation*}
	\mu(\gamma \gamma_1\Delta_{p_i})\approx \mu_{\Gamma_i} (g_i \gamma \gamma_1 \Delta_{p_i}),
	\end{equation*} 
	where $\Gamma_i:=g_i\Gamma g_i^{-1}$. Lemma~\ref{lem:bilip} yields
	\begin{equation}
	\label{measure inclusion}
	g_i\gamma \gamma_1 \Delta_{p_i}\subset g_i(B(p,r)-B(p,r/\sqrt{e}))\subset B(g_ip, Cr)-B(g_ip,r/(C\sqrt{e})).
	\end{equation}
	So we can use the argument for the previous case to obtain
	\begin{equation}
	\label{fundamental domain measure}
	\mu_{\Gamma_i}(g_i\gamma \gamma_1\Delta_{p_i})\approx r^{2\delta} h(g_i p)^{-\delta}. 
	\end{equation}
	
	Then we count the number of $\gamma\gamma_1\Delta_{p_i}$'s in $B(p,r)-B(p,r/\sqrt{e})$. It equals the number of $g_i\gamma \gamma_1\Delta_{p_i}$'s in $g_i(B(p,r)-B(p,r/\sqrt{e}))$. The map $g_i$ maps $B(p,r)$ and $B(p,r/\sqrt{e})$ to two spheres and the distance between $g_i B(p,r)$ and $g_i B(p,r/\sqrt{e})$ is at least $(1-1/\sqrt{e})r/C$. The map $g_i\gamma^{-1}g_i^{-1}$ maps $g_i B(p,r)$ and $g_i B(p,r/\sqrt{e})$ to two spheres and let $R$ and $p'$ be the radius and the center of the outer sphere respectively. Using \eqref{measure inclusion} and Lemma~\ref{lem:explicit}, we have 
	\begin{equation}
	\label{outer radius}
	R\in (h(g_i p)/(Cr), C\sqrt{e}h(g_i p)/r).
	\end{equation}
	 For every $x\in B(g_ip,Cr)-B(g_ip, r/(C\sqrt{e}))$, we have $|(g_i\gamma^{-1}g_i^{-1})'(x)|\in (h(g_i p)/(C^2r^2), C^2eh(g_i p)/r^2)$. So the distance between $g_i\gamma^{-1}B(p,r)$ and $g_i\gamma^{-1}B(p,r/\sqrt{e})$ is at least $(1-1/\sqrt{e}) h(g_i p)/(C^3r)$. This distance estimate together with \eqref{outer radius} implies there exists some constant $c\in (0,1)$ such that 
	 \begin{equation*}
	 g_i\gamma^{-1} (B(p,r)-B(p,r/\sqrt{e}))\supset B(p', R)-B(p',cR).
	 \end{equation*}
	The number of $g_i\gamma_1\Delta_{p_i}$ in $g_i\gamma^{-1}(B(p,r)-B(p,r/\sqrt{e}))$ is at least
	\begin{equation}
	\label{number of fundamental domain}
	\operatorname{Vol}_{\mathbb{R}^k}\left(B\left(p',R-C''\right)-B\left(p',cR+C''\right)\right)/\operatorname{Vol}_{\mathbb{R}^k}(g_i\Delta_{p_i})\gg R^k\gg h(g_i p)^k r^{-k},
	\end{equation}
	where $C''=\operatorname{diam}(g_i\Delta_{p_i})$. A lower bound for $\mu(B(p,r)-B(p,r/\sqrt{e}))$ can be obtained using Lemma~\ref{lem:height}, \eqref{fundamental domain measure} and \eqref{number of fundamental domain}.
	\end{proof}


\subsection{Friendliness of PS measure}
\label{sec:friendliness}
For any $r>0$, set
\begin{equation}
\label{equ:def neighborhood1}
N_r(\Delta_0):=\{x\in \Delta_0:\,\ d(x,\partial \Delta_0)\leq r \}.
\end{equation}
\begin{lem}\label{lem:boud}
	There exist
	$0<\epsilon_0<1$ such that for all $0<\epsilon<\epsilon_0$ there exists $\lambda=\lambda(\epsilon)\in(0,1)$ for all $r<1$
	\begin{equation}\label{equ:bou}
	\mu(N_{\epsilon r}(\Delta_0))\leq \lambda\mu(N_r(\Delta_0)). 
	\end{equation}
	Moreover, the constant $\lambda(\epsilon)$ tends to zero as $\epsilon$ tends to zero.
\end{lem}
Recall from Section~\ref{sec:cusps} that $\Delta_0=B_Y(C)\times \Delta_0'$. Recall that the set $\Delta_{0}'$ is a parallelotope. Let $l'$ be a facet of $\Delta_0'$ and $l=B_Y(C)\times l'$. Let $\gamma$ be the element in $\Gamma_{\infty}$ identifying $l'$ with the opposite facet $l''$ so $\gamma$ also identifies $B_Y(C)\times l'$ with $B_Y(C)\times l''$. Set
\begin{equation*}
N_r(l):=\{x\in \Delta_0\cup \gamma^{-1}\Delta_0:\,d(x,l)\leq r\}.
\end{equation*}
Lemma \ref{lem:boud} is deduced from the following lemma.
\begin{lem}\label{lem:part}
	There exist $0<\epsilon,\lambda<1$ such that for all $r<1$
	\begin{equation*}
	\mu(N_{\epsilon r}(l))\leq \lambda \mu(N_r(l)).
	\end{equation*}
\end{lem}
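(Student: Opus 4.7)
The strategy is to deduce the lemma from the friendliness of $\mu$ with respect to affine hyperplanes (the principal output of the remainder of Section \ref{sec:friendliness}), combined with the doubling property of Proposition \ref{double} and a Vitali-type covering. Concretely, we will use the friendliness assertion that there exist $\alpha, C_0>0$ such that for every affine subspace $L\subset\R^d$, every $x\in\Lambda_\Gamma\cap\Delta$, every small $s>0$ and every $\eta\in(0,1)$,
\[
\mu(B(x,s)\cap N_{\eta s}(L))\le C_0\,\eta^\alpha\,\mu(B(x,s)).
\]
Since $l$ lies in the affine hyperplane $\tilde l$ spanned by its affine hull, $N_{\epsilon r}(l)\subset N_{\epsilon r}(\tilde l)$, so this inequality applies.

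Fix a parameter $\rho\in(0,1)$ to be chosen at the end. Apply the Vitali covering lemma to produce a countable family $\{x_i\}\subset\Lambda_\Gamma\cap N_r(l)$ with $\{B(x_i,\rho r/5)\}$ pairwise disjoint and $\{B(x_i,\rho r)\}$ covering $\Lambda_\Gamma\cap N_r(l)$. Friendliness applied inside $B(x_i,\rho r)$ with $L=\tilde l$ and $\eta=\epsilon/\rho$ gives
\[
\mu(B(x_i,\rho r)\cap N_{\epsilon r}(l))\le C_0(\epsilon/\rho)^\alpha\mu(B(x_i,\rho r)),
\]
while the doubling property yields $\mu(B(x_i,\rho r))\le C_1\mu(B(x_i,\rho r/5))$ for a universal $C_1$.

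Summing over $i$ and using disjointness together with the inclusion $B(x_i,\rho r/5)\subset N_{(1+\rho/5)r}(l)$ (valid because $x_i\in N_r(l)$; mass outside $\Delta_0\cup\gamma^{-1}\Delta_0$ is irrelevant as $\mu$ is carried on $\Lambda_\Gamma$, which in the relevant region is covered by $\Delta_0\cup\gamma^{-1}\Delta_0$), we get
\[
\mu(N_{\epsilon r}(l))\le C_0C_1(\epsilon/\rho)^\alpha\,\mu(N_{(1+\rho/5)r}(l)).
\]
The key step now is a rescaling trick: applying this inequality with $r$ replaced by $r/(1+\rho/5)$ kills the enlargement factor on the right. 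Setting $\epsilon'=\epsilon/(1+\rho/5)$ and fixing, say, $\rho=1/2$, we obtain
\[
\mu(N_{\epsilon' r}(l))\le C_2\,(\epsilon')^\alpha\,\mu(N_r(l))
\]
with $C_2=C_0C_1((1+\rho/5)/\rho)^\alpha$. Choosing $\epsilon'$ small enough that $C_2(\epsilon')^\alpha<1$ furnishes the desired pair $(\epsilon,\lambda)$.

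The main obstacle is not this combinatorial argument but the friendliness estimate itself, whose proof will occupy the preceding part of Section \ref{sec:friendliness}: it requires a careful induction over scales using the parabolic structure near each cusp together with the quantitative ball measure estimates of Proposition \ref{double} and Lemma \ref{lem:bpr}. Once friendliness is in hand, the Vitali-plus-rescaling argument above is robust and applies uniformly in the scale $r<1$.
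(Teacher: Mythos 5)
Your argument rests on an \emph{absolutely decaying} hypothesis --- that there are uniform $\alpha,C_0>0$ with $\mu(B(x,s)\cap N_{\eta s}(L))\leq C_0\eta^{\alpha}\mu(B(x,s))$ for all affine $L$, all $x\in\Lambda_{\Gamma}\cap\Delta$, all small $s$ and all $\eta$ --- and this is a genuine gap: Patterson--Sullivan measures of geometrically finite groups with parabolic elements are \emph{not} absolutely decaying. Near a parabolic fixed point $p$ of rank $k<d$, the limit set inside $B(p,r)$ collapses onto a $k$-plane $Z_p$: the piece of the limit set in $\gamma\gamma_1\Delta_0$ (for $\gamma_1\in\Gamma_p$) sits at $Z$-distance $\sim h_p/|\gamma_1|$ from $p$ but within $\sim h_p/|\gamma_1|^2$ of $Z_p$ in the transverse directions. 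Taking $L\supset Z_p$, $r=h_p/|\gamma_1|$ and $\eta=1/|\gamma_1|$, one finds $\mu(B(p,r)\cap N_{\eta r}(L))\approx\mu(B(p,r))$, so no power decay in $\eta$ holds. What is true (and what the paper uses) is the \emph{pointwise} statement from \cite[Lemma 3.11]{DFSU}: for each $\xi\in\Lambda_{\Gamma}\cap N_{\epsilon r}(l)$ there exists a $\xi$-\emph{dependent} scale $\rho_{\xi}\in(0,1)$ with $\mu(B(\xi,\rho_{\xi})\cap(N_r(L)-N_{\epsilon r}(L)))\geq c\,\mu(B(\xi,\rho_{\xi}))$. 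The $\xi$-dependence of the scale is essential, and it forces the Vitali step to be run on balls of \emph{varying} radii $\rho_{\xi}$ rather than your fixed $\rho r$; one then bounds $\mu(N_{\epsilon r}(l))$ by a constant multiple of $\mu(N_r(l)-N_{\epsilon r}(l))$ and rearranges (rather than getting direct power decay in $\epsilon$, which your approach would yield but which is too strong). Even if one could show absolute decay holds for the \emph{specific} hyperplanes $L$ arising here (they are transverse to $Z$, so it may), this would require a separate argument that you do not supply.

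There is a second, smaller issue you gloss over: the containment $B(x_i,\rho r/5)\subset N_{(1+\rho/5)r}(l)$ does \emph{not} hold because $N_s(l)$ is by definition cut off to $\Delta_0\cup\gamma^{-1}\Delta_0$. The disjoint small balls can spill across other facets into neighboring $\Gamma_{\infty}$-translates of $\Delta_0$, and the limit set does carry positive mass there, so the mass in a small ball is not dominated by the mass of $N_{(1+\rho/5)r}(l)$. The paper handles exactly this by covering the spill-over by boundedly many $\Gamma_{\infty}$-translates of $N_r(l)-N_{\epsilon r}(l)$ and invoking quasi-invariance of $\mu$ under $\Gamma_{\infty}$; your ``mass outside $\Delta_0\cup\gamma^{-1}\Delta_0$ is irrelevant'' dismisses this step rather than addressing it. Your rescaling trick at the end is fine as numerology, but the two preceding points mean the argument as written does not close.
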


\begin{proof}[\textbf{Proof of Lemma~\ref{lem:boud}}]
	Assume that $\infty$ is a rank $k$ cusp. If $\infty$ is not a cusp of maximal rank, then note that $(\partial B_Y(C))\times \Delta_0'=\{|y|=C\}\times \Delta_0'$ does not intersect $\Lambda_{\Gamma}$. A small neighborhood of this boundary has zero PS measure. Therefore, we just need to consider the neighborhood of $l$'s. Using Lemma~\ref{lem:part}, we obtain
	\begin{equation*}
	\mu(N_{\epsilon r}(\Delta_0))\leq \sum_{l}\mu(N_{\epsilon r}(l))\leq \lambda \sum_{l}\mu(N_{r}(l)).
	\end{equation*}
	Each $N_r(l)$ is covered by $\Delta_{0}$ and one of its translates $\gamma\Delta_{0}$.
	By Lemma~\ref{lem:quasi-gammainfinity},
	there exists $C'>0$ such that
	\begin{equation*}
		\lambda \sum_{l}\mu(N_{r}(l))\leq \lambda C'2k \mu(N_r(\Delta_0)).
	\end{equation*}

We can replace $\epsilon$ by $\epsilon^n$ and using Lemma \ref{lem:part} repeatedly, which will yield an arbitrary small $\lambda$ in Lemma~\ref{lem:boud}. 
\end{proof}
	
	

\begin{proof}[Proof of Lemma~\ref{lem:part}]
	
	The proof is similar to the argument of using Lemma 3.11 to deduce Lemma 3.10 in~\cite{DFSU}. Let $L$ be the hyperplane containing $l$ and $N_r(L)$ be the $r$-neighborhood of $L$. ~\cite[Lemma 3.11]{DFSU} is stated in spherical metric but locally spherical metric is equivalent to the euclidean metric. So ~\cite[Lemma 3.11]{DFSU} implies that there exists $\epsilon>0$ such that 
	for every $\xi\in E:= \Lambda_\Gamma\cap N_{\epsilon r}(l)$, there exists $0<\rho_{\xi}<1$ satisfying
	\begin{equation}\label{equ:nlr}
	\mu(B(\xi,\rho_{\xi})\cap (N_r(L)-N_{\epsilon r}(L)))\geq c\mu (B(\xi,\rho_{\xi})),
	\end{equation}
	where $0<c<1$ is a constant only depending on $\Gamma$. The family $\{B(\xi,\rho_{\xi})\}_{\xi\in E}$ forms a covering of $E$.
	It follows from Vitali covering Lemma that there exists a disjoint subcollection $\{B(\xi,\rho_{\xi}) \}_{\xi\in I}$ with $I\subset E$ countable, such that 
	\[\cup_{\xi\in I} B(\xi,5\rho_{\xi})\supset \cup_{\xi\in E} B(\xi,\rho_{\xi})\supset E. \]
	The set $B(\xi,\rho_{\xi})\cap (N_r(L)-N_{\epsilon r}(L))$ may not be contained in $N_r(l)-N_{\epsilon r}(l)$, but we can cover it by some translations of $N_r(l)-N_{\epsilon r}(l)$.
	By elementary computation, we can use no more than $k_0$ number of elements $\gamma_j$'s in $\Gamma_{\infty}$ with $k_0$ depending on $\Delta_0$ such that
	\begin{equation*}
	\cup_{j}\gamma_j(N_r(l)-N_{\epsilon r}(l))\supset B(\xi,\rho_\xi)\cap (N_r(L)-N_{\epsilon r}(L)). 
	\end{equation*}
	Using this inclusion, Lemma~\ref{lem:quasi-gammainfinity} and disjointness of $B(\xi,\rho_{\xi})$'s for $\xi\in I$, we obtain
	\begin{align*}
	&C'k_0\mu(N_r(l)-N_{\epsilon r}(l))\geq \mu(\cup_{j}\gamma_j(N_r(l)-N_{\epsilon r}(l)))
	\geq \sum_{\xi\in I}\mu(B(\xi,\rho_\xi)\cap(N_r(L)-N_{\epsilon r}(L))
	\end{align*}
	Using \eqref{equ:nlr} and doubling property in Proposition~\ref{double}, we have
	\begin{equation*}
		\sum_{\xi\in I}\mu(B(\xi,\rho_\xi)\cap(N_r(L)-N_{\epsilon r}(L))
		\geq c\sum_{\xi\in I}\mu(B(\xi,\rho_{\xi}))
		\geq c\epsilon'\sum_{\xi \in I} \mu(B(\xi,5\rho_{\xi}))\geq c\epsilon'\mu(N_{\epsilon r}(l)),
	\end{equation*}
	Combining the above two formulas, we conclude that there exists $0<\lambda<1$ such that
	\begin{equation*}
	\mu (N_{\epsilon r}(l))\leq \lambda\mu(N_r(l)). \qedhere
	\end{equation*}
\end{proof}

\section{Coding of limit set}\label{sec:code}
In this section, we construct the coding and prove Proposition~\ref{prop:coding}, Lemma~\ref{lem:uni}, and Lemma~\ref{lem:l1}. At first reading, the reader might want to concentrate on the case when there is one cusp in the manifold, i.e., $\mathbf{P}=\{p_1\}$ and the coordinate change of transformation $g_1$ is the identity. This will significantly reduce the notational burden while not sacrificing too much of the main results.

\subsection{Coding for local regions}
We introduce ``flower" $J_p$, the building block for the coding. Actually, $J_p$'s are almost the union of a countable subcollection of open subsets $\Delta_j$ in the coding. The advantage of considering $J_p$ is that $J_p$ has a clean boundary which makes it possible to estimate the measure.

We first consider the case when {\textbf{$p=\gamma\infty$ is a parabolic fixed point in $\Delta$ with $\gamma\in \Gamma$ the representation of $p$ and $x_{p}=\gamma^{-1}\infty$}}. Let $\eta\in (0,1)$. We define the set $J_{p,\eta}$ as follows. By Lemma~\ref{lem:explicit}, we have
$$\gamma^{-1}B(p,\eta h(p))=B(x_p,1/\eta)^c. $$

Suppose that $\infty$ is a parabolic fixed point of maximal rank. Then $\mathbb{R}^d\subset \partial \mathbb{H}^{d+1}$ is tessellated by the translations of $\overline\Delta_0$. Take $R_{p,\eta}$ to be the smallest parallelotope tiled by the translations of $\overline\Delta_0$ such that it contains $B(x_p,1/\eta)$. Let 
\begin{equation}
J_{p,\eta}=\gamma R_{p,\eta}^c,\\
\end{equation}
\begin{equation}
N_p=\{\gamma_1\in \Gamma_{\infty}:\, \gamma_1\Delta_0\subset R_{p,\eta}^c\}
=\{\gamma_1\in \Gamma_{\infty}:\,\gamma \gamma_1\Delta_0\subset J_{p,\eta}\}.
\end{equation}

Suppose $\infty$ is a parabolic fixed point of rank $k$ in general. Let $Z$ be the affine subspace in $\partial \mathbb{H}^{d+1}$ described in Lemma~\ref{lem:biberbach} where elements in $\Gamma_{\infty}$ act as translations, and $\Delta_{0}=B_{Y}(C)\times \Delta_0'$. So $Z$ is tessellated by the translations of $\overline{\Delta_0'}$. 
 Take $R_{p,\eta}$ in $Z$ to be the smallest parallelotope tiled by the translations of $\overline{\Delta_0'}$ such that $B_Y(2/\eta)\times R_{p,\eta}$ contains $B(x_p,1/\eta)$. Set \begin{equation}\label{flower}
J_{p,\eta}=\gamma (B_Y(2/\eta)\times R_{p,\eta})^c\subset B(p,\eta h(p)),
\end{equation}
\begin{equation}
\label{flower group}
N_p=\{\gamma_1\in\Gamma_{\infty}:\, \gamma_1\Delta_0\subset (B_Y(2/\eta)\times R_{p,\eta})^c \}=\{\gamma_1\in \Gamma_{\infty}:\,\gamma \gamma_1\Delta_0\subset J_{p,\eta}\}.
\end{equation}
 
The set $J_{p,\eta}$ enjoys the following property
\begin{equation}\label{flower1}
J_{p,\eta}\cap\Lambda_\Gamma=\gamma\left(\underset{\gamma_1\in N_p}{\cup}\gamma_1\left(\overline{\Delta_0}\cap\Lambda_{\Gamma}\right)\right),
\end{equation}
that is to say, the countable disjoint union $\underset{\gamma_1\in N_p}{\sqcup}\gamma\gamma_1\Delta_0$ is a conull set in $J_{p,\eta}$. \textbf{The open sets $\gamma \gamma_1 \Delta_0$ with $\gamma_1\in N_p$ are the ones described in Proposition \ref{prop:coding} in $J_{p,\eta}$}, and on each $\gamma \gamma_1\Delta_0$, the expanding map $T$ is given by $T|_{\gamma \gamma_1\Delta_0}=(\gamma \gamma_1)^{-1}$.

We also have the following distance relation:
\begin{equation}\label{flower3}
	d((\gamma \gamma_1)^{-1}\infty,\Delta_0)=d(\gamma_1^{-1}x_p,\Delta_0)\geq 1/\eta \,\,\,\text{for any}\,\,\, \gamma_1\in N_p.
\end{equation}

\begin{figure}
	\begin{center}
		\begin{tikzpicture}[scale=2.5]
		\path[fill=olive] (3.5,0) rectangle (5.5,1.5);
		\draw[fill= white, draw=red, very thick] (3.75,0.25) rectangle (5.25,1.25);
		\draw (0,0) rectangle (2,1.5);
		\path[fill=olive, draw=red, very thick] (1,0.375)circle [radius=0.125];
		\path[fill=olive, draw=red, very thick] (1,0.625)circle [radius=0.125];
		\path[fill=olive, draw=red, very thick] (0.875,0.5)circle [radius=0.125];
		\path[fill=olive, draw=red,very thick] (1.125,0.5)circle [radius=0.125];
		\path[fill=olive, draw=none] (1,0.375)circle [radius=0.125];
		\path[fill=olive, draw=none ] (1,0.625)circle [radius=0.125];
		\path[fill=olive, draw=none ] (0.875,0.5)circle [radius=0.125];
		\path[fill=olive, draw=none ] (1.125,0.5)circle [radius=0.125];
		\node[above] at (1,0.75) {$p=\gamma\infty$};
		\path[fill=black] (1,0.5) circle [radius=0.01];
		\draw [thick, <-] (2.25,0.5) -- (3.25,0.5);
		\node[above] at (2.75, 0.5) {$\gamma$};
		\node[above] at (0.2, 1.2) {$\Delta_{0}$};
		\path[ draw=blue, very thick] (4.5, 0.75)circle [radius=0.45];
		\path[fill=black] (4.5,0.75) circle [radius=0.01];
		\node[above] at (4.5, 0.75) {$x_p$};
		\path[ draw=blue, very thick] (1, 0.5)circle [radius=0.3];
		\end{tikzpicture}
	\end{center}
	\caption{The shaded region on the left hand side is $J_{p,\eta}$, which is the image under the action of $\gamma$ on the complement of the white rectangle on the right hand side. }\label{fig:flower}
\end{figure}


\begin{lem}\label{lem:jp}
	There exists $0<c_{\four}<1$ such that for any $\eta\in(0,1)$
	\[B(p,c_{\four}\eta h(p))\subset J_{p,\eta}\subset B(p,\eta h(p)), \]
	\[B(x_p,1/\eta)\subset (\gamma^{-1} J_{p,\eta})^c\subset B(x_p, 1/(c_{\four}\eta)). \]
\end{lem}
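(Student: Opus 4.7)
The plan is to reduce the two pairs of inclusions to a single geometric statement using the coordinate change $\gamma^{-1}$. By Lemma~\ref{lem:explicit}, $\gamma^{-1}B(p,r)=B(x_p,h_p/r)^c$ for every $r>0$, and the definition~\eqref{flower} gives $\gamma^{-1}J_{p,\eta}=(B_Y(2/\eta)\times R_{p,\eta})^c$ in the general case (or simply $R_{p,\eta}^c$ when $\infty$ has maximal rank). Applying $\gamma^{-1}$ to the first chain $B(p,c_{\four}\eta h_p)\subset J_{p,\eta}\subset B(p,\eta h_p)$ and taking complements produces precisely the second chain $B(x_p,1/\eta)\subset(\gamma^{-1}J_{p,\eta})^c\subset B(x_p,1/(c_{\four}\eta))$. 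Hence it suffices to prove either pair, and I work with the second, which is stated in more convenient coordinates.

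The upper inclusion $B(x_p,1/\eta)\subset B_Y(2/\eta)\times R_{p,\eta}$ is tautological: $R_{p,\eta}$ was defined as the smallest parallelotope tiled by $\Gamma_\infty$-translations of $\Delta_0'$ such that $B_Y(2/\eta)\times R_{p,\eta}$ contains $B(x_p,1/\eta)$, so this containment is built in.

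For the lower inclusion $B_Y(2/\eta)\times R_{p,\eta}\subset B(x_p,1/(c_{\four}\eta))$, I would give a diameter estimate. Write $x_p=(y_p,z_p)\in\overline{\Delta}_0=B_Y(C)\times\overline{\Delta_0'}$, so $|y_p|\leq C$ and $z_p\in\overline{\Delta_0'}$. For any $(y,z)\in B_Y(2/\eta)\times R_{p,\eta}$, the $Y$-distance is bounded by $|y-y_p|\leq 2/\eta+C$. For the $Z$-distance, note that by minimality $R_{p,\eta}$ exceeds the $Z$-projection of $B(x_p,1/\eta)$ (a ball of radius $1/\eta$ about $z_p$) by at most $\operatorname{diam}(\Delta_0')$ along each face, so $|z-z_p|\leq 1/\eta+\operatorname{diam}(\Delta_0')$. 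Combining the two estimates gives $|(y,z)-x_p|\leq C''/\eta$ for a constant $C''$ depending only on $\Gamma$ (via $C$ and $\operatorname{diam}(\Delta_0')$). Taking $c_{\four}=1/C''$ closes the argument; uniformity in $p$ is automatic since $x_p$ is constrained to the fixed compact set $\overline{\Delta}_0$.

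There is no substantial obstacle here: the lemma is essentially a repackaging of the definition~\eqref{flower} together with an elementary size estimate for $R_{p,\eta}$. The only mild point is ensuring the bound $|y-y_p|\leq 2/\eta+C$ is absorbed into a multiple of $1/\eta$, which is automatic once $\eta$ is bounded away from $1$ (and otherwise only worsens $c_{\four}$ by a bounded factor).
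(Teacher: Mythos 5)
Your proposal is correct and takes essentially the same approach as the paper: the paper's proof is a one-line invocation of compactness of $\Delta_0$ to get $(\gamma^{-1}J_{p,\eta})^c=B_Y(2/\eta)\times R_{p,\eta}\subset B(x_p,1/(c_{\four}\eta))$, which is exactly the diameter estimate you spell out, and the conversion between the two pairs of inclusions via $\gamma^{-1}$ and complementation is the same implicit step. You merely make the compactness argument explicit by tracking the $Y$- and $Z$-coordinates of $x_p\in\overline{\Delta}_0$ separately.
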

\begin{proof}
	Due to the compactness of $\Delta_0$, there exists $c_{\four}$ such that $(\gamma^{-1}J_{p,\eta})^c=(B_Y(2/\eta)\times R_{p,\eta})\subset B(x_p,1/(c_{\four}\eta))$. The first statement can be deduce from the second using Lemma \ref{lem:explicit}.
\end{proof}

\textbf{In the following, we abbreviate $J_{p,\eta}$ to $J_p$.} For $r>0$, let 
\begin{align}
\label{equ:def neighborhood 2}
&N_r(\partial J_p):=\{x\in J_p^c:\, d(x,\partial J_p)\leq r \},\\
&N_r(\partial\gamma^{-1} J_p):=\{x\in (\gamma^{-1}J_p)^c:\, d(x,\partial \gamma^{-1}J_p)\leq r \}.\nonumber
\end{align}

\begin{lem}\label{lem:jpr}
	Fix $C>1$. For every $0<\eta<1/4C^2$, there exists $0<c=c(\eta)<1$ depending on $\eta$ such that for any $r<h(p)$, 
	\begin{equation}
	\label{recdouble}
	\mu(N_{C\eta r}(\partial J_p))\leq c\mu(N_r(\partial J_p)). 
	\end{equation}
	Moreover, $c(\eta)$ tends to zero as $\eta$ tends to zero. 
\end{lem}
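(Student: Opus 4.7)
The idea is to transport the estimate through $\gamma^{-1}$ to a neighborhood estimate on $U_p:=(\gamma^{-1}J_p)^c = B_Y(2/\eta)\times R_{p,\eta}$, and then iterate the single-facet estimate Lemma~\ref{lem:part} a number of times proportional to $\log(1/\eta)$.

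\textbf{Step 1 (reduction to $U_p$).} By Lemma~\ref{lem:jp}, $\partial J_p\subset B(p,\eta h_p)\setminus B(p,c_{\four}\eta h_p)$. For $r$ in the principal range (say $r\le c_{\four}\eta h_p/4$), the set $N_r(\partial J_p)$ lies in a slightly larger annulus around $p$, so Lemma~\ref{lem:annulusquasi} (with $C$ absorbing $c_{\four}$) gives
\[ \mu(N_r(\partial J_p))\asymp h_p^{\delta}\,\mu(\gamma^{-1}N_r(\partial J_p)), \]
with the analogous comparison for $N_{C\eta r}$. Lemma~\ref{lem:explicit} gives $|(\gamma^{-1})'|\asymp (\eta^2 h_p)^{-1}$ uniformly on this annulus, so
\[ N^{\mathrm{in}}_{c'r/(\eta^2 h_p)}(\partial U_p)\subset \gamma^{-1}N_r(\partial J_p)\subset N^{\mathrm{in}}_{C'r/(\eta^2 h_p)}(\partial U_p), \]
where $N^{\mathrm{in}}_s(\partial U_p):=\{z\in U_p: d(z,\partial U_p)\le s\}$. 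The problem therefore reduces to finding absolute $\epsilon_0,\lambda_0\in (0,1)$ such that
\begin{equation*}
\mu(N^{\mathrm{in}}_{\epsilon_0 s}(\partial U_p))\le \lambda_0\,\mu(N^{\mathrm{in}}_{s}(\partial U_p))\qquad(*)
\end{equation*}
up to the diameter of $U_p$; one then iterates $(*)$ about $\log(1/\eta)/\log(1/\epsilon_0)$ times to bridge from scale $s_1\asymp r/(\eta^2 h_p)$ down to scale $s_0\asymp r/(\eta h_p)$, yielding $c\asymp \lambda_0^{\,\log(1/\eta)/\log(1/\epsilon_0)}\to 0$ as $\eta\to 0$. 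The degenerate range $c_{\four}\eta h_p/4<r<h_p$ is handled directly: both sides have measure comparable to $\mu(B(p,\eta h_p))\asymp (\eta h_p)^{2\delta-k}h_p^{k-\delta}$ by Lemma~\ref{lem:bpr}, which already produces a bound of the desired type.

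\textbf{Step 2 (facet decomposition on $\partial U_p$).} The ``outer'' part $\partial B_Y(2/\eta)\times R_{p,\eta}$ is disjoint from $\Lambda_\Gamma$ by the structure of bounded parabolic fixed points described in Section~\ref{sec:cusps}, so contributes zero measure. The remaining facets are of the form $\gamma_1 l$, where $\gamma_1\in\Gamma_\infty$ is a boundary translate from $N_p$ (see~\eqref{flower group}) and $l$ is a facet of $\Delta_0$. The intersection $N^{\mathrm{in}}_s(\partial U_p)\cap\gamma_1\Delta_0$ equals $\gamma_1 W_s(\gamma_1)$, where $W_s(\gamma_1)\subset\Delta_0$ is a one-sided $s$-neighborhood of the outward-facing facets of $\Delta_0$ relative to $\gamma_1$. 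Since $\gamma_1$ acts as a Euclidean isometry on $\partial\H^{d+1}\setminus\{\infty\}$ and $\Delta_0$ is bounded, the spherical derivative $|\gamma_1'|_{\S^d}=(1+|x|^2)/(1+|\gamma_1 x|^2)$ has bounded multiplicative variation over $x\in\Delta_0$; the quasi-invariance of PS measure then yields
\[ \mu(\gamma_1 E)\asymp c(\gamma_1)\,\mu(E)\qquad\text{for every Borel }E\subset\Delta_0, \]
with ratio bounds independent of $\gamma_1$.

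\textbf{Step 3 (iterating Lemma~\ref{lem:part}).} The proof of Lemma~\ref{lem:boud} is built from the single-facet estimate Lemma~\ref{lem:part}, which furnishes absolute $\epsilon_0,\lambda_0\in (0,1)$ with $\mu(N_{\epsilon_0 s}(l))\le\lambda_0\,\mu(N_s(l))$; the same proof applies to the one-sided version needed here. Summing the conclusion of Step~2 over the outward facets of each $\gamma_1\Delta_0$ and then over $\gamma_1\in N_p$ yields $(*)$, completing the reduction.

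\textbf{Main obstacle.} The delicate point is Step~2: arranging that the multiplicative constant in $\mu(\gamma_1 E)\asymp c(\gamma_1)\mu(E)$ is independent of $\gamma_1$, so that the different facet contributions on $\partial U_p$ combine cleanly when summed over $N_p$. This relies on bounded Euclidean diameter of $\Delta_0$ controlling the variation of $|\gamma_1'|_{\S^d}$ across $\gamma_1\Delta_0$. A secondary nuisance is the boundary range $r\asymp h_p$, where the transported scales $s_0,s_1$ can exceed the diameter of $U_p$; this case is treated directly via Lemma~\ref{lem:bpr}.
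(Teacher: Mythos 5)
Your Step~1 reduction to $U_p:=(\gamma^{-1}J_p)^c$ and your iteration idea both track the paper's Case~$A_1$, but the way you set the cut-off produces a real gap. The single-facet estimate Lemma~\ref{lem:part}, and hence the iterable inequality $(*)$ built from it in Steps~2--3, is a statement about $s$-neighborhoods of facets of a single fundamental domain $\Delta_0$; it applies only for $s\lesssim\operatorname{diam}(\Delta_0)\approx 1$. But your ``principal range'' $r\le c_{\four}\eta h_p/4$ transports to scales $s_1=r/(\eta^2 h_p)$ as large as $\approx 1/\eta$ on $U_p$, where $N^{\mathrm{in}}_s(\partial U_p)$ is a thick shell swallowing whole translates $\gamma_1\Delta_0$; your description ``$N^{\mathrm{in}}_s(\partial U_p)\cap\gamma_1\Delta_0=\gamma_1 W_s(\gamma_1)$ with $W_s$ a one-sided $s$-neighborhood of facets'' simply fails there, and Lemma~\ref{lem:part} gives nothing. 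So your iteration is valid only on the range $r\le\eta^2 h_p$ (where $s_1\le 1$), which is exactly the paper's Case~$A_1$; the intermediate range $\eta^2 h_p<r\lesssim\eta h_p$ is not covered by your argument. The paper closes this window with a genuinely different tool: a counting argument (Case~$A_2$), comparing the number of translates $\gamma_1\Delta_0$ lying in $\gamma^{-1}N_{\beta r}(\partial J_p)$ versus $\gamma^{-1}N_r(\partial J_p)$, each translate having $\mu$-measure $\approx\eta^{2\delta}\mu(\Delta_0)$; you would need to supply something equivalent.

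Your handling of the ``degenerate range'' $c_{\four}\eta h_p/4<r<h_p$ is also wrong as stated. It is not true that both $\mu(N_{C\eta r}(\partial J_p))$ and $\mu(N_r(\partial J_p))$ are comparable to $\mu(B(p,\eta h_p))$: for $r\approx h_p$, Lemma~\ref{lem:bpr} gives $\mu(N_r(\partial J_p))\approx h_p^{\delta}$ while $\mu(N_{C\eta r}(\partial J_p))\approx (\eta h_p)^{2\delta-k}h_p^{k-\delta}$, so the ratio is $\approx\eta^{2\delta-k}$, not $\approx 1$. If they were genuinely comparable with $\eta$-independent constants, the ratio would be bounded \emph{below}, contradicting ``$c\to 0$''. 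The correct argument (the paper's Case~$B$) is a cancellation: writing $\mu(N_r\setminus N_{\beta r})=\mu(J_p\cup N_r)-\mu(J_p\cup N_{\beta r})\gtrsim (r^{2\delta-k}-(C\beta r)^{2\delta-k})h_p^{k-\delta}$ and comparing to $\mu(N_{\beta r})\lesssim(\beta r)^{2\delta-k}h_p^{k-\delta}$. Finally, the paper glues its three case estimates (each established on a subinterval of $(0,h_p]$) by applying them to $r$ or $\sqrt{\eta}\,r$, whichever lands in a good range, and using
\[
\frac{\mu(N_{C\eta r}(\partial J_p))}{\mu(N_r(\partial J_p))}\leq\min\left\{\frac{\mu(N_{C\eta r}(\partial J_p))}{\mu(N_{\beta r}(\partial J_p))},\ \frac{\mu(N_{\beta r}(\partial J_p))}{\mu(N_r(\partial J_p))}\right\},
\]
a device your proof would also need in some form.
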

The proof of Lemma \ref{lem:jpr} will be given in the appendix.

We consider the general case. \textbf{Let $p$ be any parabolic fixed point in $\Delta $. Write $p=\gamma p_i$ with $\gamma\in \Gamma$ the representation of $p$.} If $p_i=\infty$, let $J_p$ and $N_p$ be defined as (\ref{flower}) and \eqref{flower group} respectively. Otherwise, we use the following commutative diagram to define $J_p$:
 \[ \begin{tikzcd}
\H^{d+1} \arrow{r}{g_i} \arrow[swap]{d}{\Gamma} & \H^{d+1} \arrow{d}{g_i \Gamma g_{i}^{-1}} \\%
\H^{d+1} \arrow{r}{g_i}& \H^{d+1}.
\end{tikzcd}
\]
Note that $g_ip=(g_i\gamma g_i^{-1})\infty\in g_i\Delta$. So for the action of $g_i\Gamma g_i^{-1}$ on $\partial \mathbb{H}^{d+1}$, we can define $J_{i,p}$ for the parabolic fixed point $g_ip$ as (\ref{flower}). Set
\begin{equation}
\label{def:jip}
J_p:=g_i^{-1}J_{i,p},\ \ 
N_p=\{\gamma_1\in \Gamma_{p_i}:\,\gamma \gamma_1\Delta_{p_i}\subset J_{p}\},
\end{equation}
where $\Gamma_{p_i}$ is a subgroup of $\Gamma$ defined in \eqref{pistabilizer}. The set $J_p$ enjoys the property
\begin{equation}
\label{flower2}
 J_{p}\cap\Lambda_\Gamma=\underset{\gamma_1\in N_p}{\sqcup}\left(\gamma\gamma_1\overline{\Delta}_{p_i}\cap\Lambda_\Gamma\right).
 \end{equation}
 On each set $\gamma\gamma_1\Delta_{p_i}$, we have an expanding map given by $(\gamma\gamma_1)^{-1}$ which maps this set to $\Delta_{p_i}$.

The following lemma is an analog of Lemma \ref{lem:jp}.
\begin{lem}
\label{equ:jp}
There exists some constant $C_{\five}>1$ such that for any $\eta\in(0,1)$
\begin{align*}
B(g_ix_p,1/\eta)&\subset (g_i\gamma^{-1}J_p)^c\subset B(g_ix_p,1/(c_{\four}\eta)),\\
B\left(p,\eta h(p)/C_{\five}\right)\subset g_i^{-1}B\left(g_ip, c_{\four}\eta h(g_i p)\right)&\subset J_p\subset g_i^{-1}B\left(g_ip,\eta h(g_i p)\right)\subset B\left(p, C_{\five}\eta h(p)\right),
\end{align*}
where $x_p=\gamma^{-1}p_i$.
\end{lem}

\begin{proof}
We use Lemma~\ref{lem:jp},~\ref{lem:height} and~\ref{lem:bilip} to obtain the lemma. 
\end{proof}


\subsection{Coding for $\Delta_0$}
\label{coding procedure}

The construction of the coding for the whole $\Delta_0$ is by induction. Let $\Omega_0:=\Delta_{0}$.


Since we already have a nice coding for flowers $J_p$, the idea is to find a collection of pairwise disjoint flowers $J_p$ to cover the intersection $\Lambda_{\Gamma}\cap\Omega_0$. Here $p$ is a parabolic fixed point. From the construction of $J_p$ (Lemma \ref{lem:jp}), we know that the higher the height $h(p)$ is, the larger $J_p$ is. So we start with parabolic fixed points with large heights. We want that the full flower $J_p$ is inside $\Omega_0$. Hence we only take parabolic fixed points $p$ away from the boundary of $\Omega_0$.

Take
\begin{align*}
&h_n=e^{-n},\\
& \eta\in (0,1)\text{ a sufficiently small constant to be specified at the end of the proof of Proposition \ref{keylemma}}. 
\end{align*}
All the constants appearing later will be independent of $\eta$ unless we state it explicitly.

\begin{itemize}
\item 
For $n\in\N$, let 
\begin{equation}
\label{good parabolic fixed points}
P_{n+1}=\{p\in\calP:\,\ \eta h(p)\in(h_{n+1},h_n],\ B(p,h_n/(4\eta))\subset\Omega_{n} \}.
\end{equation}

\item For any $p\in P_{n+1}$, write $p=\gamma p_i$ with $\gamma\in \Gamma$ the representation of $p$. Construct $J_p$ and $N_p$ as in the previous section.

\item Set
\begin{align*}
&\Omega_{n+1}=\Omega_n-D_{n+1}=\Omega_n-\cup_{p\in P_{n+1}}J_{p}.
\end{align*}

\end{itemize}


Using the definition of $J_p$, Lemma~\ref{equ:jp} and the separation property (Lemma \ref{lem:pdistance}), it can be shown that the sets $J_p$'s with $p\in P_n$ and $n\in \mathbb{N}$ are mutually disjoint (Lemma \ref{lem:separation}) and inside $\Delta_{0}$. In Proposition \ref{keylemma}, it will be shown that the union $\cup_n\cup_{p\in P_n}J_p$ is conull in $\Delta_0$ with respect to the PS measure $\mu$. By \eqref{flower2}, the countable disjoint union
\[ \bigcup_{n\in\N}\bigcup_{p=\gamma p_i\in P_n}\bigcup_{\gamma_1\in N_p}\gamma\gamma_1\Delta_{p_i} \]
is also conull in $\Delta_0$ with respect to PS measure. On each set $\gamma\gamma_1\Delta_{p_i}$ we have an expanding map given by $(\gamma\gamma_1)^{-1}$ which maps this set to $\Delta_{p_i}$. For one cusp case, these are the countable collection of disjoint open subsets and the expanding map. When there are multi-cusps, this is the first step to construct the coding and the rest will be provided in Section \ref{sec:exptail} and \ref{sec:codmulti}.

\bigskip{}
The main result of this section is the following proposition.
\begin{prop}\label{keylemma}
	There exist $\epsilon_0>0$ and $N>0$ such that for all $n>N$, we have
	\[\mu(\Omega_n)\leq (1-\epsilon_0)^n. \]
\end{prop}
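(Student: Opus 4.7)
The plan is to establish the inductive bound $\mu(\Omega_{n+1}) \leq (1-\epsilon_0)\mu(\Omega_n)$ for all sufficiently large $n$, from which the proposition follows immediately. Equivalently, I show that the removed set $D_{n+1} = \Omega_n \setminus \Omega_{n+1} = \bigcup_{p\in P_{n+1}} J_p$ occupies a definite fraction of $\mu(\Omega_n)$.

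\textbf{Step 1: Decomposition into core and collar.} Split $\Omega_n \cap \Lambda_\Gamma$ into a \emph{core} $G_n := \{x \in \Omega_n : d(x,\partial\Omega_n) > h_n/(2\eta)\}$ and a \emph{collar} $B_n := (\Omega_n\cap\Lambda_\Gamma) \setminus G_n$. The point of the threshold is that for $x \in G_n$ and any parabolic fixed point $p$ with $\eta h_p \in (h_{n+1},h_n]$ and $|x-p| \leq \eta h_p$, the triangle inequality forces $B(p,\eta h_p) \subset \Omega_n$ and $d(p,\partial\Omega_n) > h_n/(4\eta)$; hence $p \in P_{n+1}$ and $x \in J_p$ by Lemma~\ref{lem:jp}.

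\textbf{Step 2: Covering the core by flowers via recurrence.} For each $x \in G_n\cap\Lambda_\Gamma$, invoke the geodesic-flow recurrence into the cusp neighbourhood $\mathcal{C}_\eta$ (the ``recurrence of the geodesic flow'' alluded to in the introduction, i.e.\ Lemma~\ref{lem:recurrence}) to produce a parabolic fixed point $p\in\mathcal{P}$ with $\eta h_p \in (h_{n+1},h_n]$ and $|x-p| \leq c_{\four}\eta h_p$. By Step~1 such $p$ lies in $P_{n+1}$, so $x \in J_p \subset D_{n+1}$; therefore $G_n \subset D_{n+1}$, giving $\mu(D_{n+1}) \geq \mu(G_n) = \mu(\Omega_n) - \mu(B_n)$.

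\textbf{Step 3: Controlling the collar.} The boundary $\partial\Omega_n$ is a disjoint union of a portion of $\partial\Delta_0$ together with pieces of $\partial J_p$ for $p$ ranging over $P_1\cup\cdots\cup P_n$, whose intrinsic scales $\eta h_p \in (h_{k+1},h_k]$ thus span all $k \leq n$. Bound the $h_n/(2\eta)$-neighbourhood of each piece using Lemma~\ref{lem:boud} for $\partial\Delta_0$ and the recursive doubling estimate of Lemma~\ref{lem:jpr} for each $\partial J_p$. Since Lemma~\ref{lem:jpr} gives $\mu(N_{C\eta r}(\partial J_p)) \leq c\,\mu(N_r(\partial J_p))$ with $c=c(\eta)\to 0$ as $\eta\to 0$, iterating across scales and grouping the parabolic fixed points into \emph{equivalence classes} (as hinted in the introduction) to avoid double-counting overlapping collars---using the separation Lemma~\ref{lem:pdistance} together with the doubling property Proposition~\ref{double}---yields $\mu(B_n) \leq C\eta^\beta\mu(\Omega_n)$ for some $\beta>0$ and $C$ independent of $\eta$ and $n$. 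Choosing $\eta$ small enough that $C\eta^\beta < 1$ and setting $\epsilon_0 := 1 - C\eta^\beta > 0$ then closes the induction.

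\textbf{Main obstacle.} Step~3 is the technical core. The boundary $\partial\Omega_n$ is a multi-scale, fractal-like object, and simultaneously controlling the $h_n/(2\eta)$-neighbourhoods of flower boundaries from every previous generation requires a careful bookkeeping: without the equivalence-class grouping, the naive sum over $p\in\bigcup_{k\leq n}P_k$ would lose a factor growing with $n$. The friendliness estimate of the PS measure must therefore be applied uniformly across scales, and this is precisely where the fine structure hinted at in the introduction becomes indispensable. By contrast, Steps~1 and~2 are essentially geometric and should be reasonably straightforward once the recurrence lemma is available.
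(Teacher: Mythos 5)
Your Step~2 conclusion $G_n\subset D_{n+1}$ is false, and this error propagates through the entire argument. The recurrence lemma (Lemma~\ref{lem:recurrence}, via Lemma~\ref{lem:parabolic}) gives, for a point $x$ with $\calG_{n}\tilde x$ in a fixed compact set, a parabolic fixed point $p\in B(x,h_n)$ with height $h_p\in h_n[e^{-U_0},1]$. This is much weaker than what you claim: first, $|x-p|\leq h_n$, which is much larger than $c_{\four}\eta h_p$ (since $h_p\leq h_n$ and $c_{\four}\eta<1$), so $x$ need not lie in $J_p$; second, the height window $h_n[e^{-U_0},1]$ spans roughly $U_0$ generations, so $p$ lands in $P_{n+l}$ (or $Q_{n+l}$) for some $l$ up to about $U_0+2\lfloor-\log\eta\rfloor$, not in $P_{n+1}$. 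Consequently $D_{n+1}$ alone cannot cover the core: what recurrence actually produces is that $\bigcup_{l=1}^{N}P_{n+l}$ is $Ch_n$-\emph{dense} in the well-separated part $A_n'$ of $\Omega_n$, and one then uses the doubling property to convert density into a definite fraction of measure removed over the $N$-step window $\bigcup_{l=1}^{N}D_{n+l}$. A single-step contraction $\mu(\Omega_{n+1})\leq(1-\epsilon_0)\mu(\Omega_n)$ is not available.

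There is a second, independent gap: your core/collar split misidentifies the bad set. The real obstruction is not the collar $\{d(x,\partial\Omega_n)\leq h_n/(2\eta)\}$ (which the paper does control separately, via Lemma~\ref{lem:bou}, as $\Omega_n'$). It is the set of $x\in\Omega_n$ whose geodesic is already \emph{deep inside a cusp at time $n$} without that cusp excursion having been captured by any $J_p$ — precisely the balls $B_{p,n}$ around the boundary-adjacent parabolic points in $Q_l''$, $l\leq n$. This set, called $B_n$ in the paper, is not a thin collar: it can occupy a bounded-away-from-zero fraction of $\mu(\Omega_n)$, it does not shrink with $\eta$, and Lemma~\ref{lem:parabolic} cannot be applied on it because those geodesics are not in the compact set $K_{c_\nine\eta}$ at time $n$. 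The paper handles it through the energy exchange argument: Lemma~\ref{lem:sub2} shows a definite fraction of $B_n$ returns to $A_{n+1}$ each step, Lemma~\ref{lem:sub1i} bounds the leakage $A_n\to B_{n+1}\cup D_{n+1}$, and the ratio $t_n=\mu(A_n)/\mu(B_n)$ is then shown to be uniformly bounded below by a convexity argument on the resulting fractional-linear recursion. Only then does the $N$-step density/doubling estimate of Lemma~\ref{lem:sub1ii}, applied on $A_n'$, close the argument. Your proposal has neither the $A_n/B_n$ dichotomy nor the ratio-tracking, and without them the proof cannot be repaired by tuning $\eta$.
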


For one cusp case, this yields Proposition \ref{prop:coding} (1). Moreover, the exponential tail property \eqref{sum} will follow from Proposition \ref{keylemma} rather directly and it will be proved in Section~\ref{sec:exptail}. To prove this proposition, we need a lot of preparations and we postpone its proof to the end of Section~\ref{sec:energy}.

\subsection{Separation}
\label{sec:sep}

\begin{lem}[Separation property]
\label{lem:pdistance}
For any two different parabolic fixed points $p, p'$, we have 
\begin{equation*}
d(p,p')>\sqrt{h(p) h(p')}.
\end{equation*}
\end{lem}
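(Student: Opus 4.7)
The plan is to translate the statement into elementary Euclidean geometry, using the fact that the horoballs $H_p$ fixed in Section~\ref{sec:parmea} were chosen pairwise disjoint.

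First I would observe that, in the upper-half space model, a horoball $H_p$ based at a finite point $p\in\R^d\subset\partial\H^{d+1}$ is precisely an open Euclidean ball tangent to $\R^d$ at $p$. Since the height $h_p=\sup_{y\in H_p}h(y)$ is the maximum of the last coordinate on $H_p$ and the ball is tangent to $\R^d$ at $p$, the ball must have Euclidean center $(p,h_p/2)$ and Euclidean radius $h_p/2$. (If one of the two points is $p_i=\infty$, the statement is vacuous because $h_\infty$ is infinite in the sense we would have to set up; in practice the lemma will be applied to finite parabolic fixed points, or the argument is done after conjugating by a suitable $g_i$ as in Section~\ref{sec:multi}.)

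Next I would use the construction: $H_p$ and $H_{p'}$ are disjoint open Euclidean balls. Hence the Euclidean distance between their centers is at least the sum of their radii,
\begin{equation*}
\sqrt{d(p,p')^2+\tfrac14(h_p-h_{p'})^2}\ \geq\ \tfrac12(h_p+h_{p'}).
\end{equation*}
Squaring both sides, the term $\tfrac14(h_p^2+h_{p'}^2)$ cancels and one is left with
\begin{equation*}
d(p,p')^2\ \geq\ h_p h_{p'},
\end{equation*}
which is the estimate up to equality versus strict inequality.

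For the strict inequality, equality in the previous chain would force the two open balls to be internally tangent at a single point $q\in\H^{d+1}$. One can then either appeal to the fact that the horoballs were chosen with a little room to spare (enlarging each slightly still keeps them disjoint by discreteness of $\Gamma$), or argue directly: after replacing $H_p$ by $(1+\varepsilon)H_p$ for small $\varepsilon>0$, the enlarged horoball is still a horocusp region and still disjoint from $H_{p'}$, so the inequality $d(p,p')^2\geq (1+\varepsilon)h_p h_{p'}$ holds and letting $\varepsilon\downarrow 0$ gives the desired strict inequality. The only mildly delicate step is this last tangency argument; the Euclidean computation itself is a one-line consequence of the disjointness of the horoballs.
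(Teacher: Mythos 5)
Your Euclidean reduction is exactly the ``elementary'' argument the paper alludes to (the paper only writes ``The proof is elementary, due to disjointness of horoballs''), and the computation identifying $H_p$ as the Euclidean ball of center $(p,h_p/2)$ and radius $h_p/2$, followed by squaring the triangle inequality on centers, is correct and gives $d(p,p')^2\geq h_ph_{p'}$.

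The only weak spot is your treatment of the strict inequality. Two remarks. First, the clause ``letting $\varepsilon\downarrow 0$ gives the desired strict inequality'' is logically backwards: a single $\varepsilon>0$ with $d(p,p')^2\geq(1+\varepsilon)h_ph_{p'}$ already gives strictness, whereas the limit $\varepsilon\to 0$ only recovers the weak inequality. Second, and more seriously, the claim that $(1+\varepsilon)H_p$ remains disjoint from $H_{p'}$ is precisely what fails if the two horoballs are tangent, so as written the argument is circular: you are implicitly assuming the conclusion (no tangency) to run the $\varepsilon$-enlargement. The honest fix is one of: (i) build the margin into the original choice of horoballs, i.e.\ pick them so that the \emph{closures} are pairwise disjoint (this is always possible, since one may shrink a horocusp region slightly and preserve all the required properties), and then the disjoint-closures version of the center-distance inequality is strict; or (ii) simply note that in every subsequent use of this lemma in the paper (Lemmas \ref{lem:separation}, \ref{lem:rprq}, \ref{lem:pbn}, \ref{lem:sub2}) the non-strict bound $d(p,p')\geq\sqrt{h_ph_{p'}}$ already suffices, so the strictness is inessential. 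Either way the core of your proof is sound.
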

\begin{proof}
	Let $x,x'$ be the euclidean centers of $H_p$ and $H_{p'}$ repectively. By disjointness of horoballs, then $d_E(x,x')\geq (h(p)+h(p')/2$. By the Pythagoras' theorem, we obtain
	\[d(p,p')\geq \sqrt{d_E(x,x')^2-((h(p)-h(p'))/2)^2}\geq \sqrt{h(p)h(p')}. \qedhere \]
\end{proof}
This property plays a key role in the construction of the coding and the proof of Proposition~\ref{keylemma}.

\begin{lem}
\label{lem:separation}
	If $\eta<1/(4eC_{\five})$, then
	the sets $J_p$'s with $p\in P_n$ and $n\in \mathbb{N}$ are mutually disjoint, and the distance between any two connected components of $\partial\Omega_n$ is strictly greater than $h_n/(2\eta)$.
\end{lem}

\begin{proof}
Notice that $\Omega_{n}=\Omega_{n-1}-\cup_{p\in P_n} J_p$. By induction, we only need to prove two cases.

Case 1: We consider $J_p,J_{p'}$ with distinct $p,p'$ in $P_n$. Using Lemma~\ref{equ:jp} and Lemma~\ref{lem:pdistance}, we obtain
\begin{align*}
\sqrt{h(p)h(p')}-C_{\five}\eta (h(p)+h(p'))\geq \frac{h_n}{\eta}-2C_{\five} h_{n-1}\geq \frac{h_n}{2\eta}.
\end{align*}
Therefore, two sets $J_p, J_{p'}$ are disjoint and the distance between them is at least $h_n/(2\eta)$.

Case 2: We consider $J_p$ with $p\in P_n$ and $\Omega_{n-1}$. By \eqref{good parabolic fixed points} and Lemma~\ref{equ:jp}, the distance between $J_{p}$ and $\partial\Omega_{n-1}$ is also greater than 
\begin{align*}
\frac{h_{n-1}}{4\eta} -C_{\five}\eta h(p) \geq \frac{h_{n-1}}{4\eta} -C_{\five}h_{n-1}>\frac{h_n}{2\eta}.
\end{align*}
Hence $J_p$ is inside $\Omega_{n-1}$.

By induction, for different connected components of $\Omega_{n}$, their distance is at least $h_{n}/(2\eta)$.
\end{proof}

\subsection{Equivalence classes in $Q_n$}
\label{sec:equivalence classes}

\paragraph{Motivation of equivalence classes.}
 We introduce the notion of equivalence classes to attain the exponential tail property. 
Let's start with some definitions.
The visual map $\pi:\operatorname{T}^1(\mathbb{H}^{d+1})\to \partial \mathbb{H}^{d+1}$ is defined by
\begin{equation*}
\pi(x)=\lim_{t\to \infty} \mathcal{G}_t(x),
\end{equation*}
which maps $x$ to the forward endpoint in $\partial \mathbb{H}^{d+1}$ of the geodesic defined by $x$. 

Recall that we fix $p_1$ as $\infty$ and $H_{\infty}$ is the horoball based at $\infty$ given by $\mathbb{R}^d\times \{x\in \mathbb{R}:\,x>1\}$. Let $\widetilde{H_{\infty}}$ be the corresponding unstable horosphere. 
More precisely, let $x_o$ be the unit tangent vector based at $(0,1)\in \R^d\times \R$ with $\pi(x_o)=0$. Then $\widetilde{H_{\infty}}$ is the set of $x$ in $\operatorname{T}^1(\H^{d+1})$ such that $d(\calG_{-t}x_o,\calG_{-t}x)\rightarrow 0$ as $t\rightarrow +\infty$. For a set $E\subset \partial\H^{d+1}-\{\infty\}\simeq \R^d$, let
\begin{align*}
	\widetilde{E}=\pi|_{\widetilde{H_\infty}}^{-1}(E)
\end{align*}
 be the preimage of $E$ under the map $\pi$ restricted on $\widetilde{H_\infty}$.

Let 
\begin{align*}
&H_p(\eta)\,\,\, \text{be the horoball based at}\,\,\, p\,\,\, \text{with height equal to}\,\,\, \eta h(p),\\
& \mathcal C_\eta=\cup_{p\in \calP}\Gamma \T^1(H_p(\eta)).
\end{align*}
Then $\calC_\eta$ is the lift of the unit tangent bundle over proper horocusps of $M$. 

At the time $n$, the set $\calG_n\widetilde{\Omega_n}$ is a large sheet with many holes, consisting of ``flowers" of different sizes, corresponding to different $\calG_n \widetilde{J_p}$. Here is the source of the exponential tail: for $x\in \calG_n\widetilde{\Omega_n} $ with $\pi(x)\in\Lambda_{\Gamma}\cap\Omega_0$ and $x$ not in the cusp region $\calC_\eta$, the recurrence of geodesic flow implies that there exits a new flower $\calG_n\widetilde{ J_p}$ inside the neighborhood of $x$ with size bounded below (Lemma \ref{lem:parabolic}). So a fixed proportion of the neighborhood of $x$ will be coded in a fixed time (Lemma \ref{lem:sub1ii}).

Lemma \ref{lem:parabolic} doesn't apply to the set $\calG_n\widetilde{\Omega_n}\cap \calC_\eta$. 
Points in the set $\pi(\calG_n\widetilde{\Omega_n}\cap \calC_\eta)$ are contained in the balls centered at certain parabolic fixed points. We want to argue that from step $n$ to step $n+1$, the points near the outer edge of the balls will escape the cusps. We illustrate the scheme of the proof. 
For $n\in\N$, define
\begin{equation*}
Q_{n+1}=\{p\in\calP:\, \eta h(p)\in (h_{n+1},h_n],\ B(p,\eta h(p))\cap \Omega_n\neq\emptyset,\ d(p,\partial \Omega_n)\leq h_n/(4\eta)\}.
\end{equation*}
Then we can cover $\calC_\eta\cap\calG_n\widetilde{\Omega_n}$ by the union of balls centered at parabolic fixed points in $Q_m$ with $m\leq n$ 
(Lemma \ref{lem:np}, \ref{lem:cuspBn}):
$$ \calC_\eta\cap\calG_n\widetilde{\Omega_n}=\calG_n(\cup_{m\leq n}\cup_{p\in Q_m}\widetilde{B}(p,r(p,n)))\cap \calG_n\widetilde{\Omega_n},$$
where $r(p,n)=\sqrt{\eta h(p)h_n}$. 
From step $n$ to step $n+1$, the part 
\[\calG_{n+1}(\widetilde{B}(p,r(p,n))-\widetilde{B}(p,r(p,n+1))) \]
will leave the cusp region $\calC_\eta$. We want a lower bound for the measure of $({B}(p,r(p,n))-{B}(p,r(p,n+1)))\cap\Omega_{n}$. The main difficulty is that $B(p,r(p,n))\cap \Omega_{n}$ may not be a full ball, in which case its PS measure is hard to estimate. 

We introduce the notion of equivalence classes to resolve this issue.
Consider a subset of $Q_n$:
\begin{equation*}
Q_{n}'=\{p\in Q_{n}:\,\ B(p,r(p,n))\cap\partial \Omega_{n-1}\neq\emptyset \}. 
\end{equation*}
Pick any $p\in Q_n'$. As the ball $B(p,r(p,n))\cap \Omega_{n-1}$ is not a full ball, we will pair it with another partial ball and use the doubling property of the PS measure. Notice that there is a unique component in $\partial \Omega_{n-1}$ closest to $p$ (Lemma \ref{lem:separation}).
 If it is $\partial \Omega_0$, note that $\Lambda_{\Gamma}\cap\R^d$ is covered by the translations of $\Delta_0$, so the symmetry property of these translations gives the point $p'$ to pair with $p$ (if $p$ is around the corners of $\partial \Omega_0$, we may need more than one point to pair with $p$). If it is some $\partial J_q$, write $q=\gamma^{-1}p_i$ with $\gamma^{-1}\in \Gamma$ the representation of $q$. We map $B(p,r(p,n))$ and $\partial J_q$ by $g_i\gamma $ and get a picture similar to the previous case. 
We find the paring point for $g_i\gamma p$ and map it back to get the one for $p$. The work lies in modifying the radius $r(p,n)$: $r(p,n)$ is defined to $\sqrt{\eta h_n h(p)}$ depending on $h(p)$, and it may happen that the horosphere attached to the pairing point of $p$ has a different height.

\begin{figure}
	\def\svgwidth{350bp}
	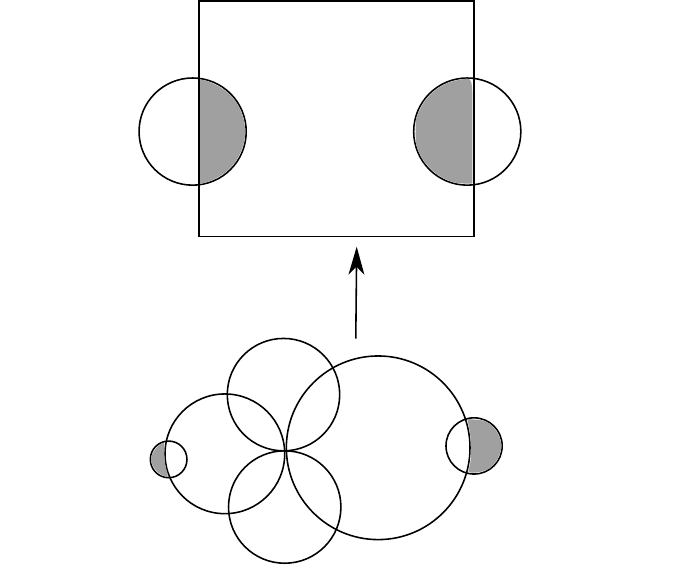
	\caption{Pairing partial balls: $q=\gamma^{-1}p_i$, $p'=g_i\gamma p$, $B_1=B(g_i\gamma p, \tilde r_{p,m}/C_{\six})$, $\gamma_1\in (g_i\Gamma g_i^{-1})_{\infty}$.}
\end{figure}


\bigskip{}
\paragraph{Finding the radius.}

\textbf{For Lemma~\ref{lem:rprq} - Lemma~\ref{lem:pjq}, we consider $p\in Q_n'$ such that the component in $\partial \Omega_{n-1}$ closest to $p$ is $\partial J_{q}$ with $q\in \cup_{l=1}^{n-1}P_l$. Write $q= \gamma^{-1}p_i$ for some $\gamma\in \Gamma$ the representation of $q$.} 

\begin{lem}
\label{lem:rprq}
There exists $C>1$ such that we have 
\begin{align*}
\eta h(p)\leq h_{n-1}\leq C\eta^3 h(q),\ \ 
 \frac{\eta h(g_i p)}{C}\leq h_{n-1}\leq C \eta^3 h(g_i q).
\end{align*}
\end{lem}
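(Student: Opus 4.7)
The plan is as follows. The left-hand bound $\eta h_p \le h_{n-1}$ is simply the definition of $Q_n$, which requires $\eta h_p \in (h_n, h_{n-1}]$. Its $g_i$-analogue $\eta h_{g_ip}/C \le h_{n-1}$ then follows at once from Lemma~\ref{lem:height}, which asserts $h_{g_ip} \asymp h_p$. These two inequalities are essentially bookkeeping.

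For the substantive bound $h_{n-1} \le C\eta^3 h_q$, I plan to combine three ingredients. First, the definition of $Q_n'$ supplies a point $z \in \partial \Omega_{n-1}$ with $d(p,z) \le \sqrt{\eta h_n h_p}$, and by the hypothesis of the lemma this $z$ lies on $\partial J_q$. Second, the flower containment \eqref{equ:jp} gives $\partial J_q \subset \overline{B}(q, C_{\five}\eta h_q)$, so $d(q,z) \le C_{\five}\eta h_q$. Third, the separation property Lemma~\ref{lem:pdistance} gives $d(p,q) \ge \sqrt{h_p h_q}$. The triangle inequality then yields
$$\sqrt{h_p h_q} \;\le\; d(p,q) \;\le\; \sqrt{\eta h_n h_p} + C_{\five}\eta h_q.$$

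To extract the claim from this, I will do a short case analysis on which of the two terms on the right dominates. Since $q \in \bigcup_{l=1}^{n-1} P_l$, we have $\eta h_q > h_{n-1} \ge h_n$, hence $h_q > h_n/\eta$. If $\sqrt{\eta h_n h_p} \ge C_{\five}\eta h_q$, then $\sqrt{h_p h_q} \le 2\sqrt{\eta h_n h_p}$, which rearranges to $h_q \le 4\eta h_n$; this is incompatible with $h_q > h_n/\eta$ once $\eta$ is small enough. So we must be in the opposite regime $\sqrt{h_p h_q} \le 2 C_{\five}\eta h_q$, which gives $h_p \le 4 C_{\five}^2 \eta^2 h_q$. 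Combining with $h_p > h_n/\eta = h_{n-1}/(e\eta)$ (from $p \in Q_n$) yields $h_{n-1} \le 4eC_{\five}^2\, \eta^3 h_q$. The corresponding inequality $h_{n-1} \le C\eta^3 h_{g_iq}$ follows by a second application of Lemma~\ref{lem:height}.

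There is no substantive obstacle; the only thing to verify is that the smallness requirement on $\eta$ used to close the case analysis is consistent with the choice of $\eta$ already being imposed in the coding construction of Section~\ref{coding procedure}, which it is.
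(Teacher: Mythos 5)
Your proof is correct and uses the same key ingredients as the paper's: the separation property (Lemma~\ref{lem:pdistance}), the flower containment \eqref{equ:jp}, and the constraints coming from $p\in Q_n'$ and $q\in \cup_{l\le n-1}P_l$. The paper takes a slightly more direct route at the last step: rather than your two-term bound $d(p,q)\le \sqrt{\eta h_n h_p}+C_{\five}\eta h_q$ followed by a case split, it first notes $d(p,\partial J_q)\le\sqrt{\eta h_n h_p}\le h_{n-1}$ and then immediately absorbs the first term using $h_{n-1}\leq \eta h_q$ (which holds because $q\in\cup_{l\le n-1}P_l$), giving $d(p,q)\le (1+C_{\five})\eta h_q$ in one line; squaring against the separation lower bound $d(p,q)\ge\sqrt{h_{n-1}h_q/(e\eta)}$ then finishes without any case analysis. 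Your case split is a harmless detour and the argument closes cleanly; the only tiny imprecision is the phrasing that the witness point $z$ "lies on $\partial J_q$," which is not literally forced, but since $\partial J_q$ is the closest component you still get $d(p,\partial J_q)\le\sqrt{\eta h_n h_p}$, which is all that is used.
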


\begin{proof}

It follows from Lemma~\ref{lem:pdistance} that
\begin{equation*}
d(p,q)\geq \sqrt{h(p) h(q)}\geq \sqrt{h_{n-1} h(q)/(e\eta)}.
\end{equation*}
Meanwhile by Lemma~\ref{equ:jp}, we have 
\begin{equation*}
d(p,q)\leq d(p,\partial J_{q})+\max_{y\in\partial J_q}d(y, q)\leq r(n,p)+C_5\eta h(q)\leq h_{n-1}+C_{\five}\eta h(q)\leq (1+C_{\five})\eta h(q).
\end{equation*}
So the above two inequalities lead to first statement. The second statement follows easily from the first statement and Lemma~\ref{lem:height}.
\end{proof}

\begin{lem}
\label{lem:annulus}
There exists $C>1$ such that 
\begin{equation*}
B(g_ip, h(g_i p))\subset B(g_iq, C\eta h(g_i q))-B(g_iq, \eta h(g_i q)/C).
\end{equation*}
\end{lem}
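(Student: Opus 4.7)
The plan is to show two separate estimates: (a) the radius $h_{g_ip}$ of the ball under consideration is negligibly small on the scale of $\eta h_{g_iq}$, and (b) the distance $d(g_ip, g_iq)$ is comparable to $\eta h_{g_iq}$, with constants uniform in $n$. Once both are established, a direct triangle-inequality argument places $B(g_ip, h_{g_ip})$ into the claimed annulus $B(g_iq, C\eta h_{g_iq}) \setminus B(g_iq, \eta h_{g_iq}/C)$.

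Estimate (a) is immediate from Lemma~\ref{lem:rprq}, which gives $h_{g_ip} \leq C h_{n-1}/\eta \leq C'\eta^2 h_{g_iq}$, and hence $h_{g_ip} \ll \eta h_{g_iq}$ for $\eta$ small. For estimate (b), I first bound $d(p,q)$ in the original coordinates and then transfer via Lemma~\ref{lem:bilip}. Using \eqref{equ:jp} we have $B(q, \eta h_q/C_{\five}) \subset J_q \subset B(q, C_{\five} \eta h_q)$, so $d(q, \partial J_q)$ is comparable to $\eta h_q$. Since $p \in Q_n'$ and by assumption $\partial J_q$ is the component of $\partial \Omega_{n-1}$ nearest to $p$, we have $d(p, \partial J_q) \leq \sqrt{\eta h_n h_p}$, which Lemma~\ref{lem:rprq} bounds by $O(\eta^3 h_q)$, negligible compared to $\eta h_q$. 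The triangle inequality then yields
$$\frac{\eta h_q}{C_{\five}} - O(\eta^3 h_q) \,\leq\, d(p,q) \,\leq\, C_{\five}\eta h_q + O(\eta^3 h_q),$$
so $d(p, q) \asymp \eta h_q$ for $\eta$ small, and Lemma~\ref{lem:height} together with Lemma~\ref{lem:bilip} upgrades this to $d(g_ip, g_iq) \asymp \eta h_{g_iq}$. Combining with (a) closes the argument, since $d(g_ip, g_iq) \pm h_{g_ip} \asymp \eta h_{g_iq}$.

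The main technical point to monitor is the applicability of Lemma~\ref{lem:bilip}, which is stated on $\Delta$: one must check that $p$ and $q$ (and small neighborhoods of them) lie in a region where the bi-Lipschitz constants of $g_i$ are uniformly controlled. For $q \in P_l$ with $l < n$ this holds automatically, since by construction $B(q, \eta h_q) \subset \Omega_{l-1} \subset \Delta_0$. For $p \in Q_n'$ it follows because $p$ is within distance $\eta h_p \leq C\eta^2 h_q \ll \eta h_q$ of $\Omega_n \subset \Delta_0$, and both points remain uniformly separated from the singularity $g_i^{-1}\infty = p_i$ by \eqref{glpi}; this verification is routine rather than a serious obstacle.
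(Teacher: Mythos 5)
Your proof is correct and uses essentially the same argument as the paper: bound the distance from $B(g_ip,h_{g_ip})$ to $g_iq$ via the triangle inequality through $\partial J_{g_iq}$, using \eqref{equ:jp} for $d(g_iq,\partial J_{g_iq})\asymp\eta h_{g_iq}$, the definition of $Q_n'$ for $d(p,\partial J_q)$, and Lemma~\ref{lem:rprq} to show both $d(p,\partial J_q)$ and $h_{g_ip}$ are $O(\eta^2 h_{g_iq})$, hence negligible. The only stylistic difference is that you establish $d(p,q)\asymp\eta h_q$ first and transfer at the end via Lemma~\ref{lem:bilip}, whereas the paper works directly in the $g_i$-coordinates throughout; this is an equivalent reorganization, not a different route.
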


\begin{proof}
For any $\xi\in \partial B(g_ip, h(g_i p))$, an upper bound for $d(\xi, g_iq)$ is given by
\begin{align*}
d(\xi, g_iq)\leq &d(\xi, g_ip)+d(g_ip,\partial J_{g_iq})+\max_{y\in\partial J_{g_iq} }d(y,g_iq)
\leq h(g_i p)+C h_{n-1}+\eta h(g_i q) \leq C\eta h(g_i q).
\end{align*}
A lower bound for $d(\xi, g_iq)$ is given by
\begin{align*}
d(\xi, g_iq)\geq &d(g_iq, \partial J_{g_iq}) -d(g_ip,\partial J_{g_iq})-h(g_i p)
\geq c_{\four} \eta h(g_i q) -Ch_{n-1}-h(g_i p) \geq c_{\four} \eta h(g_i q) -C\eta^2h(g_i q).
\end{align*}
Hence by taking $\eta$ sufficiently small, we reach the conclusion. 
\end{proof}

\begin{lem}
\label{lem:rgammaprp}
There exists $C>1$ such that we have
\begin{equation*}
\frac{ \h(g_i p)}{C\eta^2 \h(g_i q)}\leq \h(g_i \gamma p)\leq \frac{C\h(g_i p)}{\eta^2 \h(g_i q)}.
 \end{equation*}
\end{lem}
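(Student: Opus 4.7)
The plan is to reduce to Lemma~\ref{lem:rg1} applied inside the conjugated group $g_i\Gamma g_i^{-1}$, for which $\infty$ is a parabolic fixed point. Set $\tilde\gamma := g_i\gamma g_i^{-1}$. The identifications $\gamma^{-1}p_i=q$ and $g_ip_i=\infty$ give $\tilde\gamma^{-1}\infty = g_iq$, while $\tilde\gamma$ sends the parabolic fixed point $g_ip$ of $g_i\Gamma g_i^{-1}$ to $g_i\gamma p$. Hence Lemma~\ref{lem:rg1} applied with its ``$\gamma$'' taken to be $\tilde\gamma^{-1}$ and its ``$q$'' taken to be $g_ip$ outputs two-sided bounds on exactly $h_{g_i\gamma p}$:
\begin{equation*}
\frac{h_{g_iq}\,h_{g_ip}}{d(g_iq,g_ip)^2 + h_{g_ip}^2}\ \leq\ h_{g_i\gamma p}\ \leq\ \frac{h_{g_iq}\,h_{g_ip}}{(d(g_iq,g_ip)-h_{g_ip}/2)^2}.
\end{equation*}

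With this reduction in hand, the rest is bookkeeping. Lemma~\ref{lem:annulus} gives $d(g_iq,g_ip)\approx \eta\, h_{g_iq}$, and Lemma~\ref{lem:rprq} gives $h_{g_ip}\ll \eta^2 h_{g_iq}$. For the lower bound, both terms in the denominator are $\ll \eta^2 h_{g_iq}^2$ (the $h_{g_ip}^2$ term is even smaller, of order $\eta^4 h_{g_iq}^2$), yielding $h_{g_i\gamma p}\geq h_{g_ip}/(C\eta^2 h_{g_iq})$ for a suitable $C$. For the upper bound, since $h_{g_ip}\ll \eta^2 h_{g_iq}$ is a factor of $\eta$ smaller than $d(g_iq,g_ip)\approx \eta h_{g_iq}$, choosing $\eta$ small enough (as Section~\ref{coding procedure} allows us to do) forces $h_{g_ip}/2\leq d(g_iq,g_ip)/2$; the subtraction in the denominator then does not collapse, so $(d(g_iq,g_ip)-h_{g_ip}/2)^2\geq d(g_iq,g_ip)^2/4$ yields $h_{g_i\gamma p}\leq Ch_{g_ip}/(\eta^2 h_{g_iq})$.

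The only real subtlety is index bookkeeping after conjugating: one must be careful that the ``$p$'' of Lemma~\ref{lem:rg1} corresponds to $g_iq$ in our application (since $g_iq$ is the image of $\infty$ under the inverse of the conjugated element), so that $h_{g_iq}$, rather than $h_{g_ip}$, appears in the numerator of the bounds. No genuine analytic obstacle stands in the way beyond this careful identification.
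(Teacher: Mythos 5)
Your proof follows exactly the route the paper hints at (conjugating by $g_i$, applying Lemma~\ref{lem:rg1} to $g_i\gamma g_i^{-1}$, and feeding in Lemmas~\ref{lem:annulus} and~\ref{lem:rprq}), and the bookkeeping is sound. One point deserves comment. A literal application of Lemma~\ref{lem:rg1} with its ``$p$'' taken to be $g_iq$ and its ``$q$'' taken to be $g_ip$ would, as printed, put $(d(g_iq,g_ip)-h_{g_iq}/2)^2$ in the denominator of the upper bound, not the $(d(g_iq,g_ip)-h_{g_ip}/2)^2$ you wrote. Since $d(g_iq,g_ip)\approx\eta h_{g_iq}\ll h_{g_iq}/2$ by Lemma~\ref{lem:annulus}, the literal version is useless (negative denominator). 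The form you wrote is in fact the right one: the printed $h_p/2$ in \eqref{lower bound height} is a misprint for $h_q/2$, because the proof there bounds $d_E(y',p)\geq d(p,q)-h_q/2$ using that $y'$ ranges over a disk of radius $h_q/2$ around $q$, and the paper itself uses the $h_q/2$ form when it reapplies the lemma in Lemma~\ref{lem:puniform}. So your estimates are correct and your argument closes; the only caveat is that your final remark about ``index bookkeeping'' understates what happened --- you are not merely relabeling symbols in the quoted lemma but silently using its corrected form.
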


\begin{proof}
	
Apply Lemma~\ref{lem:rg1} to the horoball $H=H_{g_ip}$ based at $g_ip$ and the element $g=g_i\gamma g_i^{-1}$. Notice that $g_iq=(g_i\gamma g_i^{-1})^{-1}\infty=g^{-1}\infty$ and $h(g_i q)=h(H_{g_iq})=h(g^{-1}H_\infty)$.
We have
\[ \frac{h(g_i q)h(g_i p)}{d(g_iq,g_ip)^2+h(g_i p)^2}\leq h(g_i \gamma p)=h(gH)\leq \frac{h(g_i q)h(g_i p)}{(d(g_iq,g_ip)-h(g_i p) /2)^2}, \]

By Lemma~\ref{lem:rprq} and~\ref{lem:annulus}, we obtain the lemma.
\end{proof}

For any $m\in \N$, set
\begin{equation}
\label{equ:rpm}
\tilde{r}_{p,m}=\sqrt{\frac{h_mh(g_i \gamma p)}{\eta h(g_i q)}}.
\end{equation}
At the point $g_i\gamma p$, we will consider ball $B(g_i\gamma p, \tilde r_{p,m}/C_{\six})$, where $C_{\six}>1$ is a constant given in Lemma \ref{lem:pjq} such that $\tilde{r}_{p,m}/C_{\six}$ guarantees the equivalence classes we introduce later are well-defined. Once we have chosen the ball $B(g_i\gamma p, \tilde r_{p,m}/C_{\six})$, we will map it back by $(g_i\gamma)^{-1}$ to attain the ``correct" ball at $p$.


	By Lemma~\ref{lem:rgammaprp},~\ref{lem:height} and~\ref{lem:rprq}, we have 
\begin{equation}\label{equ:rpmless}
	\tilde{r}_{p,n}\leq \frac{C\sqrt{\eta \h_n\h(g_i p)}}{\eta^2\h(g_i q)}\leq \frac{C\h(g_i p)}{\eta \h(g_i q)}\leq C\eta h(g_i\gamma p).
\end{equation}


\begin{lem}\label{lem:pjq}
There exists $C_{\six}>1$ such that for any point $p'$, if $d(g_i\gamma p',g_i\gamma \partial J_q)\leq \tilde{r}_{p,n}/C_{\six}$, then $d(p',\partial J_q)\leq h_n$.
\end{lem}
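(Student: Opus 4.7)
This is a bounded-distortion estimate for the M\"obius map $\phi := g_i\gamma$. Since $\phi^{-1}\infty = \gamma^{-1}g_i^{-1}\infty = \gamma^{-1}p_i = q$, the Bruhat decomposition form of Lemma~\ref{lem:gammax} (valid for any element of $G$ not fixing $\infty$, not just for elements of $\Gamma$) gives
\[
|\phi'(\xi)| = \frac{h_\phi}{d(\xi,q)^2}, \qquad |(\phi^{-1})'(y)| = \frac{h_\phi}{d(y,\phi\infty)^2},
\]
for a single scale constant $h_\phi$ (the two scales coincide by $|\phi'(p)|\cdot|(\phi^{-1})'(\phi p)|=1$). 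Matching these two expressions at $p$, using $h_{g_i\gamma p}\approx h_p/(\eta^2 h_q)$ (Lemma~\ref{lem:rgammaprp}), $d(p,q)\approx \eta h_q$ (Lemma~\ref{lem:annulus} together with the bi-Lipschitz Lemma~\ref{lem:bilip}), and Lemma~\ref{lem:height}, pins down $h_\phi\approx h_q$.

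I next work in $\phi$-coordinates. By Lemma~\ref{lem:jp}, $\partial J_q\subset B(q,\eta h_q)\setminus B(q,c_{\four}\eta h_q)$, so the image formula in Lemma~\ref{lem:explicit} shows that $\phi\partial J_q$ lies in an annulus around $\phi\infty$ with both radii comparable to $1/\eta$. Combining Lemma~\ref{lem:rgammaprp} in~\eqref{equ:rpm} with Lemma~\ref{lem:rprq} and $\eta h_p\leq h_{n-1}\leq C\eta^3 h_q$ yields $\tilde r_{p,n}\leq C\eta$, so in particular $\tilde r_{p,n}/C_{\six}$ is much smaller than $1/\eta$ for small $\eta$. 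Consequently the $(\tilde r_{p,n}/C_{\six})$-neighborhood of $\phi\partial J_q$ stays inside a concentric annulus around $\phi\infty$ of radii comparable to $1/\eta$, on which $|(\phi^{-1})'(y)|\approx \eta^2 h_q$ uniformly.

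To conclude, pick $y^*\in\partial J_q$ realizing $d(\phi p',\phi y^*) = d(\phi p',\phi\partial J_q)\leq \tilde r_{p,n}/C_{\six}$, and integrate the uniform derivative bound along the segment from $\phi p'$ to $\phi y^*$ to get
\[
d(p',y^*)\ll \eta^2 h_q\cdot\tilde r_{p,n}/C_{\six}\ll \eta^2 h_q\cdot\sqrt{h_n h_p/(\eta^3 h_q^2)}/C_{\six} = \sqrt{\eta h_n h_p}/C_{\six} \leq \sqrt{e}\,h_n/C_{\six},
\]
where the last inequality uses $\eta h_p\leq h_{n-1}=eh_n$. Choosing $C_{\six}$ sufficiently large gives $d(p',\partial J_q)\leq d(p',y^*)\leq h_n$, as required. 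The main subtlety in the whole argument is the verification that $\tilde r_{p,n}\ll 1/\eta$, which is what keeps the pulled-back neighborhood inside the annulus on which $|(\phi^{-1})'|$ is uniformly comparable; this is precisely where Lemma~\ref{lem:rprq} and Lemma~\ref{lem:rgammaprp} are combined, and it is also the reason for the precise choice of radius $\tilde r_{p,n}$ in~\eqref{equ:rpm}.
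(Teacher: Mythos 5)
Your proof is correct and follows essentially the same route as the paper: bound $\tilde{r}_{p,n}\leq C\eta$, place the segment from $g_i\gamma p'$ to $g_i\gamma\partial J_q$ in an annulus of radii comparable to $1/\eta$ on which the inverse derivative is $\approx\eta^2 h_q$, and integrate. The only repackaging is that you apply the Bruhat normal form directly to $\phi=g_i\gamma$, whereas the paper factors $\phi^{-1}=g_i^{-1}\circ(g_i\gamma^{-1}g_i^{-1})$ and quotes Lemma~\ref{lem:explicit} for the $\Gamma_i$-element $g_i\gamma^{-1}g_i^{-1}$ together with the bi-Lipschitz bound for $g_i$ from Lemma~\ref{lem:bilip} to get the same derivative estimate. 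One small tidy-up: your derivation of $h_\phi\approx h_q$ by ``matching at $p$'' via Lemma~\ref{lem:rgammaprp} is more work than necessary, since $\phi^{-1}H_\infty=\gamma^{-1}g_i^{-1}H_\infty=\gamma^{-1}H_{p_i}=H_q$ gives $h_\phi=h_q$ exactly, by the same reading of the Bruhat scale as in the proof of Lemma~\ref{lem:gammax}.
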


\begin{proof}
	It follows from Lemma~\ref{lem:jp} applying $g_i\gamma g_i^{-1}$ that for any $C_{\six}>1$, if $d(g_i\gamma p', g_i\gamma \partial J_q)\leq \tilde{r}_{p,n}/C_{\six}$, then 
	$$g_i\gamma p'=g_i\gamma g_i^{-1}(g_ip')\in B(g_ix_q, 2/(c_{\four}\eta))-B(g_ix_q,1/(2\eta)),$$
	where $g_ix_q=g_i\gamma p_i=g_i\gamma g_i^{-1}\infty$.
 For any $x$ in the line segment between $g_i\gamma p'$ and $g_i\gamma \partial J_{q}$, we use Lemma \ref{lem:explicit} to obtain $|(g_i\gamma^{-1} g_{i}^{-1})'x|\leq 4\eta^2 h(g_i q)$. 
	By Lemma~\ref{lem:bilip} and \eqref{equ:rpmless}, we obtain
	\begin{align*}
	d(p',\partial J_q)\leq Cd(g_ip',g_i\partial J_{q})\leq C\eta^2 \h(g_i q) d(g_i\gamma p',g_i\gamma \partial J_q)\leq C\eta^2\tilde{r}_{p,n}\h(g_i q)/C_{\six} \leq C\eta\h(g_i p)/C_{\six}.
	\end{align*}
	By taking $C_{\six}>1$ large enough, we have $d(p', \partial J_q)\leq \h_n$.
\end{proof}

\bigskip{}
\paragraph{Definition of equivalence classes.}

Now we define equivalence classes in $Q_n$. We define them by induction. For $Q_1$,
\begin{itemize}
\item for $p\in Q_1-Q_1'$, set the equivalence class $C(p)$ of $p$ to be $\{p\}$.

\item for $p\in Q_1'$, set
\begin{equation*}
C(p)=\{\gamma_1 p:\,\gamma_1 B(p,\sqrt{\eta h_1 h(p)})\cap \partial \Omega_0\neq \emptyset,\,\,\, \gamma_1\in \Gamma_{\infty}\}.
\end{equation*}

\end{itemize}
Set 
\begin{equation*}
Q_1'':=\cup_{p\in Q_1} C(p),
\end{equation*}
and for any $p'\in Q_1''$ and $m\geq 1$, define
\begin{align*}
r_{p',m}=\sqrt{\eta h_m h(p')},\ \ B_{p',m}=B(p,r_{p',m}).
\end{align*}

Suppose we have defined $Q_n''$. We define the equivalence classes in $Q_{n+1}$ and the set $Q_{n+1}''$ as follows:
\begin{itemize}
\item[Case 1:] for $p\in Q_{n+1}'-\cup_{l\leq n}Q_l''$ such that the component in $\partial \Omega_n$ closest to $p$ is $\partial \Omega_0$, set
\begin{equation*}
C(p)=\{\gamma_1p:\, \gamma_1B(p,\sqrt{\eta h_{n+1}\h(p)})\cap \partial \Omega_0\neq\emptyset,\,\, \gamma_1\in \Gamma_{\infty}\}.
\end{equation*}
For any $p'\in C(p)$ and $m\geq n+1$, define 
\begin{align*}
r_{p',m}=\sqrt{\eta h_m \h(p')},\ \ B_{p',m}=B(p',r_{p',m}).
\end{align*}
\item[Case 2:] for $p\in Q_{n+1}'-\cup_{l\leq n}Q_l''$ such that the component in $\partial \Omega_n$ closest to $p$ is some $J_{q}$, write $q=\gamma^{-1}p_i$ with $\gamma^{-1}$ the representation of $q$. Let $\tilde{r}_{p,n}$ and $C_{\six}$ be as given in (\ref{equ:rpm}) and Lemma~\ref{lem:pjq} respectively. If $B(g_i\gamma p,\tilde{r}_{p,n}/C_{\six})\cap \,g_i\gamma\partial J_q\neq\emptyset$, set
\begin{equation}
\label{equ:cp}
C(p)=\{(g_i\gamma)^{-1} \gamma_1g_i\gamma p:\, \gamma_1B(g_i\gamma p,\tilde{r}_{p,n}/C_{\six})\cap \,g_i\gamma\partial J_q\neq\emptyset,\,\,\gamma_1\in (g_i\Gamma g_i^{-1})_{\infty}\}.
\end{equation}
Otherwise, set $C(p)=\{p\}$.


For any $p'\in C(p)$ and $m\geq n+1$, define
\begin{align}
\label{correct radius}
 &r_{p',m}=\frac{1}{C_{\six}}\sqrt{\frac{h_m h(g_i \gamma p')}{\eta h(g_i q)}}\,\,\,(\text{which equals}\,\,\,r_{p,m}),\\
 \label{correct ball}
 &B_{p',m}=(g_i\gamma)^{-1}B(g_i\gamma p', r_{p',m}).
 \end{align} 
\item[Case 3:] for $p\in Q_{n+1}- \cup_{l\leq n}Q_l''$ such that $p$ does not belong to the union of equivalence classes defined in the previous two cases, set $C(p)=\{p\}$ and for any $m\geq n+1$, define 
\begin{align*}
r_{p,m}=\sqrt{\eta h_m h(p)},\ \ B_{p,m}=B(p,r_{p,m}).
\end{align*}
\end{itemize}
Set
\begin{equation*}
Q_{n+1}''=\bigcup_{p\in Q_{n+1}-\cup_{l\leq n}Q_l''}C(p).
\end{equation*}
Then $\cup_{1\leq l\leq (n+1)}Q_l''\supset \cup_{1\leq l\leq (n+1)}Q_l$.

It is worthwhile to point out that under our definition of equivalence classes, it may happen that for $p\in Q_n-\cup_{l<n}Q_l''$, its equivalence class $C(p)$ may contain points $p'$ whose associated horospheres don't appear in the time interval $[n-1,n)$. This is our motivation to establish results like Lemmas \ref{lem:puniform} and \ref{lem:Pn}.

In the following discussion of the points $p$ in $Q_{n}''$, if the definition of $p$ involves a boundary component of $\partial \Omega_{n-1}$, we will need to consider this boundary component a lot of the times. For simplicity, we call the boundary component used to define $p\in Q_{n}''$ \textbf{the associated boundary component of $p$}. 



\bigskip{}
\paragraph{Uniformity among equivalence classes.}

For $p\in Q_{n}-\cup_{l<n} Q_l''$,
we show that, up to some constant, the points in the equivalence class $C(p)$ are ``uniform". 

\begin{lem}
\label{lem:puniform}
There exists $C_{\seven}>1$ such that for any $p\in Q_{n}-\cup_{l<n} Q_l''$ and any $p'\in C(p)$ we have
\begin{equation*}
1/C_{\seven}\leq \h(p)/\h(p')\leq C_{\seven}.
\end{equation*}
\end{lem}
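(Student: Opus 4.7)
The plan is to split along the three cases used in Section~\ref{sec:equivalent classes} to define $C(p)$. Case 3 is trivial since $C(p)=\{p\}$. In Case 1, every $p'\in C(p)$ is of the form $\gamma_1 p$ for some $\gamma_1\in\Gamma_\infty$; by Lemma~\ref{lem:biberbach}, such $\gamma_1$ acts on $\R^d$ as a Euclidean isometry fixing $\infty$, and combined with the $\Gamma$-equivariance $H_{\gamma_1 p}=\gamma_1 H_p$ of our horoball assignment, this yields $h_{p'}=h_p$ exactly.

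The real content is Case 2, where $p'=\beta p$ with $\beta:=(g_i\gamma)^{-1}\gamma_1(g_i\gamma)$. Since $g_i^{-1}\gamma_1 g_i\in\Gamma_{p_i}\subset\Gamma$ by \eqref{pistabilizer}, the element $\beta$ lies in $\Gamma$; and because $g_i\gamma$ sends $q$ to $\infty$ while $\gamma_1$ fixes $\infty$, $\beta$ fixes $q$. Therefore $\beta H_p=H_{p'}$ and $\beta H_q=H_q$, so invariance of hyperbolic distance under the isometry $\beta$ gives
\[ d_H(H_p,H_q)=d_H(H_{p'},H_q). \]
The plan is to invoke the Möbius-invariant formula which expresses $d_H(H_a,H_b)$ as a monotone function of $d(a,b)^2/(h_a h_b)$ for disjoint horoballs at distinct finite basepoints. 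Equating the two sides above and cancelling the common $h_q$ then yields the clean identity
\[ \frac{h_{p'}}{h_p}=\frac{d(p',q)^2}{d(p,q)^2}. \]
So the lemma reduces to showing $d(p,q)\approx d(p',q)\approx\eta h_q$.

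For the distance estimates the plan is to use \eqref{equ:jp}, which traps $\partial J_q$ in the annulus $B(q,C_{\five}\eta h_q)\setminus B(q,\eta h_q/C_{\five})$. The hypothesis $p\in Q_{n+1}'$ gives $d(p,\partial J_q)\leq\sqrt{\eta h_{n+1}h_p}\leq h_{n+1}$, and Lemma~\ref{lem:rprq} (after the obvious index shift) yields $h_{n+1}\ll\eta h_q$; the triangle inequality then forces $d(p,q)\approx\eta h_q$. For $p'$, the inclusion in \eqref{equ:cp} combined with Lemma~\ref{lem:pjq} gives $d(p',\partial J_q)\leq h_{n+1}$, and the same reasoning yields $d(p',q)\approx\eta h_q$. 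The main obstacle I anticipate is a clean justification of the Möbius-invariant identity; the cleanest route is to conjugate everything by $g_i\gamma$ so that $q$ moves to $\infty$: then $H_q$ becomes a half-space, both $H_p$ and $H_{p'}$ become horoballs at finite basepoints related by the Euclidean isometry $\gamma_1$ (which preserves their heights), and the explicit expression $|(g_i\gamma)'(y)|=h_{g_i\gamma\infty}/|y-q|^2$ (an analog of Lemma~\ref{lem:explicit} applied to the Möbius transformation $g_i\gamma$) transfers the height identity in the new coordinates back to the original ones.
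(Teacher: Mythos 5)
Your proof is correct, and it takes a genuinely different route from the paper's. The paper proves the key comparison via Lemma~\ref{lem:rg1} applied to the map $g_i\gamma^{-1}g_i^{-1}$: using the annulus estimate of Lemma~\ref{lem:eclocation} to control $d(g_i\gamma p, x_{g_iq})\approx 1/\eta$, it establishes the two-sided bound $h_{g_ip}\approx\eta^2 h_{g_iq}h_{g_i\gamma p}$; since $\gamma_1$ is a Euclidean isometry fixing $\infty$, $h_{g_i\gamma p}=h_{g_i\gamma p'}$ and the right-hand side is constant on $C(p)$, so $h_{g_ip}\approx h_{g_ip'}$, and Lemma~\ref{lem:height} finishes.

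Your argument instead exploits the exact $\Gamma$-invariance of hyperbolic distance. The element $\beta=(g_i\gamma)^{-1}\gamma_1(g_i\gamma)=\gamma^{-1}(g_i^{-1}\gamma_1 g_i)\gamma$ lies in $\Gamma$ (via \eqref{pistabilizer}), fixes $q$, and carries $H_p$ to $H_{p'}$ because the horoball assignment is $\Gamma$-equivariant. The classical identity $d_H(H_a,H_b)=2\log\bigl(d(a,b)/\sqrt{h_ah_b}\bigr)$ for disjoint horoballs at finite basepoints (a direct computation, or your conjugation remark) then gives the exact equality $h_{p'}/h_p = d(p',q)^2/d(p,q)^2$, and the lemma is reduced to showing $d(p,q)\approx d(p',q)\approx \eta h_q$. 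Those distance estimates follow, as you indicate, from the annulus containment in \eqref{equ:jp}, from Lemma~\ref{lem:rprq}, and (for $p'$) from Lemma~\ref{lem:pjq}; they are roughly the same input the paper feeds into Lemma~\ref{lem:eclocation}. The net difference is that your route isolates an exact conformal identity (which, incidentally, also sharpens Lemma~\ref{lem:rg1} to the exact formula $h_{\gamma^{-1}q}=h_ph_q/d(p,q)^2$) and pushes all the approximation into a single transparent pair of distance estimates, while the paper carries approximate upper and lower bounds throughout. Two small points to tighten: your bound $d(p,\partial J_q)\le\sqrt{\eta h_{n+1}h_p}\le h_{n+1}$ should read $\le h_n/\sqrt{e}$ (since $\eta h_p\le h_n$), which is harmless; and the case $C(p)=\{p\}$ inside Case 2 (when \eqref{equ:cp} is empty) should be folded into your trivial case along with Case 3.
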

It suffices to prove Lemma~\ref{lem:puniform} for the case when $\# C(p)\geq 2$ and the associated component of $p$ in $\partial \Omega_{n-1}$ is some $\partial J_q$. Write $q=\gamma^{-1}p_i$ with $\gamma^{-1}$ the representation of $q$. Let $r_{p,m}$ and $B_{p,m}$ be defined as in \eqref{correct radius} and \eqref{correct ball} respectively. We first show the following estimate.

\begin{lem}[Location of balls]
\label{lem:eclocation}

There exists a constant $C>1$ such that for $C(p)$ with the associated boundary component $\partial J_q$ and for $p'\in C(p)$
\begin{align}
&B(g_i\gamma p', r_{p',m})\subset B \left(g_ix_q, C/\eta\right)-B(g_ix_q,1/(C\eta)),\label{inverseinclusion} \\
&g_iB_{p',m}\subset B(g_iq, C\eta \h(g_i q))-B(g_iq,\eta \h(g_i q)/C). \label{inclusion}
\end{align}
\end{lem}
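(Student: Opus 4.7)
The plan is to transport everything to the conjugated picture via $g_i\gamma$: in the coordinates of $g_i\Gamma g_i^{-1}$, the flower $J_q$ becomes the complement of the parallelotope $R$ from the construction of Section~\ref{sec:code}, and the two inclusions then reduce to direct distortion estimates coming from Lemmas~\ref{lem:jp} and~\ref{lem:gammax}. No delicate cancellations should appear; the proof is a quantitative chase once one keeps track of which element maps what to where.

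For \eqref{inverseinclusion}, I would first invoke Lemma~\ref{lem:jp} in the $g_i\Gamma g_i^{-1}$-picture to locate $g_i\gamma\partial J_q$ in the annular region between $\partial B(x_{g_iq},1/\eta)$ and $\partial B(x_{g_iq},1/(c_{\four}\eta))$. By Case~2 of the equivalence-class construction, $g_i\gamma p$ lies within distance $\tilde r_{p,n}/C_{\six}$ of $g_i\gamma\partial J_q$, and \eqref{equ:rpmless} gives $r_{p,m}\leq \tilde r_{p,n}/C_{\six}\leq C\eta$. Since $C\eta\ll 1/\eta$, enlarging to the full ball $B(g_i\gamma p, r_{p,m})$ stays inside a slightly wider annulus around $x_{g_iq}$, yielding \eqref{inverseinclusion} after adjusting $C$.

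For \eqref{inclusion}, I would apply Lemma~\ref{lem:gammax} directly to the element $(g_i\gamma g_i^{-1})^{-1}\in g_i\Gamma g_i^{-1}$, which sends $\infty\mapsto g_iq$ and $x_{g_iq}\mapsto\infty$ and is a top representative of $g_iq$ in the conjugated picture by Section~\ref{sec:multi}. The explicit formula gives
\[
d\bigl((g_i\gamma g_i^{-1})^{-1}y,\,g_iq\bigr)\;=\;\frac{h_{g_iq}}{d(y,x_{g_iq})}.
\]
By definition \eqref{correct ball} we have $g_iB_{p,m}=(g_i\gamma g_i^{-1})^{-1}B(g_i\gamma p, r_{p,m})$, and by \eqref{inverseinclusion} the latter ball is contained in an annulus of radii $[1/(C\eta),C/\eta]$ around $x_{g_iq}$; the inversion above converts this into the annulus of radii $[\eta h_{g_iq}/C,\,C\eta h_{g_iq}]$ around $g_iq$, which is exactly \eqref{inclusion}. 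The main obstacle I anticipate is the bookkeeping: confirming that the height produced by Lemma~\ref{lem:gammax} for the element $(g_i\gamma g_i^{-1})^{-1}$ coincides with the $h_{g_iq}$ appearing in \eqref{equ:rpm}. This is straightforward once one unravels the top-representative convention of Section~\ref{sec:multi} and applies Lemma~\ref{lem:height} where convenient.
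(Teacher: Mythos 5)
Your argument is correct and follows the same route as the paper's proof: both locate $g_i\gamma\partial J_q$ in the annulus around $x_{g_iq}$ via Lemma~\ref{lem:jp}/\eqref{equ:jp}, use \eqref{equ:rpmless} and the defining condition of $C(p)$ to place $B(g_i\gamma p, r_{p,m})$ near that boundary, and then apply the inversion (Lemma~\ref{lem:explicit}, which you unpack through Lemma~\ref{lem:gammax}) to obtain \eqref{inclusion} from \eqref{inverseinclusion}. The bookkeeping worry you flag is a non-issue: for $(g_i\gamma g_i^{-1})^{-1}$, Lemma~\ref{lem:gammax} produces $h_{(g_i\gamma g_i^{-1})^{-1}\infty}=h_{g_iq}$ directly, with no need of Lemma~\ref{lem:height}. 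One small imprecision: the claim that $g_i\gamma p$ lies within $\tilde r_{p,n}/C_{\six}$ of $g_i\gamma\partial J_q$ rests on $p\in C(p)$, i.e.\ the $\gamma_1=\mathrm{id}$ case of \eqref{equ:cp}; it is safer (and suffices) to record only that the distance is $\lesssim\eta$, which follows either from $p\in Q_n'$ together with the derivative estimate, or from the membership condition for any $p'\in C(p)$ plus the fact that $r_{p',m}=r_{p,m}$, and then absorb constants into the lemma's $C$.
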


\begin{proof}
By Lemma~\ref{equ:jp}, we have
\begin{equation*}
g_i\gamma \partial J_q\subset B(g_i x_q, 1/(c_4\eta))-B(g_i x_q, 1/\eta).
\end{equation*}
We also have $r_{p,n}\leq C\eta$ by \eqref{equ:rpmless}. Meanwhile, the construction of the equivalence class $C(p)$ implies that $r_{p',m}=r_{p,m}$. Hence we obtain (\ref{inverseinclusion}). We use Lemma~\ref{lem:explicit} to obtain (\ref{inclusion}) from \eqref{inverseinclusion}.
\end{proof}

\begin{proof}[Proof of Lemma~\ref{lem:puniform}]
We prove the following explicit estimate:
\begin{equation}
 \label{bounded interval}
 \h(g_i p')\approx \eta^2 \h(g_i q) \h(g_i \gamma p').
 \end{equation}
 This together with Lemma~\ref{lem:height} and $h(g_i \gamma p')=h(g_i \gamma p)$ will lead to Lemma~\ref{lem:puniform}. Note that $\h(g_i \gamma p')\leq C$, with $C$ a constant depending on $\Gamma$. We apply Lemma~\ref{lem:rg1} to the horoball $H=H_{g_i\gamma p'}$ based at $g_i\gamma p'$ and the element $g=g_i\gamma^{-1}g_i^{-1}$.
 Notice that $g_ix_q =(g_i\gamma^{-1}g_i^{-1})^{-1}\infty =g^{-1}\infty$ and $h(g_i x_q)=h(g^{-1} H_\infty)$. We obtain
 \begin{equation*}
 \frac{\h(g_i x_q) \h(g_i \gamma p')}{d(g_ix_q,g_i\gamma p')^2+\h(g_i \gamma p')^2} \leq \h(g_i p')=h(gH) \leq \frac{\h(g_i x_q) \h(g_i \gamma p')}{(d(g_ix_q, g_i\gamma p')-\h(g_i \gamma p')/2)^2}.
\end{equation*}
Due to \eqref{equ:xpp}, we have $h(g_ix_q)=h(g^{-1}H_\infty)=h(gH_\infty)=h(g_iq)$.
By \eqref{inverseinclusion}, we have $d(g_i\gamma p',g_ix_q)\approx 1/\eta$. Therefore, we obtain
\begin{equation*}
 \frac{\eta^2 \h(g_i q) \h(g_i \gamma p')}{C}\leq \h(g_i p')\leq C\eta^2 \h(g_i q) \h(g_i \gamma p').\qedhere
\end{equation*}
\end{proof}


\begin{lem}
\label{lem:universal}
There exists $C_{\eight}>1$ such that for any $p\in Q''_{n}$ and any $m\geq n$, the ball $B_{p,m}$ satisfies
\begin{equation*}
B(p, \sqrt{\eta \h(p) \h_m}/C_{\eight})\subset B_{p,m} \subset B(p, C_{\eight}\sqrt{\eta \h(p) \h_m}).
\end{equation*}
\end{lem}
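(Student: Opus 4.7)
In Cases 1 and 3 of the inductive construction of $C(p)$, the ball $B_{p,m}$ is by definition equal to $B(p,\sqrt{\eta h_p h_m})$, so Lemma~\ref{lem:universal} holds trivially with $C_{\eight}=1$. All the content is in Case 2, where $p$ is defined through a boundary component $\partial J_q$ with $q=\gamma^{-1}p_i$ and $\gamma^{-1}$ a top representation of $q$. Setting $\tilde{\gamma}=g_i\gamma g_i^{-1}$, the definitions \eqref{correct radius} and \eqref{correct ball} give $g_i B_{p,m} = \tilde\gamma^{-1} B(g_i\gamma p, r_{p,m})$, and since $g_i\colon\Delta\to g_i\Delta$ is bi-Lipschitz by Lemma~\ref{lem:bilip}, it is enough to sandwich $\tilde\gamma^{-1}B(g_i\gamma p, r_{p,m})$ between two Euclidean balls around $g_ip$ of radii both comparable to $\sqrt{\eta h_p h_m}$.

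The heart of the argument will be a bounded distortion estimate for $\tilde\gamma^{-1}$ on the ball $B(g_i\gamma p, r_{p,m})$. By Lemma~\ref{lem:eclocation} this ball lies inside the annulus $B(x_{g_iq},C/\eta)-B(x_{g_iq},1/(C\eta))$, and Lemma~\ref{lem:explicit} applied to the conjugated element $\tilde\gamma$ gives $|(\tilde\gamma^{-1})'(z)|=h_{g_iq}/d(z,x_{g_iq})^2$, hence $|(\tilde\gamma^{-1})'(z)|\approx \eta^2 h_{g_iq}$ uniformly on this annulus. Crucially, the upper bound $r_{p,m}\leq C\eta$ recorded in \eqref{equ:rpmless} guarantees that across $B(g_i\gamma p, r_{p,m})$ the quantity $d(z,x_{g_iq})$ varies by at most a bounded multiplicative factor; in particular the ball avoids the pole $x_{g_iq}$ of $\tilde\gamma^{-1}$, so its image is again a Euclidean ball, and $|(\tilde\gamma^{-1})'|$ itself varies by only a bounded factor on it. The mean value inequality, applied in both directions, then produces constants $0<c<C$ with
\[ B(g_ip,\, c\,\eta^2 h_{g_iq}\,r_{p,m}) \ \subset\ \tilde\gamma^{-1}B(g_i\gamma p, r_{p,m}) \ \subset\ B(g_ip,\, C\,\eta^2 h_{g_iq}\,r_{p,m}). \]

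It will then remain to identify the radius $\eta^2 h_{g_iq}\,r_{p,m}$. Substituting \eqref{correct radius} gives $\eta^2 h_{g_iq}\,r_{p,m}=C_{\six}^{-1}\sqrt{\eta^3 h_{g_iq} h_{g_i\gamma p}\, h_m}$, and the height identity $h_{g_ip}\approx \eta^2 h_{g_iq} h_{g_i\gamma p}$ recorded in \eqref{bounded interval} during the proof of Lemma~\ref{lem:puniform} collapses this to $\approx\sqrt{\eta h_{g_ip}\, h_m}$. Lemma~\ref{lem:height} then replaces $h_{g_ip}$ by $h_p$ up to a constant, and one final application of the bi-Lipschitz map $g_i^{-1}$ (Lemma~\ref{lem:bilip}) transfers the above double inclusion from $g_ip$ back to $p$, giving Lemma~\ref{lem:universal}. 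The only step carrying real substance is the bounded distortion estimate in the second paragraph, and it reduces to the quantitative smallness of $r_{p,m}$ that was precisely the purpose of the constant $C_{\six}$ introduced in Lemma~\ref{lem:pjq}; the remaining work is bookkeeping with height identities already assembled in Lemmas~\ref{lem:rgammaprp}--\ref{lem:puniform}.
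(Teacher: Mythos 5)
Your proof is correct and follows essentially the same route as the paper's: reduce to Case~2, use Lemma~\ref{lem:eclocation} to place $B(g_i\gamma p, r_{p,m})$ in the annulus around $x_{g_iq}$ where $|(g_i\gamma^{-1}g_i^{-1})'|\approx \eta^2 h_{g_iq}$, combine with the height identity \eqref{bounded interval} to identify the radius as $\approx\sqrt{\eta h_{g_ip}h_m}$, and transfer back via Lemmas~\ref{lem:height} and \ref{lem:bilip}. You merely spell out the ``figure out the map'' step of the paper (via the mean value inequality in both directions) in more detail; the ingredients and their order are identical.
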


\begin{proof}
It is enough to prove the case when the associated component of $p$ in $\partial \Omega_{n-1}$ is some $\partial J_q$. Write $q=\gamma^{-1}p_i$ with $\gamma^{-1}$ the representation of $q$. By definition, 
\begin{equation*}
B_{p,m}=(g_i\gamma)^{-1}B(g_i\gamma p, r_{p,m})
\end{equation*}
with $r_{p,m}=\frac{1}{C_{\six}}\sqrt{\frac{h_m h(g_i\gamma p)}{\eta h(g_i q)}}$. 

Consider the action of $g_i\gamma^{-1} g_i^{-1}$ on $B(g_i\gamma p, r_{p,m})$. By Lemma \ref{lem:explicit} and \eqref{inverseinclusion}, we have 
\begin{equation}
\label{deri}
|(g_i\gamma^{-1}g_i^{-1})'x|\approx \eta^2 h(g_iq)\,\,\,\text{for any}\,\,x\in B(g_i\gamma p, r_{p,m}).
\end{equation}
Meanwhile, there exists a point $p'\in C(p)$ such that $p'$ satisfies Lemma \ref{lem:rgammaprp}. We have $h(g_i\gamma p)=h(g_i\gamma p')$ and $h(p)\approx h(p')$ (Lemma \ref{lem:puniform}). As a result, we obtain
\begin{equation}
\label{hei}
h(g_i\gamma p)\approx \frac{h(g_i p)}{\eta^2 h(g_i q)}.
\end{equation}

\eqref{deri} and \eqref{hei} yield there exists $C>1$ such that
\begin{equation*}
B(g_ip, \sqrt{\eta \h(g_ip) \h_m}/C)\subset g_iB_{p,m} \subset B(g_ip, C\sqrt{\eta \h(g_ip) \h_m}).
\end{equation*}
We use Lemma~\ref{lem:height} and~\ref{lem:bilip} to finish the proof.
 \end{proof}

\bigskip{}
\paragraph{Well-definedness of equivalence classes }

\begin{lem}\label{lem:welldefine}
	For any two equivalence classes $C(p')$ and $C(p'')$, they are either the same or disjoint.
\end{lem}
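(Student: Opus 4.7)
My proof plan is to show that $C(p')\cap C(p'')\neq\emptyset$ implies $C(p')=C(p'')$. Suppose $C(p')$, $C(p'')$ are defined at steps $n+1$, $m+1$ of the construction with $m\leq n$, and let $p\in C(p')\cap C(p'')$. To each non-trivial class $C(r)$ defined at step $k+1$ I attach its \emph{associated boundary component} $F(r)$ of $\partial\Omega_k$: either $\partial\Omega_0$ (Case 1) or $\partial J_q$ (Case 2) for the unique $q$ used in its definition.

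The first step is to show that every element of a non-trivial $C(r)$ at step $k+1$ lies within distance $h_k$ of $F(r)$. In Case 1, the defining condition $\gamma_1 B(r,r_{r,k+1})\cap\partial\Omega_0\neq\emptyset$ together with $r_{r,k+1}=\sqrt{\eta h_{k+1}h_r}\leq\sqrt{h_k h_{k+1}}\leq h_k$ (using $\eta h_r\leq h_k$) yields the bound. In Case 2 it is precisely Lemma~\ref{lem:pjq}. Applied to $p\in C(p')\cap C(p'')$, and using that every component of $\partial\Omega_m$ remains a component of $\partial\Omega_n$ (since the defining condition $B(q,\eta h_q)\subset\Omega_{n-1}$ in~\eqref{good parabolic fixed points} prevents later-removed flowers from touching earlier boundary), both $F(p')$ and $F(p'')$ are components of $\partial\Omega_n$ within $h_n$ of $p$. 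By Lemma~\ref{lem:separation}, distinct components of $\partial\Omega_n$ are separated by more than $h_n/(2\eta)$, so for $\eta<1/4$ this forces $F(p')=F(p'')$.

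Once the associated boundaries coincide, the defining group action is canonical: $\Gamma_\infty$ when $F=\partial\Omega_0$, or the conjugated stabilizer $\gamma^{-1}\Gamma_{p_i}\gamma$ when $F=\partial J_q$ with $q=\gamma^{-1}p_i$. Heights are preserved along each such orbit (the group acts by isometries fixing the cusp point), and the radii and balls used to test class membership are constant along the orbit --- in Case 1 directly, in Case 2 via the formula~\eqref{correct radius}, which depends only on the cusp data at $q$ and not on the chosen orbit representative. Writing $p=\gamma_0 p'=\gamma_0' p''$ for $\gamma_0,\gamma_0'$ in the stabilizer, one has $p''=(\gamma_0')^{-1}\gamma_0 p'$, and the ball condition for $C(p'')$ transports exactly to that for $C(p')$; hence $C(p'')\subset C(p')$, and symmetry concludes. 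The trivial-class case (Case 3, $C(r)=\{r\}$) is handled directly: the defining exclusion requires $r$ not to belong to any other class, so any overlap forces both classes to coincide with $\{r\}$.

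The main obstacle I anticipate is the bookkeeping in Case 2: elements of a class can have heights differing from the base by the fixed constant $C_{\seven}$ (Lemma~\ref{lem:puniform}), so orbit-invariance of the radius requires the precise form of~\eqref{correct radius}, written in terms of $h_{g_i\gamma p'}$ and $h_{g_iq}$ rather than $h_{p'}$ itself. Once this invariance is confirmed, the separation lemma and the conjugation structure fit together cleanly to finish the argument.
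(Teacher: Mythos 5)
Your outline matches the paper's: show the associated boundary components coincide via the separation lemma, then exploit the common orbit structure. But the execution has gaps at each stage. In the separation step you claim both $F(p')$ and $F(p'')$ lie within $h_n$ of the common point $p$; however, if $C(p'')$ is defined at the earlier step $m+1$, then $p\in C(p'')$ is only within $h_m\geq h_n$ of $F(p'')$, and for $h_m+h_n$ to beat the separation threshold $h_n/(2\eta)$ you also need $h_m\leq Ch_n$. That is exactly what Lemma~\ref{lem:puniform} supplies (comparability of heights across a class), and the paper invokes it at precisely this point; without it the inference $F(p')=F(p'')$ does not follow. In the concluding ``symmetry'' step, the defining radii scale like $\sqrt{h_m}$ versus $\sqrt{h_n}$, so when $m<n$ radius-monotonicity yields only $C(p')\subset C(p'')$ (your stated direction is in fact reversed), and there is no symmetry to conclude the converse. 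The paper instead closes this with a contradiction: the ball containment places the later base point $p''$ inside $C(p')\subset Q_{m+1}''$, which violates $p''\notin\cup_{l\leq n}Q_l''$ unless $m=n$, and only when $m=n$ do the two definitions coincide literally.

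The trivial-class case is also not dispatched by the defining exclusion alone. The exclusion $p''\notin\cup_{l\leq n}Q_l''$ constrains only the \emph{base} point of the later class, not all of its members, so a singleton $\{p'\}$ created at an earlier step can, a priori, reappear inside a later non-trivial $C(p'')$. The paper's Case 1 handles this by transporting the ball condition given by $p'\in C(p'')$, via Lemma~\ref{lem:pjq} and ball containment, back to $p'$'s own step: this would force $p''\in C(p')$ and hence make $C(p')$ non-trivial, a contradiction. Without a substitute for that argument the trivial-class case is not resolved.
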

\begin{proof}
	Assume that these two equivalence classes are not the same and the intersection is nonempty.

\medskip{}
\textbf{Case 1}: 
Suppose one of these two equivalence classes just consists of one point, 
	say $\#C(p')=1$ and $\#C(p'')\geq 2$. We may assume that $p''\in Q_n'$ for some $n$. If the associated component of $p''$ is $\partial\Omega_0$, then we are in Case 1 of the definition of equivalence classes. For $p'\in C(p'')$, we obtain that $h(p')=h(p'')\in (h_n,h_{n-1}]/\eta$. As $\#C(p')=1$, we know that $p'$ is contained in $Q_l-\cup_{i<l}Q_i''$ for some $l<n$, which contradicts $h(p')\in (h_n,h_{n-1}]/\eta$.
	
	In the sequel, we assume further that the associated component of $p''$ is some $\partial J_q$.
	Write $q=\gamma^{-1}p_i$.
	 As $p'$ belongs to the equivalence class $C(p'')$, it follows from the definition that 
	\begin{equation}
	\label{eqn:welldef}
	B(g_i\gamma p', r_{p',n})\cap g_i \gamma \partial J_q\neq \emptyset,\,\,\, B(g_i\gamma p'', r_{p'',n})\cap g_i \gamma \partial J_q\neq \emptyset,
	\end{equation}
	where $r_{p',n}$ is defined as in \eqref{correct radius} and equals $r_{p'',n}$.
	
	The fact that $C(p')$ just consists of $p'$ implies $p'\in Q_l-\cup_{i<l}Q_i''$ for some $l<n$. Meanwhile, as $\partial J_q$ is the associated component of $p''$, by Lemma~\ref{lem:rprq} and~\ref{lem:puniform}, we have
	\begin{equation*}
	\h(q)\geq \h(p'')/(C\eta^2)\geq \h(p')/(C\eta^2)\geq h_l/(C\eta^3).
	\end{equation*}
Hence $\partial J_q\subset \partial\Omega_l$.

 (\ref{eqn:welldef}) allows us to apply Lemma~\ref{lem:pjq} to $p'$, and we obtain
$$d(p',\partial J_q)\leq \h_n< \sqrt{\eta\h_l h(p')}.$$
So $p'\in Q_l'-\cup_{i<l}Q_i''$. \eqref{eqn:welldef} yields
\begin{equation*}
B(g_i\gamma p',r_{p',l})\cap g_i\gamma \partial J_q\neq \emptyset,\,\,\, B(g_i\gamma p'',r_{p'',l})\cap g_i\gamma \partial J_q\neq \emptyset.
\end{equation*}
As $l<n$, $C(p')$ contains $p''$, which is a contradiction.

\textbf{Case 2}:	Suppose that $\# C(p'),\,\#C(p'')\geq 2$. Without loss of generality, we may assume that $p'\in Q_m'-\cup_{l<m}Q_l''$ and $p''\in Q_n'-\cup_{l<n}Q_l''$ and $m\leq n$. 

Let $p\in C(p')\cap C(p'')$. Then it follows from the construction of equivalence classes and Lemma~\ref{lem:pjq} that there are boundary components $\partial_1$ and $\partial_2$ in $\partial \Omega_{n-1}$ such that
	\[d(p,\partial_1)\leq \h_{m-1},\,\,\,\ d(p,\partial_2)\leq \h_{n-1}. \]
	On the one hand, as $\partial_1$ and $\partial_2$ are in $\partial \Omega_{n-1}$, if they are distinct, Lemma~\ref{lem:separation} states that their distance is greater than $\h_{n-1}/(2\eta)$. On the other hand, using Lemma~\ref{lem:puniform}, we obtain
	\begin{equation*}
	\h_n/\h_m\geq \h(p'')/(e\h(p'))=(\h(p'')/\h(p))(\h(p)/(e\h(p')))\geq 1/(eC_{\seven}^2).
	\end{equation*}
	Then
	\begin{equation*}
	d(\partial_1,\partial_2)\leq \h_{m-1}+\h_{n-1}\leq (1+eC_{\seven}^2)\h_{n-1}<\h_{n-1}/(2\eta).
	\end{equation*}
	We conclude that $\partial_1=\partial_2$. 
	
	There are two possibilities for $\partial_1$. One possibility is that $\partial_1$ is some $\partial J_q$. Write $q=\gamma^{-1}p_i$ with $\gamma^{-1}$ the representation of $q$. As $g_i\gamma p$ is related with $g_i\gamma p'$ and $g_i\gamma p''$ by elements in $(g_i\Gamma g_i^{-1})_{\infty}$, we have $\gamma_1 g_i\gamma p''=g_i\gamma p'$ for some $\gamma_1\in (g_i\Gamma g_i^{-1})_{\infty}$. As $m\leq n$, we have
	\begin{equation*}
	\emptyset\neq B(g_ip'', r_{p'',n})\cap g_i\gamma \partial J_{q}\subset B(g_ip'', r_{p'',m})\cap g_i \gamma \partial J_{q}.
	\end{equation*}
	As a result, we have $C(p')= C(p'')$.
	The other possibility is that $\partial_1=\partial \Omega_0$. It follows directly from the construction of equivalence classes that
	\begin{equation*}
	\emptyset\neq B(p'', r_{p'',n})\cap \partial \Omega_0\subset B(p'', r_{p'',m})\cap \partial \Omega_0.
	\end{equation*}
	Hence $C(p')=C(p'')$.
\end{proof}

\subsection{Auxiliary sets $A_n$ and $B_n$ in $\Omega_n$}
\label{sec:auxiliary sets}

We introduce auxiliary sets $A_n$ and $B_n$ in $\Omega_n$. By Lemma~\ref{lem:welldefine}, the set $Q_n''$ is disjoint with $\cup_{1\leq l\leq (n-1)}Q_n''$. For any $p\in Q_n''$ and any $m\geq n$, we have defined the ball $B_{p,m}$.
Note that it follows from the construction of $Q_{n}''$ that if $\#C(p)=1$, then the full ball $B_{p,n}$ is contained in $\Omega_n$.
For each $n$, we define
\begin{align*}
B_{n}=\Omega_n\cap \bigcup_{p\in \cup_{1\leq l\leq n}Q_l''} B_{p,n}\,\,\,\text{and}\,\,\,\ \ A_{n}=\Omega_{n}-B_{n}.
\end{align*}

In the followings, we will show how to use the set $B_n$ to detect whether a point is in the cusps of the manifold at time $t=n$ or not.

\subsubsection*{$B_n$ and cusps}


\begin{lem}\label{lem:np}
	For $x\in\widetilde{\Omega_0}$, if $\mathcal{G}_n x\in \mathcal C_\eta$, then there exists a parabolic fixed point $p$ with $\eta h(p)>h_n$ such that 
	\[d(\pi(x),p)<\sqrt{\eta h(p)h_n}. \]

\end{lem}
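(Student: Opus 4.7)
My plan is to lift everything to the upper half-space and reduce the statement to an elementary Euclidean inclusion. The key observation will be that on $\widetilde{H_\infty}$ the vectors have a very rigid structure, which lets me compute the basepoint of $\calG_n x$ explicitly.

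First I will lift $x$ to a vector in $\widetilde{\Omega_0}\subset \widetilde{H_\infty}$. By the definition of $\widetilde{H_\infty}$ (the unstable horosphere based at $\infty$ whose basepoints form the horosphere $\partial H_\infty$ at height $1$), any such vector has backward endpoint $\infty$ and basepoint $(\pi(x),1)$. Hence its forward geodesic is the vertical ray through $\pi(x)$, and $\calG_n x$ lifts to a vector based at $y_n^{*}=(\pi(x),h_n)$ where $h_n=e^{-n}$.

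Next I will translate the hypothesis $\calG_n x\in\calC_\eta$. Since $\calC_\eta$ is the quotient of the $\Gamma$-invariant set $\bigcup_{p\in\calP}\T^1(H_p(\eta))$, the hypothesis is equivalent to the existence of a parabolic fixed point $p\in\calP$ with $y_n^{*}=(\pi(x),h_n)\in H_p(\eta)$. The case $p=\infty$ can be ruled out immediately, since $(\pi(x),h_n)$ lies at Euclidean height $\leq 1$ while $H_\infty(\eta)\subset H_\infty=\R^d\times\{x_{d+1}>1\}$.

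For $p\neq\infty$, I will use that $H_p(\eta)$ is the open Euclidean ball in $\R^{d+1}$ tangent to $\R^d$ at $p$ of diameter $\eta h_p$, namely $\{y\in\R^{d+1}:|y-(p,\eta h_p/2)|<\eta h_p/2\}$. Writing out the inclusion $(\pi(x),h_n)\in H_p(\eta)$ expands to $|\pi(x)-p|^2+(h_n-\eta h_p/2)^2<(\eta h_p/2)^2$, which simplifies to $|\pi(x)-p|^2<(\eta h_p-h_n)h_n$. This simultaneously forces $\eta h_p>h_n$ and $d(\pi(x),p)<\sqrt{(\eta h_p-h_n)h_n}<\sqrt{\eta h_p h_n}$, as claimed. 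I do not foresee any real obstacle; the only points to be careful about are the correct identification of vectors in $\widetilde{H_\infty}$ (so that the lifted geodesic is honestly vertical, giving the clean formula $y_n^{*}=(\pi(x),h_n)$) and the convention that $H_p(\eta)$ denotes the shrunken horoball inside $H_p$ rather than some enlargement.
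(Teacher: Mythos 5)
Your proof is correct and takes essentially the same approach as the paper: lift to the universal cover, observe that the geodesic from a vector on $\widetilde{H_\infty}$ is vertical so $\calG_n x$ sits at height $h_n$ above $\pi(x)$, and then the condition $(\pi(x),h_n)\in H_p(\eta)$ is the Pythagorean estimate. Your algebraic form $|\pi(x)-p|^2<(\eta h_p-h_n)h_n$ is a mild improvement in presentation, since it handles in one line the two cases $h_n\lessgtr\eta h_p/2$ that the paper treats separately.
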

\begin{proof}
By assumption, in the universal cover $\operatorname{T}^1(\mathbb{H}^{d+1})$, the point $\calG_n x$ is contained in a horoball $H_p(\eta)$. Hence $h_n<\eta h(p)$. If $h_n\leq \eta h(p)/2$, then we can use Pythagorean's theorem to conclude that $d(\pi(x),p)\leq \sqrt{\eta h(p)h_n}$ (see Figure~\ref{fig:radius}). If $h_n\geq \eta h(p)/2$, then $d(\pi(x),p)\leq \eta h(p)/2\leq \sqrt{\eta h(p)h_n}$.
\end{proof}

\begin{figure}
	\begin{center}
		\begin{tikzpicture}[scale=2]
		\draw [domain=0:2*pi, samples=200] plot ({cos(\x r)},{sin(\x r)});
		\draw (-2,-1) -- (2,-1);
		\draw (0,0) node[above]{$O$};
		\draw [fill] (0,0) circle [radius=.02];
		\draw (0,-1/2) node[above left]{$D$};
		\draw [fill] (0,-1/2) circle [radius=.02];
		\draw (-{sqrt(3)/2},-1/2) -- ({sqrt(3)/2},-1/2);
		\draw (0,-1) node[below]{$p$};
		\draw [fill] (0,-1) circle [radius=.02];
		\draw (0,0) -- (0,-1);
		\draw (0,0) -- ({sqrt(3)/2},-1/2);
		\draw ({sqrt(3)/2},-1/2) node[right]{$C$};
		\draw (-2/3,-1/2) node[above left]{$\calG_nx$};
		\draw [fill] (-2/3,-1/2) circle [radius=.02];
		\draw (-2/3,-1) node[below left]{$\pi(x)$};
		\draw [fill] (-2/3,-1) circle [radius=.02];
		\draw [->, dashed] (-2/3,1.5) -- (-2/3,-1);
		\draw (-2/3,1.5) node[left]{$x$};
		\draw [fill] (-2/3,1.5) circle [radius=.02];
		\end{tikzpicture}
	\end{center}
	\caption{Radius}\label{fig:radius}
\end{figure}

\begin{lem}\label{lem:cuspBn}
Fix $c_{\nine}<\min\{1/C_{\five},1/C_{\eight}^2\}$, where $C_{\five}$ and $C_{\eight}$ are constants given in Lemma \ref{equ:jp} and Lemma \ref{lem:universal} respectively. For any $x\in\widetilde{\Omega_n}$, if $\mathcal{G}_n x\in \mathcal C_{c_{\nine}\eta}$, then $\pi(x)\in B_n$.

\end{lem}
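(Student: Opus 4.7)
The plan is to use Lemma~\ref{lem:np} to produce a parabolic fixed point $p$ witnessing the cusp excursion of $\calG_n x$, then identify the level $m$ with $\eta h_p\in(h_m,h_{m-1}]$ and argue that $p$ must lie in $Q_m$; this will place $\pi(x)$ inside one of the distinguished balls $B_{p,n}$ that make up $B_n$. Applying Lemma~\ref{lem:np} with $c_{\nine}\eta$ in place of $\eta$ produces $p\in\calP$ with $c_{\nine}\eta h_p>h_n$ and $d(\pi(x),p)<\sqrt{c_{\nine}\eta h_p h_n}$. Since $h_n<c_{\nine}\eta h_p$, the radius is dominated by $c_{\nine}\eta h_p$, so $\pi(x)\in B(p,c_{\nine}\eta h_p)$; and the inequality $c_{\nine}<1$ together with $c_{\nine}\eta h_p>h_n$ forces the unique $m$ with $\eta h_p\in(h_m,h_{m-1}]$ to satisfy $m\le n$.

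Next I would argue $p\in P_m\cup Q_m$, and then exclude $P_m$. Because $\pi(x)\in\Omega_n\subset\Omega_{m-1}$ and $\pi(x)\in B(p,\eta h_p)$, the ball $B(p,\eta h_p)$ meets $\Omega_{m-1}$. For $\eta$ sufficiently small any intermediate case is ruled out: were $B(p,\eta h_p)$ to protrude from $\Omega_{m-1}$ then some point of $\partial\Omega_{m-1}$ would lie within $\eta h_p\le h_{m-1}\le h_{m-1}/(4\eta)$ of $p$, violating the ``far from boundary'' clause in the definition of $P_m$. To exclude $p\in P_m$ itself, note that by~\eqref{equ:jp} and the choice $c_{\nine}<1/C_\five$,
\begin{equation*}
B(p,c_{\nine}\eta h_p)\subset B(p,\eta h_p/C_\five)\subset J_p,
\end{equation*}
so $\pi(x)\in J_p$; but if $p\in P_m$ then $J_p$ is removed from $\Omega_m$, contradicting $\pi(x)\in\Omega_n\subset\Omega_m$. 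Therefore $p\in Q_m$, and by the inductive construction of equivalence classes $p\in\bigcup_{l\le m}Q_l''\subset\bigcup_{l\le n}Q_l''$.

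To finish, Lemma~\ref{lem:universal} gives $B(p,\sqrt{\eta h_p h_n}/C_\eight)\subset B_{p,n}$, and the condition $c_{\nine}<1/C_\eight^2$ yields $\sqrt{c_{\nine}\eta h_p h_n}<\sqrt{\eta h_p h_n}/C_\eight$, so $\pi(x)\in B_{p,n}$. Combining with $\pi(x)\in\Omega_n$ gives $\pi(x)\in B_n$ as required. I expect the main subtlety to be the second step, namely ensuring that the witness $p$ really sits at level $m$ of the coding hierarchy and cannot escape into $P_m$; the two defining constraints on $c_{\nine}$, $c_{\nine}<1/C_\five$ and $c_{\nine}<1/C_\eight^2$, are exactly the quantitative matches between the outer inclusion of~\eqref{equ:jp} (used to place $\pi(x)$ inside $J_p$ when ruling out $P_m$) and the inner inclusion in Lemma~\ref{lem:universal} (used to place $\pi(x)$ inside $B_{p,n}$ for the $Q_m$ case).
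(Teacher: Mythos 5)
Your proof is correct and follows essentially the same route as the paper's: invoke Lemma~\ref{lem:np} to locate the witness $p$ near $\pi(x)$, rule out $p\in P_m$ using the inner inclusion $B(p,\eta h_p/C_{\five})\subset J_p$ of \eqref{equ:jp} together with $c_{\nine}<1/C_{\five}$, and then place $\pi(x)\in B_{p,n}$ via Lemma~\ref{lem:universal} and $c_{\nine}<1/C_{\eight}^2$. Your version spells out the intermediate steps (pinning down the level $m$ and verifying $p\in P_m\cup Q_m$) that the paper compresses into one sentence, but the argument is the same.
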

\begin{proof}
	For $x\in \widetilde{\Omega_n}$, if $\mathcal{G}_n x\in\mathcal C_{c_{\nine}\eta}$, then it follows from Lemma~\ref{lem:np} that there exists a parabolic fixed point $p$ with $c_{\nine}\eta h(p)>h_n$ such that 
	\begin{equation}\label{equ:pix}
		d(\pi(x),p)<\sqrt{c_{\nine}\eta h(p) h_n}\leq c_9\eta h(p).
	\end{equation}
	By the definition of $P_n$ and $Q_n$, this $p$ must belong to $\bigcup_{j< n}(P_j\cup Q_j)$.
	If $p$ is in some $P_j$, then by Lemma~\ref{equ:jp} we have
	\[ \eta h(p)/C_{\five}<d(\pi(x),p), \]
	contradicting the assumption that $c_{\nine}<1/C_{\five}$. So $p$ must be in some $Q_j$. We use the construction of $B_n$, Lemma~\ref{lem:universal}, that is $B_n\supset B(p,\sqrt{\eta h(p)h_n}/C_{\eight})$, and \eqref{equ:pix} to conclude that $\pi(x)\in B_n$.
	\end{proof}

\begin{rem*}
	If $\pi(x)\in B_n$, then the point $\calG_nx$ is contained in $\calC_{C\eta}$. So the set $\calG_n \widetilde{B_n}$ is almost the same as the set of points in the cusps at time $t=n$, i.e. $\calC_\eta\cap\calG_n\widetilde{\Omega_{n}}$.
\end{rem*}

\subsubsection*{Parabolic fixed points, $B_n$ and different generations}

\begin{lem}\label{lem:Pn}
	We have $P_{n}\cap(\cup_{l\leq n} Q_l'')=\emptyset$.
\end{lem}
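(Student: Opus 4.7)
I would argue by contradiction: suppose some $p$ lies in $P_{n+1}\cap Q_l''$ for some $l\leq n$. The strategy is to extract from membership in $P_{n+1}$ and in $Q_l''$ incompatible upper and lower bounds on the ratio $h_l/h_n$, valid once $\eta$ is fixed sufficiently small. Membership in $P_{n+1}$ provides the height restriction $\eta h_p\leq h_n$ and the far-from-boundary condition $d(p,\partial\Omega_n)>h_n/(4\eta)$. Membership in $Q_l''$ provides an inclusion $p\in C(p_0)$ for some $p_0\in Q_l$, which, by the construction of equivalent classes, places $p$ close (in a quantitative sense) to a boundary component of $\partial\Omega_{l-1}$. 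The easy case $p=p_0$ is immediate from heights: $p\in Q_l$ forces $\eta h_p\in(h_l,h_{l-1}]$, while $p\in P_{n+1}$ forces $\eta h_p\in(h_{n+1},h_n]$, and these intervals are disjoint because $l\leq n$ gives $h_l\geq h_n$.

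The interesting case is $p\in C(p_0)\setminus\{p_0\}$, which forces $p_0\in Q_l'$ and falls into Case~1 or Case~2 of the definition of equivalent classes. For the height bound, Lemma~\ref{lem:puniform} gives $h_p\asymp_{C_{\seven}}h_{p_0}$, and combining $\eta h_{p_0}>h_l$ with $\eta h_p\leq h_n$ yields $h_l/h_n\leq C_{\seven}^2$. For the distance bound, the construction of $C(p_0)$ places $p$ close to the associated boundary: in Case~1 (associated component $\partial\Omega_0$), the Euclidean isometry $\gamma_1\in\Gamma_\infty$ gives $d(p,\partial\Omega_0)\leq\sqrt{\eta h_l h_{p_0}}\leq\sqrt{C_{\seven} h_l h_n}$; in Case~2 (associated component $\partial J_q$ with $q\in P_m$, $m<l$), reproducing the argument of Lemma~\ref{lem:pjq} at time $l$ and invoking Lemma~\ref{lem:rprq} at level $l$ yields $d(p,\partial J_q)\leq h_l$. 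Since every flower $J_{p'}$ removed at stages $l\leq k\leq n$ satisfies $B(p',\eta h_{p'})\subset\Omega_{k-1}$ and therefore lies strictly in the interior of $\Omega_{l-1}$, we have $\partial\Omega_{l-1}\subset\partial\Omega_n$, so the bounds transfer to $d(p,\partial\Omega_n)$.

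Feeding the near-boundary estimate into $d(p,\partial\Omega_n)>h_n/(4\eta)$ now yields $h_l/h_n>1/(16C_{\seven}\eta^2)$ in Case~1 and $h_l/h_n>1/(4\eta)$ in Case~2, both clashing with the upper bound $h_l/h_n\leq C_{\seven}^2$ once $\eta$ is smaller than a constant depending only on $C_{\seven}$; since $\eta$ is the free small parameter fixed at the end of the proof of Proposition~\ref{keylemma}, this is consistent with the rest of the construction. I expect the main subtlety to lie in the Case~2 distance estimate: one must verify that the argument of Lemma~\ref{lem:pjq} transplants verbatim from time $n$ to time $l$, which means checking that the radius $\tilde r_{p_0,l}$ and the derivative $|(g_i\gamma^{-1}g_i^{-1})'(x)|\sim 1/(\eta^2 h_{g_iq})$ still combine to give $d(p,\partial J_q)\leq C\eta h_{g_ip_0}/C_{\six}\leq h_l$ via Lemma~\ref{lem:rprq} applied at level $l$. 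Everything else is bookkeeping with the intervals $(h_{l},h_{l-1}]$ and $(h_{n+1},h_{n}]$.
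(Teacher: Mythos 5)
Your proof is correct and essentially mirrors the paper's: both argue by contradiction, use Lemma~\ref{lem:puniform} to bound $h_l/h_n$ by a constant, use Lemma~\ref{lem:pjq} (resp.\ the Case~1 construction) to show $p$ lies within $O(h_l)$ of the associated boundary component, and contradict the far-from-boundary requirement $d(p,\partial\Omega_n)>h_n/(4\eta)$ in the definition of $P_{n+1}$ once $\eta$ is small. The subtlety you flag about ``transplanting'' Lemma~\ref{lem:pjq} from time $n$ to time $l$ is not an issue: that lemma is stated with $n$ the index of the equivalent class being built, so applying it at level $l$ (where $p_0\in Q_l'$) is precisely the intended use, exactly as in the construction of $C(p_0)$ itself.
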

\begin{proof}
	If not, suppose $p\in P_{n}$ is also contained in an equivalence class $C(p')$ with $p'\in Q_m-(\cup_{1\leq l\leq m-1}Q_l'')$ and $m\leq n$. Due to $\# C(p)\geq 2$, by the construction of equivalence classes, we must have $p'\in Q_m'$. Let $\partial$ be the associated boundary component of $p'$. Recall the construction of the equivalence classes. If $\partial$ is $\partial \Omega_0$, it is easy to obtain $d(p, \partial \Omega_0)<h_m$. If $\partial$ is some $\partial J_q$, due to $\# C(p')\geq 2$, we use Lemma~\ref{lem:pjq} to deduce that $d(p,J_q)\leq h_m$. By Lemma~\ref{lem:puniform}, we have $h_m/h_n\leq h(p')/h(p)\leq C_{\seven}$. Hence by the definition of $P_{n}$
	\begin{equation*}
	 d(p,\partial)\geq h_{n-1}/(4\eta)\geq h_m/(4eC_{\seven}\eta)> h_m, 
	\end{equation*}
	which is a contradiction.
\end{proof}

\begin{lem}\label{lem:pbn}
There exists a constant $0<c_{\ten}<1$ such that for any $p\in P_{n+1}\cup Q_{n+1}''$, we have
	\begin{equation}\label{equ:pbn}
	d(p,B_n)\geq c_{\ten}h_n/\eta.
	\end{equation}
\end{lem}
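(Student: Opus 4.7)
The plan is to exploit the separation between parabolic fixed points (Lemma~\ref{lem:pdistance}) against the smallness of the radii of the balls comprising $B_n$ (Lemma~\ref{lem:universal}). Since $B_n\subset\bigcup_{p'\in\bigcup_{l\leq n}Q_l''} B_{p',n}$, it suffices to establish the bound $d(p,B_{p',n})\geq c_{\ten} h_n/\eta$ for every $p'\in Q_l''$ with $l\leq n$.

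First I would collect the relevant height estimates. For $p\in P_{n+1}\cup Q_{n+1}''$, the definition of $P_{n+1}$ gives $h_p>h_{n+1}/\eta$ directly, while if $p\in C(q)$ with $q\in Q_{n+1}$, then Lemma~\ref{lem:puniform} yields $h_p\geq h_q/C_{\seven}>h_{n+1}/(\eta C_{\seven})=h_n/(e\eta C_{\seven})$. Similarly, for $p'\in C(q')$ with $q'\in Q_l$, Lemma~\ref{lem:puniform} gives $h_{p'}\in [h_l/(\eta C_{\seven}),\, C_{\seven}h_{l-1}/\eta]$. Note also that $p\neq p'$: this is Lemma~\ref{lem:Pn} when $p\in P_{n+1}$, and it follows from the disjointness of distinct equivalent classes (Lemma~\ref{lem:welldefine}) when $p\in Q_{n+1}''$.

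Next, I would apply Lemma~\ref{lem:pdistance} to obtain
\[ d(p,p')\geq\sqrt{h_p h_{p'}}\geq \frac{\sqrt{h_n h_l}}{\eta C_{\seven}\sqrt{e}}, \]
while Lemma~\ref{lem:universal} gives
\[ B_{p',n}\subset B\bigl(p',\,C_{\eight}\sqrt{\eta h_{p'} h_n}\bigr)\subset B\bigl(p',\,C_{\eight}\sqrt{eC_{\seven}}\,\sqrt{h_l h_n}\bigr). \]
Combining these two inequalities yields
\[ d(p,B_{p',n})\geq \sqrt{h_n h_l}\left(\frac{1}{\eta C_{\seven}\sqrt{e}}-C_{\eight}\sqrt{eC_{\seven}}\right). \]
Since $\eta$ is chosen sufficiently small (independent of $n,l,p,p'$), the first term dominates and we obtain $d(p,B_{p',n})\geq \sqrt{h_n h_l}/(2\eta C_{\seven}\sqrt{e})\geq h_n/(2\eta C_{\seven}\sqrt{e})$, where the last step uses $h_l\geq h_n$ because $l\leq n$. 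Taking $c_{\ten}:=1/(2C_{\seven}\sqrt{e})$ finishes the argument.

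The only real subtlety is bookkeeping: one must verify the height bounds on both $p$ and $p'$ uniformly across all possible cases in the construction of equivalent classes (Cases~1, 2, 3 of Section~\ref{sec:equivalent classes}), and confirm that the ball $B_{p',n}$—which in Case~2 is not a genuine euclidean ball around $p'$ but the pullback $(g_i\gamma)^{-1}B(g_i\gamma p',r_{p',n})$—is nonetheless contained in a euclidean ball around $p'$ of radius $C_{\eight}\sqrt{\eta h_{p'} h_n}$, as guaranteed by Lemma~\ref{lem:universal}. Once this is in hand, the proof reduces to the single elementary inequality above, and no further geometric input is required.
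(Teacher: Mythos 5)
Your proof is correct and follows the same strategy as the paper: bound $d(p,q)$ from below by $\sqrt{h_p h_q}$ (Lemma~\ref{lem:pdistance}), bound the radius of $B_{q,n}$ from above by $C_{\eight}\sqrt{\eta h_q h_n}$ (Lemma~\ref{lem:universal}), and use the lower bounds on $h_p$ and $h_q$ coming from the index ranges together with Lemma~\ref{lem:puniform} to absorb the error term for $\eta$ small. The paper's proof factors out $\sqrt{h_q}$ from both terms, which avoids needing an upper bound on $h_{p'}$, but the two computations are interchangeable; your explicit observation that the disjointness of $p$ and $p'$ in the case $p\in Q_{n+1}''$ comes from Lemma~\ref{lem:welldefine} (rather than Lemma~\ref{lem:Pn}, which only treats $P_{n+1}$) is a welcome clarification of a point the paper passes over.
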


\begin{proof}	
Let $p\in P_{n+1}\cup Q_{n+1}''$ and $B_{q,n}$ be a ball in $B_n$. By Lemma~\ref{lem:Pn}, $p$ and $q$ are two different parabolic fixed points. We have
\begin{align*}
d(p, B_{q,n})&\geq d(p,q)-C\sqrt{\eta h(q) h_n}\,\,\,\,\,\,\,\,\,\,\,\, \text{(by Lemma~\ref{lem:universal})}\\
&\geq \sqrt{h(p)h(q)}-C\sqrt{\eta h(q) h_n}\,\,\,\,\,\,\,\,\,\,\,\, \text{(by Lemma~\ref{lem:pdistance})}\\
&=\sqrt{h(q)}(\sqrt{h(p)}-C\sqrt{\eta h_n})\\
&\geq \sqrt{h_n/C\eta}\left(\sqrt{h_n/(Ce\eta)}-C\sqrt{\eta h_n}\right)
\geq h_n/(C\eta).\qedhere
\end{align*}
\end{proof}

Recall that $D_{n+1}=\cup_{p\in P_{n+1}} J_p$ and $\Omega_{n+1}=\Omega_n-D_{n+1}$.
\begin{lem}\label{lem:fullball}
If $\eta<\frac{c_{\ten}}{C_{\five}C_{\eight}}$, then:

\noindent 1. We have the followings:
\begin{align}
 \label{equ:generation}
 D_{n+1}\cap B_n&=\emptyset\,\,\,\text{and}\,\,\, \left(\cup_{p\in Q''_{n+1}} B_{p,n+1}\right)\cap B_n=\emptyset,
\\\nonumber
A_{n+1}&=(A_n-D_{n+1}-A_n\cap B_{n+1})\cup(A_{n+1}\cap B_n),
\\
\label{equ:anbn}	
A_n\cap B_{n+1}&=\cup_{p\in Q_{n+1}''}(B_{p,n+1}\cap\Omega_{n+1}),\ A_{n+1}\cap B_n=B_n-B_{n+1}.
\end{align}
2. For $p\in Q_n''$ and $m\geq n$, we have $B_{p,m}\cap \Omega_m=B_{p,m}\cap\Omega_n$. 

\end{lem}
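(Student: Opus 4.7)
The lemma is essentially bookkeeping: once the two disjointness statements in \eqref{equ:generation} are established, the rest is formal set algebra using $A_n=\Omega_n-B_n$ and $\Omega_{n+1}=\Omega_n-D_{n+1}$. So I would organize the argument in two phases.

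\emph{Phase 1: geometric disjointness.} The key inputs are Lemma~\ref{lem:pbn} (which gives $d(p,B_n)\geq c_{\ten} h_n/\eta$ for every $p\in P_{n+1}\cup Q''_{n+1}$) together with the radius controls \eqref{equ:jp} and Lemma~\ref{lem:universal}. For $p\in P_{n+1}$, we have $J_p\subset B(p,C_{\five}\eta h_p)$ with $\eta h_p\leq h_n$, so $J_p\subset B(p, C_{\five}h_n)$, which is disjoint from $B_n$ once $\eta$ is small enough that $C_{\five} < c_{\ten}/\eta$; this yields $D_{n+1}\cap B_n=\emptyset$. For $p\in Q''_{n+1}$, Lemma~\ref{lem:universal} gives $B_{p,n+1}\subset B(p,C_{\eight}\sqrt{\eta h_p h_{n+1}})\subset B(p,C_{\eight}h_n)$, and again this is disjoint from $B_n$ for $\eta$ small.

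\emph{Phase 2: set-algebra manipulations.} Using $\Omega_{n+1}=\Omega_n-D_{n+1}=(A_n\sqcup B_n)-D_{n+1}$ and the disjointness $D_{n+1}\cap B_n=\emptyset$, we can write $\Omega_{n+1}=(A_n-D_{n+1})\sqcup B_n$. Intersecting with the complement of $B_{n+1}$ gives
\[
A_{n+1}=\Omega_{n+1}-B_{n+1}=\bigl((A_n-D_{n+1})-B_{n+1}\bigr)\cup (B_n-B_{n+1}),
\]
which is the stated decomposition once we observe $A_n-B_{n+1}=A_n-(A_n\cap B_{n+1})$ and $(B_n-B_{n+1})=A_{n+1}\cap B_n$ (the latter because $A_n\cap B_n=\emptyset$ by definition). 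For the formula $A_n\cap B_{n+1}=\bigcup_{p\in Q''_{n+1}}(B_{p,n+1}\cap\Omega_{n+1})$, I would split the union defining $B_{n+1}$ over $p\in \cup_{l\leq n} Q''_l$ and $p\in Q''_{n+1}$: for $l\leq n$ the inclusion $B_{p,n+1}\subset B_{p,n}$ (since $r_{p,n+1}\leq r_{p,n}$) puts $B_{p,n+1}\cap\Omega_{n+1}\subset B_n$, so this part contributes nothing to $A_n\cap B_{n+1}$; for $p\in Q''_{n+1}$, Phase~1 gives $B_{p,n+1}\cap B_n=\emptyset$, so $B_{p,n+1}\cap\Omega_{n+1}\subset A_n$, yielding the claimed equality.

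\emph{Part (2).} Since $\Omega_m\subset\Omega_n$, it suffices to show $B_{p,m}\cap(\Omega_n-\Omega_m)=\emptyset$, i.e. $B_{p,m}\cap D_l=\emptyset$ for every $n<l\leq m$. Fix $q\in P_l$; by Lemma~\ref{lem:Pn}, $q\notin Q''_n$, hence $q\neq p$. The separation lemma \ref{lem:pdistance} yields $d(p,q)\geq\sqrt{h_ph_q}\geq\sqrt{h_n h_l}/\eta$, whereas $J_q\subset B(q,C_{\five}\eta h_q)\subset B(q,C_{\five}h_n)$ and $B_{p,m}\subset B(p,C_{\eight}\sqrt{\eta h_p h_m})\subset B(p,C_{\eight}\sqrt{h_{n-1}h_n})$. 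For $\eta$ sufficiently small the sum of the two radii is dominated by $d(p,q)$, so $J_q\cap B_{p,m}=\emptyset$, and taking the union over $q\in P_l$ and $l\in(n,m]$ completes the argument.

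\emph{Expected main obstacle.} None of the steps are individually deep; the only delicate point is Phase~1, where one must check that the constants $C_{\five}$, $C_{\eight}$, $C_{\seven}$ (entering via Lemma~\ref{lem:puniform} when comparing $h_p$ of points in an equivalence class with $h_n$) are all dominated by $c_{\ten}/\eta$. This is precisely a statement about how small the parameter $\eta$ must be chosen, and it will be automatically satisfied by the overall choice of $\eta$ made at the end of the proof of Proposition~\ref{keylemma}.
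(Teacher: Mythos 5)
Your Phase 1 and Phase 2 match the paper's (very terse) treatment of the first statement: the paper cites Lemmas~\ref{lem:jp}, \ref{lem:universal}, \ref{lem:pbn} for \eqref{equ:generation} and says the rest is easy, which is exactly the set algebra you spell out. One small quibble: you justify $B_n-B_{n+1}=A_{n+1}\cap B_n$ by ``$A_n\cap B_n=\emptyset$'', but the correct reason is $B_n\subset\Omega_{n+1}$ (which follows from $B_n\subset\Omega_n$ and $D_{n+1}\cap B_n=\emptyset$), so that intersecting $B_n$ with $A_{n+1}=\Omega_{n+1}-B_{n+1}$ gives $B_n-B_{n+1}$.

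For Part~2 you take a genuinely different route from the paper. You argue directly that $B_{p,m}$ is disjoint from every $J_q$, $q\in P_l$ with $n<l\le m$, using the separation lemma $d(p,q)\ge\sqrt{h_ph_q}$ together with the radius bounds for $J_q$ and $B_{p,m}$. This works, but it forces you to track the constants $C_{\five},C_{\seven},C_{\eight}$ (e.g. the bound $h_p\geq h_n/\eta$ only holds when $p\in Q_n$ itself; for $p$ in an equivalence class one loses a factor $C_{\seven}$ via Lemma~\ref{lem:puniform}), which is why you flag this as the main obstacle. The paper's argument is cleaner: it proceeds by downward recycling of the first part of \eqref{equ:generation}. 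Since $p\in Q_n''\subset\cup_{j\le l}Q_j''$ for $l\ge n$ and $B_{p,m}\subset B_{p,l}$, one has $B_{p,m}\cap\Omega_l\subset B_l$; combined with $D_{l+1}\cap B_l=\emptyset$ this gives $B_{p,m}\cap\Omega_{l+1}=B_{p,m}\cap(\Omega_l-D_{l+1})=B_{p,m}\cap\Omega_l$, and inducting from $l=n$ to $l=m-1$ yields the claim with no new geometric input or constant-chasing at all. Your direct argument buys nothing over the paper's here and carries more risk of a constant going the wrong way; if you keep it, you should make explicit how the $\eta$ chosen at the end of Proposition~\ref{keylemma} dominates the accumulated constants, since the lemma is used before $\eta$ is finally pinned down.
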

\begin{proof}
	For $p\in P_{n+1}$, by Lemma \ref{equ:jp}, we have $J_p\subset B(p,C_{\five}\eta h(p))$. Then $C_{\five}\eta h(p)\leq C_{\five}h_n\leq c_{\ten}h_n/\eta$. By Lemma \ref{lem:pbn}, we have $J_p\cap B_n=\emptyset$. For $p\in Q_{n+1}''$, by Lemma \ref{lem:universal}, we have $B_{p,n+1}\subset B(p, C_{\eight}\sqrt{\eta h(p)h_n})$. Then $C_{\eight}\sqrt{\eta h(p)h_n}\leq C_{\eight} h_n< c_{\ten}h_n/\eta$.
	By Lemma \ref{lem:pbn}, we have $B_{p,n+1}\cap B_n=\emptyset$.
	The rest of the first statement can be obtained easily.
	
	For $m>l\geq n$, by $D_{l+1}\cap B_l=\emptyset$ and $B_{p,m}\cap \Omega_l\subset B_{l}$ when $p\in Q_n''$ we know that
	\[ B_{p,m}\cap\Omega_{l+1}=B_{p,m}\cap (\Omega_{l}-D_{l+1})=B_{p,m}\cap\Omega_{l}, \]
	which implies the second part of the statement.
\end{proof}

\subsection{Energy exchange argument}\label{sec:energy}
We are ready to prove Proposition \ref{keylemma}. 

\begin{lem}
\label{lem:sub2}
	There exists $c_{\ele}>0$ such that
	\begin{equation}
	\label{equ:sub2}
	\mu(B_{n}\cap A_{n+1})>c_{\ele}\mu(B_{n}). 
	\end{equation}
\end{lem}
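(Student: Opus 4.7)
The plan is to identify $A_{n+1}\cap B_n = B_n\setminus B_{n+1}$ as a disjoint union of annular regions in the balls $B_{p,n}$, and to estimate each piece using the annular measure Lemma~\ref{lem:rre}. By Lemma~\ref{lem:fullball}(1), for $p\in Q''_{n+1}$ the ball $B_{p,n+1}$ lies in $A_n$ and is therefore disjoint from $B_n$; by Lemma~\ref{lem:fullball}(2), for $p\in Q''_l$ with $l\leq n$ we have $B_{p,m}\cap\Omega_m=B_{p,m}\cap\Omega_l$. The balls $\{B_{p,n}\}_{p}$ are pairwise disjoint: two balls from distinct equivalent classes are separated by Lemma~\ref{lem:pdistance}, while within one equivalent class the points are translates of each other by elements of $(g_i\Gamma g_i^{-1})_\infty$ and the balls are automatically disjoint. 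Combining these,
\[
A_{n+1}\cap B_n=\bigsqcup_{p\in\bigcup_{l\leq n}Q''_l}(B_{p,n}\setminus B_{p,n+1})\cap\Omega_n,\qquad B_n=\bigsqcup_{p}B_{p,n}\cap\Omega_n.
\]

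Since $\mu(B_{p,n}\cap\Omega_n)\leq\mu(B_{p,n})$, the lemma reduces to the uniform per-ball estimate $\mu((B_{p,n}\setminus B_{p,n+1})\cap\Omega_n)\geq c\,\mu(B_{p,n})$ with $c>0$ independent of $p,l,n$. The crucial scale identity is $r_{p,n+1}/r_{p,n}=1/\sqrt{e}$, which follows from $h_{n+1}/h_n=1/e$ in all three cases of the equivalent-class construction. In Case~2 I would push forward by $g_i\gamma$, using the quasi-invariance \eqref{conjugation} to relate $\mu$ to the PS measure of the conjugate group $g_i\Gamma g_i^{-1}$, so that $B_{p,m}$ becomes a standard ball $B(g_i\gamma p,r_{p,m})$ around a parabolic fixed point; Cases~1 and~3 are already of this form. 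Lemma~\ref{lem:rre} then gives
\[
\mu(B_{p,n}\setminus B_{p,n+1})\geq c_0\,\mu(B_{p,n}),
\]
the hypothesis $r_{p,n}\leq h_p/C$ being checked from $r_{p,n}\lesssim\sqrt{\eta}\,h_p$ (Lemma~\ref{lem:universal}, Lemma~\ref{lem:puniform}) for $\eta$ sufficiently small.

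It remains to show that a uniform proportion of this annulus survives the intersection with $\Omega_n$, namely $\mu((B_{p,n}\setminus B_{p,n+1})\cap\Omega_n)\geq c_1\,\mu(B_{p,n}\setminus B_{p,n+1})$. By the separation Lemma~\ref{lem:separation}, the only component of $\partial\Omega_n$ that can meet $B_{p,n}$ is the one used to define the equivalent class of $p$, all others being at distance $\gg r_{p,n}$. In Case~2, in $g_i\gamma$-coordinates this component is a standard parallelotope of size $\sim 1/\eta$, and $g_i\gamma p$ sits at distance comparable to $r_{p,n}$ from it (Lemma~\ref{lem:eclocation}); in Case~1 it is a facet of $\partial\Omega_0$; in Case~3 the ball lies entirely in $\Omega_n$ and there is nothing to prove. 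The ``good side'' of the parallelotope (respectively $\partial\Omega_0$) within the annulus then carries a definite PS-mass by Lemma~\ref{lem:jpr} (for $\partial J_q$) or Lemma~\ref{lem:boud} (for $\partial\Omega_0$), combined with the doubling property Proposition~\ref{double}.

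The main obstacle is precisely this last step: establishing the uniform bound $\mu((B_{p,n}\setminus B_{p,n+1})\cap\Omega_n)\geq c_1\,\mu(B_{p,n}\setminus B_{p,n+1})$ with $c_1$ independent of $\eta$, $p$, $l$, $n$. Concretely, after the $g_i\gamma$ change of coordinates one has to show that removing a parallelotope neighborhood of size $\sim 1/\eta$ from a standard annulus around a nearby parabolic fixed point leaves a definite PS-fraction. This is the technical heart of the argument and requires carefully assembling the friendliness (Lemma~\ref{lem:boud}), the $\partial J_q$-neighborhood estimate (Lemma~\ref{lem:jpr}), and the doubling property (Proposition~\ref{double}); once in place, multiplying $c_0$ and $c_1$ yields the desired constant $c_{\ele}$.
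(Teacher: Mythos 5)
Your reduction to the \emph{per-ball} inequality
\[
\mu\bigl((B_{p,n}\setminus B_{p,n+1})\cap\Omega_n\bigr)\geq c\,\mu(B_{p,n})
\]
is where the argument breaks: this inequality is false in general, and recognizing this is the whole point of the equivalent-class machinery. A point $p\in Q_n'$ satisfies, by definition, $d(p,\partial\Omega_{n-1})\leq r_{p,n}$ (this is what $B(p,\sqrt{\eta h_n h_p})\cap\partial\Omega_{n-1}\neq\emptyset$ says), and nothing prevents $p$ from lying at distance $\ll r_{p,n}$ from that boundary component, or even on the far side of it. In that regime $B_{p,n}\cap\Omega_n$ is an arbitrarily thin sliver of $B_{p,n}$, so no absolute lower bound on $\mu((B_{p,n}\setminus B_{p,n+1})\cap\Omega_n)$ in terms of $\mu(B_{p,n})$ can hold. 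The friendliness results (Lemma~\ref{lem:boud}, Lemma~\ref{lem:jpr}) control the measure of thin \emph{neighborhoods} of boundary components, and the DFSU-type statement inside Lemma~\ref{lem:part} provides, for each point near the boundary, \emph{some} well-placed ball of adaptively chosen radius; none of this forces the particular pre-assigned ball $B_{p,n}$ to have a definite fraction of its mass on the good side. So the ``main obstacle'' you flag is not merely technical --- it is an obstruction to the per-ball route.

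The paper's proof instead reduces to the inequality
\[
\sum_{p'\in C(p)}\mu\bigl((B_{p',n}\setminus B_{p',n+1})\cap\Omega_n\bigr)\;\geq\; c_{\ele}\sum_{p'\in C(p)}\mu\bigl(B_{p',n}\cap\Omega_n\bigr),
\]
summed over each equivalent class $C(p)$ (after first checking that distinct balls $B_{p,n}$ are pairwise disjoint and grouping by Lemma~\ref{lem:welldefine}). The key observation is that under the change of coordinates $g_i\gamma$, each $B_{p',n}\cap\Omega_n$ for $p'=(g_i\gamma)^{-1}\gamma_1 g_i\gamma p$ is a $\gamma_1$-translate of a piece of $B(g_i\gamma p, r_{p,n})$ cut by $\gamma_1^{-1}g_i\gamma\,\partial J_q$ (or by $\partial\Omega_0$ in Case~1); summing over $\gamma_1$ ranging through the class \emph{reassembles a full ball} (and likewise a full annulus for the numerator), because the $\gamma_1$-translates tile the ambient space. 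Only then does Lemma~\ref{lem:rre} apply, to full balls and annuli rather than slivers. Your proposal uses $r_{p,n+1}/r_{p,n}=1/\sqrt e$, Lemma~\ref{lem:rre}, and the conjugation by $g_i\gamma$ correctly, but omits the summation-over-$C(p)$ step that recombines the partial balls; without it the per-ball claim you need cannot be proven.
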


The definition of equivalence classes is mainly used in this lemma. The idea is that the left hand side of \eqref{equ:sub2} can be expressed as a sum over equivalence classes. Over one equivalence class, we obtain a full ball whose measure we are able to estimate.
\begin{proof}[Proof of Lemma \ref{lem:sub2}]
We claim that for any distinct $p,p'\in \cup_{1\leq l\leq n}Q''_l$, we have $B_{p,n}\cap B_{p',n}=\emptyset$. The first equation in Lemma~\ref{lem:fullball} verifies the case when $p\in Q''_l$ and $p'\in Q''_j$ with $l\neq j$.

When $p,p'\in Q''_l$, using Lemma~\ref{lem:pdistance}, and~\ref{lem:universal}, we have
\begin{align*}
&d(B_{p,l}, B_{p',l})\geq d(p,p')-C\sqrt{\eta h(p) h_l}-C\sqrt{\eta h(p')h_l}\\
\geq & \sqrt{h(p) h(p')}-C\sqrt{\eta h(p) h_l}-C\sqrt{\eta h(p')h_l}
\geq h_l/(C\eta) -2Ch_{l-1}>0,
\end{align*}
showing the claim.

So $\mu(B_n\cap A_{n+1})$ can be divided into the sum over $p\in \cup_{1\leq l\leq n}Q''_l$. Using Lemma~\ref{lem:welldefine}, we can further group the sum into equivalence classes. Due to \eqref{equ:anbn}, $\mu(B_n\cap A_{n+1})=\mu(B_n-B_{n+1})$. Then the proof of \eqref{equ:sub2} is reduced to proving that there exists $c_{\ele}>0$ such that for each equivalence class $C(p)$, we have
\[ \sum_{p'\in C(p)} \mu((B_{p',n}-B_{p',n+1})\cap\Omega_n)\geq c_{\ele} \sum_{p'\in C(p)}\mu(B_{p',n}\cap \Omega_n).\]

We first consider the equivalence classes defined in Case 1 and Case 3 in page \pageref{correct radius}.
Then by the definition of equivalence class and Lemma \ref{lem:quasi-gammainfinity}, we obtain
\begin{align*}
\frac{\sum_{p'\in C(p)} \mu((B_{p',n}-B_{p',n+1})\cap\Omega_n)}{\sum_{p'\in C(p)}\mu(B_{p',n}\cap \Omega_n)}\geq \frac{\mu(B_{p,n}-B_{p,n+1})}{C\mu(B_{p,n})}=\frac{\mu(B(p,r_{p,n})-B(p,r_{p,n+1}))}{C\mu(B(p,r_{p,n}))}\geq \frac{c}{C},
\end{align*}
where the last inequality follows from Lemma~\ref{lem:rre} and $r_{p,n}=\sqrt{\eta h_mh_p}\ll \eta h(p)$.

\begin{figure}
	\def\svgwidth{350bp}
	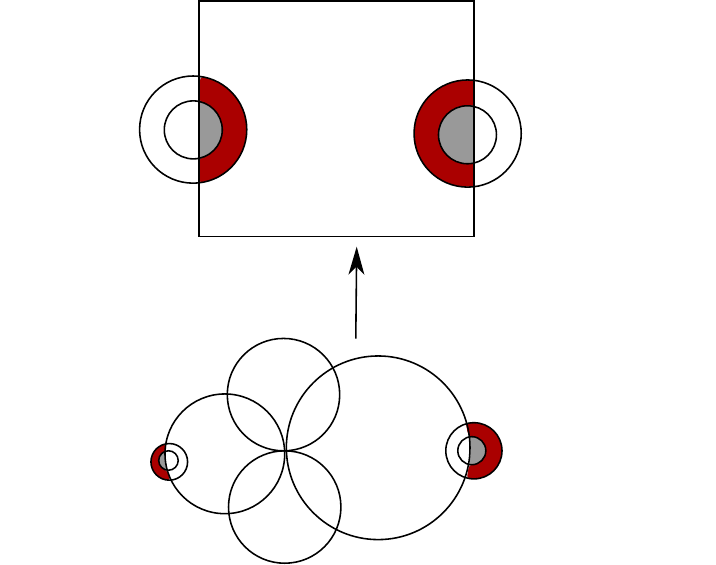
	\caption{Pairing partial balls: $q=\gamma^{-1}p_i$, $p'=g_i\gamma p$, $B_1=B(g_i\gamma p, \tilde r_{p,m}/C_{\six})$, $\gamma_1\in (g_i\Gamma g_i^{-1})_{\infty}$}
\end{figure}

Next we consider the equivalence classes defined in Case 2 in page \pageref{correct radius}. Suppose the associated boundary component of $p$ is $\partial J_{q}$ with $q=\gamma^{-1}p_i$ and $\gamma^{-1}$ is the representation of $q$. We first assume that $p_i=\infty$. By Lemma~\ref{lem:annulusquasi} and \eqref{inclusion} for any Borel subset $E\subset B_{p,n}$, we have
\begin{align*}
h^{\delta}_q\mu(\gamma E)/C\leq\mu(E)\leq Ch^{\delta}_q\mu(\gamma E).
\end{align*}
We have 
\begin{align}
\label{energyba}
&\frac{\sum_{p'\in C(p)} \mu((B_{p',n}-B_{p',n+1})\cap\Omega_n)}{\sum_{p'\in C(p)}\mu(B_{p',n}\cap \Omega_n)}
\geq \frac{\sum_{p'\in C(p)} \mu((B (\gamma p',r_{p',n})-B(\gamma p',r_{p',n+1}))\cap\gamma J_q^c)}{C\sum_{p'\in C(p)}\mu(B (\gamma p',r_{p',n})\cap \gamma J_q^c)}.
\end{align}
For each $p'\in C(p)$, we can write $p'=\gamma^{-1} \gamma_1 \gamma p$ with $\gamma_1\in \Gamma_{\infty}$. We have
\begin{align*}
&\mu (B (\gamma p',r_{p',n})\cap \gamma J_q^c)=\mu (\gamma_1 B (\gamma p, r_{p,n})\cap \gamma J_q^c)
\approx \mu (B (\gamma p, r_{p,n})\cap \gamma^{-1}_1\gamma J_q^c),
\end{align*}
where we use Lemma \ref{lem:quasi-gammainfinity} and (\ref{inverseinclusion}) to compute $|\gamma_1(x)|$ and $|x|$ for $x$ in $B (\gamma p, r_{p,n})$. Summing over $p'\in C(p)$, we can get a full ball. Similarly, we have
\begin{align*}
\mu((B (\gamma p', r_{p',n})-B (\gamma p',r_{p',n+1}))\cap \gamma J_q^c)
\approx \mu((B(\gamma p, r_{p,n})-B(\gamma p,r_{p,n+1}))\cap \gamma_1^{-1}\gamma J_q^c).
\end{align*}
We use these two observations and Lemma~\ref{lem:rre}, $r_{p,n}\ll\eta h(\gamma p)$ \eqref{equ:rpmless} to conclude
\begin{align*}
(\ref{energyba})\geq &\frac{\mu(B(\gamma p, r_{p,n})-B(\gamma p, r_{p,n+1}))}{C\mu(B (\gamma p, r_{p,n}))}\geq \frac{c}{C}.
\end{align*}

For general $p_i$, let $g_ip_i=\infty$ and $\Gamma_i=g_i\Gamma g_i^{-1}$. Using \eqref{conjugation}, we obtain
\begin{align}
\label{energyba2}
\frac{\sum_{p'\in C(p)} \mu((B_{p',n}-B_{p',n+1})\cap \Omega_n)}{\sum_{p'\in C(p)} \mu (B_{p',n}\cap \Omega_n)}
\approx\frac{\sum_{p'\in C(p)} \mu_{\Gamma_i}(g_i(B_{p',n}-B_{p',n+1})\cap g_i\Omega_n)}{\sum_{p'\in C(p)} \mu_{\Gamma_i} (g_iB_{p',n}\cap g_i\Omega_n)}.
\end{align}
This fraction can be estimated the same way as we estimate (\ref{energyba}). So
\begin{equation*}
(\ref{energyba2})\geq c/C.\qedhere
\end{equation*}
\end{proof}

Let $C_{\twl}=2C_{\five} C_{\three}+4C_{\eight}$, where $C_{\three}$, $C_{\five}$ and $C_{\eight}$ are constants given by Proposition \ref{double}, Lemma \ref{equ:jp} and Lemma \ref{lem:universal} respectively. 
Let 
\begin{equation*}
\Omega_n'=\{x\in\Omega_n:\ d(x,\partial\Omega_n)\leq C_{\twl}h_n \}. 
\end{equation*}
This is the set of points with distance less than $C_{\twl} h_n$ to the boundary of $\Omega_{n}$.
\begin{lem}[Boundary estimate]\label{lem:bou}
	There exists $c_{\thi}>0$ depending on $C_{\twl}\eta$ such that
	\begin{equation*}
	\mu(\Omega_n')\leq c_{\thi}\mu(\Omega_n) 
	\end{equation*}
and $c_{\thi}$ tends to 0 as $C_{\twl}\eta$ tends to 0.	
\end{lem}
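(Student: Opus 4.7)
The plan is to split $\Omega_n'$ along the connected components of $\partial\Omega_n$, estimate the mass in each piece using the friendliness results already proved, and sum. The boundary decomposes as $\partial\Omega_n=\partial\Omega_0\cup\bigsqcup_{l\le n,\,p\in P_l}\partial J_p$, and by Lemma~\ref{lem:separation} any two of these components are separated by more than $h_n/(2\eta)$. I fix the auxiliary scale $r=h_n/(4\eta)$; assuming $4C_{\twl}\eta<1$, the neighbourhoods $N_{C_{\twl}h_n}(\partial)$, and even the larger $N_r(\partial)$, are pairwise disjoint as $\partial$ ranges over components of $\partial\Omega_n$.

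The crux of the argument is a containment claim: for every component $\partial$ of $\partial\Omega_n$, one has $N_r(\partial)\subset\Omega_n$. For $\partial=\partial J_p$ with $p\in P_l$ and $l\le n$, I need to rule out $J_q\cap N_r(\partial J_p)\ne\emptyset$ for $q\ne p$ in $\bigcup_{l'\le n}P_{l'}$, through a three-case analysis in $l'$. The case $l'<l$ uses the defining inequality $d(p,\partial\Omega_{l-1})>h_{l-1}/(4\eta)$, applied with the boundary component $\partial J_q\subset\partial\Omega_{l-1}$, together with $\operatorname{diam}(J_q)\lesssim \eta h_q\le h_{l-1}$. The case $l'=l$ follows from Lemma~\ref{lem:separation} combined with the observation that same-generation diameters are much smaller than $h_n/(2\eta)$. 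The case $l'>l$ uses $d(q,\partial\Omega_{l'-1})>h_{l'-1}/(4\eta)$ together with $\partial J_p\subset\partial\Omega_{l'-1}$ and $h_{l'-1}\ge h_{n-1}=eh_n>2h_n$. An analogous template yields $N_r(\partial\Omega_0)\subset\Omega_n$, and $N_r(\partial J_p)\subset\Delta_0$ follows from a one-sided variant using $\partial\Omega_0\subset\partial\Omega_{l-1}$.

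With containment established, I apply the friendliness estimates component by component. For $\partial=\partial J_p$, I invoke Lemma~\ref{lem:jpr} at scale $r=h_n/(4\eta)$ (valid since $\eta h_p\ge h_l\ge h_n$, so $r<h_p$) with constant $C=4C_{\twl}$ chosen so that $C\eta r=C_{\twl}h_n$, giving $\mu(N_{C_{\twl}h_n}(\partial J_p))\le c\,\mu(N_r(\partial J_p))$ where $c\to 0$ as $C_{\twl}\eta\to 0$. For $\partial=\partial\Omega_0$, I iterate Lemma~\ref{lem:boud} (the remark following that lemma explicitly allows an arbitrarily small contraction factor at the cost of a correspondingly small ratio) to obtain $\mu(N_{C_{\twl}h_n}(\partial\Omega_0))\le c'\,\mu(N_r(\partial\Omega_0))$ of the same quality. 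Summing over components and invoking disjointness together with containment,
\[
\mu(\Omega_n')\le\sum_\partial\mu(N_{C_{\twl}h_n}(\partial))\le\max(c,c')\sum_\partial\mu(N_r(\partial))\le\max(c,c')\,\mu(\Omega_n),
\]
so $c_{\thi}=\max(c,c')$ works and tends to $0$ as $C_{\twl}\eta\to 0$.

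The main obstacle is the containment step, which requires carefully coupling Lemma~\ref{lem:pdistance}, Lemma~\ref{lem:separation}, and the quantitative defining inequalities of $P_l$, and uses the factor $e>2$ between consecutive scales $h_{n-1}/h_n$ in a crucial way. Once containment is in place, the two friendliness lemmas and the disjointness of the $r$-neighbourhoods do the rest essentially mechanically.
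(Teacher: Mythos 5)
Your argument has the same skeleton as the paper's: decompose $\Omega_n'$ over the connected components of $\partial\Omega_n$, use Lemma~\ref{lem:separation} to see that the neighbourhoods $N_r(\partial)$ at the auxiliary scale $r=h_n/(4\eta)$ are pairwise disjoint subsets of $\Omega_n$, apply the friendliness estimates (Lemma~\ref{lem:boud} near $\partial\Omega_0$ and Lemma~\ref{lem:jpr} near each $\partial J_p$), and sum. Two points are worth flagging.

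The containment $N_r(\partial)\subset\Omega_n$, which you call the crux and attack with a three-case analysis on the generations $l',l$ using the defining inequalities of $P_l$, in fact follows immediately from Lemma~\ref{lem:separation}: if a point $x\in N_r(\partial J_p)$ lay in some $J_q$ with $q\ne p$, the segment from $x$ to the nearest point of $\partial J_p$ would cross $\partial J_q$, giving $d(\partial J_p,\partial J_q)\le r<h_n/(2\eta)$ and contradicting the separation; the same reasoning works with $\partial\Omega_0$ in place of $\partial J_q$. This is precisely what the paper cites Lemma~\ref{lem:separation} for in the final inequality. Your three-case analysis essentially re-derives the content of that lemma and, while not incorrect, is redundant.

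The more substantive gap is the multi-cusp case. You apply Lemma~\ref{lem:jpr} to every $\partial J_p$ with $p\in\bigcup_{l\le n}P_l$, but that lemma is stated and proved under the standing assumption, declared just before its statement, that $p=\gamma\infty$ with $\gamma$ a top representation, i.e.\ for $p$ in the $\Gamma$-orbit of $p_1=\infty$. When $p=\gamma p_i$ with $i\ne 1$, the flower is $J_p=g_i^{-1}J_{g_ip}$, and the doubling estimate must be run for the conjugated group $\Gamma_i=g_i\Gamma g_i^{-1}$ with the conjugated measure $\mu_{\Gamma_i}$ and then transferred back using the bi-Lipschitz map $g_i$ (Lemma~\ref{lem:bilip}), the height comparison (Lemma~\ref{lem:height}), and the measure comparison~\eqref{conjugation}; the resulting multiplicative constant $C'=\max_i e^{\delta d(o,g_io)}$ is then absorbed into $c_{\thi}$. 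Your proof as written is complete only in the one-cusp case, and this conjugation step needs to be added.
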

\begin{proof}
	
	The boundary $\partial\Omega_n$ consists of 
	$\partial\Omega_0$ and $\partial J_p$ with $p\in\cup_{1\leq l\leq n}P_l$. For any $p\in \cup_{1\leq l\leq n}P_l$, write $p=\gamma p_i$ with $\gamma\in \Gamma$ the representation of $p$ and $\Gamma_i=g_i\Gamma g_i^{-1}$. Recall the definitions \eqref{equ:def neighborhood1} and \eqref{equ:def neighborhood 2}. Note that $h_{n}/(4\eta)\leq h(p)$. It follows from Lemmas~\ref{lem:height} and~\ref{lem:bilip} that there exists $C>1$ such that $h_{n}/(C\eta)<h(g_i p)$ and
	\begin{equation*}
	g_iN_{C_{\twl}h_n}(\partial J_p)\subset N_{CC_{\twl} h_n}(\partial J_{i,p}),\,\,\, N_{h_n/(C\eta)}(\partial J_{i,p})\subset g_i N_{h_n/(4\eta)}(\partial J_p),
	\end{equation*}
	where $J_{i,p}$ is defined as in \eqref{def:jip}. It follows from \eqref{conjugation}, Lemma \ref{lem:boud} and 
	\ref{lem:jpr} that there exists $c>0$ such that 
	\begin{align*}
		&\mu(\Omega_n')=\mu(N_{C_{\twl} h_n}(\partial{\Omega_0}))+\sum_{p\in \cup_{1\leq l\leq n}P_l}\mu(N_{C_{\twl}h_n}(\partial J_p))\\
		\leq &c\mu(N_{h_n/(4\eta)}(\partial \Omega_0))+C'\sum_{p\in \cup_{1\leq l\leq n}P_l}\mu_{\Gamma_i}(N_{CC_{\twl}h_n}(\partial J_{i,p}))\\
		\leq &c\mu(N_{h_n/(4\eta)}(\partial \Omega_0))+cC'\sum_{p\in \cup_{1\leq l\leq n}P_l}\mu_{\Gamma_i}(N_{h_n/(C\eta)}(\partial J_{i,p}))\\
		\leq &c\mu(N_{h_n/(4\eta)}(\partial \Omega_0))+cC'^2\sum_{p\in \cup_{1\leq l\leq n}P_l}\mu(N_{h_n/(4\eta)}(\partial J_p))
		\leq cC'^2\mu(\Omega_n),
	\end{align*}
	where the last inequality is due to Lemma~\ref{lem:separation} and $C'=\max_{i}e^{\delta d(o,g_io)}$. 
\end{proof}

\begin{lem}
\label{lem:sub1i}
	There exists $0<c_{\fou}<1$ such that
	\[\mu(A_n\cap (D_{n+1}\cup B_{n+1} ))\leq c_{\fou}\mu(A_n)+\mu(\Omega_n'). \]
\end{lem}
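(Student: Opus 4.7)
My strategy is to decompose $A_n \cap (D_{n+1} \cup B_{n+1})$ according to parabolic centres. From $D_{n+1} = \bigsqcup_{p \in P_{n+1}} J_p$, the disjoint decomposition $A_n \cap B_{n+1} = \bigsqcup_{p \in Q_{n+1}''} (B_{p,n+1} \cap \Omega_{n+1})$ from \eqref{equ:anbn}, and $D_{n+1} \cap B_{n+1} = \emptyset$ (since $B_{n+1} \subset \Omega_{n+1} = \Omega_n \setminus D_{n+1}$), I would write
\[
A_n \cap (D_{n+1} \cup B_{n+1}) = \bigsqcup_{p \in T} F_p,
\]
where $T := P_{n+1} \sqcup Q_{n+1}''$, $F_p := J_p$ for $p \in P_{n+1}$, and $F_p := B_{p,n+1} \cap \Omega_{n+1}$ for $p \in Q_{n+1}''$. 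Each $F_p$ lies in $A_n$ by Lemma~\ref{lem:fullball}, and each is contained in a Euclidean ball centred at $p$ of radius $r_p \lesssim h_n$: via \eqref{equ:jp} with $\eta h_p \leq h_n$ for $J_p$, and via Lemma~\ref{lem:universal} with $\sqrt{\eta h_p h_{n+1}} \leq h_n$ for $B_{p,n+1}$. Lemma~\ref{lem:pdistance} combined with the height bounds $h_p \gtrsim h_n/\eta$ (direct for $P_{n+1}$, through Lemma~\ref{lem:puniform} for $Q_{n+1}''$) gives $d(p,p') \geq \sqrt{h_p h_{p'}} \gtrsim h_n/\eta$ for distinct centres, confirming pairwise disjointness of the $F_p$'s for $\eta$ small.

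Next I split $T = T^b \sqcup T^g$ with $T^b := \{p \in T : F_p \subset \Omega_n'\}$. The $T^b$ contribution is at most $\mu(\Omega_n')$ by disjointness. For $p \in T^g$ there is some $x \in F_p \setminus \Omega_n'$, hence
\[
d(p, \partial\Omega_n) \geq d(x, \partial\Omega_n) - r_p > C_{\twl} h_n - r_p \geq C_{\three} r_p,
\]
the last step using the explicit form $C_{\twl} = 2C_{\five} C_{\three} + 4C_{\eight}$, tuned so that this inequality is valid whether $r_p$ is controlled by $C_{\five} h_n$ (the $J_p$ case) or $C_{\eight} h_n$ (the $B_{p,n+1}$ case, including the conjugated balls from Lemma~\ref{lem:universal}). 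Setting $\widetilde B_p := B(p, C_{\three} r_p)$, the display yields $\widetilde B_p \subset \Omega_n$; and Lemma~\ref{lem:pbn} provides $d(p, B_n) \geq c_{\ten} h_n/\eta \gg C_{\three} r_p$ for $\eta$ small, so $\widetilde B_p \subset A_n$. The same separation $d(p,p') \gtrsim h_n/\eta$ also keeps the enlarged family $\{\widetilde B_p\}_{p \in T^g}$ pairwise disjoint.

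Finally, the growth property of Proposition~\ref{double} (valid at scale $r_p < 1/C_{\three}$, which holds for $n$ large) yields $\mu(F_p) \leq \mu(B(p, r_p)) < \tfrac{1}{2}\mu(\widetilde B_p)$, so
\[
\sum_{p \in T^g} \mu(F_p) < \tfrac{1}{2}\sum_{p \in T^g} \mu(\widetilde B_p) \leq \tfrac{1}{2} \mu(A_n),
\]
and combining with the $T^b$ bound proves the lemma with $c_{\fou} = 1/2$. The main obstacle I anticipate is the constant-chasing behind the chain $C_{\twl} h_n - r_p \geq C_{\three} r_p$: the definition of $C_{\twl}$ is engineered to absorb a $C_{\three}$-enlargement of the larger of the two radii $r_p$ arising from $J_p$ and $B_{p,n+1}$, and this needs verification case by case, especially for $p \in Q_{n+1}''$ centred in a cusp other than $\infty$, where $B_{p,n+1}$ is only comparable to a Euclidean ball after conjugating through some $g_i$. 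The rest of the argument is routine doubling and disjointness.
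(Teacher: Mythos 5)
Your decomposition $A_n\cap(D_{n+1}\cup B_{n+1})=\bigsqcup_{p\in T}F_p$, the separation bounds, the $T^b/T^g$ split, the inclusion $\widetilde B_p\subset A_n$ via Lemma~\ref{lem:pbn}, and the appeal to the growth property in Proposition~\ref{double} all reproduce the paper's argument in outline, and the overall structure is correct. The gap is precisely the one you flagged. With $C_{\twl}=2C_{\five}C_{\three}+4C_{\eight}$, the chain $C_{\twl}h_n-r_p\geq C_{\three}r_p$ need not hold if for $p\in Q_{n+1}''$ you only know $r_p\leq C_{\eight}h_n$ from Lemma~\ref{lem:universal}: rearranging gives the requirement $C_{\three}(C_{\eight}-2C_{\five})\leq 3C_{\eight}$, which can fail when $C_{\eight}$ is large relative to $C_{\five}$. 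There is also a $\sqrt{C_{\seven}}$ loss (from Lemma~\ref{lem:puniform}) hiding in the bound $\sqrt{\eta h_ph_{n+1}}\leq h_n$ when $p\in Q_{n+1}''\setminus Q_{n+1}$, which you did not address.

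The paper closes both gaps with one extra observation that you skip, inserted before doubling is applied. From $B_{p,n+1}\subset B(p,C_{\eight}h_n)$ and the existence of a point of $B_{p,n+1}$ in $\Omega_n-\Omega_n'$, one gets $d(p,\partial\Omega_n)>(C_{\twl}-2C_{\eight})h_n\geq C_{\twl}h_n/2$, which is much larger than $h_n$. But any $p$ assigned to an equivalent class through Case~1 or Case~2 of the construction in Section~\ref{sec:equivalent classes} sits within distance comparable to $h_n$ of a boundary component (Lemma~\ref{lem:pjq}); so such $p$ must instead be a Case~3 point, hence $p\in Q_{n+1}$ and $B_{p,n+1}=B(p,\sqrt{\eta h_ph_{n+1}})\subset B(p,h_n)$, an honest Euclidean ball. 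This replaces your $r_p\leq C_{\eight}h_n$ by $r_p\leq h_n\leq C_{\five}h_n$, matching the $J_p$ case and eliminating the $\sqrt{C_{\seven}}$ loss, and is exactly what the specific form of $C_{\twl}$ is tuned for: the growth comparison then reads $\mu(B(p,C_{\five}h_n))<\tfrac12\mu(B(p,C_{\three}C_{\five}h_n))\leq\tfrac12\mu(B(p,C_{\twl}h_n/2))$. Once you insert that one step, your argument and the paper's coincide.
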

\begin{proof}
By Lemma~\ref{lem:fullball}, we have 
\begin{equation*}
A_n\cap B_{n+1}\cap (\Omega_n-\Omega_n')\subset \bigcup_{p\in Q_{n+1}''}B_{p,n+1}.
\end{equation*}
We consider the points $p\in Q_{n+1}''$ such that $B_{p,n+1}$ intersects the set on the left. Denote the set of such points by $Q_{n+1}'''$. By Lemma~\ref{lem:universal}, we have
\begin{equation*}
B_{p,n+1}\subset B(p,C_{\eight}\sqrt{\eta h(p)h_{n+1}})\subset B(p,C_{\eight}h_n).
\end{equation*}
Then its distance to $\partial\Omega_n$ is greater than $(C_{\twl}-2C_{\eight})h_n\geq C_{\twl}h_n/2$. So $Q_{n+1}'''$ must be a subset of points in Case 3 in page \pageref{correct radius}, 
and $B_{p,n+1}=B(p,\sqrt{\eta h(p)h_{n+1}})\subset B(p,h_n)$. 

For $p\in P_{n+1}$, by Lemma \ref{equ:jp}, we have $J_p\subset B(p,C_{\five}\eta h(p))\subset B(p,C_{\five}h_n)$.


	By \eqref{equ:pbn}, for $p\in P_{n+1}\cup Q_{n+1}'''$
	\begin{equation}\label{equ:Hp}
	d(p,B_n)\geq c_{\ten}h_n/\eta\geq C_{\twl}h_n/2.
	\end{equation}
	Hence
	\[B(p, C_{\five}h_{n})\subset B(p,C_{\twl} h_n/2)\subset A_n. \]
	Then by doubling property in Proposition~\ref{double}
	\[\mu(B(p, C_{\five}h_{n}))\leq c_{\fou}\mu(B(p,C_{\twl} h_n/2)). \]
	By Lemma \ref{lem:pdistance}, the points in the set $P_{n+1}\cup Q_{n+1}'''$ are of distance $h_{n+1}/\eta$ apart from each other. Hence
	the balls $B(p,C_{\twl}h_n/2)$ are disjoint. 
	Adding them together, we obtain
	\begin{align*}
		&\mu(A_n\cap(D_{n+1}\cup B_{n+1})-\Omega_n')\leq \sum_{p\in P_{n+1}\cup Q_{n+1}'''}\mu(B(p,C_{\five}h_n))\\
		\leq& c_{\fou} \sum_{p\in P_{n+1}\cup Q_{n+1}'''}\mu(B(p,C_{\twl}h_n/2))\leq c_{\fou}\mu(A_n).\qedhere
	\end{align*}
\end{proof}

\bigskip{}
Set $A_n'=A_n-\Omega_n'$ which is the set of points in $A_n$ with distance at least $C_{\twl}h_n$ to the boundary $\partial\Omega_n$.
\begin{lem}
\label{lem:sub1ii}
	There exist $N\in\N$ and $c_{\fif}>0$ depending on $\eta$ such that
	\begin{equation*}
	\mu(\cup_{l=1}^N D_{n+l})\geq c_{\fif}\mu(A_n').
	\end{equation*}
\end{lem}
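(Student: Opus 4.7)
The plan is to use the recurrence of the geodesic flow (Lemma~\ref{lem:recurrence} from the introduction). For $N$ large enough depending on $\eta$, I would produce a subset $E_N \subset A_n' \cap \Lambda_\Gamma$ with $\mu(E_N) \geq \mu(A_n')/2$ such that each $x \in E_N$ admits a lift $\tilde x \in \widetilde{\Omega_0}$ with $\pi(\tilde x) = x$ and some time $m = m(x) \in [n+1, n+N]$ satisfying $\calG_m \tilde x \in \mathcal C_{c_{\nine}\eta}$. The recurrence is a BMS-measure statement on $\T^1(M)$, and pulling it back to a PS-measure estimate on the slice $A_n' \cap \Lambda_\Gamma$ via the Hopf parametrization together with exponential tails of return times into the thin cusp is the first technical step.

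For such $x$ and $m$, Lemma~\ref{lem:np} applied with $c_{\nine}\eta$ in place of $\eta$ produces a parabolic fixed point $p = p_x$ with $c_{\nine}\eta h_p > h_m$ and $d(x,p) < \sqrt{c_{\nine}\eta h_p h_m}$. Combining these gives $d(x,p) < c_{\nine}\eta h_p$, and since $c_{\nine} < 1/C_{\five}$ I deduce $d(x,p) < \eta h_p/C_{\five}$, so by \eqref{equ:jp} the point $x$ lies in $J_p$. Let $l_p$ be the generation of $p$, defined by $\eta h_p \in (h_{l_p}, h_{l_p-1}]$; from $\eta h_p > h_m/c_{\nine}$ I get $l_p \leq m \leq n + N$. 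It then suffices to show $p \in P_{l_p}$ with $l_p \geq n+1$, or, failing that, that $x$ is captured by a removed flower at some step in $[n+1,n+N]$.

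Ruling out $l_p \leq n$ goes by a dichotomy. Since $x \in \Omega_n \subset \Omega_{l_p-1}$ and $d(x,p) < \eta h_p$, the ball $B(p,\eta h_p)$ meets $\Omega_{l_p-1}$, so $p \in P_{l_p} \cup Q_{l_p}$. If $p \in P_{l_p}$, then $x \in J_p \subset D_{l_p}$, contradicting $x \in \Omega_n$. If $p \in Q_{l_p}$, then $p \in \bigcup_{j \leq n} Q_j''$, and Lemma~\ref{lem:universal} gives $B(p,\sqrt{\eta h_p h_n}/C_{\eight}) \subset B_{p,n}$; using $h_m \leq h_n$ and $c_{\nine} < 1/C_{\eight}^2$, I find $d(x,p) < \sqrt{\eta h_p h_n}/C_{\eight}$, hence $x \in B_{p,n} \cap \Omega_n \subset B_n$, contradicting $x \in A_n$.

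The hard part will be the residual case $l_p \in [n+1,n+N]$ with $p \in Q_{l_p}$. Here I would leverage $x \in A_n'$: the bound $d(p,\partial\Omega_n) > d(x,\partial\Omega_n)-d(x,p) > (C_{\twl}-C_{\five})h_n$ keeps $p$ far from $\partial\Omega_n$, so the closeness $d(p,\partial\Omega_{l_p-1}) \leq h_{l_p-1}/(4\eta)$ required for $p \in Q_{l_p}$ must be realized against a newly introduced boundary $\partial J_q$ with $q \in P_j$, $n < j < l_p$. A comparison of scales via Lemma~\ref{lem:pdistance} and \eqref{equ:jp} should then place $x \in J_q$, giving $x \in D_j \subset \bigcup_{l=1}^N D_{n+l}$. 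Integrating the pointwise conclusion over $E_N$ then yields the lemma with $c_{\fif}=1/2$.
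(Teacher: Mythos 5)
Your proposal has a genuine gap at its foundational step, and the gap reflects a misunderstanding of what Lemma~\ref{lem:recurrence} provides.

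You begin by trying to produce a subset $E_N \subset A_n' \cap \Lambda_\Gamma$ of measure at least $\mu(A_n')/2$ whose points have orbits entering the cusp region $\mathcal C_{c_{\nine}\eta}$ at some time $m \in [n+1, n+N]$. But no such estimate is available — in fact, this is precisely the kind of quantitative recurrence-time tail that the entire coding construction is designed to establish, so invoking it here would be circular. Worse, Lemma~\ref{lem:recurrence} does not say anything about the orbit of a point $x$ entering the cusp: it is a \emph{pointwise} statement (for every $x$ in a compact $K$) about the \emph{unstable leaf} $W^u(x,1)$ hitting a stable horosphere within bounded time. The encapsulation of this is Lemma~\ref{lem:parabolic}: if $\mathcal G_n \tilde x$ lies in the compact part $K_{c_{\nine}\eta}$, then $B(x, h_n)$ contains a parabolic fixed point with height in $h_n[e^{-U_0}, 1]$. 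Note the hypothesis is that the orbit is \emph{outside} the cusp, which by (the contrapositive of) Lemma~\ref{lem:cuspBn} is automatic whenever $x \in A_n$ with $d(x,\partial\Omega_n)$ large. So the paper's argument applies this to \emph{every} $x \in A_n' \cap \Lambda_\Gamma$ — no measure-half subset, no waiting for cusp entry. Your plan to apply Lemma~\ref{lem:np} (orbit enters cusp) instead of Lemma~\ref{lem:parabolic} (orbit in compact part, unstable leaf enters cusp) is the wrong mechanism, and it fails because there is no reason for half the measure of $A_n'$ to enter the cusp in bounded time; indeed Lemma~\ref{lem:cuspBn} guarantees those orbits are \emph{not} in the cusp at time $n$.

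There is also a structural difference in what the paper aims for. The paper proves that $\bigcup_{1 \leq l \leq N} P_{n+l}$ is $C'h_n$-dense in $A_n' \cap \Lambda_\Gamma$: every such $x$ is within $C'h_n$ of some $p \in P_{n+l}$, not necessarily contained in the flower $J_p$. The measure estimate then follows from the disjointness of the small balls $B(p, \eta h_p/C_{\five})$ and the doubling property, which converts "near a good $p$" into "captured measure." Your proposal tries to prove the stronger pointwise claim $x \in J_q$, which is not what the paper's sub-case analysis establishes (it only gets $d(x,q) \leq Ch_{n+k}$ when $p \in Q_{j+1}$), and the geometry does not force it: a point $p$ close to $\partial J_q$ with $B(p, \eta h_p)$ meeting $\Omega_{l_p-1}$ sits in a thin annulus around $\partial J_q$, not inside $J_q$. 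Your dichotomy ruling out $l_p \leq n$ is correct in isolation and mirrors part of the paper's sub-case argument (using Lemma~\ref{lem:universal} and $c_{\nine} < 1/C_{\eight}^2$ to land in $B_n$), but it rests on the unsupported existence of $E_N$. Finally, you also apply \eqref{equ:jp} to conclude $x \in J_p$ before knowing $p$ is a good parabolic fixed point — $J_p$ is only defined for $p \in P_{l_p}$.

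The correct skeleton is: fix $k = \lfloor -\log\eta \rfloor$ and consider the position of $x$ in $\Omega_{n+k}$ directly. Either $x$ has already been captured or is close to a captured boundary (easy cases); or $x \in A_{n+k}$ far from $\partial\Omega_{n+k}$, in which case Lemma~\ref{lem:cuspBn} places $\mathcal G_{n+k}\tilde x$ in $K_{c_{\nine}\eta}$, Lemma~\ref{lem:parabolic} hands you a parabolic fixed point $p$ with height in $h_{n+k}[e^{-U_0}, 1]$ inside $B(x, h_{n+k})$, and a short analysis of whether $p$ is good, removed, or bad at its own generation $j$ produces a good $q$ within $Ch_{n+k}$ of $x$; or $x \in B_{n+k}$, in which case one moves to the boundary of the relevant $B_{p,n+k}$ and reduces to the previous case. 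The density claim plus doubling then closes the lemma with $N = U_0 + 2\lfloor -\log\eta\rfloor + 2$.
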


Let $\widetilde{A_n}$ be the subset of $\widetilde{\Omega_n}$ such that $\pi(\widetilde{A_n})=A_n$. The key point of the proof is that we can use the recurrence property of the geodesic flow on $\calG_n(\widetilde{A_n})$, since Lemma~\ref{lem:cuspBn} tells us that $\calG_n(\widetilde{A_n})$ stays in a compact subset. Recall that we introduce some notations. We assumed that there are $j$ cusps in $M$ and $\{p_i\}_{1\leq i\leq j}$ is a complete set of inequivalent parabolic fixed points. We used the notation $H_{p_i}$ to denote the horoball based at $p_i$. Now let $H^s_{p_i}\subset \operatorname{T}^1(\mathbb{H}^{d+1})$ be the strong stable horosphere, that is,
\begin{equation*}
H_{p_i}^s:=\{x\in \operatorname{T}^1(H_{p_i}):\,\text{the basepoint of}\,x\,\text{is at}\,\partial H_{p_i}\,\text{and}\,\pi(x)=p_i\}.
\end{equation*}
By abusing the notation, we also use $H_{p_i}^s$ to denote its image in the quotient space $\operatorname{T}^1(M)$.

For every $x\in \operatorname{T}^1(M)$ and $\epsilon>0$, set $W^u(x,\epsilon)$ to be the local strong unstable manifold at $x$, that is,
\begin{equation*}
W^u(x,\epsilon):=\{y\in\operatorname{T}^1(M):\,\lim_{t\to -\infty}d(\mathcal{G}_{t}x,\mathcal{G}_ty)=0,\ d^u(y,x)\leq \epsilon\},
\end{equation*}
where $d(\cdot,\cdot)$ is the Riemannian metric on $\operatorname{T}^1(M)$ and $d^u(\cdot,\cdot)$ is the Riemannian metric restricted on the strong unstable manifold.

Denote by $W$ in $\operatorname{T}^1(M)$ the non-wandering set of the geodesic flow. 
\begin{lem}\label{lem:recurrence}
Let $K$ be any compact subset in $W$. Then there exists $U_0>0$ such that for every $x$
	 in $K$ and every $H^s_{p_i}$ in $\operatorname{T}^1(M)$, there exists a time $t\in[0,U_0]$ such that $\calG_t(W^u(x,1))$ meets $H^s_{p_i}$. 
\end{lem}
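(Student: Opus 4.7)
The plan is to combine compactness of $K$ with density of the $\Gamma$-orbit $\Gamma p_i$ in the limit set $\Lambda_\Gamma$, and then use a finite counting argument to bound the flow time.

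First, I would lift $K$ to a compact set $\tilde K\subset\T^1(\H^{d+1})$ whose basepoints all lie in a fixed bounded region of $\H^{d+1}$ (taking lifts in a finite union of fundamental domains that covers $K$). Since every $x\in K$ is non-wandering, both endpoints $\tilde x_-$ and $\pi(\tilde x)$ lie in $\Lambda_\Gamma$, and they vary in a compact subset of $\partial^2\H^{d+1}$ as $\tilde x$ runs through $\tilde K$. The visual image $\pi(W^u(\tilde x,1))$ is an open neighbourhood of $\pi(\tilde x)$ in $\partial\H^{d+1}$; by continuity in $\tilde x$ and compactness of $\tilde K$, there is a uniform $\epsilon_0>0$ such that $B(\pi(\tilde x),\epsilon_0)\subset \pi(W^u(\tilde x,1))$ for every $\tilde x\in\tilde K$. (Concretely, in coordinates where $\tilde x_-=\infty$, the unstable ball of radius one maps to a Euclidean ball of radius equal to the height of the basepoint, which is bounded below on $\tilde K$; the general case reduces to this via the conformal action of $G$.)

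Next, since $\Gamma$ is Zariski dense and hence non-elementary, it acts minimally on $\Lambda_\Gamma$, so $\Gamma p_i$ is dense in $\Lambda_\Gamma$ for every $i$. Since $\pi(\tilde K)$ is a compact subset of $\Lambda_\Gamma$, a finite-cover argument yields $N>0$ such that the finite set $\{\gamma p_i:d(o,\gamma o)\le N,\ 1\le i\le j\}$ is $\epsilon_0$-dense in $\pi(\tilde K)$. Then, for every $\tilde x\in\tilde K$ and every $i$, I pick $\gamma\in\Gamma$ with $d(o,\gamma o)\le N$ and $\gamma p_i\in B(\pi(\tilde x),\epsilon_0)$, and take the unique $\tilde z\in W^u(\tilde x,1)$ with $\pi(\tilde z)=\gamma p_i$. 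The forward geodesic from $\tilde z$ converges to $\gamma p_i$, hence enters $\gamma H_{p_i}$; at the time $t\ge 0$ when it first reaches $\partial \gamma H_{p_i}$, the vector $\calG_t\tilde z$ lies in the strong stable horosphere based at $\gamma p_i$. Projecting to $\T^1(M)$, $\calG_t W^u(x,1)$ meets $H^s_{p_i}$.

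Finally, to bound $t$ uniformly, observe that the basepoint of $\tilde z$ lies within horospherical distance one of that of $\tilde x$, hence in a fixed compact region of $\H^{d+1}$; and the horoballs $\gamma H_{p_i}$ with $d(o,\gamma o)\le N$ form a finite collection of horoballs of bounded minimal height. A routine continuity argument then shows that the hyperbolic time from any such basepoint to any such horoball along the prescribed geodesic is uniformly bounded, and $U_0$ is the maximum over this finite data. The main obstacle is organising the uniformity: pinning down the uniform visual size $\epsilon_0$ and the uniform bound $N$ depend on choosing the lifts in a compact region and on minimality of $\Gamma$ on $\Lambda_\Gamma$, but once those are in place the remaining estimates reduce to straightforward compactness.
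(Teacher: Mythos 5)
Your argument is a genuinely different route from the paper's. The paper picks a point $y$ with dense negative orbit (ergodicity of $\bms$), builds transversal sections $Z_1\subset Z_2$ around $H^s_{p_i}$, and uses the local product structure twice to transport a hitting time of $y$ back to $W^u(x,\cdot)$. You replace all of this with the elementary facts that (i) the visual map sends $W^u(\tilde x,1)$ to a ball of uniform radius $\epsilon_0$ around $\pi(\tilde x)$ as $\tilde x$ ranges over a compact lift $\tilde K$, and (ii) $\Gamma p_i$ is dense in $\Lambda_\Gamma$ by minimality of the $\Gamma$-action, so a finite set of translates $\gamma p_i$ is $\epsilon_0$-dense on $\pi(\tilde K)$. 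This is more explicit and avoids invoking the local product structure, at the cost of needing direct control of horoballs in the universal cover. Both approaches are legitimate.

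There is, however, one genuine gap. You write that the forward geodesic from $\tilde z$ ``first reaches $\partial\gamma H_{p_i}$ at the time $t\ge 0$,'' but you never rule out that the basepoint of $\tilde z$ already lies \emph{inside} $\gamma H_{p_i}$. If it does, the forward geodesic (pointing at $\gamma p_i$) stays inside $\gamma H_{p_i}$ for all $t>0$ and never crosses $\partial\gamma H_{p_i}$ at a nonnegative time, so the construction fails. This is not vacuous: in the application (Lemma~\ref{lem:parabolic}) one takes $K=K_{c_{\nine}\eta}=W-\calC_{c_{\nine}\eta}$, and points of $K_{c_{\nine}\eta}$ are only excluded from the \emph{shrunk} horoballs $H_p(c_{\nine}\eta)$, not from the full horoballs $H_p$; the basepoints of $W^u(\tilde x,1)$ can therefore sit inside some $\gamma H_{p_i}$. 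The fix is available but must be made: the basepoints of $\cup_{\tilde x\in\tilde K}W^u(\tilde x,1)$ form a compact set $F'$ which meets only finitely many of the pairwise-disjoint horoballs $\{\gamma H_{p_i}\}$; because $\Lambda_\Gamma$ is perfect, removing those finitely many $\gamma p_i$ from $\Gamma p_i$ leaves a set that is still dense in $\Lambda_\Gamma$, so one can insist $\gamma p_i\in B(\pi(\tilde x),\epsilon_0)$ with $\gamma H_{p_i}\cap F'=\emptyset$, which forces $t>0$. Without this (or an equivalent) step your proof does not close. You should also make explicit which boundary metric your uniform $\epsilon_0$ refers to (the spherical metric is the safe choice, since Euclidean radii degenerate near $\infty$), though this is a presentation issue rather than a gap.
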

\begin{proof}
	
	Let $\epsilon<1/10$ and consider $Z_1=\cup_{x\in \partial H^s_{p_i}}W^u(x,\epsilon)$ and $Z_2=\cup_{x\in \partial H^s_{p_i}}W^u(x,5\epsilon)$ in $\operatorname{T}^1(M)$. Then $Z_1$ is a transversal section to the geodesic flow. By ergodicity of geodesic flow on non-wandering set $W$, there exists a point $y$ such that its negative time orbit is dense and there exists $t_0\geq 0$ such that 
	$\calG_{t_0} y\in Z_1 $.
	We can cover the compact set $K$ with a finite number of balls of radius $\epsilon$. There exists $t_1>0$ such that $\calG_{[-t_1,0]}y$ intersects every ball.
	
	Fix any $x$ in $K$. There exists $x'\in W^u(x,\epsilon)$ and $-s\in [-t_1,0]$ such that $d(x',\mathcal{G}_{-s}y)\leq 2\epsilon$ and $\calG_{-s}y$ are in the same strong stable manifold (that is to say, $\lim_{t\to \infty}d(\mathcal{G}_t x', \mathcal{G}_t (\mathcal{G}_{-s}y))=0$). Therefore
	\[d(\calG_{s+t_0}x',\calG_{t_0} y)\leq 2\epsilon. \]
	Using $\calG_{t_0} y\in Z_1$ and local product structure, we have $\calG_{s+t_0}x'\in \calG_{s_1}Z_2$ for some $s_1\in[-\epsilon,\epsilon]$. Due to the definition of $Z_2$, we can find $x''\in W^u(x,6\epsilon)$ such that $\calG_{s+t-s_1}x''\in H^s_{p_i}$.
\end{proof}

The following lemma is a straightforward corollary of Lemma~\ref{lem:recurrence}. Recall that $c_{\nine}>0$ is the constant given in Lemma \ref{lem:cuspBn}. Let $K_{c_{\nine}\eta}=W-\Gamma\backslash\calC_{c_{\nine}\eta}$. The base of non-wandering set in $M$ is the convex core $C(M)$ and the base of $\Gamma\backslash\calC_{c_{\nine}\eta}$ is a union of proper horocusps. By Definition~\ref{def:geofinite}, we know that $K_{c_{\nine}\eta}$ is compact.
\begin{lem}\label{lem:parabolic}
	Let $U_0$ be the constant in Lemma~\ref{lem:recurrence} with $K=K_{c_{\nine}\eta}$. For every $x$ in $\Delta_0\cap\Lambda_\Gamma$ and $n\in\N$, if $\calG_n \tilde{x}$ is in $K_{c_{\nine}\eta}$, where $\tilde{x}$ is the point in $\widetilde{\Omega_0}$ such that $\pi(\tilde{x})=x$, then the ball $B(x,h_n)$ contains a parabolic fixed point with height in $h_n[e^{-U_0},1]$.
\end{lem}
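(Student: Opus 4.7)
The plan is to apply Lemma~\ref{lem:recurrence} to the image $y$ of $\calG_n\tilde x$ in $\T^1(M)$, with $K = K_{c_{\nine}\eta}$ (which is compact by the discussion preceding the lemma) and with the strong stable horosphere $H^s_\infty\subset\T^1(M)$. By hypothesis $y\in K_{c_{\nine}\eta}$, so Lemma~\ref{lem:recurrence} produces $t\in[0,U_0]$ and a point $y'\in W^u(y,1)$ with $\calG_t y'\in H^s_\infty$. I then lift $y'$ uniquely to $\tilde y'\in W^u(\calG_n\tilde x,1)\subset\T^1(\H^{d+1})$; since $W^u$ is the \emph{strong} unstable leaf, $\tilde y'$ has the same backward endpoint $\infty$ as $\calG_n\tilde x$ and lies on the same horosphere, namely $\{x_{d+1}=h_n\}$.

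The relation $\calG_t y'\in H^s_\infty$ lifts to $\calG_t\tilde y'\in\gamma H^s_\infty=H^s_{\gamma\infty}$ for some $\gamma\in\Gamma$, and I set $p:=\gamma\infty$, which is the desired parabolic fixed point. (Note that $\gamma\notin\Gamma_\infty$: otherwise the forward endpoint of $\tilde y'$ would also be $\infty$, contradicting its backward endpoint being $\infty$.) Since geodesic flow preserves endpoints, the forward endpoint of $\tilde y'$ itself is $p$; together with the backward endpoint $\infty$, this forces the basepoint of $\tilde y'$ to sit on the vertical geodesic above $p$, hence to equal $(p,h_n)$.

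The two quantitative bounds are then read off directly in the upper half-space model. On the horosphere $\{x_{d+1}=h_n\}$ the induced (unstable) Riemannian metric equals $h_n^{-1}$ times the Euclidean metric on $\R^d$, so the constraint $d^u(\tilde y',\calG_n\tilde x)\leq 1$ translates into $|p-x|\leq h_n$, giving $p\in B(x,h_n)$. Flowing $\tilde y'$ forward by time $t$ drags its basepoint down the vertical line above $p$ from height $h_n$ to $h_n e^{-t}$; this basepoint must lie on $\partial H_p$, and because $\partial H_p$ meets the vertical line above $p$ precisely at height $h_p$, we conclude $h_p=h_n e^{-t}\in[h_n e^{-U_0},h_n]$. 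The step I expect to be the most delicate is the height normalization of $\widetilde{H_\infty}$: the sharp constant $1$ in both bounds uses that $\tilde x\in\widetilde{\Omega_0}$ is based at height exactly $1$, so that $\calG_n\tilde x$ sits on $\{x_{d+1}=h_n\}$; any other normalization would dilate both the Euclidean distance and the height computation by the initial height, and one would have to absorb that factor into the cusp-avoidance constant $c_{\nine}$ via the hypothesis $\calG_n\tilde x\notin\mathcal{C}_{c_{\nine}\eta}$. Once that normalization is fixed, the rest is routine horoball geometry.
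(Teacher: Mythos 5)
Your proof is correct and follows the same strategy as the paper's: flow forward to $\calG_n\tilde x$, identify $\calG_n\tilde B$ with the local strong unstable leaf $W^u(\calG_n\tilde x,1)$, invoke Lemma~\ref{lem:recurrence} (valid because $\calG_n\tilde x$ lands in the compact set $K_{c_{\nine}\eta}$), and read off the parabolic fixed point from the lift of the stable horosphere that the flowed unstable leaf meets. The only (harmless) variation is that you fix the target horosphere to be $H^s_\infty$ rather than an unspecified $H^s_{p_i}$, and you spell out the height computation $h_p=h_ne^{-t}\in h_n[e^{-U_0},1]$ and the translation $d^u\le 1\Leftrightarrow d_E\le h_n$ on the horosphere $\{x_{d+1}=h_n\}$, both of which the paper's terse proof leaves implicit.
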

\begin{proof}
	Let $\tilde{B}$ be the set in $\widetilde{\Omega_0}$ such that $\pi(\tilde{B})=B(x,h_n)$. We have $\calG_n\tilde{B}=W^u(\calG_n\tilde{x},1)$. As $\calG_n\tilde{x}\in K_{c_{\nine}\eta}$, by Lemma~\ref{lem:recurrence}, there exists $t\in[0,U_0]$ such that $\calG_tW^u(\calG_n\tilde{x},1)$ intersects some $H^s_{p_i}$. Hence in the universal cover $\T^1(\H^{d+1})$, the unstable leaf $\calG_tW^u(\calG_n\tilde{x},1)$ is tangent to a horoball. Let $q$ be the basepoint of the horoball. Then $q$ is in $B(x,h_n)$.
\end{proof}

\begin{proof}[\textbf{Proof of Lemma~\ref{lem:sub1ii}}]
	Set $N=U_0+2\lfloor -\log \eta\rfloor+2$. We claim that: There exists $C'>1$ depending on $\eta$ such that $\mathop{\cup}_{1\leq l\leq N}P_{n+l}$ is a $C'h_n$ dense set in $A_n'\cap\Lambda_\Gamma$. That is to say, for every $x\in A_n'\cap\Lambda_\Gamma$, there exists some $p\in \bigcup_{1\leq l\leq N}P_{n+l}$ such that $d(x,p)\leq C'h_n$.
	
	\medskip{}
	Let $k=\lfloor -\log \eta\rfloor$. Fix any point $x\in A_n'\cap\Lambda_\Gamma$. We consider the position of $x$ in $\Omega_{n+k}$.
	
	\begin{itemize}
	
	 \item[Case 1] Suppose $x\notin \Omega_{n+k}$. Then $x\in J_{p}$ for some $p\in \cup_{1\leq l\leq k}P_{n+l}$. So $d(x,p)\leq C_{\five}\eta h(p)\leq C_{\five}h_n$.
	 
	 \item[Case 2] Suppose $x\in \Omega_{n+k}$ and $d(x,\partial \Omega_{n+k})<3h_{n+k}/\eta$. As $x\notin \Omega_n'$, we have $d(x,\partial \Omega_n)\geq C_{\twl}h_n$. Meanwhile, we have $3h_{n+k}/\eta< C_{\twl} h_n$. Consequently, the connected component in $\partial \Omega_{n+k}$ closest to $x$ is some $\partial J_p$ with $p\in \cup_{1\leq l\leq k} P_{n+l}$. Hence
	 \begin{equation*}
	 d(x,p)\leq d(x,\partial J_p)+d(\partial J_p,p)\leq 3h_{n+k}/\eta+C_{\five} \eta h(p)\leq Ch_n.
	 \end{equation*}
	
	\item[Case 3] 
		
		Suppose $x\in A_{n+k}\cap\Lambda_\Gamma$ and $d(x,\partial\Omega_{n+k})>2h_{n+k}/\eta$. By Lemma~\ref{lem:cuspBn}, $\mathcal{G}_{n+k}\tilde x\in K_{c_{\nine}\eta}$. It then follows from Lemma~\ref{lem:parabolic} that $B(x, h_{n+k})$ contains a parabolic fixed point $p$ with height in $h_{n+k}[e^{-U_0},1]$. Let $j=\lfloor-\log (\eta h(p))\rfloor$, then $j\in n+2k+[0,U_0+1]$. Let's consider the position of $p$. 
		\begin{itemize}
			\item Suppose $p\in P_{j+1}$. Then $d(x,p)\leq h_{n+k}$. 
			\item Suppose $p\notin P_{j+1}$ and $p\notin \Omega_j$. Note that the conditions on $x$ and $p\in B(x,h_{n+k})$ imply that $p\in \Omega_{n+k}$. So there exists some $q\in \cup_{l=1}^{j-n-k}P_{n+k+j}$ such that $p\in J_q$. We obtain
			\begin{equation*}
			d(x,q)\leq d(x,p)+d(p,q)\leq h_{n+k}+C_{\five}\eta h(q)\leq C h_{n+k}. 
			\end{equation*}
			\item Suppose $p\notin P_{j+1}$ and $p\in\Omega_j$. 
			Because $\eta h(p)\in (h_{j+1},h_j]$, we must have $p\in Q_{j+1}$.
			By the definition of $Q_{j+1}$, we have $d(p,\partial\Omega_j)\leq h_j/\eta$. Observe that
			\begin{equation*}
			d(p,\partial\Omega_{n+k})\geq d(x,\partial \Omega_{n+k})-d(x,p)>2h_{n+k}/\eta-h_{n+k}>h_j/\eta. 
			\end{equation*}
			So there exists $q\in \cup_{l=1}^{j-n-k}P_{n+k+l}$ such that $d(p,J_{q,\eta})\leq h_j/\eta$. This implies
			\begin{equation*}
			d(x,q)\leq d(x,p)+d(p,q)\leq h_{n+k}+h_j/\eta+C_{\five}\eta h(q)\leq Ch_{n+k}.
			\end{equation*}		
		\end{itemize}
			
		\item[Case 4]
		Suppose $x\in B_{n+k}$ and $d(x,\partial \Omega_{n+k})\geq 3h_{n+k}/\eta$. As $x\in A_n$, we have $x\in B_{n+k}-B_n$. So there exists $p\in \cup_{1\leq l \leq k}Q_{n+l}''$ such that $x\in B_{p, n+k}$. By Lemma \ref{lem:universal}, we have $$B_{p,n+k}\subset B(p,C_{\eight}\sqrt{\eta h(p) h_{n+k}})\subset B(p,C_{\eight}\sqrt{h_n h_{n+k}}).$$
			Since $h_{k}\geq \eta$, for any $y\in B_{p,n+k}$, using Lemma~\ref{lem:universal}, we have
		\begin{equation*}
		d(y,\partial \Omega_{n+k})\geq d(x,\partial \Omega_{n+k})-d(x,y)\geq 3h_{n+k}/\eta-2C_{\eight}\sqrt{h_n h_{n+k}}\geq 2h_{n+k}/\eta.
		\end{equation*}
		So the full ball $B_{p,n+k}$ is contained in $\Omega_{n+k}$. By a similar argument as in the proof of Lemma \ref{lem:sub2}, we have $\mu(B_{p,n+k}-B_{p,n+k+1})>0$. We can find a point $y\in \Lambda_{\Gamma}\cap (B_{p,n+k}-B_{p,n+k+1})$. By \eqref{equ:anbn}, we know that in fact $y$ is in $A_{n+k+1}\cap \Lambda_{\Gamma}$.
		\begin{itemize}
			\item If $d(y,\partial \Omega_{n+k+1})>2h_{n+k+1}/\eta$, the point $y$ belongs to Case 3.
			\item Otherwise, $d(y,\partial \Omega_{n+k+1})\leq 2h_{n+k+1}/\eta$. But $d(y,\partial\Omega_{n+k})>2h_{n+k}/\eta$, there exists $J_q$ with $q\in P_{n+k+1}$ such that $d(y,\partial J_q)\leq 2h_{n+k+1}/\eta$.
		\end{itemize}
	 It follows that there exists $q\in \cup_{1\leq l\leq N}P_{n+l}$ such that
		\begin{equation*}
		d(x,q)\leq d(x,y)+d(y,q)\leq 2C_{\eight}\sqrt{h_{n}h_{n+k}}+d(y,q)\leq Ch_n. 
		\end{equation*}	
	\end{itemize}

	Finally, by Lemma \ref{equ:jp} we know that for $p\in\cup_{1\leq l\leq N}P_{n+l} $, the balls $B(p,\eta h(p)/C_{\five})$ are disjoint. 
	Using the claim and doubling property (Proposition~\ref{double}), 
	\begin{align*}
		&\mu(\cup_{l=1}^N D_{n+l})\geq \sum_{p\in\cup_{1\leq l\leq N}P_{n+l}}\mu(B(p,\eta h(p)/C_{\five}))\\
		\geq& c_{\fif}\sum_{p\in\cup_{1\leq l\leq N}P_{n+l}}\mu(B(p,C' h_n ))\geq c_{\fif}\mu(A_n'\cap\Lambda_\Gamma)=c_{15}\mu(A_n'),
	\end{align*}
	finishing the proof.
\end{proof}

\begin{proof}[\textbf{Proof of Proposition~\ref{keylemma}}]
	We will prove the following statement and Proposition~\ref{keylemma} is a direct consequence of this:	for $\eta$ sufficiently small, there exist $N$ and $c_0>0$ depending on $\eta$ such that
	\[\mu(\cup_{l=1}^N D_{n+l})\geq c_0\mu(\Omega_n). \]
	
	Recall that $c_{\ele},c_{\thi}$ and $c_{\fou}$ are the constants given in Lemma \ref{lem:sub2}, \ref{lem:bou} and \ref{lem:sub1i} respectively. We can take $c_{\thi}$ small enough such that $c_{\thi}<c_{\ele}$ and $c_{\thi}+c_{\fou}<1$.
	Write $t_n=\frac{\mu(A_n)}{\mu(B_n)}$, which makes sense even if $\mu(B_n)=0$. Then by Lemma~\ref{lem:fullball},~\ref{lem:sub2},~\ref{lem:sub1i}, and~\ref{lem:bou}
	\begin{align*}
		t_{n+1}&=\frac{\mu(A_{n+1})}{\mu(B_{n+1})}=\frac{\mu(A_n)+\mu(B_n\cap A_{n+1})-\mu(A_n\cap(D_{n+1}\cup B_{n+1}))}{\mu(B_n)-\mu(B_n\cap A_{n+1})+\mu(A_n\cap B_{n+1})}\\
		&\geq \frac{\mu(A_n)+c_{\ele}\mu(B_n)-(c_{\fou}\mu(A_n)+c_{\thi}\mu(\Omega_n))}{\mu(B_n)-c_{\ele}\mu(B_n)+(c_{\fou}\mu(A_n)+c_{\thi}\mu(\Omega_n))}=\frac{t_n-(c_{\fou}+c_{\thi})t_n+(c_{\ele}-c_{\thi})}{1+(c_{\fou}+c_{\thi})t_n-(c_{\ele}-c_{\thi})}=f(t_n).
	\end{align*}
	Here $f$ is fractional function and of the form $f(t)=\frac{a_1t+a_2}{b_1t+b_2}$ with $a_i,b_i>0$, which is a monotone function. Hence 
	$$\inf_{t\in\R^+ }f(t)\geq\min\{\frac{a_1}{b_1},\ \frac{a_2}{b_2} \}=\min\{\frac{1-(c_{\fou}+c_{\thi})}{c_{\fou}+c_{\thi}},\frac{c_{\ele}-c_{\thi}}{1-(c_{\ele}-c_{\thi})} \}=q(c_{\thi}). $$
	By $t_n>0$, 
	 there is a uniform lower bound of $t_n$ for all $n\in\N$.
	
	Then use Lemma~\ref{lem:sub1ii} to obtain the desired statement:
	\begin{align*}
		&\mu(\cup_{l=1}^N D_{n+l})\geq c_{\fif}\mu(A_n')\geq c_{\fif}(\mu(A_n)-\mu(\Omega_n'))\\
		\geq &c_{\fif}(\mu(A_n)-c_{\thi}\mu(\Omega_n))=c_{\fif}(\frac{t_n}{1+t_n}-c_{\thi})\mu(\Omega_n).
	\end{align*}
	If $c_{\thi}$ is small enough, then $\frac{t_n}{1+t_n}\geq\frac{q(c_{\thi})}{1+q(c_{\thi})}>c_{\thi}$. Then we can fix a small $\eta$ in Lemma~\ref{lem:bou} such that $c_{\thi}$ satisfies these restriction. 
\end{proof}

\subsection{Exponential tail}\label{sec:exptail}


For one cusp case, we have described how to construct the countable collection of disjoint open subsets in $\Delta_0$ and the expanding map in Section \ref{coding procedure}. When there are multi-cusps, the coding is constructed in two steps and we describe the first step here and finish the rest in Section \ref{sec:codmulti}.

 Suppose that there are $j$ cusps. Recall the regions $\Delta_{p_i}$ introduced in Section \ref{sec:multi}. We claim: there is a countable collection of disjoint open subsets $\sqcup_{i,k}\Delta_{p_i,k}$ in $\Delta(=\sqcup_i \Delta_{p_i})$ and an expanding map $T_0:\sqcup_{i,k}\Delta_{p_i,k}\to \Delta$ such that
 \begin{itemize}
 	\item $\sum_{i,k}\mu(\Delta_{p_i,k})=\mu(\Delta)$.
 	\item For each $\Delta_{p_i,k}$, it is a subset in $\Delta_{p_i}$. And there exists an element $\gamma_0\in \Gamma$ such that $\Delta_{p_i,k}=\gamma_0 \Delta_{p_l}$ for some $1\leq l\leq j$ and $T|_{\Delta_{p_i,k}}=\gamma_0^{-1}$. 
 \end{itemize}
Denote by $\mathcal{H}_0$ the set of inverse branches of $T_0$.

 The construction is as follows: for each $\Delta_{p_i}$, we apply the construction in Section \ref{coding procedure} to the group $\Gamma_i=g_i\Gamma g_i^{-1}$ and the region $g_i\Delta_{p_i}=\Delta_{p_i}'$. In particular, we attain a countable collection of disjoint open sets $\Delta_{p_i,k}'$. Moreover, Proposition \ref{keylemma} holds for $\Delta_{p_i}'$. We set $\Delta_{p_i,k}=g_i^{-1}\Delta_{p_i,k}'$. 
 
 
 For an element $\gamma_0$ in $\calH_0$, if $\gamma_0$ maps $\Delta_{p_l}$ into some $\Delta_{p_i,k}$, then we define
\begin{equation}\label{infty}
|\gamma_0'|_\infty=\sup_{x\in\Delta_{p_l}}|\gamma_0'(x)|.
\end{equation}
The infinity norm of the derivative of a composition map is defined similarly.
We prove the following.
\begin{lem}
\label{lem:h0}
 There exists $\epsilon>0$ such that
\begin{equation}\label{h0}
\sum_{\gamma_0\in\calH_0} |\gamma_0'|_{\infty}^{\delta-\epsilon}<\infty.
\end{equation}
\end{lem}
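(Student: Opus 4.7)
My approach is to organize the sum over $\mathcal{H}_0$ by the generation $n$ in the inductive construction of Section \ref{coding procedure}, and within generation $n$ by the parabolic fixed point $p \in P_{n+1}$ producing each branch. By Lemma \ref{lem:bilip} the maps $g_i$ are bi-Lipschitz on the bounded regions involved, so the estimate transfers (up to a multiplicative constant) between the original branches in $\Gamma$ and the conjugated branches in $\Gamma_i = g_i\Gamma g_i^{-1}$; it therefore suffices to carry out the analysis inside a single cusp $g_i\Delta_{p_i}$ and then sum over the finitely many cusps.

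Fix $n$ and a cusp $p_i$ of rank $k_i$, and consider $\tilde p=\tilde\gamma\infty \in P_{n+1}$ lying in $g_i\Delta_{p_i}$. The branches produced by $\tilde p$ are $\tilde\gamma\tilde\gamma_1$ with $\tilde\gamma_1 \in N_{\tilde p}\subset (\Gamma_i)_\infty$. By Lemma \ref{lem:biberbach} the elements of the maximal normal abelian $(\Gamma_i)_\infty$ act as Euclidean isometries, so Lemma \ref{lem:explicit} gives
\begin{equation*}
|(\tilde\gamma\tilde\gamma_1)'(x)| = h_{\tilde p}\big/d(\tilde\gamma_1 x, x_{\tilde p})^2,\qquad x_{\tilde p}=\tilde\gamma^{-1}\infty.
\end{equation*}
Parameterizing $\tilde\gamma_1$ by its translation vector in the rank-$k_i$ direction $Z$ of Lemma \ref{lem:biberbach}, the constraint \eqref{flower group} forces $d(\tilde\gamma_1 g_i\Delta_{p_i}, x_{\tilde p}) \geq c/\eta$, while at distance $\approx d$ there are $\lesssim d^{k_i-1}$ admissible translates. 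Hence
\begin{equation*}
\sum_{\tilde\gamma_1 \in N_{\tilde p}}|(\tilde\gamma\tilde\gamma_1)'|_\infty^{\delta-\epsilon}
\lesssim \sum_{d\geq c/\eta} d^{k_i-1}\bigl(h_{\tilde p}/d^2\bigr)^{\delta-\epsilon}
\lesssim h_{\tilde p}^{\delta-\epsilon}\,\eta^{2(\delta-\epsilon)-k_i},
\end{equation*}
where the geometric sum converges provided $2(\delta-\epsilon)>k_i$. This uses the classical inequality $\delta>k_i/2$, which is also implicit in Lemma \ref{lem:bpr} through the positive exponent $2\delta-k$.

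To exploit Proposition \ref{keylemma}, I rewrite the bound in terms of $\mu(J_{\tilde p})$. By Lemma \ref{lem:jp} we have $B(\tilde p, c_{\four}\eta h_{\tilde p})\subset J_{\tilde p}\subset B(\tilde p,\eta h_{\tilde p})$, so Lemma \ref{lem:bpr} yields $\mu(J_{\tilde p})\approx \eta^{2\delta-k_i}h_{\tilde p}^\delta$. Since $\tilde p \in P_{n+1}$ forces $h_{\tilde p}\approx h_n/\eta=e^{-n}/\eta$, a direct manipulation gives
\begin{equation*}
h_{\tilde p}^{\delta-\epsilon}\,\eta^{2(\delta-\epsilon)-k_i}
\approx \eta^{-\epsilon}\,e^{n\epsilon}\,\mu(J_{\tilde p}).
\end{equation*}
Summing over $\tilde p \in P_{n+1}$ and using the disjointness of the flowers produces
\begin{equation*}
\sum_{\tilde p\in P_{n+1}}\sum_{\tilde\gamma_1\in N_{\tilde p}} |(\tilde\gamma\tilde\gamma_1)'|_\infty^{\delta-\epsilon}
\lesssim \eta^{-\epsilon}\,e^{n\epsilon}\,\mu(D_{n+1}) \leq \eta^{-\epsilon}\,e^{n\epsilon}\,\mu(\Omega_n).
\end{equation*}

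Applying Proposition \ref{keylemma} with $\mu(\Omega_n)\leq(1-\epsilon_0)^n$, summing over $n$, and finally summing over the finitely many cusps, gives $\sum_{\gamma_0\in\mathcal{H}_0}|\gamma_0'|_\infty^{\delta-\epsilon} \lesssim \sum_n\bigl(e^{\epsilon}(1-\epsilon_0)\bigr)^n$, which converges for any $\epsilon \in (0,-\log(1-\epsilon_0))$. The one substantive obstacle is the lattice sum over $N_{\tilde p}$, whose convergence demands $2(\delta-\epsilon)>k_i$ uniformly in the cusp ranks; this is supplied by Sullivan's inequality $\delta>k_{\max}/2$. The remaining ingredients (bi-Lipschitz descent under $g_i$, disjointness bookkeeping for $D_{n+1}$, and summation over the finite set of cusps) are routine.
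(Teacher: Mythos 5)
Your proof is correct and takes essentially the same approach as the paper: sum by generation over $P_{n+1}$, bound the inner sum over $N_p$ by the lattice-point count $\sum_d d^{k-1}(h_p/d^2)^{\delta-\epsilon}$ (which converges precisely because $2(\delta-\epsilon)>k$, i.e.\ $\delta>k/2$), convert the result into a multiple $\eta^{-\epsilon}e^{n\epsilon}\mu(J_p)$, and then invoke Proposition~\ref{keylemma}. The only cosmetic difference is that the paper's intermediate step (Lemma~\ref{lem:ng}) reproves the lower bound for $\mu(J_p)$ directly from quasi-invariance and the tiling by $\gamma\gamma_1\Delta_0$, whereas you instead invoke Lemma~\ref{lem:bpr} together with Lemma~\ref{lem:jp}; the two routes to $\mu(J_p)\approx\eta^{2\delta-k}h_p^\delta$ are interchangeable.
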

For one cusp case, this gives the exponential tail \eqref{sum}. When there are multi-cusps, \eqref{h0} can be understood as that the map $T_0$ satisfies the exponential tail property. 

We start the proof of Lemma \ref{lem:h0} with the following result. Denote by $\cup_n P_n$ the set of ``good parabolic fixed points" which appear in the first step of the construction of the coding for multi-cusp case and are defined similarly as \eqref{good parabolic fixed points}. 

\begin{lem}\label{lem:ng}
There exists $C>0$ such that for any parabolic fixed point $p=\gamma p_i\in \Delta_0\cap \cup_n P_n$, we have for any $\epsilon\in (0,\delta-k/2)$,
\begin{equation*}
\sum_{\gamma_1\in N_p}|(\gamma \gamma_1)'|_{\infty}^{\delta-\epsilon}\leq C(2\delta-k-2\epsilon)^{-1}h(p)^{-\epsilon} \eta^{-2\epsilon}\mu(J_{p}),
\end{equation*} 
where $k$ is the rank of the parabolic fixed point $p$ and $N_p$ is defined in \eqref{flower group}.

\end{lem}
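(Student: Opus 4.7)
The plan is to evaluate $|(\gamma\gamma_1)'|_\infty$ explicitly via Lemma~\ref{lem:explicit}, organise the sum over $N_p$ by dyadic shells around $x_p=\gamma^{-1}p_i$, and compare the resulting geometric series with the measure formula of Lemma~\ref{lem:bpr}.

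First I would reduce to the case $p_i=\infty$. Conjugating by $g_i$, the triple $(\Gamma,p,\Delta_{p_i})$ is replaced by $(\Gamma_i,g_ip,g_i\Delta_{p_i})$, and by Lemma~\ref{lem:height}, Lemma~\ref{lem:bilip} and \eqref{conjugation}, the three quantities $|(\gamma\gamma_1)'|_\infty$, $h_p$ and $\mu(J_p)$ change only by multiplicative factors bounded in terms of $\Gamma$; so I may assume $p_i=\infty$, $\gamma_1\in\Gamma_\infty$ and $\Delta_{p_i}=\Delta_0$. Then applying Lemma~\ref{lem:explicit} to $\tilde\gamma=\gamma\gamma_1$ (which satisfies $\tilde\gamma\infty=p$, with the horoball at $p$ having height $h_p$ because $\gamma_1$ preserves $H_\infty$), combined with the fact that $\gamma_1$ acts as a Euclidean isometry of $\R^d$, gives
\[
|(\gamma\gamma_1)'(x)|=\frac{h_p}{d(\gamma_1 x,x_p)^2}\qquad\text{for every }x\in\Delta_0.
\]
Since \eqref{flower3} yields $d(\gamma_1 x,x_p)\geq 1/\eta$ for each $x\in\Delta_0$, the bounded diameter of $\Delta_0$ gives the bounded-distortion estimate
\[
|(\gamma\gamma_1)'|_\infty\approx\frac{h_p}{d_{\gamma_1}^{2}},\qquad d_{\gamma_1}:=d(\gamma_1\Delta_0,x_p)\gg 1/\eta.
\]

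Next I would split $N_p$ into dyadic shells $S_j=\{\gamma_1\in N_p:\,d_{\gamma_1}\in[2^j/\eta,2^{j+1}/\eta]\}$ for $j\geq 0$. Because $\Gamma_\infty$ acts by rank-$k$ translations on the affine subspace $Z$ of Lemma~\ref{lem:biberbach}, and $d_{\gamma_1}$ differs from the in-$Z$ distance $d_Z(b_{\gamma_1},\mathrm{proj}_Z x_p)$ only by a quantity bounded by $\mathrm{diam}(\Delta_0)$, a volume count in $\R^k$ shows $\#S_j\ll (2^j/\eta)^k$. Combining,
\[
\sum_{\gamma_1\in N_p}|(\gamma\gamma_1)'|_\infty^{\delta-\epsilon}\ll h_p^{\delta-\epsilon}\sum_{j\geq 0}(2^j/\eta)^{k-2(\delta-\epsilon)}.
\]
For $\epsilon<\delta-k/2$ this geometric series converges, and its sum is $\approx\eta^{2(\delta-\epsilon)-k}/(2\delta-k-2\epsilon)$. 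Using Lemma~\ref{lem:bpr} together with \eqref{equ:jp} to obtain $\mu(J_p)\approx\eta^{2\delta-k}h_p^{\delta}$, the bound rearranges to
\[
\sum_{\gamma_1\in N_p}|(\gamma\gamma_1)'|_\infty^{\delta-\epsilon}\ll\frac{1}{2\delta-k-2\epsilon}\,h_p^{\delta-\epsilon}\eta^{2\delta-k-2\epsilon}=\frac{C}{2\delta-k-2\epsilon}\,h_p^{-\epsilon}\eta^{-2\epsilon}\mu(J_p),
\]
which is the desired inequality. The main delicate point is the lattice-point count in the non-maximal-rank case: I must verify that the Euclidean distance $d_{\gamma_1}$ is genuinely controlled by the $k$-dimensional distance along $Z$. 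This follows because the $Y$-part of $\gamma_1\Delta_0$ is contained in $B_Y(C)$ (so it contributes only a bounded term to $d_{\gamma_1}$, negligible compared to $2^j/\eta$), ensuring the counting volume is truly $k$-dimensional and the geometric series has the correct exponent $k-2(\delta-\epsilon)$.
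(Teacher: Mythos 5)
Your argument is correct and follows the same overall architecture as the paper's proof: pass to the one-cusp case by conjugation, express $|(\gamma\gamma_1)'|_\infty$ via Lemma~\ref{lem:explicit} as $h_p/d_{\gamma_1}^2$, count the translates $\gamma_1\Delta_0$ by distance from $x_p$ and sum, then compare with a lower bound for $\mu(J_p)$. The paper carries out the sum by a single ``general polar coordinates'' estimate whereas you use an explicit dyadic-shell decomposition, and the paper derives the lower bound $\mu(J_p)\gg h_p^\delta\eta^{2\delta-k}/(2\delta-k)$ directly from quasi-invariance of $\mu$ applied to each $\gamma\gamma_1\Delta_0$ (rather than citing Lemma~\ref{lem:bpr} together with \eqref{equ:jp} as you do); both of these are presentational variations of the same computation, and your use of Lemma~\ref{lem:bpr} is arguably slightly more economical since the needed estimate is already available. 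Your remark at the end about controlling the $Y$-component by $B_Y(C)$ in the non-maximal-rank case is exactly the point that makes the $k$-dimensional lattice count legitimate; the paper leaves this implicit.
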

\begin{proof}
We first consider the case when $p=\gamma \infty$. By Lemma~\ref{lem:explicit}, we have for every $x\in \Delta_0$ and every $\gamma_1\in N_p$,
\begin{equation*}
|(\gamma \gamma_1)'(x)|=|\gamma'(\gamma_1x)|=\frac{h(p)}{d(\gamma_1x,x_{\gamma})^2}.
\end{equation*}
As $\cup_{\gamma_1\in N_p}\gamma_1\Delta_0\subset B(x_{\gamma}, 1/\eta)^c$ where $x_{\gamma}=\gamma^{-1}\infty$, we use general polar coordinates to obtain
\begin{equation}
\label{equ:tail1}
\sum_{\gamma_1\in N_p}|(\gamma \gamma_1)'|^{\delta-\epsilon}_{\infty}\ll h(p)^{\delta-\epsilon}\sum_{\gamma_1\in N_p}\frac{1}{d(\gamma_1\Delta_0,x_{\gamma})^{2\delta-2\epsilon}}\ll \frac{h(p)^{\delta-\epsilon} \eta^{2\delta-2\epsilon-k}}{2\delta-2\epsilon-k}.
\end{equation}
Meanwhile, by the quasi-invariance of PS measure and \eqref{equ:change}, we have for every $\gamma_1\in N_p$
\begin{equation*}
\mu(\gamma \gamma_1 \Delta_0)=\int_{x\in \Delta_0}|(\gamma \gamma_1)'(x)|^{\delta}_{\mathbb{S}^n}\dd\mu(x)\approx \int_{x\in \Delta_0}|(\gamma \gamma_1)'(x)|^{\delta}\dd\mu(x)\approx\frac{\mu(\Delta_0)h(p)^{\delta}}{d(\gamma_1\Delta_0,x_{\gamma})^{2\delta}}.
\end{equation*}
Therefore,
\begin{equation}
\label{equ:tail2}
\mu(\cup_{\gamma_1\in N_p} \gamma\gamma_1\Delta_0)\gg \mu(\Delta_0)h(p)^{\delta}\sum_{\gamma_1\in N_p}\frac{1}{d(\gamma_1\Delta_0,x_{\gamma})^{2\delta}}\gg \frac{h(p)^{\delta}\eta^{2\delta-k}}{2\delta-k}.
\end{equation}
Hence (\ref{equ:tail1}) and (\ref{equ:tail2}) together yield the statement for the case when $p=\gamma \infty$.

For the general case when $p=\gamma p_i$ with $g_i p_i=\infty$. Note that for every $\gamma_1\in N_p$, we have
$\gamma \gamma_1=g_i^{-1}(g_i\gamma \gamma_1 g_i^{-1}) g_i$. Hence by Lemma~\ref{lem:height} and~\ref{lem:bilip}
\begin{equation*}
h(g_i p)\approx h(p),\ \ |(\gamma \gamma_1)'|_{\infty}=\sup_{x\in\Delta_{p_i}}|(\gamma\gamma_1)'(x)|\approx \sup_{x\in g_i\Delta_{p_i}}|(g_i \gamma \gamma_1 g_i^{-1})'(x)|= |(g_i \gamma \gamma_1 g_i^{-1})'|_{\infty}.
\end{equation*}
Write $\Gamma_i=g_i\Gamma g_i^{-1}$. Using \eqref{conjugation}, we obtain
\begin{equation*}
\mu(\gamma \gamma_1 \Delta_{p_i})\approx \mu_{\Gamma_i}(g_i\gamma \gamma_1g_i^{-1}(g_i\Delta_{p_i})).
\end{equation*}
We have $g_ix_p=(g_i\gamma g_i^{-1})^{-1}\infty\in g_i\overline{\Delta}_{p_i}$.
Because $g_ip=g_i\gamma g_i^{-1}\infty$ and $g_i \gamma_1 g_i^{-1}(g_i\Delta_{p_i})\subset B(g_ix_p,1/\eta)^c$ for any $\gamma_1\in N_p$, we are able to compare $\sum_{\gamma_1} |(g_i \gamma \gamma_1 g_{i}^{-1})'|^{\delta-\eps}_{\infty}$ with $\mu_{\Gamma_i}(\cup_{\gamma_1} g_i \gamma \gamma_1 g_i^{-1}(g_i \Delta_{p_i}))$ as above and this will prove Lemma \ref{lem:ng} for the general case.
\end{proof}

\begin{proof}[\textbf{Proof of Lemma \ref{lem:h0}}]
We only need to sum the inverse branches in $\mathcal{H}_0$ whose images are in $\Delta_{0}$. For a general inverse branches whose image is in $\Delta_{p_j}$, we consider the group $g_j\Gamma g_j^{-1}$ and the inequality can be proved in the same fashion. By Lemma~\ref{lem:ng} and Proposition~\ref{keylemma}, for any sufficiently small $\epsilon\in (0,1)$,
\begin{align*}
&\sum_{n\in\N}\sum_{p=\gamma p_i\in P_n\cap \Delta_0}\sum_{\gamma_1\in N_p}|(\gamma \gamma_1)'|_{\infty}^{\delta-\epsilon}
\ll \sum_{n\in \N}\sum_{p\in P_n}\mu(J_{p})h(p)^{-\epsilon}\eta^{-2\epsilon}\\
\leq& \eta^{-2\epsilon}\sum_{n\in \N}\mu(\Omega_n)e^{\epsilon(n+1)}\leq \eta^{-2\epsilon}\sum_{n\geq N}(1-\epsilon_0)^ne^{\epsilon(n+1)}+\eta^{-2\epsilon}\sum_{n< N}\mu(\Omega_n)e^{\epsilon(n+1)}.
\end{align*}
By choosing an $\epsilon$ small enough such that $(1-\epsilon_0)e^{\epsilon}<1$, the above sum is finite.
\end{proof}

\subsection{Coding for multi-cusps}\label{sec:codmulti}
We caution the readers that the symbol $\gamma$ will be used to denote an inverse branch in this section.

In Section \ref{sec:exptail}, we have found a countable collection of disjoint open subsets $\sqcup_{i,k}\Delta_{p_i,k}$ in $\Delta(=\sqcup_i \Delta_{p_i})$ and an expanding map $T_0:\sqcup_{i,k}\Delta_{p_i,k}\to \Delta$.
Without loss of generality, we may suppose that $T_0$ is irreducible, which means there doesn't exist a nonempty subset of $I_1\subsetneq \{1,\cdots,j \}$ such that 
$$T_0(\cup_{i\in I_1}\Delta_{p_i})\subset \cup_{i\in I_1}\Delta_{p_i}.$$
Otherwise, we can consider the restriction of $T_0$ to the union $\cup_{i\in I_1}\Delta_{p_i}$.


For $x\in \Delta_0=\Delta_{p_1}$, define the first return time
\[n(x)=\inf\{n\in\N:\,T_0^n(x)\in\Delta_{p_1} \}.\]
Set $n(x)=\infty$ if $T_0^n(x)$ doesn't come back to $\Delta_{p_1}$ for all $n\in\N$ or $T_0^n(x)$ lies outside of the domain of definition of $T_0$ for some $n$.

The expanding map $T$ in Proposition \ref{prop:coding} is defined by 
\begin{equation*}
T(x)=T_0^{n(x)}(x)\,\,\,\text{for}\,\,\, x\,\,\,\text{such that}\,\,\, n(x)<\infty. 
\end{equation*}
By the definition of $T_0$, 
we have 
\begin{itemize}
\item either $T(x)=\gamma^{-1}x$ with $\gamma\in \mathcal{H}_0$ and $\gamma: \Delta_{p_1}\to \Delta_{p_1}$, 
\item or $T(x)=\gamma_{n(x)}^{-1}\cdots \gamma_1^{-1} x$ with $\gamma_{l}\in \mathcal{H}_0$ for $l=1,\ldots,n(x)$, 
where $\gamma_l$ maps $\Delta_{p_{k(l+1)}}$ to $\Delta_{p_{k(l)}}$ with $ p_{k(1)}=p_{k(n(x)+1)}=p_1$ and $p_{k(l)}\neq p_1$ for $1<l\leq n(x)$.

\end{itemize}
The string $\gamma_{n(x)}^{-1}\cdots \gamma_1^{-1}$ gives an open subset $\gamma_1\cdots \gamma_{n(x)}\Delta_{p_1}\subset \Delta_{p_1}$. They consist of open subsets described in Proposition \ref{prop:coding}. 

To prove (1), (3) and (4) in Proposition \ref{prop:coding}, we start with a preliminary version of Lemma~\ref{lem:l1}. Define
\begin{equation*}
 U_i=g_i^{-1}B(g_i\Delta_{p_i},1/(2\eta))^c\,\,\,\text{for}\,\,\,1\leq i\leq j.
 \end{equation*}
\begin{lem}\label{lem:l1'}
	 If $\gamma$ is an inverse branch in $\calH_0$ which maps $\Delta_{p_i}$ into $\Delta_{p_l}$, then $\gamma^{-1}U_l\subset U_i$.
\end{lem}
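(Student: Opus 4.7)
The plan is to translate into cusp-adapted coordinates via $g_i,g_l$ and combine the explicit form of Möbius transformations (Lemma~\ref{lem:gammax}) with the separation property of $N_p$ encoded in~\eqref{flower3}. The hypothesis $\gamma\Delta_{p_i}\subset\Delta_{p_l}$ together with the flower construction of Section~\ref{coding procedure} applied at cusp $p_l$ in the $g_l$-conjugated coordinates produces a parabolic fixed point $p=\alpha p_i\in\Delta_{p_l}\cap\bigcup_{n}P_n$, a top representation $\alpha\in\Gamma$ of $p$ (so $\alpha^{-1}p_i\in\overline{\Delta}_{p_i}$), and an element $\alpha_1\in N_p\subset\Gamma_{p_i}$, such that $\gamma=\alpha\alpha_1$ and $\gamma\Delta_{p_i}\subset J_p\subset B(p,C_{\five}\eta h_p)$ by~\eqref{equ:jp}.

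The inclusion $\gamma^{-1}U_l\subset U_i$ is equivalent, after applying $g_l,g_i$ and taking complements, to the Euclidean statement
\[
\tilde\gamma\bigl(B(g_i\Delta_{p_i},1/(2\eta))\bigr)\subset B(g_l\Delta_{p_l},1/(2\eta)),\qquad \tilde\gamma:=g_l\gamma g_i^{-1}.
\]
I factor $\tilde\gamma=(g_l\alpha g_i^{-1})\circ T$ with $T:=g_i\alpha_1 g_i^{-1}$. Since $\alpha_1\in\Gamma_{p_i}$, Lemma~\ref{lem:biberbach} makes $T$ a Euclidean isometry of $\R^d$. The first factor sends $\infty\mapsto g_lp$, so by the Bruhat form of Lemma~\ref{lem:gammax} it equals
\[
(g_l\alpha g_i^{-1})(z) = \lambda\cdot\frac{z-z_0}{|z-z_0|^2}\cdot R + (g_lp,0)
\]
for some $R\in\operatorname{O}(d)$, $z_0=g_i\alpha^{-1}p_l$, and scalar $\lambda>0$. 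Consequently the pole of $\tilde\gamma$ is $\tilde x^*:=T^{-1}(z_0)=g_i\gamma^{-1}p_l$.

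The crucial estimate is $d(\tilde x^*,g_i\Delta_{p_i})\gg 1/\eta$. This transfers the separation $d((\tilde\alpha\tilde\alpha_1)^{-1}\infty, g_l\Delta_{p_i})\geq 1/\eta$ from~\eqref{flower3} (in $g_l$-coordinates) back to $g_i$-coordinates via $g_i\circ g_l^{-1}$, using Lemmas~\ref{lem:height}--\ref{lem:bilip} and~\eqref{equ:coordinate2}. Matching $|\tilde\gamma(y)-(g_lp,0)|\ll \eta h_{g_lp}$ for $y\in g_i\Delta_{p_i}$ (which follows from $\gamma\Delta_{p_i}\subset B(p,C_{\five}\eta h_p)$ and Lemmas~\ref{lem:height}--\ref{lem:bilip}) with the explicit formula yields $\lambda\ll h_{g_lp}$. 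Then for $y\in B(g_i\Delta_{p_i},1/(2\eta))$ one has $|y-\tilde x^*|\gg 1/\eta$ for $\eta$ small, and hence
\[
|\tilde\gamma(y)-(g_lp,0)| = \lambda/|y-\tilde x^*|\ll \eta h_{g_lp}\ll 1,
\]
using $p\in\bigcup_{n}P_n$ (so $\eta h_p\leq 1$) and Lemma~\ref{lem:height}. Since $(g_lp,0)\in g_l\Delta_{p_l}$, this bound is smaller than $1/(2\eta)$ for $\eta$ sufficiently small, giving the required inclusion.

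The main obstacle is the crucial estimate: the separation~\eqref{flower3} is naturally phrased in $g_l$-coordinates whereas $\tilde x^*$ lives in $g_i$-coordinates at distance of order $1/\eta$ from $g_l\Delta_{p_i}$, and Lemma~\ref{lem:bilip} only controls $g_i\circ g_l^{-1}$ on the bounded region $g_l\Delta$. Carefully tracking the derivative of $g_i\circ g_l^{-1}$ via~\eqref{equ:coordinate2} along the segment joining $g_l\Delta_{p_i}$ to $g_l\gamma^{-1}p_l$ is the main bookkeeping task.
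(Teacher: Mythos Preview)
Your reformulation via $\tilde\gamma=g_l\gamma g_i^{-1}$ and the identification of its pole $\tilde x^*=g_i\gamma^{-1}p_l$ are correct, but the ``crucial estimate'' is not what \eqref{flower3} gives you. For a branch $\gamma=\alpha\alpha_1$ with $p=\alpha p_i\in\Delta_{p_l}$, the flower $J_p$ in the multi-cusp construction is built via $g_i$ (the chart sending $p_i\mapsto\infty$), not via $g_l$; the separation \eqref{flower3} therefore reads
\[
d\bigl((g_i\gamma g_i^{-1})^{-1}\infty,\,g_i\Delta_{p_i}\bigr)=d\bigl(g_i\gamma^{-1}p_i,\,g_i\Delta_{p_i}\bigr)\ge 1/\eta,
\]
which controls $g_i\gamma^{-1}p_i$, not your $\tilde x^*=g_i\gamma^{-1}p_l$. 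Your sentence ``transfers the separation \dots\ in $g_l$-coordinates back to $g_i$-coordinates'' has the roles of $g_i$ and $g_l$ reversed. Your derivation of $\lambda\ll h_{g_lp}$ by ``matching'' also fails as written: you only have a \emph{lower} bound on $|y-\tilde x^*|$, so $\lambda=|\tilde\gamma(y)-g_lp|\cdot|y-\tilde x^*|$ cannot be bounded above this way (in fact $\lambda=h_{g_lp}$ exactly, since $\tilde\gamma H_\infty=g_l\alpha g_i^{-1}H_\infty=g_lH_p$).

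The paper avoids both problems by conjugating with $g_i$ on \emph{both} sides. Since $\gamma p_i=p\in\Delta_{p_l}$, one has $g_lU_l\subset B(g_l\gamma p_i,1/(2\eta))^c$ trivially, and a single bi-Lipschitz transfer (Lemma~\ref{lem:bilip}) gives $g_iU_l\subset B(g_i\gamma g_i^{-1}\infty,1)^c$. Now Lemma~\ref{lem:explicit} applied to $g_i\gamma g_i^{-1}$ sends this complement into $B\bigl((g_i\gamma g_i^{-1})^{-1}\infty,\,h_{g_ip}\bigr)\subset B(g_i\gamma^{-1}p_i,1)$, and \eqref{flower3} (already in $g_i$-coordinates) places this ball outside $B(g_i\Delta_{p_i},1/(2\eta))$. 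Your route can be repaired---once you have the paper's chain, $g_i\gamma^{-1}p_l$ lies within distance $1$ of $g_i\gamma^{-1}p_i$, which does give your crucial estimate---but at that point you have essentially reproduced the paper's argument inside a more complicated wrapper.
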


\begin{proof}
Due to the construction, we know that $\gamma p_i$ is a parabolic fixed point inside $\Delta_{p_l}$. The definition of $U_l$ implies
\begin{equation*}
g_lU_l\subset B(g_l\gamma p_i,1/(2\eta))^c.
\end{equation*}
Because the maps $g_i$'s are bi-Lipschitz (Lemma~\ref{lem:bilip}), we obtain
\begin{equation}
\label{invcon1}
g_i U_l\subset B(g_i \gamma p_i,1)^c=B(g_i\gamma g_i^{-1}\infty,1)^c.
\end{equation}
By Lemma~\ref{lem:explicit}, we have
\begin{align}
\label{invcon2}
(g_i\gamma g_i^{-1})^{-1} B(g_i\gamma g_{i}^{-1}\infty, 1)^c=B((g_i\gamma g_i^{-1})^{-1}\infty, h(g_i \gamma p_i))
\subseteq B((g_i\gamma g_i^{-1})^{-1}\infty,1).
\end{align}
By \eqref{flower3}, we obtain
\begin{equation}
\label{invcon3}
d(g_i\Delta_{p_i}, (g_i\gamma g_i^{-1})^{-1}\infty)\geq 1/\eta. 
\end{equation}
Combining (\ref{invcon1})-(\ref{invcon3}) together, we conclude that
\begin{equation*}
g_i \gamma^{-1}U_l \subset B((g_i\gamma g_i^{-1})^{-1}\infty,1)\subset B(g_i\Delta_{p_i},1/(2\eta))^c.\qedhere
\end{equation*}
\end{proof}

We prove Proposition \ref{prop:coding} (1) and \eqref{sum}.
The proof is to consider an induced map and reduce the number of cusps by $1$ at a time.

Let $q=p_j$. Denote $\cup_{1\leq i\leq j-1}\Delta_{p_i}=\Delta-\Delta_{q}$ by $X_1$ and for $x\in X_1$, define
\[n_1(x)=\inf\{n\in\N:\, T_0^n(x)\in X_1 \}.\] The map $T_1$ is given by $T_1(x)=T_0^{n_1(x)}(x)$ for $x$ such that $n_1(x)<\infty$. Since $T_0$ is irreducible, this induced system is also irreducible on $X_1$. 
Write
 \begin{align*}
 &\mathcal{H}_q:=\text{the set of the inverse branches of}\,\,\,T_0\,\,\,\text{which are from}\,\,\,\Delta_q\,\,\,\text{to}\,\,\,\Delta_q,\\
 &\mathcal{H}_p:=\text{the set of the inverse branches of}\,\,\,T_0\,\,\,\text{which are from}\,\,\,X_1\,\,\,\text{to}\,\,\,X_1,\\
 &\mathcal{H}_{pq}:=\text{the set of the inverse branches of}\,\,\,T_0\,\,\,\text{which are from}\,\,\,X_1\,\,\,\text{to}\,\,\,\Delta_q,\\
 &\mathcal{H}_{qp}:=\text{the set of the inverse branches of}\,\,\,T_0\,\,\,\text{which are from}\,\,\,\Delta_q\,\,\,\text{to}\,\,\,X_1.
 \end{align*}
 As $T_1$ is a composition of multiples of $T_0$'s, we have
 \begin{itemize}
\item either $T_1(x)=\gamma^{-1}x$ with $\gamma\in \mathcal{H}_p$,
\item or $T_1(x)=\gamma_{n_1(x)}^{-1}\cdots \gamma_1^{-1} x$ with $\gamma_1\in \mathcal{H}_{qp}$, $\gamma_{n_1(x)}\in \mathcal{H}_{pq}$ and $\gamma_l\in \mathcal{H}_q$ for $l=2,\ldots, n_1(x)-1$. 
 \end{itemize}
The string $\gamma_{1}\cdots \gamma_{n_1(x)}$ is an inverse branch of $T_1$. Set
\begin{align*} 
&\calH_1:=\text{the set of all inverse branches of}\,\,\, T_1,\\
 &\mathcal{H}_q^n:=\{\gamma_1\cdots \gamma_n:\,\gamma_i\in\mathcal{H}_q\,\,\,\text{for}\,\,\,1\leq i\leq n\}\,\,\,\text{for every}\,\,\,n\in \mathbb{N}.
\end{align*}
 \begin{lem}\label{lem:gammacoding}
 There exists $C>0$ such that for every $n\in \mathbb{N}$ and for every $\gamma\in\calH_q^n$, we have 
 \begin{equation*}
 |\gamma' (x)|\geq |\gamma'|_\infty /C \,\,\,\text{for any}\,\,\,x\in \Delta_q.
 \end{equation*}
 \end{lem}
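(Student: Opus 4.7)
My plan is to prove this by a standard bounded distortion argument, provided we have two uniform properties for elements $\gamma\in\mathcal{H}_q$: a uniform contraction constant $\lambda<1$, and a uniform bound $C_1$ on $|\D\log|\gamma'||$. Given these, for $\gamma=\gamma_1\cdots\gamma_n\in\mathcal{H}_q^n$ and any $x,y\in\Delta_q$, the chain rule gives
\[
\log|\gamma'(x)|-\log|\gamma'(y)|=\sum_{k=1}^n\bigl(\log|\gamma_k'(\gamma_{k+1}\cdots\gamma_n x)|-\log|\gamma_k'(\gamma_{k+1}\cdots\gamma_n y)|\bigr).
\]
Since each $\gamma_i$ preserves $\Delta_q$, the iterated images stay in $\Delta_q$, and iterating the contraction bound yields $d(\gamma_{k+1}\cdots\gamma_n x,\gamma_{k+1}\cdots\gamma_n y)\leq\lambda^{n-k}\operatorname{diam}(\Delta_q)$. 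The log-derivative bound then makes the $k$-th summand at most $C_1\lambda^{n-k}\operatorname{diam}(\Delta_q)$, and summing the geometric series gives $|\log|\gamma'(x)|-\log|\gamma'(y)||\leq C_1\operatorname{diam}(\Delta_q)/(1-\lambda)$. Exponentiating and taking the sup over $y$ yields $|\gamma'(x)|\geq |\gamma'|_\infty/C$ with $C=\exp(C_1\operatorname{diam}(\Delta_q)/(1-\lambda))$.

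To establish the two uniform properties, I would use the explicit description of the inverse branches of $T_0$ from Section~\ref{sec:code} applied to the conjugated group $g_j\Gamma g_j^{-1}$ at the cusp $p_j=q$. After this conjugation, each $\gamma\in\mathcal{H}_q$ takes the form $\gamma_{\mathrm{top}}\gamma_1$ where $\gamma_{\mathrm{top}}$ is a top representation of some parabolic fixed point $p=\gamma_{\mathrm{top}}\infty\in g_j\overline{\Delta}_q$ with $\eta h_p\leq 1$, and $\gamma_1\in N_p$. By Lemma~\ref{lem:explicit}, $|\gamma_{\mathrm{top}}'(\gamma_1 y)|=h_p/|\gamma_1 y-x_{\gamma_{\mathrm{top}}}|^2$, while $\gamma_1$ is a Euclidean isometry so $|\D\gamma_1|\equiv 1$. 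Since $\gamma_1 y\in B(x_{\gamma_{\mathrm{top}}},1/\eta)^c$ by~\eqref{flower3}, this immediately gives the contraction bound $|\gamma'(y)|\leq h_p\eta^2\leq\eta$. Differentiating $\log|\gamma'(y)|=\log h_p-2\log|\gamma_1 y-x_{\gamma_{\mathrm{top}}}|$ and using $|\D\gamma_1|\equiv 1$ together with $|\gamma_1 y-x_{\gamma_{\mathrm{top}}}|\geq 1/\eta$ gives $|\D\log|\gamma'||\leq 2\eta$. Lemmas~\ref{lem:height} and~\ref{lem:bilip} transfer these estimates back from $g_j\Delta_q$ to $\Delta_q$ at a uniform multiplicative cost, yielding a single $\lambda<1$ and $C_1$ independent of $\gamma$.

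The only delicate point is that $\mathcal{H}_q$ is infinite and indexed by parabolic fixed points $p$ at every scale $h_p\to 0$, so the constants must not degenerate with $h_p$. The key observation is that both estimates depend on $\eta$ rather than on $h_p$ individually: although $|\gamma'|$ naively involves $h_p$, the absolute bound $\eta h_p\leq 1$ built into $P_n$ absorbs this, and the log-derivative involves only $1/|\gamma_1 y-x_{\gamma_{\mathrm{top}}}|\leq\eta$, again independent of $p$. This uniformity is precisely what makes the telescoping distortion estimate close, and it is the main technical content of the lemma.
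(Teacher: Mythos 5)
Your proof is correct, but it takes a different route from the paper's. The paper treats the composite $\gamma\in\mathcal{H}_q^n$ directly as a single M\"obius transformation: after conjugating by $g_j$ so that $q\mapsto\infty$, Lemma~\ref{lem:explicit} gives the exact formula $|(g_j\gamma g_j^{-1})'(y)|=h_{g_jp}/d(y,g_j\gamma^{-1}q)^2$, and the inductively iterated inclusion of Lemma~\ref{lem:l1'} shows $g_j\gamma^{-1}q$ stays in $U_q$, hence at distance $>1/(2\eta)$ from $g_j\Delta_q$; since $\operatorname{diam}(g_j\Delta_q)$ is fixed, the ratio $d(y,g_j\gamma^{-1}q)/d(y',g_j\gamma^{-1}q)$ is uniformly bounded, which is the whole proof. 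Your argument instead does the standard bounded-distortion telescoping, decomposing $\gamma=\gamma_1\cdots\gamma_n$ and establishing uniform single-step contraction $\lambda$ and log-derivative bounds $C_1$ by writing each $\gamma_i=\gamma_{\mathrm{top}}\gamma_1$ and using \eqref{flower3}. Both are sound; the paper's is shorter because it avoids the telescoping altogether and only needs one application of the derivative formula, at the cost of tracking $\gamma^{-1}q$ across compositions via Lemma~\ref{lem:l1'}, whereas yours localizes all the work to single branches. The key uniformity point you flag --- that the contraction and distortion bounds are controlled by $\eta$ rather than by $h_p$, even though $\mathcal{H}_q$ contains branches at all scales $h_p\to 0$ --- is indeed the content that makes either argument close; your explanation of it is correct.
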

 \begin{proof}
 	We first notice that $|\gamma'(x)|\approx |(g_j\gamma g_j^{-1})'(g_jx)|$.
 Write $p=\gamma p_j$. By Lemma~\ref{lem:explicit}, we have
 \[| (g_j\gamma g_j^{-1})' (y)|=\frac{h(g_j p)}{d(y,g_j\gamma^{-1}p_j)^2}.\]
 By \eqref{flower3}, we have $d(g_j\Delta_q,g_j\gamma^{-1}p_j)=d(g_j\Delta_q,(g_j\gamma g_j^{-1})^{-1}\infty)>1/(2\eta)$. Then for every $y\in g_j\Delta_{p_j}=g_j\Delta_q$, the distance
$d(y,g_j\gamma^{-1}p_j)\in [d(g_j\Delta_q,g_j\gamma^{-1}p_j)\pm \operatorname{diam}(g_j\Delta_q)]$, which implies the lemma.
 \end{proof}

\begin{lem}\label{lem:hl}
	There exist $C>0$, $\epsilon>0$ such that for every $l\in\N$
	\begin{equation*}
	\sum_{1\leq k\leq l,\ \gamma_k\in\calH_q}|(\gamma_1\cdots \gamma_l)'|_\infty^{\delta}<C(1-\epsilon)^l.
	\end{equation*}
\end{lem}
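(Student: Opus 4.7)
The plan is to convert the sum over $\mathcal{H}_q^l$ into the PS measure of a ``survivor set'' in $\Delta_q$ and then extract exponential decay from the irreducibility of $T_0$ using the uniform bounded distortion supplied by Lemma~\ref{lem:gammacoding}.

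First, for $\gamma\in\mathcal{H}_q^n$ and $x\in\Delta_q$, Lemma~\ref{lem:gammacoding} gives $|\gamma'(x)|\approx|\gamma'|_\infty$ with constants independent of $n$. Combining this with \eqref{equ:change} and the boundedness of $\Delta_q$ yields $|\gamma'(x)|_{\mathbb{S}^d}\approx|\gamma'|_\infty$, so by quasi-invariance of the PS measure
\[\mu(\gamma A)\approx|\gamma'|_\infty^{\delta}\mu(A)\qquad\text{for every Borel }A\subset\Delta_q,\]
with uniform constants. The cylinders $\gamma\Delta_q$, $\gamma\in\mathcal{H}_q^l$, are pairwise disjoint and their union equals
\[E_l:=\{x\in\Delta_q:\,T_0^k x\in\Delta_q\text{ for all }0\leq k\leq l\};\]
taking $A=\Delta_q$ in the above inequality yields $\sum_{\gamma\in\mathcal{H}_q^l}|\gamma'|_\infty^{\delta}\approx\mu(E_l)/\mu(\Delta_q)$, so it suffices to prove $\mu(E_l)\leq C(1-\epsilon)^l$.

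For this I use the irreducibility of $T_0$: the singleton index set $\{q\}$ is not forward-invariant, so some cylinder $\Delta_{q,k}\subset\Delta_q$ satisfies $T_0(\Delta_{q,k})=\Delta_{p_l}\subset X_1$ with $l\neq q$, and hence $\epsilon_0:=\mu(\Delta_q\setminus E_1)/\mu(\Delta_q)>0$. For each $\gamma\in\mathcal{H}_q^l$ and $y\in\Delta_q$, $T_0^{l+1}(\gamma y)=T_0 y$, so $\gamma\Delta_q\cap E_{l+1}=\gamma E_1$. Applying the bounded-distortion inequality to $A=\Delta_q\setminus E_1$ produces a universal $c'>0$ with $\mu(\gamma(\Delta_q\setminus E_1))/\mu(\gamma\Delta_q)\geq c'\epsilon_0$; by injectivity of $\gamma$, this gives $\mu(\gamma E_1)\leq(1-c'\epsilon_0)\mu(\gamma\Delta_q)$. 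Summing over $\gamma\in\mathcal{H}_q^l$ yields $\mu(E_{l+1})\leq(1-c'\epsilon_0)\mu(E_l)$, and iterating proves the desired bound with $\epsilon=c'\epsilon_0$.

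The only subtlety is the uniformity in $n$ of the distortion constants in $\mu(\gamma A)\approx|\gamma'|_\infty^\delta\mu(A)$ for $\gamma\in\mathcal{H}_q^n$; this uniformity is precisely the content of Lemma~\ref{lem:gammacoding}. Once it is available, the remainder is a one-step renewal argument built on the trivial observation that $T_0(\Delta_q)\not\subset\Delta_q$, supplied by the assumed irreducibility of $T_0$.
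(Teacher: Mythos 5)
Your proof is correct and takes essentially the same route as the paper: both rest on the uniform distortion bound $\mu(\gamma A)\approx |\gamma'|_\infty^\delta\,\mu(A)$ supplied by Lemma~\ref{lem:gammacoding} (plus \eqref{equ:change}), and both derive a one-step contraction from the fact that, by irreducibility of $T_0$, the set $F=\cup_{\gamma\in\mathcal H_q}\gamma\Delta_q$ (your $E_1$) has $\mu(F)<\mu(\Delta_q)$. The paper phrases the renewal as the claim $\sum_{\gamma\in\calH_q}\mu(h\gamma\Delta_q)\leq(1-\epsilon)\mu(h\Delta_q)$ for $h\in\calH_q^n$ and iterates; you phrase it as $\mu(E_{l+1})\leq(1-c'\epsilon_0)\mu(E_l)$ via the survivor sets $E_l$ — the same inequality after summing over $h\in\calH_q^l$.
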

\begin{proof}
	Claim: there exists $\epsilon>0$ such that for every $n\in \mathbb{N}$ and for any $h\in \calH_q^n$, we have
	\begin{equation}
	\label{measure decrease}
	\sum_{\gamma\in \calH_q}\mu(h\gamma\Delta_q)\leq (1-\epsilon)\mu(h\Delta_q).
	\end{equation}
	Proof of the claim: for a measurable set $E\subset \Delta_q$, by Lemma~\ref{lem:gammacoding}
	\begin{equation}\label{equ:hE}
	\mu(hE)=\int_E |h'(x)|_{\S^d}^\delta\dd\mu(x)\approx \int_E |h'(x)|^\delta\dd\mu(x) \in \mu(E) |h'|^{\delta}_\infty[1/C,1].
	\end{equation}
	Write $F=\cup_{\gamma\in\calH_q} \gamma\Delta_q$. Since $T_0$ is irreducible, we have $\mu(F)<\mu(\Delta_q)$. By \eqref{equ:hE}
	\[\sum_{\gamma\in\calH_q}\mu(h\gamma\Delta_q)=\mu(hF)\leq |h'|^\delta_\infty \mu(F)=\frac{\mu(F)}{\mu(\Delta_q-F)}|h'|^\delta_\infty \mu(\Delta_q-F)\leq C' \mu(h(\Delta_q-F)).\]
	So we have
	\begin{equation*}
	(1+1/C')\sum_{\gamma\in \mathcal{H}_q}\mu(h\gamma \Delta_q)\leq \mu(hF)+\mu (h(\Delta_q-F))\leq \mu(h\Delta_q).
	\end{equation*}
	
	Using \eqref{measure decrease}, Lemma~\ref{lem:gammacoding} and \eqref{equ:hE} with $E=\Delta_q$, we obtain
	\begin{align*}
	&\sum_{1\leq k\leq l,\ \gamma_k\in\calH_q}|(\gamma_1\cdots \gamma_l)'|_\infty^{\delta}\leq C \sum_{1\leq k\leq l,\ \gamma_k\in\calH_q}\mu(\gamma_1\cdots \gamma_l\Delta_q)\\
	\leq &C \sum_{1\leq k\leq l-1,\ \gamma_k\in\calH_q}(1-\epsilon)\mu(\gamma_1\cdots\gamma_{l-1}\Delta_q) \leq C (1-\epsilon)^l.\qedhere
	\end{align*}
\end{proof}

\begin{proof}[\textbf{Proof of Proposition \ref{prop:coding} (1) and \eqref{sum}}] We first use Proposition \ref{keylemma}, \eqref{h0}, Lemma \ref{lem:gammacoding} and \ref{lem:hl} to prove that for the expanding map $T_1$, we have
\begin{enumerate}
\item There exists $\epsilon_1>0$ such that 
\begin{equation}
\label{induce map 1}
 \sum_{\gamma\in\calH_1}|\gamma'|_\infty^{\delta-\epsilon_1}<\infty, 
 \end{equation}
 where $|\gamma'|_\infty$ is defined as in \eqref{infty}.
 \item There exists $\epsilon>0$ such that for every $n\in \mathbb{N}$,
 \begin{equation}
 \label{induce map 2}
 \mu(\{x\in X_1:\,n_1(x)>n+1\})\ll (1-\epsilon)^n.
 \end{equation}
 \end{enumerate}
 The second statement in particular implies that the map $T_1$ is defined almost everywhere in $\Delta-\Delta_q$.
 
	Due to Lemma~\ref{lem:hl}, we can find a large $l_0$ such that $\sum_{1\leq k\leq l_0,\ \gamma_k\in\calH_q}|(\gamma_1\cdots \gamma_{l_0})'|_\infty^{\delta}<1$. Then using \eqref{h0} and submultiplicativity $|(\gamma_1\gamma_2)'|_\infty\leq |(\gamma_1)'|_\infty|(\gamma_2)'|_\infty$, we obtain 
$$\sum_{1\leq k\leq l_0,\ \gamma_k\in\calH_q}|(\gamma_1\cdots \gamma_{l_0})'|_\infty^{\delta-\epsilon}<\infty,$$ 
where $\epsilon>0$ is the constant given by \eqref{h0}. Hence we can find $0<\epsilon_1<\epsilon$ small such that 
\begin{equation*}
\sum_{1\leq k\leq l_0,\ \gamma_k\in\calH_q}|(\gamma_1\cdots \gamma_{l_0})'|_\infty^{\delta-\epsilon_1}<1.
\end{equation*}
Using submultiplicativity, we obtain constants $C>0,\rho<1$ such that for $l\in\N$
\begin{equation}\label{equ:1kl}
	\sum_{1\leq k\leq l,\ \gamma_k\in\calH_q}|(\gamma_1\cdots \gamma_{l})'|_\infty^{\delta-\epsilon_1}\leq C\rho^l.
\end{equation}

Denote $\sum_{\gamma\in \mathcal{H}_1}|\gamma'|_{\infty}^{\delta-\epsilon_1}$ by $E_q$. For every inverse branch of $T_1$, it can be uniquely decomposed as $\gamma_0\gamma_1\cdots\gamma_{l}\gamma_{l+1}$ with $\gamma_{l+1}\in\calH_{pq}$, $\gamma_{i}\in\calH_q$ with $i=1,\cdots l$ and $\gamma_0\in\calH_{qp}$. 
Using this expression and submultiplicativity,
we obtain
\begin{equation}\label{equ:ep}
E_q\leq \sum_{\gamma\in\calH_p}|\gamma'
|_\infty^{\delta-\epsilon_1}+\sum_{l\geq 1} (\sum_{\gamma_0\in\calH_{qp}} |\gamma_0'|_\infty^{\delta-\epsilon_1})(\sum_{\gamma_{l+1}\in\calH_{pq}} |\gamma_{l+1}'|_\infty^{\delta-\epsilon_1})
(\sum_{\gamma_i\in\calH_q,1\leq i\leq l}|(\gamma_1\cdots \gamma_l)'|_\infty^{\delta-\epsilon_1}).
\end{equation}
 Therefore $E_p$ is also finite due to \eqref{equ:ep}, \eqref{equ:1kl} and \eqref{h0}.
 
The set of $x$ such that $T_0^n(x)$ is outside of domain of definition of $T_0$ for some $n$ has zero PS measure by Proposition~\ref{keylemma}. We only need to consider the set of $x$ such that $T_0^n(x)$ is in the domain of definition for every $n$. 
If $x\in X_1$ with $n_1(x)>n+1$, then $x$ must be in $\gamma_0\gamma_1\cdots \gamma_n\Delta_q$ with $\gamma_0\in\calH_{qp}$ and $\gamma_i\in\calH_q$ for $1\leq i\leq n$. Therefore
Lemma~\ref{lem:hl} implies 
\begin{align*}
&\mu(\{x\in X_1:\,n_1(x)>n+1\})\leq \sum_{\gamma_0\in\calH_{qp}}\sum_{\gamma_i\in\calH_q,1\leq i\leq n}\mu( \gamma_0\gamma_1\cdots\gamma_n\Delta_q)\\
\leq& (\sum_{\gamma_0\in\calH_{qp}} |\gamma_0'|_\infty^{\delta})(\sum_{\gamma_i\in\calH_q,1\leq i\leq n}|(\gamma_1\cdots \gamma_n)'|_\infty^{\delta})\ll (1-\epsilon)^n.
\end{align*}
 
 We keep reducing the number of cusps by considering the set $X_2:=X_1-\Delta_{p_{j-1}}$ and the induced map $T_2:X_2\to X_2$ which is constructed similar to $T_1$: in particular, the inverse branches of $T_2$ are compositions of elements in $\mathcal{H}_1$. Analogs of Lemma~\ref{lem:gammacoding} and \ref{lem:hl} for $T_2$ also hold. The replacements of Proposition \ref{keylemma} and \eqref{h0} are \eqref{induce map 2} and \eqref{induce map 1} respectively. Using these three ingredients, we can show the properties like \eqref{induce map 1} and \eqref{induce map 2} also hold for $T_2$. The proof of Proposition~\ref{prop:coding}(1) and \eqref{sum} will be finished by repeating this.
\end{proof}


Now, we will finish proving the rest of results for the coding except Lemma~\ref{lem:uni} (UNI).

\begin{proof}[\textbf{Proof of Lemma~\ref{lem:l1}}]
	Take 
	\begin{equation}\label{equ:Lambda-}
	\Lambda_{-}=\Lambda_\Gamma\cap \{|x|>1/(2\eta) \}=\Lambda_\Gamma\cap U_1.
	\end{equation}
%
	The contracting map $\gamma$ from $\Delta_0$ to $\Delta_0$ is a composition of maps in $\calH_0$, so the inclusion follows directly from Lemma~\ref{lem:l1'}.
	Write $p=\gamma \infty$. By Lemma~\ref{lem:explicit},
	\[|(\gamma^{-1})'(x)|_{\S^d}=\frac{h(p)}{d(x,p)^2}\frac{1+|x|^2}{1+|\gamma^{-1}x|^2}. \]
	For $x\in\Lambda_{-}$,
	as $p=\gamma\infty\in\Delta_0$, we have
	\begin{equation*}
	\frac{1+|x|^2}{d(x,p)^2}\leq \frac{1+|x|^2}{(|x|-\operatorname{diam}(\Delta_0))^2}. 
	\end{equation*}
	The right hand side of the inequality is around $1$ as $|x|\geq 1/(2\eta)$.
	For $\gamma^{-1}x\in \Lambda_{ -}$, we have $|\gamma^{-1}x|\geq 1/(2\eta)$. Hence $|(\gamma^{-1})'(x)|_{\S^d}\leq\lambda$ for some $\lambda$ independent of $\gamma$.
\end{proof}

\begin{proof}[\textbf{Proof of Proposition~\ref{prop:coding} (3)}]
	By Lemma~\ref{lem:explicit}, we have
	\[|\gamma'(x)|=\frac{h(p)}{d(x,\gamma^{-1}\infty)^2}. \]
	By Lemma~\ref{lem:l1}, we have $\gamma^{-1}\infty\in\Lambda_{-}$, which implies $d(\gamma^{-1}\infty,x)\geq 1/(2\eta)$. Hence 
	\[
	|\gamma'(x)|\leq (2\eta)^2h(p)\leq 4\eta^2.\qedhere\]
\end{proof}

{\textbf{Proof of Proposition~\ref{prop:coding} (4).}} We need the following lemma, which will also be needed in later sections. 
\begin{lem}\label{lem:der}
	Let $\gamma$ be any element in $\Gamma$ which does not fix $\infty$. For any $x\in \Delta_0$ and any unit vector $e\in\R^d$, we have
	\[\partial_e\log | \gamma'(x)|=-\frac{2\langle x-\xi,e \rangle}{|x-\xi|^2} \]
	where $\xi=\gamma^{-1}\infty$.
\end{lem}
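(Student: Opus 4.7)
The plan is to derive this as a direct computation from the explicit formula for $|\gamma'(x)|$ given in Lemma~\ref{lem:explicit}. Since $\gamma$ does not fix $\infty$, that lemma yields
\[|\gamma'(x)| = \frac{h_p}{d(x,\xi)^2} = \frac{h_p}{|x-\xi|^2},\]
where $p = \gamma\infty$ and $\xi = \gamma^{-1}\infty \in \partial\H^{d+1}\setminus\{\infty\}$. This reduces the problem to differentiating a very explicit scalar function of $x \in \R^d$.

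First I would take the logarithm, which gives
\[\log|\gamma'(x)| = \log h_p - 2\log|x-\xi|.\]
The factor $\log h_p$ is a constant in $x$ and contributes nothing to $\partial_e$. For the remaining term, I would use the standard chain rule identity $\partial_e \log|x-\xi| = \frac{\langle x-\xi,\,e\rangle}{|x-\xi|^2}$, which follows from $\partial_e|x-\xi|^2 = 2\langle x-\xi, e\rangle$ and $\partial_e \log u = (\partial_e u)/u$ applied to $u = |x-\xi|^2$, combined with $\log|x-\xi| = \tfrac{1}{2}\log|x-\xi|^2$. Assembling the pieces gives
\[\partial_e \log|\gamma'(x)| = -2 \cdot \frac{\langle x-\xi,\,e\rangle}{|x-\xi|^2},\]
which is exactly the claimed formula.

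There is no real obstacle here; the only mild care needed is to verify that the derivative is taken at the right point and that $x \ne \xi$, which holds because $\xi = \gamma^{-1}\infty \in \Lambda_-$ is disjoint from $\overline{\Delta}_0$ by Lemma~\ref{lem:l1} (equivalently, by the fact that $\gamma$ arises from the coding, so $\xi$ lies far from $\Delta_0$). Thus $|x-\xi|$ is uniformly bounded away from $0$ on $\Delta_0$, and the differentiation is legitimate everywhere on $\Delta_0$. This uniform lower bound on $|x-\xi|$ is also exactly what will be needed in subsequent sections to extract the bound $|\D(\log|\gamma'|)(x)| < C_1$ promised in Proposition~\ref{prop:coding}(4).
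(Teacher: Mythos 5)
Your computation is correct and is exactly the "elementary computation" the paper has in mind: invoke $|\gamma'(x)| = h_p/|x-\xi|^2$ from Lemma~\ref{lem:explicit}, take logarithms, and apply the chain rule. The only small imprecision is your closing remark: for a general $\gamma\in\Gamma$ not fixing $\infty$ (which is all the lemma requires), there is no guarantee that $\xi=\gamma^{-1}\infty$ lies in $\Lambda_-$ or avoids $\overline{\Delta}_0$ — that holds for $\gamma$ arising from the coding $\mathcal{H}$ via Lemma~\ref{lem:l1}, which is indeed where this derivative formula is applied, but the formula itself is simply valid at any $x\neq\xi$ and requires no such uniform separation.
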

\begin{proof}
	It can be shown using Lemma~\ref{lem:explicit} and elementary computation. 
\end{proof}

For $\gamma\in\calH$, Proposition~\ref{prop:coding} (4) can be deduced using Lemma~\ref{lem:der} and the observation that $|\gamma^{-1}\infty|\geq 1/(2\eta)$ (Lemma~\ref{lem:l1} and~\eqref{equ:Lambda-}). 

\subsection{Verifying UNI}\label{sec:UNI}


We prove Lemma~\ref{lem:uni} in this part. 
Let $\Gamma_f$ be the semigroup generated by $\gamma$ in $\calH$ and $\Gamma_b$ be the semigroup generated by $\gamma^{-1}$ with $\gamma\in\calH$. Let $\Lambda_f$ and $\Lambda_b$ be the limit set of $\Gamma_f$ and $\Gamma_b$ on $\partial\H^{d+1}$, that is the set of accumulation points of orbit $\Gamma_f o$ and $\Gamma_b o$ for some $o\in\H^{d+1}$ respectively. It follows from the definition that the limit set $\Lambda_f$ is $\Gamma_f$-invariant and $\Lambda_b$ is $\Gamma_b$-invariant. Due to~\cite[Proposition 3.19]{Kap} (convergence property of M\"obius transformation), we have that $\Lambda_f$ is a $\Gamma_f$-minimal set and $\Lambda_{b}$ is a $\Gamma_b$-minimal set.

\begin{lem}\label{lem:dense}
	The limit set $\Lambda_b$ is not contained in an affine subspace in $\R^d\cup\{\infty \}$ or a sphere in $\R^d$.
\end{lem}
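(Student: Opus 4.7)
The plan is to argue by contradiction. Suppose $\Lambda_b$ is contained in a proper sphere or affine subspace of $\partial\H^{d+1}=\R^d\cup\{\infty\}$. Let $V^*$ be the intersection of all spheres and affine subspaces of $\partial\H^{d+1}$ that contain $\Lambda_b$. Since the intersection of any two spheres and/or affine subspaces is again a sphere, affine subspace, point, or empty, $V^*$ itself is a sphere or affine subspace, and it is the unique smallest such containing $\Lambda_b$; by assumption $V^*\subsetneq\partial\H^{d+1}$.

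I first show that $\Gamma_b$ preserves $V^*$. For $\gamma\in\Gamma_b$, the $\Gamma_b$-invariance of $\Lambda_b$ yields $\gamma(\Lambda_b)\subset\Lambda_b\subset V^*$. Since $\gamma$ is a M\"obius transformation and sends spheres/affine subspaces to spheres/affine subspaces of the same dimension, $\gamma(V^*)$ is the unique smallest sphere/affine subspace containing $\gamma(\Lambda_b)$. Since $V^*$ also contains $\gamma(\Lambda_b)$, we have $\gamma(V^*)\subset V^*$, and equality of dimensions forces $\gamma(V^*)=V^*$. Therefore $\calH\subset\operatorname{Stab}_G(V^*)$, and the subgroup $\langle\calH\rangle\subset\Gamma$ lies inside the proper algebraic subgroup $\operatorname{Stab}_G(V^*)$ of $G=\operatorname{SO}(d+1,1)^\circ$.

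Next I extract parabolic subgroups of $\Gamma$ inside $\langle\calH\rangle$. By the coding construction (Sections~\ref{sec:code}--\ref{sec:exptail}), $\calH$ contains one-step return inverse branches of the form $\gamma_p\gamma_1$, where $\gamma_p$ is a top representation of a parabolic fixed point $p=\gamma_p\infty\in\cup_n P_n\cap\Delta_0$ and $\gamma_1\in N_p\subset\Gamma_\infty$. For fixed $\gamma_p$ and any $\gamma_1,\gamma_1'\in N_p$, the element $(\gamma_p\gamma_1)(\gamma_p\gamma_1')^{-1}=\gamma_p(\gamma_1\gamma_1'^{-1})\gamma_p^{-1}\in\langle\calH\rangle$ is a $\gamma_p$-conjugate of a parabolic element of $\Gamma_\infty$; since $N_p$ is cofinite in $\Gamma_\infty$, such products generate the full conjugated parabolic subgroup $\gamma_p\Gamma_\infty\gamma_p^{-1}$ fixing $p$. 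Hence $\langle\calH\rangle$ contains the parabolic subgroup of $\Gamma$ stabilizing each such $p\in\cup_n P_n\cap\Gamma\cdot\infty$.

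The main obstacle is the final Zariski density step. Because $\Gamma$ is Zariski dense and $\Lambda_\Gamma$ is $\Gamma$-minimal, $\Lambda_\Gamma$ cannot be contained in any proper sphere or affine subspace of $\partial\H^{d+1}$, and the parabolic fixed points of $\Gamma$ are dense in $\Lambda_\Gamma$. Combining this with the measure-exhaustion result Proposition~\ref{keylemma} and the density arguments already employed in Section~\ref{sec:energy}, the set $\cup_n P_n\cap\Gamma\cdot\infty$ contains parabolic fixed points that are not all contained in any proper sphere or affine subspace. Invoking the structural fact that nontrivial unipotent subgroups of $\operatorname{SO}(d+1,1)^\circ$ fixing boundary points in general position generate a Zariski dense subgroup then yields $\overline{\langle\calH\rangle}^{\operatorname{Zar}}=G$, contradicting $\langle\calH\rangle\subset\operatorname{Stab}_G(V^*)$ and completing the proof.
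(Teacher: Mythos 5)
Your strategy diverges from the paper's at the final step, and the divergence is where the gap lies. The paper's proof is measure-theoretic: it shows $\Gamma_f$ preserves the minimal sphere/affine subspace $A$ (passing from the semigroup $\Gamma_b$ to $\Gamma_f$ via the fact that the Zariski closure of a semigroup is a group), proves $\mu(\Lambda_f)=\mu(\Lambda_\Gamma\cap\Delta_0)>0$ using the coding, and then invokes the Flaminio--Spatzier result \cite[Corollary 1.4]{FS} that the PS measure of a proper sphere/affine subspace vanishes for Zariski dense $\Gamma$. Your first half is in fact a small simplification of the paper's: observing that a M\"obius transformation preserves $V^*$ iff its inverse does lets you pass directly from $\Gamma_b$ to the \emph{group} $\langle\calH\rangle$ without needing the semigroup-Zariski-closure lemma. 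That much is fine.

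The problem is the last paragraph. First, the assertion that ``$\cup_n P_n\cap\Gamma\cdot\infty$ contains parabolic fixed points that are not all contained in any proper sphere or affine subspace'' is exactly the kind of statement one would deduce from Flaminio--Spatzier together with Proposition~\ref{keylemma}: the flowers $J_p$ cover $\Lambda_\Gamma\cap\Delta_0$ up to $\mu$-null sets, and a proper sphere has $\mu$-measure zero by \cite{FS}. So you have not avoided the measure-theoretic input — you have implicitly re-derived it, less cleanly. Second, the ``structural fact that nontrivial unipotent subgroups of $\operatorname{SO}(d+1,1)^\circ$ fixing boundary points in general position generate a Zariski dense subgroup'' is neither standard nor cited; it needs a proof (e.g.\ via the classification of proper algebraic subgroups containing unipotents and the Boubel--Zeghib type description in \cite{BZ}), which you have not given. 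Third, your step producing parabolic subgroups from $\calH$ is written for the one-cusp picture (inverse branches $\gamma_p\gamma_1$ with $\gamma_1\in N_p\subset\Gamma_\infty$); in the multi-cusp case $\calH$ is built from first-return compositions of $\calH_0$ across the various $\Delta_{p_i}$, and the claim that $\calH$ directly contains enough such elements to generate $\gamma_p\Gamma_\infty\gamma_p^{-1}$ needs to be re-examined. In short: the group-theoretic route is plausible in spirit, but as written it is a sketch resting on two unproven claims, one of which essentially \emph{is} the Flaminio--Spatzier input that the paper's cleaner proof uses directly.
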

\begin{proof}
	Let $A$ be an affine subspace or a sphere with minimal dimension which contains $\Lambda_b$. Because $\Lambda_b$ is $\Gamma_b$ invariant, the semigroup $\Gamma_b$ must preserve $A$, so does the Zariski closure of $\Gamma_b$. The Zariski closure of a semigroup is a group (see for example~\cite[Lemma 6.15]{BQ}). The Zariski closures of $\Gamma_f$ and $\Gamma_b$ are the same. Hence $\Gamma_f$ also preserves $A$ and $\Lambda_f$ is in $A$. \textbf{We claim: $\mu(\Lambda_f)=\mu(\Lambda_{\Gamma}\cap \Delta_0)>0$}. Then because $\Gamma$ is Zariski dense, by~\cite[Corollary 1.4]{FS}, we conclude that $\mu(A)$ is non zero if and only if $A=\R^d$, finishing the proof.
	
	Proof of the claim: Let $x$ be any point in $\Lambda_{\Gamma}\cap \Delta_0$ such that $T^nx\in\Lambda_{\Gamma}\cap \Delta_0$ for every $n\in\N$. We can write $x=\gamma_n T^n(x)\in \gamma_n\Delta_0$ for some $\gamma_n\in\calH^n$. Fix any $y\in \Lambda_f$, it follows from Proposition~\ref{prop:coding} (3) that $d(\gamma_n y, \gamma_n T^n(x))\to 0$. So $\gamma_n y\rightarrow x$ and $x\in\Lambda_f$. Due to Proposition~\ref{prop:coding} (1), the set of $x$'s such that $T^nx\in\Lambda_{\Gamma}\cap \Delta_0$ for every $n\in\N$ is a conull set in $\Lambda_{\Gamma}\cap \Delta_0$. Hence $\mu(\Lambda_f)=\mu(\Lambda_{\Gamma}\cap \Delta_0)$.
\end{proof}
\begin{lem}
\label{nonvanish}
	For every $x\in\Lambda_\Gamma\cap\overline\Delta_0$, there exist pairs of points $(\xi_{1m},\xi_{2m}),\ m=1,\cdots, k_x$ in the limit set $\Lambda_b$ and $\epsilon_x'>0$ such that for every unit vector $e\in \R^d$ there exists $m$,
	\[|\langle \frac{x-\xi_{1m}}{|x-\xi_{1m}|^2}-\frac{x-\xi_{2m}}{|x-\xi_{2m}|^2},e \rangle|>2\epsilon_x'>0. \]
\end{lem}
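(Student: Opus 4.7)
The plan is to combine Lemma~\ref{lem:der} with Lemma~\ref{lem:dense} via the conformal geometry of inversions. Setting $\Psi_x(\xi) := (x-\xi)/|x-\xi|^2$, the inner product we wish to bound below is precisely $\langle \Psi_x(\xi_{1m}) - \Psi_x(\xi_{2m}), e\rangle$. Observe that $\Psi_x$ is, up to a sign, the unit inversion centered at $x$, and hence a M\"obius transformation of $\R^d \cup \{\infty\}$ that interchanges $x$ and $\infty$. Since $x \in \overline{\Delta}_0$ and $\Lambda_b \subset \Lambda_- = \Lambda_\Gamma \cap \{|\xi|>1/(2\eta)\}$ (see~\eqref{equ:Lambda-}), which lies at positive distance from $\overline{\Delta}_0$, the map $\Psi_x|_{\Lambda_b}$ is a smooth embedding with derivative bounds uniform in $x$.

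The crucial claim is that $\Psi_x(\Lambda_b)$ has full affine span in $\R^d$. Suppose not; then $\Psi_x(\Lambda_b)$ is contained in some affine hyperplane $H \subset \R^d$. Since $\Psi_x(x) = \infty \notin H$, M\"obius invariance of the family of generalized spheres (spheres together with affine hyperplanes in $\R^d \cup \{\infty\}$) implies that $\Psi_x^{-1}(H)$ is a genuine Euclidean sphere passing through $x$. Thus $\Lambda_b \subset \Psi_x^{-1}(H)$ would be contained in a sphere in $\R^d$, contradicting Lemma~\ref{lem:dense}.

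Granted the claim, pick $y_0, y_1, \ldots, y_d \in \Psi_x(\Lambda_b)$ that are affinely independent in $\R^d$, and let $\xi_j := \Psi_x^{-1}(y_j) \in \Lambda_b$. Set $k_x := d$ and $(\xi_{1m}, \xi_{2m}) := (\xi_m, \xi_0)$, so that the vectors $v_m := \Psi_x(\xi_m) - \Psi_x(\xi_0) = y_m - y_0$ form a basis of $\R^d$. The function $e \mapsto \max_{1 \leq m \leq d} |\langle v_m, e\rangle|$ is continuous on the unit sphere $\S^{d-1}$ and strictly positive (it would vanish only for an $e$ orthogonal to all of $\R^d$). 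By compactness it attains a positive minimum, which we take as $2\epsilon_x'$. This yields the conclusion of the lemma.

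The main technical point I anticipate is a careful bookkeeping of the action of $\Psi_x$ on generalized spheres and hyperplanes, in particular keeping track of whether $x$ lies on a given such set — here it is used that $\Psi_x^{-1}$ of a hyperplane not through $\infty$ is necessarily a sphere (through $x$), rather than another hyperplane — together with matching this up precisely with Lemma~\ref{lem:dense}, which excludes both spheres and proper affine subspaces of $\R^d \cup \{\infty\}$. Once this conformal bookkeeping is set up cleanly, the rest of the argument is a routine compactness.
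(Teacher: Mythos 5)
Your proposal is correct and is essentially the paper's proof: both invert $\Lambda_b$ by the map $\Psi_x=inv_x$, show the image cannot lie in a hyperplane by appealing to Lemma~\ref{lem:dense}, and finish by compactness of the unit sphere (your version is just slightly more explicit, choosing $d+1$ affinely independent image points to produce the $d$ pairs $(\xi_m,\xi_0)$). One small correction to the M\"obius bookkeeping: $\Psi_x^{-1}(H\cup\{\infty\})$ is a sphere through $x$ exactly when $0\notin H$, and an affine hyperplane through $x$ when $0\in H$ — the dichotomy is governed by whether $\Psi_x(\infty)=0$ lies in $H$, not by whether $\infty\in H$ — so it is not always a genuine sphere; this causes no trouble, since Lemma~\ref{lem:dense} excludes both spheres and affine subspaces, exactly as you acknowledge at the end.
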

\begin{proof}
	The map $inv_x:\xi \mapsto \frac{x-\xi}{|x-\xi|^2}$ is an inversion and this map is injective. If there exists a unit vector $e\in\R^d$ such that 
	\[\langle \frac{x-\xi_{1}}{|x-\xi_{1}|^2}-\frac{x-\xi_{2}}{|x-\xi_{2}|^2},e \rangle=0 \]
	for all $\xi_1,\xi_2$ in $\Lambda_b$, then $inv_x(\Lambda_b)$ is contained in an affine subspace parallel to $e^\perp$. Hence $\Lambda_b$ itself is contained in an affine subspace in $\R^d\cup\{\infty \}$ or a sphere in $\R^d$, which contradicts Lemma~\ref{lem:dense}. Therefore, for every unit vector $e\in\R^d$, there exist $\xi_1,\xi_2$ in $\Lambda_b$ such that 
	\[\langle \frac{x-\xi_{1}}{|x-\xi_{1}|^2}-\frac{x-\xi_{2}}{|x-\xi_{2}|^2},e \rangle\neq0. \]
	We use continuity and compactness to finish the proof.
\end{proof}
\begin{lem}\label{lem:Lam2}
	Let $\xi$ be any point in $\Lambda_b$. For any $\epsilon_2,\epsilon_3>0$, there exists $n_\xi\in\N$ such that for any $n\geq n_\xi$, there exists $\gamma$ in $\calH^n$ satisfying
	\[ d_{\S^d}(\gamma^{-1}\infty,\xi)\leq \epsilon_2,\ |\gamma'|_\infty\leq \epsilon_3. \]
\end{lem}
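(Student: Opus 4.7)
The plan is to first produce a single element $\gamma_0 \in \calH^{m_0}$ such that $\gamma_0^{-1}\Lambda_- \subset B(\xi,\epsilon_2)$, and then to generate elements of every sufficiently large length by composing $\gamma_0$ on the left with arbitrary products in $\calH$. Given such $\gamma_0$, for any $n \geq m_0$ I take an arbitrary $\beta \in \calH^{n-m_0}$ (the empty product when $n=m_0$) and set $\gamma := \beta\gamma_0 \in \calH^n$. Then $\gamma^{-1}\infty = \gamma_0^{-1}(\beta^{-1}\infty)$; since $\infty \in \Lambda_\Gamma$ and $\infty \in U_1$ give $\infty \in \Lambda_-$, and Lemma~\ref{lem:l1} provides $\gamma^{-1}\Lambda_- \subset \Lambda_-$ for each $\gamma\in\calH$, iteration yields $\beta^{-1}\infty \in \Lambda_-$, whence $\gamma^{-1}\infty \in \gamma_0^{-1}\Lambda_- \subset B(\xi,\epsilon_2)$. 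For the derivative bound, Proposition~\ref{prop:coding}(3) together with the chain rule (each intermediate image stays in $\Delta_0$, where every factor contracts by at most $\lambda$) gives $|\gamma'|_\infty \leq \lambda^n$, and setting $n_\xi := \max\{m_0,\lceil\log\epsilon_3/\log\lambda\rceil\}$ enforces both bounds simultaneously.

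The main work lies in producing $\gamma_0$. By the definition of $\Lambda_b$, there is a sequence of distinct elements $\eta_k = \gamma_k^{-1} \in \Gamma_b$, $\gamma_k \in \calH^{n_k}$, with $\eta_k o \to \xi$. Fixing any $\gamma^* \in \calH$, the modified sequence $(\gamma^*)^m \gamma_k \in \calH^{n_k+m}$ satisfies $((\gamma^*)^m \gamma_k)^{-1}o = \gamma_k^{-1}((\gamma^*)^{-m}o)$, which stays at bounded hyperbolic distance from $\gamma_k^{-1}o$ and therefore shares its boundary limit $\xi$; choosing $m=m(k)$ appropriately, I may assume $n_k \to \infty$. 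I then invoke the classical convergence property of discrete M\"obius subgroups on $\partial\H^{d+1}$: after passing to a subsequence, there exist $a,b \in \partial\H^{d+1}$ such that $\gamma_k^{-1} \to b$ uniformly on compact subsets of $\overline{\H^{d+1}}\setminus\{a\}$. Testing at $o$ identifies $b=\xi$, and the exceptional point $a$ is then the common limit of the contracted sets $\gamma_k\overline{\Delta_0}$, whose diameters are at most $\lambda^{n_k}\operatorname{diam}(\Delta_0) \to 0$, so $a \in \overline{\Delta_0}$.

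The hard part is ensuring $a \notin \overline{\Lambda_-}$, so that uniform convergence extends over all of $\overline{\Lambda_-}$. This is handled by the geometry of the coding: by~\eqref{equ:Lambda-}, $\overline{\Lambda_-} \subset \{|x| \geq 1/(2\eta)\}$, whereas $\overline{\Delta_0}$ has diameter independent of $\eta$, so for the small value of $\eta$ fixed in the coding construction the two sets are disjoint. Consequently $\overline{\Lambda_-}$ is a compact subset of $\partial\H^{d+1}\setminus\{a\}$, and $\gamma_k^{-1} \to \xi$ uniformly on it; choosing $k$ large enough to have $\gamma_k^{-1}\Lambda_- \subset B(\xi,\epsilon_2)$ and setting $\gamma_0 := \gamma_k$, $m_0 := n_k$ completes the construction.
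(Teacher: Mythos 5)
Your composition step (forming $\gamma=\beta\gamma_0$ with $\beta\in\calH^{n-m_0}$ arbitrary) is exactly what the paper does. The construction of $\gamma_0$, however, proceeds by a different route: the paper invokes minimality of the $\Gamma_b$-action on $\Lambda_b$ together with Lemma~\ref{lem:l1}, producing a sequence with $\gamma_n^{-1}\Lambda_-\to\xi$ in two sentences, whereas you unpack the underlying convergence-group property directly, locate the exceptional point $a$ inside $\overline{\Delta_0}$ via the shrinking of cylinders, and then exploit the gap between $\overline{\Delta_0}$ and $\overline{\Lambda_-}$. Both rest on the same cited fact \cite[Prop.~3.19]{Kap}, but your version is more explicit about why the word lengths $n_k$ can be pushed to infinity, a point the paper treats implicitly.

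Two places in your construction need tightening, though neither is fatal. First, the assertion that $\gamma_k^{-1}((\gamma^*)^{-m}o)$ ``stays at bounded hyperbolic distance from $\gamma_k^{-1}o$'' holds only for fixed $m$, since the distance equals $d_{\H^{d+1}}((\gamma^*)^{-m}o,o)$, which tends to infinity with $m$; what you actually need is a diagonal extraction: for each fixed $m$, $\gamma_k^{-1}y\to\xi$ for every fixed $y\in\H^{d+1}$, so one may choose $k_m$ with $d_{\S^d}(\gamma_{k_m}^{-1}((\gamma^*)^{-m}o),\xi)\leq 1/m$, and then $(\gamma^*)^m\gamma_{k_m}\in\calH^{n_{k_m}+m}$ has word length at least $m$ while its inverse still sends $o$ to a point converging to $\xi$. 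Second, the identification of the exceptional point $a$ with the common limit of the sets $\gamma_k\overline{\Delta_0}$ is asserted but not proved; it follows by noting that if $\gamma_k\overline{\Delta_0}$ shrank instead to some $z_0\neq a$, then $\gamma_k^{-1}\to\xi$ uniformly on a fixed neighbourhood of $z_0$, forcing $\overline{\Delta_0}=\gamma_k^{-1}(\gamma_k\overline{\Delta_0})$ into arbitrarily small neighbourhoods of $\xi$, which is absurd since $\overline{\Delta_0}$ has fixed positive diameter. With these two repairs the argument is complete.
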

\begin{proof}
	Since $\Lambda_b$ is $\Gamma_b$ minimal, for any point $\xi'\in\Lambda_b$, there exists a sequence $\{\gamma_n^{-1}\}$ in $\Gamma_b$ such that $\gamma_n^{-1}\xi'$ converges to $\xi$ and $|\gamma_n'|_\infty$ tends to zero.
	By Lemma~\ref{lem:l1}, we know that $\gamma_n^{-1}\Lambda_{-}$ also converges to $\xi$. Hence we can always find a $\gamma$ in $\Gamma_f$ with $|\gamma'|_\infty\leq \epsilon_3$ and $d_{\S^d}(\gamma^{-1}\Lambda_{-},\xi)\leq \epsilon_2$. Let $n_\xi$ be the unique number such that $\gamma\in\calH^{n_\xi}$.
	
	For any $\gamma_1\in \cup_{n\geq 1}\calH^{n}$, we have $|(\gamma_1\gamma)'|_\infty\leq |\gamma_1'|_\infty|\gamma'|_\infty\leq |\gamma'|_\infty$ and 
	\[d_{\S^d}((\gamma_1\gamma)^{-1}\infty,\xi)=d_{\S^d}(\gamma^{-1}(\gamma_1^{-1}\infty),\xi)\leq d_{\S^d}(\gamma^{-1}\Lambda_{-},\xi)\leq\epsilon_2. \]
	Therefore, for any $m= n_\xi+n$, choose any $\gamma_1\in\calH^n$ and then $\gamma_1\gamma\in\calH^m$ and it satisfies Lemma~\ref{lem:Lam2}.
\end{proof}

Combining the above two lemmas, by Lemma~\ref{lem:der} and the formula $R_n(\gamma x)=-\log|\gamma'(x)|$ for $\gamma\in\calH^n$, we have
\[|\partial_e(R_n\circ \gamma_{1m}-R_n\circ \gamma_{2m})(y)|=|\langle \frac{y-\gamma_{1m}^{-1}\infty}{|y-\gamma_{1m}^{-1}\infty|^2}-\frac{y-\gamma_{2m}^{-1}\infty}{|y-\gamma_{2m}^{-1}\infty|^2},e \rangle|. \]
Using this expression, Lemma \ref{nonvanish}, \ref{lem:Lam2} and continuity, we obtain 
\begin{lem}\label{lem:gamma}
	For every $x\in\Lambda_\Gamma\cap\overline\Delta_0$, there exist $\epsilon_x, \epsilon_x'>0$ such that for any $\epsilon_3>0$, there exists $n_x\in\N$ such that the following holds for any $n\geq n_x$. There exist $k_x\in\N$, $\gamma_{im}\in\calH^n$ with $i=1,2$ and $m=1,\ldots, k_x$ satisfying
	\begin{itemize}
	\item $|\gamma'_{im}|<\epsilon_3$ for every $i=1,2$ and $m=1,\ldots,k_x$.
	\item for any unit vector $e\in\R^d$, there exists $m\in\{1,\ldots,k_x\}$ such that for any $y\in B(x,\epsilon_x)$,
	\[|\partial_e(R_n\circ \gamma_{1m}-R_n\circ \gamma_{2m})(y)|\geq\epsilon_x'>0. \]
	\end{itemize}
\end{lem}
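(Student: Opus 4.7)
The plan is to combine Lemma~\ref{nonvanish}, which at the fixed point $x$ produces a finite collection of pairs $(\xi_{1m},\xi_{2m})\in\Lambda_b\times\Lambda_b$ that ``see'' every direction $e$, with Lemma~\ref{lem:Lam2}, which approximates any point of $\Lambda_b$ by $\gamma^{-1}\infty$ for some $\gamma\in\calH^n$ of small norm. The link between the two is the identity
$$\partial_e(R_n\circ\gamma_{1m}-R_n\circ\gamma_{2m})(y)=\left\langle \frac{y-\gamma_{1m}^{-1}\infty}{|y-\gamma_{1m}^{-1}\infty|^2}-\frac{y-\gamma_{2m}^{-1}\infty}{|y-\gamma_{2m}^{-1}\infty|^2},e\right\rangle$$
(up to an immaterial scalar), derived from Lemma~\ref{lem:der} and $R_n\circ\gamma=-\log|\gamma'|$ for $\gamma\in\calH^n$; note that no element of $\calH^n$ fixes $\infty$ since elements of $\calH$ are strict contractions by Proposition~\ref{prop:coding}(3), so Lemma~\ref{lem:der} applies.

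First, I apply Lemma~\ref{nonvanish} at $x$ to obtain $k_x\in\N$, pairs $(\xi_{1m},\xi_{2m})$ in $\Lambda_b$ for $m=1,\dots,k_x$, and $\epsilon_x'>0$ such that for each unit vector $e\in\R^d$ some index $m=m(e)$ satisfies $|F_m(x,\xi_{1m},\xi_{2m},e)|>2\epsilon_x'$, where
$$F_m(y,\eta_1,\eta_2,e):=\left\langle \frac{y-\eta_1}{|y-\eta_1|^2}-\frac{y-\eta_2}{|y-\eta_2|^2},e\right\rangle$$
is the right-hand side of the displayed formula with $\eta_i$ in place of $\gamma_{im}^{-1}\infty$. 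Since $\Lambda_b\subset\overline{\Lambda_{-}}$ by construction of $\Gamma_b$ together with Lemma~\ref{lem:l1}, each $\xi_{im}$ lies at positive spherical distance from $\overline\Delta_0$, so the denominators $|y-\eta_i|$ are bounded below uniformly for $y$ near $x$ and $\eta_i$ near $\xi_{im}$ (interpreting $F_m$ in the spherical metric when some $\xi_{im}=\infty$). Therefore $F_m$ is jointly continuous in $(y,\eta_1,\eta_2)$, and by compactness of the unit sphere $\{|e|=1\}$ and finiteness of $k_x$, I can choose $\epsilon_x>0$ and $\epsilon_2>0$ small enough that
$$|F_m(y,\eta_1,\eta_2,e)-F_m(x,\xi_{1m},\xi_{2m},e)|<\epsilon_x'$$
whenever $y\in B(x,\epsilon_x)$, $d_{\S^d}(\eta_i,\xi_{im})\leq\epsilon_2$, and $m\in\{1,\dots,k_x\}$. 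For the selector $m=m(e)$ this gives $|F_m(y,\eta_1,\eta_2,e)|>\epsilon_x'$ uniformly in the allowable $y$ and $\eta_i$.

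Given $\epsilon_3>0$, I apply Lemma~\ref{lem:Lam2} to each of the $2k_x$ points $\xi_{im}$ with parameters $\epsilon_2$ and $\epsilon_3$ to obtain integers $n_{im}$, and set $n_x:=\max_{i,m}n_{im}$. For any $n\geq n_x$, Lemma~\ref{lem:Lam2} yields $\gamma_{im}\in\calH^n$ with $|\gamma_{im}'|_\infty\leq\epsilon_3$ (the first required bound) and $d_{\S^d}(\gamma_{im}^{-1}\infty,\xi_{im})\leq\epsilon_2$; substituting $\eta_i=\gamma_{im}^{-1}\infty$ into the uniform estimate above and invoking the displayed identity delivers the second required bound. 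The only delicate step is ensuring that the continuity of $F_m$ is uniform both in the direction $e$ and near $\xi_{im}=\infty$, which is what forces working in the spherical metric and invoking Lemma~\ref{lem:l1} to keep $y$ and $\eta_i$ quantitatively separated; all other steps are direct applications of the two preceding lemmas.
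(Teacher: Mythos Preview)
Your proof is correct and follows essentially the same approach as the paper, which simply states that the lemma follows from Lemma~\ref{nonvanish}, Lemma~\ref{lem:Lam2}, the derivative identity from Lemma~\ref{lem:der}, and continuity. You have filled in the details carefully, including the justification that $\Lambda_b\subset\overline{\Lambda_-}$ keeps the $\xi_{im}$ bounded away from $\overline\Delta_0$ so that the relevant function is jointly continuous.
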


\begin{proof}[\textbf{Proof of Lemma~\ref{lem:uni}}]
	For every $x\in \Lambda_\Gamma\cap\overline\Delta_0$, we apply Lemma \ref{lem:gamma} to $x$ and get two constants $\epsilon_x, \epsilon_x'>0$. Since $\Lambda_\Gamma\cap\overline\Delta_0$ is compact, we can find a finite set $\{x_1,\cdots,x_l \}$ such that $\cup B(x_j,\epsilon_{x_j}/2)\supset \Lambda_\Gamma\cap\overline\Delta_0$. Let $\epsilon_0=\inf\{\epsilon_{x_j}' \}$ and $\r=\inf\{\epsilon_{x_j}/2 \}$. Take $\epsilon_3=\epsilon_0/C$
	and $n_0\geq \sup_{1\leq j\leq l}\{n_{x_j} \}$. Then for every $x_j$, there exists a finite set $\{\gamma_{im}\}$ in $\mathcal{H}^{n_0}$ satisfying results in Lemma~\ref{lem:gamma}. We put all these $\gamma_{im}$'s together and this is the finite set in $\mathcal{H}^{n_0}$ described in Lemma \ref{lem:uni}. For any $x\in\Lambda_\Gamma\cap\overline\Delta_0$, it is contained in some $B(x_j,\epsilon_{x_j}/2)$. Then $B(x,\r)\subset B(x_j,\epsilon_{x_j})$. The family $\{\gamma_{im}\}$ for $x_j$ will satisfy nonvanishing condition on $B(x,\r)$, that is for every unit vector $e\in\R^d$ there exists $m$ such that
	for any $y\in B(x,\r)$
	\[|\partial_e(R_{n_0}\circ \gamma_{1m}-R_{n_0}\circ \gamma_{2m})(y)|\geq\epsilon_0>0. \]
	
	Finally, the inequality $|\D\tau_{im}|_\infty\leq C_2$ is due to \eqref{uniform contraction}.
\end{proof}


\section{Spectral gap and Dolgopyat-type spectral estimate}\label{sec:spegap}
In this section, we prove a Dolgopyat-type spectral estimate and the main result is Proposition~\ref{L2contracting}. Our argument is influenced by the one in~\cite{ArMe, AGY, BaVa,Dol, Nau,Sto} and there is some technical variation in the current setting. The proof involves proving a cancellation lemma (Lemma~\ref{key}) and using it to obtain $L^2$ contraction. The rough idea is as follows. Denote the set $\Delta_0\cap \Lambda_{\Gamma}$ by $\Lambda_0$. With the UNI property (Lemma~\ref{lem:uni}) available, for each ball $B(y,r)$ with $y\in \Lambda_0$, one uses the doubling property of the PS measure to find a point $x\in B(y,r)\cap \Lambda_0$ such that cancellation happens on $B(x,r')$. Then, to run the classical argument, one needs to find finitely many such pairwise disjoint balls $B(x_i,r')$'s contained in $\Delta_0$ such that $\sqcup B(x_i,Dr')$ covers $\Delta_0$ for some $D>1$. The difficulty lies in that the balls $B(x_i,r)$'s are produced using PS measure so the position of $B(x_i,r')$'s is in some sense random and some $B(x_i,r')$ may not be fully contained in $\Delta_0$. To overcome this, we find $B(x_i,r')$'s which only cover a subset of $\Delta_0$ and divide the proof of Proposition~\ref{L2contracting} into the cases when the iteration is small and when the iteration is large.

\subsection{Twisted transfer operators}

For $s\in \mathbb{C}$, let $L_s$ be the twisted transfer operator defined by
\begin{equation}
L_s(u)(x)=\sum_{\gamma\in\calH} |\gamma'(x)|^{\delta+s}u(\gamma x).
\end{equation} 
For $u:\Delta_0\to \mathbb{C}$, define
\begin{equation*}
\uLip=\max \{|u|_{\infty},\,\ulip\},
\end{equation*}
where $\ulip=\sup_{x\neq y} |u(x)-u(y)|/d(x,y)$, where $d(\cdot, \cdot)$ is the Euclidean distance. Denote by $\text{Lip}(\Delta_0)$ the space of functions $u:\Delta_0\to \mathbb{C}$ with $\uLip <\infty$. We also introduce a family of equivalent norms on $\text{Lip}(\Delta_0)$:
\begin{equation*}
\ub=\max\{|u|_{\infty},\,\ulip/(1+|b|)\},\,\,\,b\in \mathbb{R}.
\end{equation*}

With Proposition~\ref{prop:coding} available, we obtain the following lemma by a verbatim of the proof of~\cite[Proposition 2.5]{ArMe}.
\begin{lem}
\label{well-defined}
Write $s=\sigma+i b$. 
The family $s\mapsto L_s$ of operators on $\operatorname{Lip}(\Delta_0)$ is continuous on $\{s\in\C:\ \sigma>-\epsilon_o\}$, where $\epsilon_o$ is given as in Proposition~\ref{prop:coding} (4). Moreover, $\sup_{|\sigma|<\epsilon_o} \lVert L_s\rVert_b<\infty$.
\end{lem}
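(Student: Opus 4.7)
The plan is to verify both assertions by direct estimation, using the three structural features of Proposition~\ref{prop:coding}: uniform contraction (3), the bounded log-derivative (4), and the exponential tail (\ref{sum}). Integrating (4) along $\Delta_0$ yields $|\gamma'(x)|\asymp|\gamma'|_\infty$ uniformly in $\gamma\in\calH$, so for $|\sigma|<\epsilon_o$ one has
\[ |\gamma'(x)|^{\delta+\sigma}=|\gamma'(x)|^{\delta-\epsilon_o}\,|\gamma'(x)|^{\sigma+\epsilon_o}\ll|\gamma'|_\infty^{\delta-\epsilon_o}, \]
because $|\gamma'(x)|\le\lambda<1$ and $\sigma+\epsilon_o>0$. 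By (\ref{sum}) this makes the series defining $L_su(x)$ absolutely and uniformly convergent with $|L_su|_\infty\ll|u|_\infty$.

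For the Lipschitz seminorm I would split
\[ L_su(x)-L_su(y)=\sum_{\gamma}|\gamma'(x)|^{\delta+s}\bigl(u(\gamma x)-u(\gamma y)\bigr)+\sum_{\gamma}\bigl(|\gamma'(x)|^{\delta+s}-|\gamma'(y)|^{\delta+s}\bigr)u(\gamma y). \]
The first sum is bounded by $\lambda|u|_{\operatorname{Lip}}|x-y|\sum_\gamma|\gamma'|_\infty^{\delta-\epsilon_o}$ using (3) and the mean value theorem on $\gamma$. For the second sum I differentiate $t\mapsto|\gamma'(y+t(x-y))|^{\delta+s}$, obtaining $(\delta+s)|\gamma'|^{\delta+s}\,\partial_t\log|\gamma'|$, which by (4) is majorized in absolute value by $|\delta+s|\,C_1\,|\gamma'|_\infty^{\delta+\sigma}$. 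Summing over $\gamma$ contributes at most a constant multiple of $(1+|b|)|u|_\infty|x-y|$. Combining the two pieces gives $|L_su|_{\operatorname{Lip}}\ll|u|_{\operatorname{Lip}}+(1+|b|)|u|_\infty$, and dividing by $1+|b|$ yields $\|L_su\|_b\ll\|u\|_b$ uniformly in $|\sigma|<\epsilon_o$.

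Continuity of $s\mapsto L_s$ on $\{\sigma>-\epsilon_o\}$ follows from the same uniform convergence: on any compact $K\subset\{\sigma>-\epsilon_o\}$, the pointwise bounds above dominate $L_su(x)$ and the two Lipschitz pieces by a summable series independent of $s\in K$. Since each term $|\gamma'(x)|^{\delta+s}u(\gamma x)$ is continuous in $s$, dominated convergence gives $\|L_su-L_{s_0}u\|_b\to 0$ as $s\to s_0$, and the same domination applied to $L_s-L_{s_0}$ makes this convergence uniform on the $\|\cdot\|_b$-unit ball.

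The only mildly delicate point is to see that the factor $|\delta+s|$ appearing when one differentiates $|\gamma'|^{\delta+s}$ in $x$ is precisely absorbed by the rescaling $1/(1+|b|)$ built into the norm $\|\cdot\|_b$; without this rescaling the operator norm would grow linearly in $b$, whereas the very role of the $b$-norm is to make the family of operators uniformly bounded. No ingredient beyond Proposition~\ref{prop:coding} is needed, which is why the proof is a verbatim adaptation of~\cite[Proposition~2.5]{ArMe}.
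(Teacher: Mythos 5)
Your proposal is correct and follows exactly the argument the paper intends: the paper itself does not spell out a proof but cites it as a verbatim adaptation of \cite[Proposition~2.5]{ArMe}, and what you reproduce is precisely that argument — uniform comparison $|\gamma'(x)|\asymp|\gamma'|_\infty$ from Proposition~\ref{prop:coding}~(4), summability from the exponential tail~\eqref{sum}, the two-term splitting of $L_su(x)-L_su(y)$ controlled by~(3) and~(4), and the observation that the factor $|\delta+s|$ is absorbed by the $1/(1+|b|)$ rescaling in $\|\cdot\|_b$. Nothing to flag beyond the minor point that the operator-norm continuity at the end deserves one explicit line (bounding $||\gamma'(x)|^{\delta+s}-|\gamma'(x)|^{\delta+s_0}|$ by $|s-s_0|\cdot C\cdot|\gamma'|_\infty^{\delta-\epsilon_o'}$ for some $\epsilon_o'<\epsilon_o$ by trading a small power of $|\gamma'|$ against the logarithm), but this is routine and well within the level of detail the paper leaves implicit.
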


\vspace{2mm}
Define the PS measure $\mu_E$ on $\Delta_0$ with respect to the Euclidean metric by
\[\dd\mu_E(x)=(1+|x|^2)^\delta\dd \mu(x). \]
Using the quasi-invariance of the PS measure $\mu$, we obtain that the dual operator of $L_0$ preserves the measure $\mu_E$ by a straightforward computation. Our main result of Dolgopyat argument is the following $L^2$ contracting proposition.
\begin{prop}\label{L2contracting}
	There exist $C>0,\ \beta <1, \epsilon>0$ and $b_0>0$ such that for all $v$ in $\operatorname{Lip}(\Delta_0)$, $m\in\N$ and $s=\sigma+ib$ with $|\sigma|\leq\epsilon$ and $|b|>b_0$, we have
	\begin{equation*}
	\int|L_s^{m}v|^2\dd\mu_E\leq C\beta^m\|v\|^2_b.
	\end{equation*}
\end{prop}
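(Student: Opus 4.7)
The plan is to follow the standard Dolgopyat scheme, adapted to our fractal setting. Write $\Lambda_0 = \Lambda_\Gamma \cap \overline{\Delta}_0$ and fix $n_0$ together with the finite family $\{\gamma_{mj}\}$ as in Lemma~\ref{lem:uni}. First I would establish a Doeblin--Fortet (Lasota--Yorke) estimate on $\operatorname{Lip}(\Delta_0)$: combining the uniform contraction $|\gamma'|_\infty \leq \lambda$ from Proposition~\ref{prop:coding}(3), the bounded distortion~\eqref{uniform contraction} from Proposition~\ref{prop:coding}(4), and the exponential tail~\eqref{sum}, one obtains constants $\rho<1$ and $C>0$ such that for $|\sigma|<\epsilon$ and $|b|\geq 1$,
\begin{equation*}
\|L_s^n v\|_b \leq C\rho^n \|v\|_b + C|v|_\infty.
\end{equation*}
Via a standard cone argument, Proposition~\ref{L2contracting} then reduces to a one-step $L^2$ contraction on a positive cone
\begin{equation*}
\mathcal{C}_{A|b|} = \bigl\{\, H\in\operatorname{Lip}(\Delta_0) :\ H>0,\ |H(x)-H(y)| \leq A|b|\, d(x,y)\min(H(x),H(y))\,\bigr\},
\end{equation*}
namely: for each sufficiently large $|b|$ there is $\alpha<1$ such that whenever $H\in\mathcal{C}_{A|b|}$ dominates $v$ in the sense that $|v|\leq H$ and $|v(x)-v(y)|\leq A|b|\, d(x,y)\max(H(x),H(y))$, one can exhibit $\tilde H\in\mathcal{C}_{A|b|}$ with $|L_s^{n_0}v|\leq \tilde H$ on $\Lambda_0$ and $\int\tilde H^2\,\dd\mu_E \leq \alpha\int(L_\sigma^{n_0}H)^2\,\dd\mu_E$.

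The cancellation mechanism comes from UNI. For a fixed index $j$, Lemma~\ref{lem:uni} gives $|\partial_e(\tau_{1j}-\tau_{2j})(y)|\geq \epsilon_0$ on $B(x,\r)$. For two points $y,y'\in B(x,\r)$ separated by distance $\asymp 1/|b|$ along $e$, the oscillation of the phase $b(\tau_{1j}-\tau_{2j})$ between $y$ and $y'$ is of order one and bounded away from $2\pi\mathbb{Z}$. Since $|\gamma_{mj}'|_\infty \leq \epsilon_0/C$ by~\eqref{equ:hm} and $H\in\mathcal{C}_{A|b|}$, the functions $H\circ\gamma_{mj}$ vary by a factor arbitrarily close to $1$ on $B(x,\r)$ once $C$ is chosen large enough. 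Hence at least one of the two points $z\in\{y,y'\}$ satisfies
\begin{equation*}
\bigl| e^{ib\tau_{1j}(z)}H(\gamma_{1j}z) + e^{ib\tau_{2j}(z)}H(\gamma_{2j}z)\bigr| \leq (1-\eta)\bigl(H(\gamma_{1j}z)+H(\gamma_{2j}z)\bigr)
\end{equation*}
for a universal $\eta>0$, and by continuity this pointwise cancellation propagates to a ball of radius $\asymp 1/|b|$ around $z$.

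To turn pointwise cancellation into an $L^2$ gain, I construct a Dolgopyat majorant at scale $r_0 = c/|b|$. For each $y\in\Lambda_0$ lying at distance $\geq C_{12}\r$ from $\partial\Delta_0$, the above argument supplies a point $z=z(y)\in B(y,\r)\cap\Lambda_0$ and an index $j=j(y)$ on which cancellation holds on $B(z,r_0)\subset\Delta_0$. A Vitali extraction applied to $\{B(y,\r)\}_{y}$ together with the doubling property (Proposition~\ref{double}) produce a finite pairwise disjoint family $\{B(z_i,r_0)\}$ whose $5$-enlargements cover a definite fraction of $\Lambda_0\setminus N_{C\r}(\partial\Delta_0)$; the friendliness Lemma~\ref{lem:boud} shows $\mu(N_{C\r}(\partial\Delta_0)) \leq c_1\mu(\Delta_0)$ with $c_1<1$ uniformly in $|b|$, so $\sum_i\mu(B(z_i,r_0))\geq c_0\mu(\Delta_0)$. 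Setting
\begin{equation*}
\tilde H(x) = L_\sigma^{n_0}H(x)\cdot\bigl(1-\eta'\chi(x)\bigr),
\end{equation*}
where $\chi$ is a $|b|$-Lipschitz bump equal to $1$ on $\bigcup_i B(z_i,r_0/2)$ and supported in $\bigcup_i B(z_i,r_0)$, one checks that $|L_s^{n_0}v|\leq \tilde H$ pointwise on $\Lambda_0$ from the branch-by-branch cancellation plus the triangle inequality for the remaining pairs, that $\tilde H\in\mathcal{C}_{A|b|}$ by enlarging $A$ to absorb the Lipschitz contribution of $\chi$, and that $\int\tilde H^2\,\dd\mu_E \leq (1-c_0\eta'^2)\int(L_\sigma^{n_0}H)^2\,\dd\mu_E$.

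The main obstacle is precisely this covering step. Because $\Lambda_\Gamma\cap\Delta_0$ is not a John domain, one cannot a priori cover $\Delta_0$ by disjoint balls $B(z_i,r_0)\subset\Delta_0$ centered in $\Lambda_0$, and the $z_i$ must moreover sit strictly inside $\Delta_0$ so that the cancellation argument is not polluted by boundary effects. Friendliness (Lemma~\ref{lem:boud}) is what tolerates the removal of the thin layer $N_{C\r}(\partial\Delta_0)$ without losing a definite mass fraction, and the doubling property (Proposition~\ref{double}) is what controls the Vitali extraction. A secondary delicate point is choosing the Lipschitz cutoff $\chi$ on the correct scale $r_0=c/|b|$ so that $\eta'\chi$ is compatible with the cone constant $A$. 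Once the one-step contraction is in place, iterating it together with the Lasota--Yorke bound yields the exponential decay $\int|L_s^m v|^2\,\dd\mu_E \leq C\beta^m\|v\|_b^2$ claimed in the proposition.
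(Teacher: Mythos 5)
Your overall blueprint---Lasota--Yorke estimate, cone invariance, UNI-driven cancellation, a covering at scale $1/|b|$ controlled by doubling and friendliness---is exactly the right framework and matches the paper's Section 6. You also correctly flag the covering/boundary issue as the genuine obstruction in this fractal setting and correctly identify doubling and friendliness as the tools that make the Vitali step work. The gap is in how you close: you assert a uniform \emph{multiplicative} one-step $L^2$ contraction, $\int\tilde H^2\,\dd\mu_E \leq \alpha\int(L_\sigma^{n_0}H)^2\,\dd\mu_E$ with $\alpha<1$, and then iterate. That inequality would require the cancellation balls to carry a definite fraction of the $L^2$-mass of $L_\sigma^{n_0}H$. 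But the covering you can actually build only reaches the set $\Delta_b=\{x\in\Delta_0:\ d(x,\partial\Delta_0)>\eps_3(D+1)/|b|\}$: the key lemma only produces a cancellation ball $B(x,\eps_3/|b|)\subset\Delta_0$ when the starting point $y$ is at distance $\gtrsim 1/|b|$ from $\partial\Delta_0$. Since the majorant $H$ is only constrained to lie in a cone with Lipschitz constant $\asymp |b|$, it can vary by factors $e^{A|b|\,d}$ over distance $d$, so $L_\sigma^{n_0}H$ can be overwhelmingly concentrated in the thin layer $\Delta_0\setminus\Delta_b$ that the cancellation mechanism never touches. Friendliness (Lemma~\ref{lem:boud}) bounds the \emph{measure} of that layer by $O(|b|^{-\kappa})$, but it gives no handle on its share of the weighted integral, so no uniform $\alpha<1$ exists.

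What one actually gets---and this is where the paper diverges from the John-domain setting of Avila--Gou\"ezel--Yoccoz---is an \emph{additive} perturbation of the contraction: normalizing so that $u_0\equiv 1$ and using $|u_{m+1}|\leq 1$ together with Proposition~\ref{contr int}, the inductive step is $\int u_{m+1}^2\,\dd\nu \leq \beta_1\int u_m^2\,\dd\nu + C|b|^{-\kappa}$, the error coming from $\nu(\Delta_0\setminus\Delta_b)$. Iterating this only yields the desired bound $C\beta^m\|v\|_b^2$ for $m\lesssim\log|b|$ (Lemma~\ref{L2bounded}); for larger $m$ the additive tail $|b|^{-\kappa}$ stops decaying while $\beta^m$ keeps shrinking, so the estimate would fail. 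The paper supplies a second mechanism for the large-$m$ regime: Lemma~\ref{lem:Linfty2} uses Cauchy--Schwarz to dominate $|\tilde L_s^n v|_\infty^2$ by $\tilde L^n_{2\sigma}(|v|)\cdot\tilde L_0^n(|v|)$ and then exploits the spectral gap of the \emph{unweighted} operator $\tilde L_0$ on $\operatorname{Lip}(\Delta_0)$; Lemma~\ref{lem:Lipcontracting} bootstraps the bounded-iteration $L^2$ gain through this to obtain genuine Lipschitz-norm contraction $\|\tilde L_s^{mn_0}v\|_b\leq C\beta^m\|v\|_b$ for all $m\geq A\log|b|$. Proposition~\ref{L2contracting} is then assembled by splitting into the three ranges $m\leq C_{\nin}\log|b|$, $C_{\nin}\log|b|\leq m\leq A\log|b|$, and $m>A\log|b|$. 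Without this two-regime structure your argument is incomplete: the step you need to iterate simply does not close multiplicatively.
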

The proof will be given at the end of Section~\ref{sec:L2contractoin}.

Recall that $\nu$ is the unique $T$-invariant ergodic probability measure on $\Delta_0\cap \Lambda_{\Gamma}$ which is absolutely continuous with respect to the PS measure $\mu$ with a positive Lipschitz density function $\bar f_0$. So $\nu$ is also absolutely continuous with respect to $\mu_{E}$ with a positive Lipschitz density function $f_0$. Based on these, it is a classical result that the operator $L_0$ acts on $\operatorname{Lip}(\Delta_0)$ and has a spectral gap and a simple isolated eigenvalue at $1$ with $f_0$ the corresponding eigenfunction. 

For $\sigma\in \mathbb{R}$ close enough to $0$, $L_{\sigma}$ acting on $\text{Lip}(\Delta_0)$ is a continuous perturbation of $L_0$ (see Lemma~\ref{well-defined}). Hence, it has a unique eigenvalue $\lambda_{\sigma}$ close to $1$, and the corresponding eigenfunction $f_{\sigma}$ (normalized so that $\int f_{\sigma}=1$) belongs to $\text{Lip}(\Delta_0)$, strictly positive, and tends to $f_0$ in $\text{Lip}(\Delta_0)$ as $\sigma\to 0$. Choose a sufficiently small $\epsilon\in (0,\epsilon_o)$ such that for $\sigma\in (-\epsilon,\epsilon)$, $f_{\sigma}$ is well defined and
\begin{equation*}
1/2\leq \lambda_{\sigma}\leq 2,\,\,\, f_{0}/2\leq f_{\sigma}\leq 2 f_{0},\,\,\, |f_0|_{\text{Lip}}/2\leq |f_{\sigma}|_{\text{Lip}}\leq 2|f_0|_{\text{Lip}}.
\end{equation*}
For $s=\sigma+ib$ with $|\sigma|<\epsilon$ and $b\in \mathbb{R}$, define a modified transfer operator $\tilde{L}_{s}$ by
\begin{equation}
\tilde{L}_{s}(u)=(\lambda_{\sigma}f_{\sigma})^{-1}L_{s}(f_{\sigma}u).
\end{equation}
It satisfies $\tilde{L}_{\sigma}1=1$, and $|\tilde{L}_{s}u|\leq \tilde{L}_{\sigma}|u|$.

\begin{lem}[Lasota-Yorke inequality]
\label{Lasota-Yorke}
There is a constant $C_{\sixt}>1$ such that
\begin{equation}
|\LL^n_s v|_{\operatorname{Lip}} \leq C_{\sixt} (1+|b|) |v|_{\infty} +C_{\sixt} \lambda^n |v|_{\operatorname{Lip}}
\end{equation}
holds for any $s=\sigma+ib$ with $|\sigma|<\epsilon$, and all $n\geq 1,\,v\in \operatorname{Lip}(\Delta_0)$, where $\lambda$ is given as in Proposition~\ref{prop:coding}.
\end{lem}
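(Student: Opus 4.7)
The plan is to expand $\tilde L_s^n$ as a sum over inverse branches and then compare values at two nearby points by the standard add-and-subtract trick, using the uniform contraction (Proposition~\ref{prop:coding}~(3)) for the Lipschitz-of-$v$ term and the uniform distortion bound~\eqref{uniform contraction} for the Lipschitz-of-weights term, where the imaginary part $ib$ of $s$ is the only source of the factor $1+|b|$.

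First I would record the formula
\[
\tilde L_s^n v(x)=\sum_{\gamma\in\calH_n}\varphi_\gamma(x)\,v(\gamma x),\qquad
\varphi_\gamma(x):=\lambda_\sigma^{-n}f_\sigma(x)^{-1}|\gamma'(x)|^{\delta+s}f_\sigma(\gamma x),
\]
and split $\varphi_\gamma(x)=\psi_\gamma(x)\,e^{ib\log|\gamma'(x)|}$, where $\psi_\gamma$ is the real positive weight obtained by replacing $s$ with $\sigma$. Note that $\sum_\gamma\psi_\gamma(x)=\tilde L_\sigma^n 1(x)=1$ since $f_\sigma$ is the eigenfunction. For $x,y\in\Delta_0$ write
\[
\tilde L_s^n v(x)-\tilde L_s^n v(y)
=\sum_{\gamma\in\calH_n}\varphi_\gamma(x)\bigl(v(\gamma x)-v(\gamma y)\bigr)
+\sum_{\gamma\in\calH_n}\bigl(\varphi_\gamma(x)-\varphi_\gamma(y)\bigr)v(\gamma y).
\]
By Proposition~\ref{prop:coding}~(3), $|v(\gamma x)-v(\gamma y)|\le|v|_{\rm Lip}\,|\gamma'|_\infty|x-y|\le\lambda^n|v|_{\rm Lip}|x-y|$, so the first sum is bounded by $\lambda^n|v|_{\rm Lip}|x-y|\cdot\tilde L_\sigma^n 1(x)=\lambda^n|v|_{\rm Lip}|x-y|$.

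For the second sum, I would estimate the Lipschitz seminorm of each weight $\varphi_\gamma$ relative to its size. The key input is the composed distortion bound~\eqref{uniform contraction}: for every $\gamma\in\calH_n$, $|\D(\log|\gamma'|)|_\infty\le C_2$, whence $\psi_\gamma$ has Lipschitz constant at most $C'\,|\psi_\gamma|_\infty$ for some $C'$ depending only on $\Gamma,\sigma,f_\sigma$ (using also that $f_\sigma$ and $1/f_\sigma$ are bounded Lipschitz, and that $|\gamma'|_\infty\le\lambda^n\le1$ controls the Lipschitz contribution from $f_\sigma\circ\gamma$). For the oscillatory factor, the elementary bound $|e^{i\theta_1}-e^{i\theta_2}|\le|\theta_1-\theta_2|$ combined with~\eqref{uniform contraction} gives
\[
\bigl|e^{ib\log|\gamma'(x)|}-e^{ib\log|\gamma'(y)|}\bigr|\le |b|\,C_2\,|x-y|.
\]
Plugging these into the identity $\varphi_\gamma=\psi_\gamma\cdot e^{ib\log|\gamma'|}$ yields
\[
|\varphi_\gamma(x)-\varphi_\gamma(y)|\le C''(1+|b|)\,\max\bigl(\psi_\gamma(x),\psi_\gamma(y)\bigr)\,|x-y|,
\]
and summing over $\gamma\in\calH_n$ and using $\sum_\gamma\psi_\gamma\equiv1$ together with the uniform comparability $\psi_\gamma(x)\approx\psi_\gamma(y)$ (again from~\eqref{uniform contraction}, which gives $\psi_\gamma(x)/\psi_\gamma(y)\le e^{C_2|x-y|}$) bounds the second sum by $C_{\sixt}(1+|b|)|v|_\infty|x-y|$.

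Combining the two estimates and dividing by $|x-y|$ gives the desired inequality. The only mildly subtle point, and the main technical obstacle, is verifying the uniform bound on $|\psi_\gamma|_{\rm Lip}/|\psi_\gamma|_\infty$ independently of $n$ and $\gamma\in\calH_n$; this is exactly where the composed distortion estimate~\eqref{uniform contraction} (derived in turn from Proposition~\ref{prop:coding}~(3)--(4) via the telescoping bound $C_2=C_1/(1-\lambda)$ in~\eqref{constant c2}) is indispensable, since a naive bound summing $C_1$ over $n$ branches would blow up.
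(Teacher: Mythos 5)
Your proposal is correct and follows the standard Lasota--Yorke decomposition: split $\tilde L_s^n v(x)-\tilde L_s^n v(y)$ into the ``oscillation of $v$'' term (controlled by the uniform contraction $\lambda^n$) and the ``oscillation of weights'' term (controlled by the uniform distortion bound~\eqref{uniform contraction}, with the factor $1+|b|$ coming from the phase $e^{ib\log|\gamma'|}$), using $\sum_\gamma\psi_\gamma\equiv1$ to sum. This is exactly the argument in the reference the paper cites (\cite[Lemma~2.7]{ArMe}), so your approach coincides with the paper's.
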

The proof of this lemma is a verbatim of proof of \cite[Lemma 2.7]{ArMe}. The following lemma can be deduced from 
Lemma~\ref{Lasota-Yorke} by a straightforward computation.
\begin{lem}\label{lem:Lb}
We have $\lVert \LL^n_s\rVert_b\leq 2C_{\sixt}$ for all $s=\sigma+ib$ with $|\sigma|<\epsilon$ and all $n\geq 1$.
\end{lem}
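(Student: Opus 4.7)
The plan is to derive the two bounds comprising $\|\cdot\|_b$ separately, one trivially from the defining inequality $|\widetilde{L}_s u|\leq \widetilde{L}_\sigma |u|$ together with $\widetilde{L}_\sigma 1=1$, and the other directly from Lemma~\ref{Lasota-Yorke}. There is no real obstacle here; the lemma is a bookkeeping consequence of the Lasota--Yorke estimate once one unpacks the definition of $\|\cdot\|_b$.

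First I would bound the sup-norm. Since $\widetilde{L}_\sigma 1 = 1$ (by construction of $f_\sigma$ as an eigenfunction with eigenvalue $\lambda_\sigma$) and the iterated pointwise inequality $|\widetilde{L}_s^n v|\leq \widetilde{L}_\sigma^n |v|$ holds for every $n\geq 1$, we obtain
\[
|\widetilde{L}_s^n v|_\infty \;\leq\; |v|_\infty \;\leq\; \|v\|_b.
\]

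Next I would bound the Lipschitz seminorm. Applying Lemma~\ref{Lasota-Yorke} and then dividing by $1+|b|$,
\[
\frac{|\widetilde{L}_s^n v|_{\operatorname{Lip}}}{1+|b|} \;\leq\; C_{\sixt}\,|v|_\infty \;+\; C_{\sixt}\,\lambda^n\,\frac{|v|_{\operatorname{Lip}}}{1+|b|} \;\leq\; C_{\sixt}\|v\|_b + C_{\sixt}\|v\|_b \;=\; 2C_{\sixt}\|v\|_b,
\]
where we used $\lambda^n\leq 1$ and both $|v|_\infty\leq \|v\|_b$ and $|v|_{\operatorname{Lip}}/(1+|b|)\leq \|v\|_b$ by definition of the norm.

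Taking the maximum of the two displayed bounds and using $C_{\sixt}>1$ gives
\[
\|\widetilde{L}_s^n v\|_b \;=\; \max\!\left\{|\widetilde{L}_s^n v|_\infty,\ \tfrac{|\widetilde{L}_s^n v|_{\operatorname{Lip}}}{1+|b|}\right\} \;\leq\; 2C_{\sixt}\|v\|_b,
\]
uniformly in $n\geq 1$ and in $s=\sigma+ib$ with $|\sigma|<\epsilon$, which is the claim.
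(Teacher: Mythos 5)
Your proof is correct and is exactly the ``straightforward computation'' the paper alludes to: the sup-norm bound from $|\widetilde{L}_s u|\leq \widetilde{L}_\sigma|u|$ and $\widetilde{L}_\sigma 1=1$, the Lipschitz bound by dividing the Lasota--Yorke inequality by $1+|b|$, and taking the maximum. Nothing to add.
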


\subsection{Cancellation lemma}

The main result of this subsection is the cancellation lemma (Lemma~\ref{key}) and the proof is inspired by the proof of analogous results in~\cite{Nau} and~\cite{Sto}. We start with detailing all the constants. 

Let $C_{\sev}$ be the constant which will be specified in \eqref{cone const 3}. We define the cone
\begin{defn}
For $b\in \mathbb{R}$, let
\begin{align*}
\mathcal{C}_b=&\{(u,v):\,u,\,v\in \text{Lip}(\Delta_0),\,u>0,\,0\leq |v|\leq u,\,|\log u|_{\text{Lip}}\leq C_{\sev} |b|,\\
&|v(x)-v(y)|\leq C_{\sev} |b| u(y)d(x,y)\,\,\,\text{for all}\,\,\,x,y\in \Delta_0\}.
\end{align*}
\end{defn}

Let $r>0$ and $\epsilon_0>0$ be the same constants as the ones in Lemma~\ref{lem:uni}. We apply Lemma~\ref{lem:uni} with $C=16C_{\sev}$. Let $n_0$ be a sufficiently large integer which satisfies Lemma~\ref{lem:uni} and the inequality
\begin{equation}
\label{cone const 1}
\lambda^{n_0}C_{\sev}(1+\operatorname{diam}(\Delta_0))\leq 1.
\end{equation} 
Let $\gamma_{mj}$, with $m=1,2$, $j=1,\ldots,j_0$ be the inverse branches given by Lemma~\ref{lem:uni}.

Let $k\in\N$ be such that
\begin{equation}
\label{equ:k}
k\epsilon_0>16(C_2+\epsilon_0),
\end{equation}
where $C_2$ is given in \eqref{constant c2}.

Note that the measure $\nu$ is absolutely continuous with respect to the PS measure $\mu$. Let $D>0$ be such that for all $x\in\Lambda_\Gamma\cap \Delta_0$ and $r'\leq 1/C_{\three}$ with $C_{\three}$ given in Proposition \ref{double}
\begin{equation}
\label{equ:D}
 \nu(B(x,Dr'))>\nu(B(x,(k+2)r')). 
\end{equation}

Let $\eps_2>0$ be such that
\begin{equation}
\label{equ:epsilon3}
(2C_{\sev}\eps_2+1/4)e^{2C_{\sev}\eps_2}\leq 3/4. 
\end{equation}

Let $\eps_3>0$ be such that
\begin{equation}\label{equ:epsilon2}
	\eps_3(D+2)< \min\{\eps_2,\ r,\ 1/C_{\three}\},\ \eps_3(D+2)(C_2+\epsilon_0)<3\pi/2,\ \eps_3k \epsilon_0<\pi.
\end{equation}

Recall the notation $\tau_{mj}$ introduced in Lemma \ref{lem:uni}. For $s=\sigma+ib\in\C$, define
$$A_{s,\gamma_{mj}}(v)(x)=e^{(s+\delta)\tau_{mj}(x)}f_\sigma(\gamma_{mj}x)v(\gamma_{mj}x).$$


\begin{lem}\label{key}
	There exists $0< \eta_0<1$ such that the following holds. For $s=\sigma+ib$ with $|\sigma|\leq \epsilon$, $|b|>1$, for $(u,v)\in C_b$, and for any $y\in\Lambda_0$, there exists $x\in B(y,\eps_3D/|b|)\cap \Lambda_\Gamma$ such that we have the following:
	there exists $j\in \{1,\ldots,j_0\}$ such that one of the following inequalities holds on $B(x,\eps_3/|b|)$:
	\begin{align*}
	\textbf{type } \gamma_{1j}:\ 	|A_{s,\gamma_{1j}}(v)+A_{s,\gamma_{2j}}(v)|\leq\eta_0A_{\sigma,\gamma_{1j}}(u)+A_{\sigma,\gamma_{2j}}(u),\\
	\textbf{type } \gamma_{2j}:\ 	|A_{s,\gamma_{1j}}(v)+A_{s,\gamma_{2j}}(v)|\leq A_{\sigma,\gamma_{1j}}(u)+\eta_0 A_{\sigma,\gamma_{2j}}(u).
	\end{align*}
\end{lem}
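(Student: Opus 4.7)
The plan follows the Dolgopyat--Naud--Stoyanov cancellation scheme, adapted to the fractal support $\Lambda_\Gamma$ via the doubling and friendliness properties of the PS measure.

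\textbf{Step 1: Factorization and a priori estimates.} I write
\[A_{s,\gamma_{mj}}(v)(x)=\beta_{mj}(x)\,e^{ib\tau_{mj}(x)}\,A_{\sigma,\gamma_{mj}}(u)(x),\qquad \beta_{mj}(x):=\frac{v(\gamma_{mj}x)}{u(\gamma_{mj}x)},\quad |\beta_{mj}|\le 1.\]
Using the cone condition $(u,v)\in\calC_b$, the derivative bound \eqref{uniform contraction}, and the enhanced contraction $|\gamma_{mj}'|_\infty\le\epsilon_0/(16C_{\sev})$ imposed via $C=16C_{\sev}$ in the invocation of Lemma~\ref{lem:uni}, one checks that on every ball of radius $\eps_3/|b|$ the functions $\beta_{mj}$ and $\log A_{\sigma,\gamma_{mj}}(u)$ vary by at most a small absolute constant. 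In particular $A_{\sigma,\gamma_{1j}}(u)\asymp A_{\sigma,\gamma_{2j}}(u)$ on $B(y,\eps_3 D/|b|)$. All of these are routine Lipschitz computations whose point is to allow the phase argument below to be localized to scale $\eps_3/|b|$.

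\textbf{Step 2: Dichotomy.} Write $\beta_{mj}=\rho_{mj}e^{i\psi_{mj}}$.
Case (A): If there exist $j$, $m$ and $z\in\Lambda_\Gamma\cap B(y,\eps_3 D/|b|)$ with $\rho_{mj}(z)\le 1-2\eta_0$, then by Step~1 $\rho_{mj}\le 1-\eta_0$ on $B(z,\eps_3/|b|)$, and combined with $\alpha_{1j}\asymp\alpha_{2j}$ this yields the type-$\gamma_{mj}$ inequality outright (we still need to pass $z\rightsquigarrow x\in\Lambda_\Gamma$, see Step~3).
Case (B): $\rho_{mj}\ge 1-2\eta_0$ throughout $B(y,\eps_3 D/|b|)$ for all $m,j$; then Step~1 also gives $|\psi_{mj}|_{\mathrm{Lip}}\ll |b|\epsilon_0$, so the slow phases $\psi_{mj}$ do not drown out the UNI oscillation. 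Pick an arbitrary unit vector $e\in\R^d$ and apply Lemma~\ref{lem:uni} at $y$ to obtain $j$ with $|\partial_e(\tau_{1j}-\tau_{2j})|\ge\epsilon_0$ on $B(y,\r)\supset B(y,\eps_3 D/|b|)$ (valid because $|b|\ge 1$ and $\eps_3 D<\r$ by \eqref{equ:epsilon2}). Slide along $e$ over $k$ centers $y_\ell=y+\ell\tfrac{\eps_3}{|b|}e$ for $\ell=1,\dots,k$. The combined phase $\Theta:=b(\tau_{1j}-\tau_{2j})+\psi_{1j}-\psi_{2j}$ has $\ell$-jumps bounded below by $\epsilon_0\eps_3-o(1)$ and bounded above by $\eps_3(C_2+\epsilon_0)$; the calibration \eqref{equ:k}--\eqref{equ:epsilon2} is arranged precisely so that the $k$ values $\Theta(y_\ell)\bmod 2\pi$ cannot all lie inside a $\pi/4$-neighborhood of $0$. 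At the exceptional $\ell$, the Lipschitz control on $\Theta$ gives $\cos\Theta(x)\le 1-c$ throughout $B(y_\ell,\eps_3/|b|)$, and the elementary identity
\[|z_1+z_2|^2=(|z_1|+|z_2|)^2-2|z_1||z_2|\bigl(1-\cos(\arg z_1-\arg z_2)\bigr),\]
applied with $z_m=\alpha_{mj}\rho_{mj}e^{i(b\tau_{mj}+\psi_{mj})}$, combined with $\alpha_{1j}\asymp\alpha_{2j}$ and $\rho_{mj}\ge 1-2\eta_0$, gives the required $\eta_0<1$ contraction.

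\textbf{Step 3: Fractal selection of $x$.} The chosen center $y_\ell$ (resp.\ $z$) need not lie in $\Lambda_\Gamma$. The definition \eqref{equ:D} of $D$, a direct consequence of doubling (Proposition~\ref{double}), asserts $\nu(B(y,D\eps_3/|b|))>\nu(B(y,(k+2)\eps_3/|b|))$; hence the annulus between radii $(k+2)\eps_3/|b|$ and $D\eps_3/|b|$ contains points of $\Lambda_\Gamma$, and one can shift $y_\ell$ by at most $\eps_3/|b|$ to some $x\in\Lambda_\Gamma$ with $B(x,\eps_3/|b|)\subset B(y,\eps_3 D/|b|)$. Because $|\Theta|_{\mathrm{Lip}}\le |b|(C_2+o(1))$, this shift perturbs $\Theta$ by a small constant only, preserving the bound $\cos\Theta\le 1-c$ on $B(x,\eps_3/|b|)$ (up to slightly redefining the constants). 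A second application of friendliness (Lemma~\ref{lem:boud}) shows that the shift can also be arranged to keep $B(x,\eps_3/|b|)$ away from $\partial\Delta_0$, which is needed to apply the subsequent $L^2$-contraction argument (Proposition~\ref{L2contracting}).

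\textbf{Main obstacle.} The essential new difficulty, absent from the convex cocompact treatments \cite{Nau,Sto}, is that the cancellation of complex exponentials takes place on Euclidean balls, whereas the operator norms controlling exponential mixing are integrated against the fractal PS measure. Guaranteeing that the ``bad phase'' Euclidean ball produced by UNI actually contains a substantial piece of $\Lambda_\Gamma$, and that the Lipschitz estimates on $\Theta$ survive the perturbation to a fractal point, is exactly the role played by the whole interlocking constant chain~\eqref{cone const 1}--\eqref{equ:epsilon2}, the strong contraction bound $|\gamma_{mj}'|_\infty\le\epsilon_0/(16C_{\sev})$, and the doubling-based pigeonhole~\eqref{equ:D}.
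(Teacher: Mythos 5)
There are two genuine gaps in your proposal that the paper's proof is engineered to avoid.

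\textbf{The false two-sided comparability.} In Step~1 you assert $A_{\sigma,\gamma_{1j}}(u)\asymp A_{\sigma,\gamma_{2j}}(u)$ on $B(y,\eps_3 D/|b|)$ and in Step~2 (Case B) you invoke this $\asymp$ together with $\rho_{mj}\ge 1-2\eta_0$ to conclude the contraction. This is not correct: the cone condition only gives $|\log u|_{\rm Lip}\leq C_{\sev}|b|$, and while $x$ ranges over a ball of radius $\sim 1/|b|$, the images $\gamma_{1j}x$ and $\gamma_{2j}x$ are a \emph{macroscopic} distance apart (of order $\operatorname{diam}\Delta_0$), so $u(\gamma_{1j}x)/u(\gamma_{2j}x)$ can be as large as $e^{c|b|}$. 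No two-sided $\asymp$ is available, and with it the elementary identity $|z_1+z_2|^2=(|z_1|+|z_2|)^2-2|z_1||z_2|(1-\cos\theta)$ does not give a fixed $\eta_0<1$ on the branch you want. The paper handles this with a one-sided argument: Lemma~\ref{lem:inf} gives that $u(\gamma_1\cdot)$ and $u(\gamma_2\cdot)$ each vary by a bounded factor on the small ball, hence exactly one of $|A_{s,\gamma_1}(v)/A_{s,\gamma_2}(v)|\leq C_{\eig}$ or its reciprocal holds; the Naud-type elementary inequality then assigns the gain $\eta_0$ to the \emph{dominated} branch, which is precisely why the statement has two possible ``types''. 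Your argument can be repaired (the geometric identity does give $|z_1+z_2|\le|z_1|+|z_2|-\tfrac{c}{2}\min(|z_1|,|z_2|)$, from which the type corresponding to the smaller branch follows), but as written the $\asymp$ is unjustified and the case split by type is not derived.

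\textbf{The fractal shift in Step~3 does not land.} You generate the cancellation point by sliding over $k$ centers $y_\ell=y+\ell\tfrac{\eps_3}{|b|}e$, pigeonholing, and then shifting the exceptional $y_\ell$ ``by at most $\eps_3/|b|$ to some $x\in\Lambda_\Gamma$''. But the inequality \eqref{equ:D} is a statement about the PS measure of balls centered at $y\in\Lambda_0$; it yields a fractal point in the annulus $B(y,D\eps_3/|b|)\setminus B(y,(k+2)\eps_3/|b|)$, and says nothing about the existence of fractal points near an intermediate center $y_\ell$, which may fall into a gap of the limit set. The paper's proof sidesteps this entirely: one applies \eqref{equ:D} once, obtaining $x_0\in\Lambda_\Gamma$ in the annulus, and then runs the pigeonhole on just the two points $y$ and $x_0$ (both already in $\Lambda_\Gamma$), with $\hat B$ the smallest ball containing $B(y,\eps_3/|b|)\cup B(x_0,\eps_3/|b|)$; the constant $k$ enforces a lower bound on $|\Phi(y)-\Phi(x_0)|$ and $D+2$ an upper bound via \eqref{equ:epsilon2}, which is exactly what makes the two-point pigeonhole close. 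Finally, your last remark about arranging $B(x,\eps_3/|b|)\subset\Delta_0$ via friendliness is not part of Lemma~\ref{key} (the lemma makes no such claim); the boundary issue is dealt with after the lemma in the construction of $\chi$ and $\Delta_b$ and in Proposition~\ref{contr int}.
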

We first prove a quick estimate. 
\begin{lem}\label{lem:inf}
Let $\eps_2$ be the constant defined in \eqref{equ:epsilon3}.
	For any $|b|>1$, for $(u,v)\in C_b$ and for a ball $Z$ of radius $\eps_2/|b|$, we have
	\begin{enumerate}
	\item
	$\inf_Zu\geq e^{-2C_{\sev}\eps_2}\sup_Zu;$
	\item 
	either $|v|\leq \frac{3}{4}u $
	for all $x\in Z$
	or $|v|\geq \frac{1}{4}u $
	for all $x\in Z$. 
\end{enumerate}
\end{lem}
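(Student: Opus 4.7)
The plan is to prove both parts by direct use of the two Lipschitz-type bounds built into the cone $\mathcal{C}_b$, together with the choice of $\eps_2$ specified in \eqref{equ:epsilon3}.

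For part (1), since $(u,v)\in \mathcal{C}_b$ gives $|\log u|_{\operatorname{Lip}}\leq C_{\sev}|b|$ and any two points $x,y\in Z$ satisfy $d(x,y)\leq 2\eps_2/|b|$, I would immediately get $|\log u(x)-\log u(y)|\leq 2C_{\sev}\eps_2$, hence $u(x)/u(y)\in [e^{-2C_{\sev}\eps_2},e^{2C_{\sev}\eps_2}]$ throughout $Z$. Part (1) follows at once.

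For part (2), I would argue by contradiction: assume neither alternative holds, so there exist $x_1,x_2\in Z$ with $|v(x_1)|>\tfrac34 u(x_1)$ and $|v(x_2)|<\tfrac14 u(x_2)$. Using the Lipschitz bound $|v(x_2)-v(x_1)|\leq C_{\sev}|b|u(x_1)d(x_1,x_2)\leq 2C_{\sev}\eps_2\, u(x_1)$ (here the cone condition as stated, with $u(y)$ on the right, together with part (1) gives the same estimate up to the factor $e^{2C_{\sev}\eps_2}$, which is still absorbed below), I would propagate the lower bound from $x_1$ to $x_2$ to obtain
\[
|v(x_2)|\geq \left(\tfrac34-2C_{\sev}\eps_2\right)u(x_1)\geq \left(\tfrac34-2C_{\sev}\eps_2\right)e^{-2C_{\sev}\eps_2}u(x_2),
\]
where the last step uses part (1). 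The contradiction with $|v(x_2)|<\tfrac14 u(x_2)$ is then equivalent to $\tfrac34-2C_{\sev}\eps_2\geq \tfrac14 e^{2C_{\sev}\eps_2}$, which rearranges (using $e^{2C_{\sev}\eps_2}\geq 1$ so that $2C_{\sev}\eps_2\leq 2C_{\sev}\eps_2 e^{2C_{\sev}\eps_2}$) to $(2C_{\sev}\eps_2+\tfrac14)e^{2C_{\sev}\eps_2}\leq \tfrac34$, which is exactly the definition \eqref{equ:epsilon3} of $\eps_2$.

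There is no real obstacle here; the only point to be careful about is the form of the cone condition $|v(x)-v(y)|\leq C_{\sev}|b|u(y)d(x,y)$, where $u$ is evaluated at $y$ rather than $x$. This is harmless because part (1) allows us to pass from $u(y)$ to $u(x)$ at the cost of a factor $e^{2C_{\sev}\eps_2}$, and the inequality \eqref{equ:epsilon3} is chosen with exactly this loss in mind.
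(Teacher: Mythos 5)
Your proof is correct and is essentially the same argument as in the paper: both parts come from the two Lipschitz-type bounds built into $\mathcal{C}_b$ together with the numerical choice of $\eps_2$ in \eqref{equ:epsilon3}. The only (cosmetic) difference is that the paper proves (2) in contrapositive form---if $|v(x_0)|\leq \tfrac14 u(x_0)$ at some $x_0\in Z$, then $|v|\leq\tfrac34 u$ throughout $Z$---whereas you run the identical propagation as a proof by contradiction, which costs the extra but harmless rearrangement of \eqref{equ:epsilon3} that you correctly carry out.
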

\begin{proof}
	The first inequality is due to $|\log u(x)-\log u(y)|
	\leq C_{\sev}|b||x-y|$ for every $x,y\in\Delta_0$.
	
	Suppose there exists $x_0\in Z$ such that $|v(x_0)|\leq \frac{1}{4}u(x_0)$. Then by \eqref{equ:epsilon3}
	\begin{align*}
	|v(x)|&\leq |v(x)-v(x_0)|+\frac{1}{4}u(x_0)\leq C_{\sev}|x-x_0||b|\sup_Zu+\frac{1}{4}\sup_Zu\\
	&\leq (2C_{\sev}\eps_2+\frac{1}{4})\sup_Zu\leq (2C_{\sev}\eps_2+\frac{1}{4})e^{2C_{\sev}\eps_2}\inf_Z u\leq \frac{3}{4}u(x).\qedhere
	\end{align*}
\end{proof}

\begin{proof}[Proof of Lemma~\ref{key}]
It follows \eqref{equ:D} that there exists $x_0\in (B(y,\eps_3 D/|b|)-B(y,(k+2)\eps_3/|b|))\cap\Lambda_\Gamma$. 
	 Let $B_1=B(y,\eps_3/|b|)$, $B_2=B(x_0,\eps_3/|b|)$ and $\hat{B}$ the smallest ball containing $B_1\cup B_2$. For all $x\in B_1,x'\in B_2$, we have
	\begin{equation}\label{equ:xx'}
	d(x,x')\in \frac{\eps_3}{b}[k,D+2].
	\end{equation}
	In view of \eqref{equ:epsilon2}, the radius of $\hat{B}$ is smaller than $\eps_2/|b|$ and it is contained in $B(y,r)$. Let $e_0=(y-x_0)/|y-x_0|$.
	
	 By Lemma~\ref{lem:uni} for the point $y$ there exists $j$ in $\{1,\cdots,j_0\}$ such that \eqref{equ:uni} holds for $B(y,r)$ with $e=e_0$. From now on, $j$ is fixed, so we abbreviate $(\gamma_{1j},\gamma_{2j})$ to $(\gamma_1,\gamma_2)$ and $(\tau_{1j},\tau_{2j})$ to $(\tau_1,\tau_2)$.
	
	Due to $|\gamma_m'|_\infty\leq \lambda\leq 1$, the radius of $\gamma_{m}\hat{B}$ is smaller than $\epsilon_{2}/|b|$. So we can apply Lemma~\ref{lem:inf} to $\gamma_{m}\hat{B}$ and we have that 
	either
	$|v(\gamma_mx)|\geq\frac{1}{4} u(\gamma_mx)$ for all $x\in \hat{B}$ or $|v(\gamma_mx)|\leq\frac{3}{4} u(\gamma_mx)$ for all $x\in \hat{B}$. 	Suppose that 
	\[|v(\gamma_mx)|\leq \frac{3}{4}u(\gamma_mx) \]
	holds for some $m\in\{1,2 \}$ for all $x\in \hat B$. Then Lemma~\ref{key} can be proved by a straightforward computation.

	Suppose that for $x\in \hat{B}$ and $m=1,2$
	\begin{equation}\label{equ:notsmall}
	|v(\gamma_mx)|\geq \frac{1}{4}u(\gamma_mx).
	\end{equation}
	
	\textbf{Claim:} Under the assumption of \eqref{equ:notsmall}, there exists $C_{\eig}>0$ independent of $b$ and $(u,v)$ such that for $l\in \{1,2\}$, we have 
	\begin{equation}\label{claim}
	\text{either}\,\,\,\left|\frac{A_{s,\gamma_1}(v)}{A_{s,\gamma_2}(v)}\right|\leq C_{\eig}\text{ for all }x\in B_l\text{ or }\left|\frac{A_{s,\gamma_2}(v)}{A_{s,\gamma_1}(v)}\right|\leq C_{\eig}\text{ for all }x\in B_l. 
	\end{equation}
	\begin{proof}[Proof of the claim]
	 Fix any $x_0\in \Delta_0$. Due to $|\tau_m'|_{\infty}\leq C_2$ (see \eqref{uniform contraction}), we have for any $x\in \hat{B}$, $$|\tau_1(x)-\tau_2(x)|\leq |\tau_1(x_0)-\tau_2(x_0)|+2C_2|x-x_0|.$$ Hence there exists a constant $C({\tau_1,\tau_2})$ depending on $\tau_1,\tau_2$ such that
	\[\left|\frac{A_{s,\gamma_1}(v)}{A_{s,\gamma_2}(v)}\right|\leq C(\tau_1,\tau_2)\frac{f_\sigma(\gamma_1x)u(\gamma_1x)}{f_\sigma(\gamma_2x)u(\gamma_2x)}. \]
	For the middle term,
	\[\frac{f_\sigma(\gamma_1x)}{f_\sigma(\gamma_2x)}\leq 4\frac{\sup f_0}{\inf f_0}. \]
	Since the radius of $\gamma_2B_l$ is less than $\eps_2/|b|$, using Lemma~\ref{lem:inf}, we have for every $x$ in $B_l$
	\[ \frac{u(\gamma_1x)}{u(\gamma_2x)}\leq \frac{\sup_{B_l}u(\gamma_1)}{\inf_{B_l}u(\gamma_2)}\leq e^{2C_{\sev}\eps_2}\frac{\sup_{B_l}u(\gamma_1)}{\sup_{B_l}u(\gamma_2)}. \]
	Putting these together, we have
	\[\left|\frac{A_{s,\gamma_1}(v)}{A_{s,\gamma_2}(v)}\right|\leq C_{\eig}\frac{\sup_{B_l}u(\gamma_1)}{\sup_{B_l}u(\gamma_2)} \]
	where $C_{\eig}=4C(\tau_1,\tau_2)e^{2C_{\sev}\eps_2}\frac{\sup f_0}{\inf f_0}$.	We have a similar inequality for $\left|\frac{A_{s,\gamma_2}(v)}{A_{s,\gamma_1}(v)}\right|$. Note that either $\frac{\sup_{B_l}u(\gamma_1)}{\sup_{B_l}u(\gamma_2)}\leq 1$ or $\frac{\sup_{B_l}u(\gamma_2)}{\sup_{B_l}u(\gamma_1)}\leq 1$. The proof of the claim finishes.
\end{proof}

	Now we start to compute the angle and our definitions are only for $x\in\hat{B}$. The function $\arg(v(\gamma_mx))$ is well defined because $|v(\gamma_mx)|\geq u(\gamma_mx)/4>0$. Let 
	\[\Theta(x)=b(\tau_1(x)-\tau_2(x)), \ V(x)=\arg(v(\gamma_1x))-\arg(v(\gamma_2x)), \]
	and let $$\Phi(x)=\Theta(x)+V(x).$$
	We apply Lemma~\ref{lem:uni} to $\hat{B}$ and obtain that for $x\in \hat{B}$, 
	\[|\partial_{e_0}\Theta(x)|\geq |b|\epsilon_0, \,\,\,|\Theta'(x)|\leq |b|C_2. \]
	For the angle function, 
	by \eqref{equ:notsmall} and \eqref{equ:hm}, we have for $i\in\{1,2\}$ and $x,x'\in\hat{B}$ 
	\begin{align*}
	|\arg v(\gamma_ix)-\arg v(\gamma_ix')|&=|\operatorname{Im}(\log v(\gamma_ix)-\log v(\gamma_ix'))|\leq \frac{|v(\gamma_ix)-v(\gamma_ix')|}{|v(\gamma_ix)|}\\
	&\leq C_{\sev}|b|\frac{u(\gamma_ix)|\gamma_ix-\gamma_ix'|}{|v(\gamma_ix)|}\leq |b|\epsilon_0|x-x'|/4.
	\end{align*}
	This implies that for $x,x'\in\hat{B}$
	\[ |V(x)-V(x')|\leq |b|\epsilon_0|x-x'|/2. \]
	Combining the estimates for $\Theta$ and $V$, we obtain for $x,x'\in\hat{B}$
	\begin{equation}\label{equ:phileq}
		|\Phi(x)-\Phi(x')|\leq b(C_2+\epsilon_0)|x-x'|,
	\end{equation}
	and for $x, x+te_0\in \hat{B}$ with $t\in\R^+$,
	\[|\Phi(x)-\Phi(x+te_0)|\geq b\epsilon_0 t/2.\]
	Hence for $x_1=y,\ x_2=x_0$ which are the centers of $B_1$ and $B_2$ respectively, by \eqref{equ:xx'},
	\begin{equation}\label{equ:phixx'}
	|\Phi(x_1)-\Phi(x_2)|\in \eps_3[k\epsilon_0/2,(D+2)(C_2+\epsilon_0)].
	\end{equation}
	Let 
	$\epsilon_4=\eps_3k\epsilon_0/8.$
	We claim that there exists $l\in \{1,2\}$ such that
	\begin{equation}\label{equ:phiz}
		d(\Phi(x_l),2\pi\Z)>\epsilon_4.
	\end{equation}
	If not so, then both the distance from $\Phi(x_1)$ to $2\pi \Z$ and that from $\Phi(x_2)$ to $2\pi\Z$ are less than $\epsilon_4$. By \eqref{equ:phixx'} and \eqref{equ:epsilon2}
	\[|\Phi(x_1)-\Phi(x_2)|\leq \eps_3(D+2)(C_2+\epsilon_0) \leq 3\pi/2< 2\pi-2\epsilon_4. \]
	Hence $\Phi(x_1),\Phi(x_2)$ are in a ball $(2n\pi-\epsilon_4,2n\pi+\epsilon_4)$ with $n\in\Z$. This implies that 
	\[|\Phi(x_1)-\Phi(x_2)|\leq 2\epsilon_4=\eps_3k\epsilon_0/4, \]
	contradicting with \eqref{equ:phixx'}. 
	
	Without loss of generality, we may assume \eqref{equ:phiz} holds for $x_1$. For any $x$ in the ball $B_1$, by \eqref{equ:phileq} and \eqref{equ:k}
	\[|\Phi(x)-\Phi(x_l)|\leq (C_2+\epsilon_0)\eps_3\leq k\eps_3\epsilon_0/16=\epsilon_4/2. \]
	Combined with \eqref{equ:phiz}, we have
	\begin{equation}
	\label{equ:awayinteger}
		d(\Phi(x),2\pi\Z)\geq\epsilon_4/2. 
	\end{equation}
	In conclusion, there exists $l\in \{1,2\}$ such that for all $x\in B_l$, $d(\Phi(x),2\pi\Z)>\epsilon_4/2$ and \eqref{claim} holds. Without loss of generality, we may assume $|A_{s,\gamma_1}(v)(x)/A_{s,\gamma_2}(v)(x)|\leq C_{\eig}$ for all $x\in B_l$. By an elementary inequality \cite[Lemma 5.12]{Nau}, there exists $0<\eta_0<1$ depending on $\epsilon_4$ and $C_{\eig}$ such that on $B_l$
	\begin{equation*}
	|A_{s,\gamma_1}(v)+A_{s,\gamma_2}(v)|\leq \eta_0|A_{s,\gamma_1}(v)|+|A_{s,\gamma_2}(v)|\leq\eta_0A_{\sigma,\gamma_1}(u)+A_{\sigma,\gamma_2}(u).\qedhere
		\end{equation*}
\end{proof}

For $b$ with $|b|\geq 1$, let
\begin{equation}\label{equ:deltab}
	\Delta_b=\{x\in\Delta_0|\ d(x,\partial\Delta_0)>\frac{\eps_3(D+1)}{|b|} \}.
\end{equation}
For any $(u,v)\in \calC_b$, we can find $\{x_i\}_{1\leq i\leq l_0}\subset\Lambda_0:=\Lambda_{\Gamma}\cap \Delta_0$ such that $B(x_i,\eps_3/|b|)$'s are disjoint balls contained in $\Delta_0$, 
\[\Lambda_0\cap\Delta_b\subset \cup_{1\leq i\leq l_0}B(x_i,2\eps_3D/|b|), \]
 and on each $B(x_i,\eps_3/|b|)$ one of the $2j_0$ inequalities in Lemma~\ref{key} holds. In fact, suppose we have already found some points $x_i$'s but $\cup B(x_i,2\eps_3D/|b|)$ don't cover the set $\Lambda_0\cap\Delta_b$. Then for a point $y\in \Lambda_0\cap \Delta_b-\cup B(x_i,2\eps_3D/|b|)$, we apply Lemma~\ref{key} to $y$ and obtain a point $x\in B(y,\eps_3 D/|b|)\cap \Lambda_0$ such that Lemma~\ref{key} holds on $B(x,\eps_3/|b|)$. Moreover, the ball $B(x,\eps_3/|b|)$ is contained in $\Delta_0$ and it is disjoint from $\cup B(x_i,\eps_3/|b|)$. 


Let $B_i=B(x_i,\eps_3/|b|)$ and $\tilde{B_i}=B(x_i,\eps_3/(3|b|))$ for $i=1,\cdots,l_0$. Let $\eta\in[\eta_0,1)$ and define a $C^1$ function $\chi:\Delta_0\rightarrow[\eta,1]$ as follows: 
it equals 1 outside of $\cup_{m,j,i} \gamma_{mj} B_i$; for each $B_i$, if $B_i$ is of type $\gamma_{mj}$, let $\chi(\gamma_{mj}(y))=\eta$ for $y\in \tilde{B_i}$ and $\chi\equiv 1$ on other $\gamma_{m'j'}B_i$. We can choose $\eta$ close to 1 and independent of $b$ such that $|\chi'(x)|\leq |b|$ for all $x\in \Delta_0$.

\begin{cor}
\label{cone ineq}
	Under the same assumptions as in Lemma~\ref{key}, for $(u,v)\in \mathcal{C}_b$ and $\chi=\chi(b,u,v)$ a $C^1$ function described as above, we have
	\begin{equation*}
	\label{equ:con ineq}
	 |\LL_s^{n_0} v|\leq \LL_\sigma^{n_0}(\chi u). 
	\end{equation*}
\end{cor}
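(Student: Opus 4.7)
The plan is to split the transfer-operator sum defining $\tilde L_s^{n_0}v(x)$ into two groups: the $2j_0$ distinguished inverse branches $\gamma_{mj}$ ($m\in\{1,2\}$, $1\le j\le j_0$) supplied by Lemma~\ref{lem:uni}, and the remainder $\mathcal H':=\mathcal H^{n_0}\setminus\{\gamma_{mj}\}$. After multiplying the desired inequality by $\lambda_\sigma^{n_0}f_\sigma(x)$ and applying the triangle inequality to the pairs, it suffices to dominate
\[
\sum_{j=1}^{j_0}\bigl|A_{s,\gamma_{1j}}(v)(x)+A_{s,\gamma_{2j}}(v)(x)\bigr|+\sum_{\gamma\in\mathcal H'}|\gamma'(x)|^{\delta+\sigma}f_\sigma(\gamma x)|v(\gamma x)|
\]
by the analogous expression with $v$ replaced by $\chi u$ and $s$ by $\sigma$.

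For the tail over $\mathcal H'$, the image $\gamma\Delta_0$ is disjoint from every $\gamma_{mj}\Delta_0$, so the definition of $\chi$ forces $\chi(\gamma x)=1$; combined with $|v|\le u$, each tail term is dominated termwise. For each pair $(\gamma_{1j},\gamma_{2j})$ I split on the location of $x$. If $x\notin\bigcup_iB_i$, or if $x\in B_i$ whose type $(m_i,j_i)$ has $j_i\ne j$, then $\chi(\gamma_{1j}x)=\chi(\gamma_{2j}x)=1$ by construction, so the pointwise bound $|v|\le u$ together with the triangle inequality already gives the desired estimate. The only interesting case is $x\in B_i$ with $j_i=j$: there Lemma~\ref{key} supplies
\[
|A_{s,\gamma_{1j}}(v)+A_{s,\gamma_{2j}}(v)|\le \eta_0\,A_{\sigma,\gamma_{m_ij}}(u)+A_{\sigma,\gamma_{3-m_i,j}}(u),
\]
while the construction of $\chi$ guarantees $\chi(\gamma_{m_ij}x)\ge\eta\ge\eta_0$ and $\chi(\gamma_{3-m_i,j}x)=1$ for every $x\in B_i$. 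Since $A_{\sigma,\gamma_{mj}}(\chi u)(x)=\chi(\gamma_{mj}x)\,A_{\sigma,\gamma_{mj}}(u)(x)$, the right-hand side above is bounded by $A_{\sigma,\gamma_{1j}}(\chi u)+A_{\sigma,\gamma_{2j}}(\chi u)$, which is the required contribution to $\tilde L_\sigma^{n_0}(\chi u)(x)$.

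The main (essentially only) point to verify is that $\chi$ has been arranged to meet two constraints at once: that $\chi\equiv1$ away from the cancellation regions $\gamma_{m_ij_i}B_i$, so that the non-cancellation estimates match termwise; and that $\chi\ge\eta_0$ on those cancellation regions, so that the factor $\eta_0$ produced by Lemma~\ref{key} is absorbed. Both are immediate from the piecewise definition of $\chi$ given just before the corollary, together with the choice $\eta\in[\eta_0,1)$. Once this is in place, summing the case-by-case pointwise inequalities above yields the claim.
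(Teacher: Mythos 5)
Your proof is correct and is precisely the direct verification that the paper leaves implicit by stating the corollary without proof. The decomposition of $\tilde L_s^{n_0}$ into the distinguished pairs $(\gamma_{1j},\gamma_{2j})$ and the remaining branches of $\mathcal H^{n_0}$, the termwise domination $|v|\le u$ (using $\chi\equiv1$ away from the cut regions $\gamma_{m_ij_i}B_i$, which follows from the disjointness of the $n_0$-cylinders $\gamma\Delta_0$), and the absorption of the factor $\eta_0$ into $\chi(\gamma_{m_ij_i}x)\ge\eta\ge\eta_0$ via Lemma~\ref{key} on $B_i$, all work exactly as you say; no alternative route is taken in the paper.
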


Define $J_i=B(x_i,2\eps_3D/|b|)$ for $i=1,\cdots,l_0$ and let $\tilde{B}=\cup\tilde{B_j}$.

\begin{prop}
\label{contr int}
	Suppose that $w$ is a positive Lipschitz function with $|\log w(x)-\log w(y)|\leq K|b||x-y|$ for some $K>0$. Then
	\begin{equation}
	\label{equ:contr int}
	\int_{\tilde{B}}w\dd\nu \geq \epsilon_4\int_{\Delta_b} w\ \dd\nu, 
	\end{equation}
	with $\epsilon_4=\epsilon_5 e^{-4\eps_3DK}$, where $\epsilon_5$ comes from doubling property only depending on $D$ and $\nu$.
\end{prop}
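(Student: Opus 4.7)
The plan is to combine three ingredients: the covering property of $\{J_i\}$ on $\Lambda_0 \cap \Delta_b$, the doubling property of PS measure (transferred to $\nu$), and the bounded oscillation of $w$ on each $J_i$ coming from the hypothesis on $\log w$.

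First I would reduce to a ballwise estimate. Since $\nu$ is supported on $\Lambda_\Gamma$ and by construction $\Lambda_0 \cap \Delta_b \subset \cup_i J_i$, we have
\[
\int_{\Delta_b} w \, \dd\nu = \int_{\Lambda_0 \cap \Delta_b} w \, \dd\nu \leq \sum_{i=1}^{l_0} \int_{J_i} w \, \dd\nu.
\]
On the other hand the balls $\tilde{B}_i \subset B_i$ are pairwise disjoint, so $\int_{\tilde{B}} w \, \dd\nu = \sum_i \int_{\tilde{B}_i} w \, \dd\nu$. Thus it suffices to show, for each $i$, that $\int_{\tilde{B}_i} w \, \dd\nu \geq \epsilon_5 e^{-4\epsilon_3 D K} \int_{J_i} w \, \dd\nu$.

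Next, I would control the oscillation of $w$. Since $\tilde{B}_i \subset J_i$ and $\operatorname{diam}(J_i) \leq 4\epsilon_3 D/|b|$, the hypothesis on $\log w$ gives
\[
\sup_{J_i} w \leq e^{4\epsilon_3 D K} \inf_{J_i} w,
\]
so $\int_{J_i} w \, \dd\nu \leq e^{4\epsilon_3 D K} (\inf_{J_i} w) \, \nu(J_i)$ and $\int_{\tilde{B}_i} w \, \dd\nu \geq (\inf_{J_i} w) \, \nu(\tilde{B}_i)$. The ratio therefore reduces to bounding $\nu(\tilde{B}_i)/\nu(J_i)$ from below.

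For this last step I would invoke doubling. The balls $\tilde{B}_i$ and $J_i$ are concentric (both centered at $x_i \in \Lambda_0$) with radii ratio a fixed constant $6D$ independent of $b$, and $|b| \geq 1$ ensures the radii are bounded by the admissible threshold in Proposition \ref{double}. Iterating doubling of $\mu$ a bounded number of times (depending only on $D$) yields $\mu(\tilde{B}_i) \geq c \, \mu(J_i)$, and since $\nu$ has a positive Lipschitz density with respect to $\mu$ on $\Delta_0 \cap \Lambda_\Gamma$ bounded away from $0$ and $\infty$, the same inequality transfers to $\nu$: $\nu(\tilde{B}_i) \geq \epsilon_5 \, \nu(J_i)$ for a constant $\epsilon_5 = \epsilon_5(D) > 0$. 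Combining the three estimates and summing over $i$ gives the claim with $\epsilon_4 = \epsilon_5 e^{-4\epsilon_3 D K}$.

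The argument is essentially mechanical once the three ingredients are lined up; the only point requiring mild care is ensuring that the doubling constant $\epsilon_5$ is genuinely uniform in $b$ and in the particular configuration of $\{x_i\}$. This is automatic because the radii ratio $6D$ is fixed and the centers $x_i$ all lie in the fixed set $\Lambda_0 \cap \Delta_0$, so a single application of Proposition \ref{double} (or a uniformly bounded iterate of it) suffices.
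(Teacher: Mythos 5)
Your proof is correct and takes essentially the same approach as the paper: reduce to a per-ball estimate on each pair $(\tilde{B}_i, J_i)$, control the oscillation of $w$ on $J_i$ via the Lipschitz hypothesis on $\log w$ (diameter $\leq 4\epsilon_3 D/|b|$ cancelling the factor $|b|$), and bound $\nu(\tilde{B}_i)/\nu(J_i)$ from below by the doubling property. The paper states these steps a little more tersely, but the decomposition and the three ingredients are identical.
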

\begin{proof}
	Since $\cup_i J_i$ covers $\Delta_b$, it is sufficient to prove for each $i$ we have a similar inequality.
	Due to hypothesis, we obtain $\inf_{\tilde{B_i}}w\geq e^{-4\eps_3DK}\sup_{J_i}w$. By doubling property, there exists $\epsilon_5$ depending on $D$ such that
	\[\nu(\tilde{B_i})\geq\epsilon_5\nu(J_i). \]
	Therefore
	\[\int_{\tilde{B_i}}w\ \dd\nu\geq \nu(\tilde{B_i})\inf_{\tilde{B_i}}w\geq \epsilon_5\nu(J_i) e^{-4\eps_3DK}\sup_{J_i}w\geq \epsilon_4\int_{J_i}w\ \dd\nu.\qedhere \]
\end{proof}

\subsection{Invariance of Cone Condition}
We define the constants
\begin{align}
\label{cone const 2}
&C_{\sev}'=16(\delta+\epsilon)C_2|f_0|_{\infty} |f_0^{-1}|_{\infty}+16 |f_0^{-1}|_{\infty}|f_0|_{\text{Lip}}+4C_2+2,\\
\label{cone const 3}
&C_{\sev}=\max\{8|f_0^{-1}|_{\infty} |f_0|_{\text{Lip}} +(\delta+3)C_2+1+4|f_0|_{\infty}|f_0^{-1}|_{\infty}C_{\sev}', 6C_{\sixt}\}.
\end{align}

\begin{lem}
\label{cone invariance}
Let $C_{\sev}>0$ be the constant defined in (\ref{cone const 3}) and $n_0$ be the constant defined in (\ref{cone const 1}). For $s=\sigma+ib$ with $|\sigma|<\epsilon$ and $|b|>1$, for $(u,v)\in \mathcal{C}_b$, 
 we have 
\begin{equation}
(\LL^{n_0}_{\sigma}(\chi u),\,\LL^{n_0}_s v)\in \mathcal{C}_b,
\end{equation}
where $\chi=\chi (b,u,v)$ is the same as the one in Corollary~\ref{cone ineq}. 
\end{lem}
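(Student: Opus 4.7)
The plan is to verify the three defining conditions of the cone $\mathcal{C}_b$ in turn for the pair $(U,V)$ where $U=\LL_\sigma^{n_0}(\chi u)$ and $V=\LL_s^{n_0} v$. The first condition, namely $|V|\leq U$, is immediate from Corollary~\ref{cone ineq} applied to $(u,v)\in \mathcal C_b$. So the work is concentrated on (i) the log-Lipschitz bound $|\log U|_{\mathrm{Lip}}\leq C_{\sev}|b|$ and (ii) the twisted Lipschitz bound $|V(x)-V(y)|\leq C_{\sev}|b|U(y)d(x,y)$. For both, we expand
\[
\LL_s^{n_0}w(x)=\sum_{\gamma\in\calH_{n_0}}W_\gamma^s(x)\,w(\gamma x),\quad W_\gamma^s(x)=\frac{e^{(s+\delta)\tau_\gamma(x)}f_\sigma(\gamma x)}{\lambda_\sigma^{n_0}f_\sigma(x)},
\]
and note that $|W_\gamma^s|=W_\gamma^\sigma$. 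Using \eqref{uniform contraction} together with the Lipschitz regularity of $\log f_\sigma$ (which is independent of $b$ for $|\sigma|\leq\epsilon$), one checks that $\log W_\gamma^\sigma$ is Lipschitz with a constant $M_0$ independent of $\gamma$ and of $b$, and that $\log W_\gamma^s$ has Lipschitz constant at most $M_0+|b|C_2$; these estimates essentially encapsulate the ingredients already exploited in Lemma~\ref{Lasota-Yorke}.

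For (i), I would write
\[
U(x)-U(y)=\sum_\gamma\bigl[W_\gamma^\sigma(x)-W_\gamma^\sigma(y)\bigr](\chi u)(\gamma x)+\sum_\gamma W_\gamma^\sigma(y)\bigl[(\chi u)(\gamma x)-(\chi u)(\gamma y)\bigr].
\]
The first sum is dominated by $M_0 d(x,y)\cdot U(x)$, giving a bound $M_0 d(x,y)$ after dividing by $U$. For the second sum, $|\chi'|\leq|b|$ and $|\log u|_{\mathrm{Lip}}\leq C_{\sev}|b|$, combined with Proposition~\ref{prop:coding}(3) giving contraction $|\gamma x-\gamma y|\leq\lambda^{n_0}d(x,y)$, produce a factor $\lambda^{n_0}|b|\cdot O(1)\cdot U(y)$. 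By \eqref{cone const 1} the total is at most $C_{\sev}|b|U(y)d(x,y)\cdot(\text{small constant})$; after comparing $U(x)$ and $U(y)$ via the already-controlled factor $e^{M_0 d(x,y)}$, this upgrades to the log-Lipschitz estimate with constant $\leq C_{\sev}|b|$, provided $C_{\sev}$ is chosen as in~\eqref{cone const 3}.

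For (ii), which is the main obstacle, the key manipulation is the same telescoping
\[
V(x)-V(y)=\sum_\gamma\bigl[W_\gamma^s(x)-W_\gamma^s(y)\bigr]v(\gamma y)+\sum_\gamma W_\gamma^s(x)\bigl[v(\gamma x)-v(\gamma y)\bigr].
\]
For the second sum, the cone hypothesis on $v$ and contraction give $|v(\gamma x)-v(\gamma y)|\leq \lambda^{n_0}C_{\sev}|b|u(\gamma y)d(x,y)$, so after using $u(\gamma y)\leq\eta^{-1}\chi(\gamma y)u(\gamma y)$ the sum is bounded by $\lambda^{n_0}\eta^{-1}C_{\sev}|b|U(x)d(x,y)$, which is absorbed by $C_{\sev}|b|U(y)d(x,y)/2$ once $n_0$ is large. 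For the first sum, the $b$-dependent log-Lipschitz bound on $W_\gamma^s$ gives a contribution of order $(M_0+|b|C_2)d(x,y)\cdot\sum_\gamma W_\gamma^\sigma(x)u(\gamma y)$; combining $|v|\leq u$, the cone bound on $u$ to pass from $u(\gamma y)$ to $\eta^{-1}\chi(\gamma y)u(\gamma y)$, and comparing $W_\gamma^\sigma(x)$ with $W_\gamma^\sigma(y)$ via $M_0$, one obtains a bound of the form $(M_0+|b|C_2)\eta^{-1}e^{M_0\operatorname{diam}\Delta_0}U(y)d(x,y)$. Adding the two contributions and invoking the precise definition of $C_{\sev}$ in \eqref{cone const 2}–\eqref{cone const 3}—in which the coefficients $|f_0^{-1}|_\infty|f_0|_{\mathrm{Lip}}$ and $(\delta+3)C_2$ were engineered to swallow exactly these terms—yields the desired inequality $|V(x)-V(y)|\leq C_{\sev}|b|U(y)d(x,y)$.

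The main obstacle I expect is the bookkeeping in (ii): one must carefully ensure that every time the estimate produces $u$ or $U$ evaluated at $x$ rather than $y$ (or at $\gamma x$ rather than $\gamma y$), the correction factor is absorbed either by an already-known log-Lipschitz bound with a $b$-free constant or by the $\lambda^{n_0}$ contraction. The role of the cutoff $\chi$, which separates $\eta^{-1}$ from $C_{\sev}$ in the computation, and the explicit threshold~\eqref{cone const 1} are precisely what make this bookkeeping close.
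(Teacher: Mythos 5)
Your overall strategy — verify the three cone conditions directly, using the telescoping decompositions of $U(x)-U(y)$ and $V(x)-V(y)$, the $b$-free log-Lipschitz bound on the weights $W_\gamma^\sigma$, the contraction $\lambda^{n_0}$, and the cut-off $\chi$ — is the right one, and it is the same approach as the paper's (which simply refers to \cite[Lemma 2.12]{ArMe} without writing it out). The verification of $0\le |V|\le U$, $U>0$ from Corollary~\ref{cone ineq} is correct, and your treatment of $|\log U|_{\mathrm{Lip}}$ and of the first sum in the decomposition for $V$ is in the right spirit.

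However, there is a concrete gap in your treatment of the second sum in (ii). You obtain (after using the cone bound on $v$ and the contraction)
\[
\sum_\gamma |W_\gamma^s(x)|\,|v(\gamma x)-v(\gamma y)| \le C_{\sev}|b|\lambda^{n_0}d(x,y)\sum_\gamma W_\gamma^\sigma(x)\,u(\gamma y),
\]
and you claim this is $\le \lambda^{n_0}\eta^{-1}C_{\sev}|b|\,U(x)\,d(x,y)$ and then absorbed by $\tfrac12 C_{\sev}|b|\,U(y)\,d(x,y)$ once $n_0$ is large. Two things go wrong. First, $\sum_\gamma W_\gamma^\sigma(x)(\chi u)(\gamma y)$ is neither $U(x)$ nor $U(y)$, since $U(x)=\sum_\gamma W_\gamma^\sigma(x)(\chi u)(\gamma x)$ and $U(y)=\sum_\gamma W_\gamma^\sigma(y)(\chi u)(\gamma y)$; the cross term has the weight at $x$ but the test function at $\gamma y$. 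Second, and more seriously, the absorption of $U(x)$ by $U(y)$ fails for large $|b|$: from the log-Lipschitz bound the best you can say is $U(x)/U(y)\le e^{C_{\sev}|b|\operatorname{diam}(\Delta_0)}$, which is unbounded in $|b|$, so taking $n_0$ large (which is $b$-independent) cannot swallow it. This is not an "easy absorption" — the step as stated is false.

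The fix is what you yourself already do for the first sum: compare the \emph{weights} $W_\gamma^\sigma(x)$ and $W_\gamma^\sigma(y)$ term by term before summing, which costs only the $b$-independent factor $e^{M_0\operatorname{diam}(\Delta_0)}$ (where $M_0=|\log W_\gamma^\sigma|_{\mathrm{Lip}}$), and keep both $W_\gamma^\sigma$ and $u$ evaluated at $y$ (resp.\ $\gamma y$) so that the sum really is $\eta^{-1}U(y)$. Concretely, from
\[
W_\gamma^\sigma(x)\le e^{M_0\operatorname{diam}(\Delta_0)}W_\gamma^\sigma(y),\qquad |v(\gamma x)-v(\gamma y)|\le C_{\sev}|b|\lambda^{n_0}u(\gamma y)d(x,y),\qquad u(\gamma y)\le\eta^{-1}(\chi u)(\gamma y),
\]
the second sum is bounded by $e^{M_0\operatorname{diam}(\Delta_0)}\eta^{-1}C_{\sev}|b|\lambda^{n_0}\,U(y)\,d(x,y)$, and the requirement is then $e^{M_0\operatorname{diam}(\Delta_0)}\eta^{-1}\lambda^{n_0}\le 1/2$, which is a $b$-free constraint that \eqref{cone const 1} (together with the size of $C_{\sev}$) is designed to ensure. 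With this change and your existing bound on the first sum the two pieces combine to $\le C_{\sev}|b|U(y)d(x,y)$ as soon as $C_{\sev}$ is large enough relative to $e^{M_0\operatorname{diam}(\Delta_0)}$, $\eta^{-1}$, $M_0$, $C_2$.
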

The proof is a verbatim of the proof of~\cite[Lemma 2.12]{ArMe}.

\subsection{$L^2$ contraction for bounded iterations}
In this part, we will prove Proposition~\ref{L2contracting} for the case when $m$ bounded by $\log |b|$. Compared with~\cite{AGY}, where they can finish the proof of an analog of Proposition~\ref{L2contracting} at this stage, we have the difficulty about the boundary. More precisely, Proposition \ref{contr int} is one of the ingredients to obtain Proposition \ref{L2contracting}. Now the integration region of the right hand side of \eqref{equ:contr int} is $\Delta_b$, which is smaller than $\Delta_0$, so it just enables us to obtain $L^2$ contraction for bounded iterations. For large iteration, we will use a Lipschitz contraction lemma (Lemma \ref{lem:Lipcontracting}) to obtain $L^2$ contraction in the next subsection.

\begin{lem}\label{lem:vLip}
	For $|b|>1$ and $v\in \operatorname{Lip}(\Delta_0)$, if $|v|_{\operatorname{Lip}}\geq C_{\sev} |b| |v|_{\infty}$, then
	\[\|\LL_s^{n_0}v\|_b\leq \frac{9}{10}\|v\|_b. \]
\end{lem}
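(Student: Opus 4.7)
The plan is to observe first that the hypothesis $|v|_{\operatorname{Lip}} \geq C_{\sev}|b| |v|_\infty$ forces $\|v\|_b$ to be detected entirely by its Lipschitz component. Indeed, since $|b|>1$ and $C_{\sev}\geq 6C_{\sixt}>6$, one has $|v|_{\operatorname{Lip}}/(1+|b|) \geq (C_{\sev}|b|/(1+|b|))|v|_\infty \geq (C_{\sev}/2)|v|_\infty > |v|_\infty$, so $\|v\|_b = |v|_{\operatorname{Lip}}/(1+|b|)$ and simultaneously $|v|_\infty \leq (2/C_{\sev})\|v\|_b$. The proof then splits into separately bounding the $L^\infty$ and Lipschitz parts of $\LL_s^{n_0} v$ by a fixed fraction of $\|v\|_b$.

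For the sup-norm I will use the pointwise domination $|\LL_s u| \leq \LL_\sigma |u|$ stated after the definition of $\LL_s$, iterate it $n_0$ times, and combine with $\LL_\sigma 1 = 1$ to conclude $|\LL_s^{n_0} v|_\infty \leq |v|_\infty$. Feeding this into the first step gives $|\LL_s^{n_0} v|_\infty \leq (2/C_{\sev})\|v\|_b \leq (1/3)\|v\|_b$, which is safely under the $9/10$ budget.

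For the Lipschitz part I will apply the Lasota--Yorke estimate (Lemma~\ref{Lasota-Yorke}):
\[
 |\LL_s^{n_0} v|_{\operatorname{Lip}} \leq C_{\sixt}(1+|b|)|v|_\infty + C_{\sixt}\lambda^{n_0}|v|_{\operatorname{Lip}}.
\]
In the first term, using $|v|_\infty \leq |v|_{\operatorname{Lip}}/(C_{\sev}|b|)$ and $(1+|b|)/|b|<2$, absorb the $(1+|b|)$ factor to get a bound of $(2C_{\sixt}/C_{\sev})|v|_{\operatorname{Lip}} \leq (1/3)|v|_{\operatorname{Lip}}$. In the second term, the choice of $n_0$ recorded in \eqref{cone const 1}, namely $\lambda^{n_0}C_{\sev}(1+\operatorname{diam}(\Delta_0)) \leq 1$, combined with $C_{\sev}\geq 6C_{\sixt}$, yields $C_{\sixt}\lambda^{n_0}\leq 1/6$. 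Summing and dividing by $1+|b|$ produces $|\LL_s^{n_0} v|_{\operatorname{Lip}}/(1+|b|) \leq (1/2)\|v\|_b$.

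Combining the two estimates gives $\|\LL_s^{n_0} v\|_b \leq (1/2)\|v\|_b < (9/10)\|v\|_b$, as required. There is no substantive obstacle: this is a bookkeeping argument whose success was engineered into the definitions of $C_{\sev}$ in \eqref{cone const 3} and of $n_0$ in \eqref{cone const 1}. The only point that demands attention is to exploit the hypothesis at the outset to replace $|v|_\infty$ by the much stronger Lipschitz information everywhere it appears in the Lasota--Yorke bound, so that both surviving terms are controlled by a single small multiple of $|v|_{\operatorname{Lip}}/(1+|b|) = \|v\|_b$.
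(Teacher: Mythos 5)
Your proof is correct and takes essentially the same route as the paper: bound $|\LL_s^{n_0}v|_\infty$ by $|v|_\infty \leq (2/C_{\sev})\|v\|_b$, apply the Lasota--Yorke inequality of Lemma~\ref{Lasota-Yorke} for the Lipschitz part, and absorb both terms using the engineered bounds $C_{\sev}\geq 6C_{\sixt}$ and $\lambda^{n_0}C_{\sev}\leq 1$. The arithmetic matches the paper's line by line (both arrive at the sharper constant $1/2$ and then weaken to $9/10$).
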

\begin{proof}
	We have
	\[|\LL_s^{n_0}v|_\infty\leq |v|_\infty\leq \frac{1}{C_{\sev}|b|}|v|_{Lip}\leq \frac{2}{C_{\sev}}\|v\|_b. \]
	By Lemma~\ref{Lasota-Yorke}, we obtain
	\begin{align*}
|\LL_s^{n_0}v|_{\operatorname{Lip}}& \leq C_{\sixt}(1+|b|)|v|_\infty+C_{\sixt}\lambda^{n_0}|v|_{\operatorname{Lip}}\leq (1+|b|)(\frac{C_{\sixt}(1+|b|)}{C_{\sev}|b|}+C_{\sixt}\lambda^{n_0})\|v\|_b\\&\leq (1+|b|)(\frac{1}{3}+\frac{1}{6})\|v\|_b=(1+|b|)\frac{1}{2}\|v\|_b,
	\end{align*}	
	where the last inequality is due to $C_{\sev}\geq 6C_{\sixt}$ and $\lambda^{n_0}C_{\sev}\leq 1$.
\end{proof}

\begin{lem}
\label{L2bounded}

There exist $C_{\nin}>0$ and $\beta<1$ such that for all $s=\sigma+ib$ with $|\sigma|<\epsilon$ and $|b|$ large enough and $m\leq [C_{\nin}\log|b|]$
\begin{equation}\label{equ:L2Linfinty}
\int |\LL^{mn_0}_{s} v|^2\dd\nu \leq \beta^m\|v\|_b^2.
\end{equation}
\end{lem}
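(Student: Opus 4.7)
The plan is to iterate the cone invariance of Lemma \ref{cone invariance}, use Jensen's inequality to transform the pointwise cone estimate into an $L^2$ contraction of $\int u_k^2\dd\nu$, and control the remainder from the boundary strip $\Delta_0\setminus\Delta_b$ via friendliness of the PS measure. The restriction $m\leq C_{\nin}\log|b|$ arises exactly from absorbing the polynomial-in-$|b|$ boundary loss into the exponential contraction.

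First, one may reduce to $|v|_{\operatorname{Lip}}\leq C_{\sev}|b||v|_\infty$ via Lemma \ref{lem:vLip}, then set $u_0\equiv|v|_\infty$, so that $(u_0,v)\in\mathcal C_b$ trivially, and inductively define $v_{k+1}=\LL_s^{n_0}v_k$ and $u_{k+1}=\LL_\sigma^{n_0}(\chi_k u_k)$ with $\chi_k=\chi(b,u_k,v_k)$ from Corollary \ref{cone ineq}. By Lemma \ref{cone invariance}, $(u_k,v_k)\in\mathcal C_b$ for every $k$, and by Corollary \ref{cone ineq} one has $|v_k|\leq u_k$; moreover $\sup u_k\leq|v|_\infty\leq\|v\|_b$ by positivity of $\LL_\sigma^{n_0}$ and $\LL_\sigma^{n_0}1=1$. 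Since $\LL_\sigma^{n_0}$ is a Markov-like operator, Jensen gives $(\LL_\sigma^{n_0}(\chi_k u_k))^2\leq\LL_\sigma^{n_0}((\chi_k u_k)^2)$; combined with the $(1+O(|\sigma|))$-approximate $\LL_\sigma^{n_0}$-invariance of $\nu$ (inherited from the closeness of $f_\sigma$ to $f_0$) and $\chi_k^2\leq 1-(1-\eta^2)\mathbf{1}_{E_k}$, this yields
$$\int u_{k+1}^2\dd\nu\leq(1+C|\sigma|)\Bigl(\int u_k^2\dd\nu-(1-\eta^2)\int_{E_k}u_k^2\dd\nu\Bigr).$$

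The central estimate is $\int_{E_k}u_k^2\dd\nu\geq c\int_{\Delta_b}u_k^2\dd\nu$ for some $c>0$ independent of $b$. Proposition \ref{contr int} applied to $w=u_k^2$ (with $K=2C_{\sev}$, valid since $(u_k,v_k)\in\mathcal C_b$) gives $\int_{\tilde B}u_k^2\dd\nu\geq\epsilon_4\int_{\Delta_b}u_k^2\dd\nu$, and the passage from $\tilde B$ to $E_k$ follows from quasi-invariance of $\mu$ together with the local constancy of $u_k^2$ on each tiny piece $\gamma_{m_ij_i}(\tilde B_i)$ (diameter $\ll 1/|b|$), the uniform positive lower bound on $|\gamma_{mj}'|_\infty$ from the finite collection in Lemma \ref{lem:uni}, and the doubling property of $\nu$. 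The residual mass over $\Delta_0\setminus\Delta_b$ is bounded by $\sup u_k^2\cdot\nu(\Delta_0\setminus\Delta_b)\leq C|b|^{-\alpha}\|v\|_b^2$ using friendliness (Lemma \ref{lem:boud}). Setting $I_k:=\int u_k^2\dd\nu$, we obtain
$$I_{k+1}\leq(1-\tau)I_k+C|b|^{-\alpha}\|v\|_b^2$$
for some uniform $\tau>0$ (provided $|b|$ is large and $|\sigma|$ is small); iterating from $I_0\leq\|v\|_b^2$ and choosing $C_{\nin}$ small enough that $(1-\tau)^{C_{\nin}\log|b|}\gtrsim|b|^{-\alpha/2}$ yields $I_m\leq\beta^m\|v\|_b^2$ for some $\beta\in(0,1)$ uniformly in $b$.

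The main obstacle lies in the transfer of the estimate from $\tilde B$ to $E_k$: the branches $\gamma_{m_ij_i}$ relocate $\tilde B_i$ by distance $O(1)$, across which $u_k^2$ may vary by a factor $e^{O(|b|)}$ because its log-Lipschitz constant scales as $|b|$. Handling this requires exploiting the precise structure $u_{k+1}=\LL_\sigma^{n_0}(\chi_k u_k)$, where the cutoff only removes the contribution of a single branch on each $\tilde B_i$: expanding $u_{k+1}^2$ on $\tilde B_i$ and using $a^2-(a-b)^2\geq ab$ produces a cross-term pairing $\LL_\sigma^{n_0}(u_k)(x)$ with $w_{\gamma_{m_ij_i}}(x)\,u_k(\gamma_{m_ij_i}x)$ that, after local constancy and summation, compares favorably to $\int_{\tilde B_i}u_k^2\dd\nu$, as in the Naud/Stoyanov implementations of Dolgopyat's argument. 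The friendliness-based boundary control restricts this $L^2$ argument to $m\leq C_{\nin}\log|b|$; the Lipschitz contraction Lemma \ref{lem:Lipcontracting} in the next subsection takes over for larger $m$.
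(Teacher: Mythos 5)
Your overall strategy (reduce via Lemma~\ref{lem:vLip}, iterate cone invariance from Lemma~\ref{cone invariance}, extract $L^2$ contraction on $\Delta_b$, bound the boundary strip by friendliness) matches the paper. But your key mechanism for extracting the contraction diverges, and the divergence introduces a genuine gap. You apply Jensen to write $u_{k+1}^2\leq \LL_\sigma^{n_0}(\chi_k^2 u_k^2)$, then (approximate) $\nu$-invariance to land on $\int u_k^2\dd\nu-(1-\eta^2)\int_{E_k}u_k^2\dd\nu$, where $E_k=\cup_i\gamma_{m_ij_i}(\tilde B_i)$. You then need $\int_{E_k}u_k^2\dd\nu\gtrsim\int_{\Delta_b}u_k^2\dd\nu$, and this is exactly what fails: $E_k$ lies under the fixed branches $\gamma_{m_ij_i}$, a positive distance away from $\tilde B$, and since $|\log u_k|_{\operatorname{Lip}}\leq C_{\sev}|b|$ the ratio of $u_k$ across that gap can be as large as $e^{\Theta(|b|)}$. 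Proposition~\ref{contr int} only certifies mass on $\tilde B$, not on $E_k$; nothing in the cone or coding forces $u_k^2$ to carry comparable mass under the images of the fixed branches. Your proposed repair via the cross-term $a^2-(a-b')^2\geq ab'$ does not close the hole either, because the extracted term is still $u_k(\gamma_{m_ij_i}x)^2$ — again evaluated far from $\tilde B_i$ — and the ``local constancy'' and quasi-invariance observations compare values within a tiny ball, not across the $O(1)$ displacement that the branch produces.

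The paper resolves this with a Cauchy-Schwarz split inside the transfer operator sum, writing $u_{m+1}^2\leq\xi(\sigma)\,\LL_0^{n_0}(u_m^2)\cdot\LL_{2\sigma}^{n_0}(\chi_m^2)$. This decouples the two sources of difficulty: the cutoff factor $\LL_{2\sigma}^{n_0}(\chi_m^2)(y)$ is $\leq\eta_1<1$ precisely for $y\in\tilde B$ (because the factor $\chi_m(\gamma_i y)$ appears for $\gamma_i y\in\gamma_i\tilde B_i$ — the cutoff is probed exactly where it was designed to bite), while the $u_m$-dependence is pushed into $W:=\LL_0^{n_0}(u_m^2)$, whose log-Lipschitz constant is $K|b|$ with $K$ a fixed constant (using $2C_{\sev}\lambda^{n_0}\leq 2$ from \eqref{cone const 1}), not $2C_{\sev}|b|$. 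Proposition~\ref{contr int} is then applied to $W$, not to $u_m^2$, and the resulting estimate $\eta_1\int_{\tilde B}W\dd\nu+\int_{\Delta_b\setminus\tilde B}W\dd\nu\leq\beta'\int_{\Delta_0}W\dd\nu=\beta'\int u_m^2\dd\nu$ closes the induction with the cutoff localization and the measure localization aligned on the same set $\tilde B$. In short: replacing Jensen by this three-factor Cauchy-Schwarz is not a cosmetic change but the step that makes the $\tilde B$ / cutoff-support mismatch disappear; without it the proof does not go through.
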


\begin{proof}
If for all $0\leq p\leq m-1$, we have $|\LL_s^{pn_0}v|_{\text{Lip}}\geq C_{\sev} |b| |\LL_s^{pn_0}v|_{\infty}$, then by Lemma~\ref{lem:vLip},
\[\int |\LL_s^{mn_0}v|^2\dd\nu\leq \|\LL_s^{mn_0}v\|_b^2\leq (\frac{9}{10})^m\|v\|_b^2. \]

Otherwise, suppose $p$ is the smallest integer such that $|\LL_s^{pn_0}v|_{\operatorname{Lip}}\leq C_{\sev}|b||\LL_s^{pn_0}v|_\infty$. We consider $v'=\LL_s^{pn_0}v$. Then Lemma~\ref{lem:vLip} implies $\|v'\|_b\leq (\frac{9}{10})^p\|v\|_b$. We only need to show that 
\[\int|\LL_s^{(m-p)n_0}v'|^2\dd\nu\leq \beta^{m-p}\|v'\|_b^2. \]
We reduce to the case when $p=0$, that is $|v|_{\operatorname{Lip}}\leq C_{\sev}|b||v|_\infty$.
Define $u_0\equiv 1,\,v_0=v/|v|_{\infty}$ and induitively,
\begin{equation*}
u_{m+1}=\LL^{n_0}_{\sigma}(\chi_{m}u_{m}),\,\,\, v_{m+1}=\LL^{n_0}_{s}(v_m),
\end{equation*}
where $\chi_m=\chi (b,u_m,v_m)$. It is immediate that $(u_0,v_0)\in \mathcal{C}_b$, and it follows from Lemma~\ref{cone invariance} that $(u_m,v_m)\in \mathcal{C}_b$ for all $m$. Hence in particular the $\chi_m$'s are well defined.

We will show that there exist $\beta_1\in (0,1)$, $\kappa>0$ and $C>0$ such that for all $m$
\begin{equation}
\label{induction eq}
\int u_{m+1}^2 \dd\nu \leq \beta_1 \int u_m^2 \dd\nu+C|b|^{-\kappa}.
\end{equation}
Then note that
\begin{equation*}
|\LL^{mn_0}_s v|=|v|_{\infty} |\LL^{mn_0}_s v_0|=|v|_{\infty} |v_m|\leq |v|_{\infty} u_m.
\end{equation*}
As a result,
\begin{equation*}
\int |\LL^{mn_0}_s v|^2\dd\nu\leq |v|_{\infty}^2 \int u_m^2 \dd\nu \leq |v|^2_{\infty}(\beta_1^m \int u^2_0 \dd\nu+C|b|^{-\kappa}\sum_{0\leq l\leq m-1}\beta_1^l ) \leq(\beta_1^m+C|b|^{-\kappa}/(1-\beta_1)) |v|_{\infty}^2.
\end{equation*}
We can find $C_{\nin}>0$ and $\beta<1$ such that for any large enough $|b|$, \eqref{equ:L2Linfinty} holds for all $m\leq [C_{\nin}\log|b|]$.

Now we prove \eqref{induction eq}. By definition
\begin{align*}
u_{m+1}(x)&=\lambda_{\sigma}^{-n_0} f_{\sigma}^{-1}(x) \sum_{\gamma\in \calH^{n_0}} |\gamma'(x)|^{\delta+\sigma} f_{\sigma}(\gamma x) \chi_{m}(\gamma x) u_m(\gamma x)\\
&= \lambda_{\sigma}^{-n_0} f_{\sigma}^{-1}(x)\sum_{\gamma\in \calH^{n_0}} \left(|\gamma'(x)|^{\delta/2} f_{\sigma}^{1/2}(\gamma x) u_m(\gamma x)\right) \left(|\gamma'(x)|^{\delta/2+\sigma} f_{\sigma}^{1/2}(\gamma x) \chi_m(\gamma x)\right),
\end{align*}
so by Cauchy-Schwarz
\begin{align*}
u_{m+1}^2(x)\leq& (\lambda_{\sigma}^{-n_0} f_{\sigma}(x))^{-2}\left(\sum_{\gamma\in \calH^{n_0}} |\gamma'(x)|^{\delta} f_{\sigma}(\gamma x) u_{m}^2(\gamma x)\right) \left(\sum_{\gamma\in \calH^{n_0}} |\gamma'(x)|^{\delta+2\sigma} f_{\sigma}(\gamma x) \chi_m^2(\gamma x)\right)\\
\leq & \xi (\sigma) \LL^{n_0}_0 (u^2_m) \LL^{n_0}_{2\sigma} (\chi_m^2)
\end{align*}
where (noting that $\lambda_0=1$)
\begin{equation*}
\xi (\sigma) =(\lambda^{-2}_{\sigma} \lambda_{2\sigma})^{n_0} \left|\frac{f_0}{f_{\sigma}}\right|_{\infty} \left|\frac{f_{2\sigma}}{f_{\sigma}}\right|_{\infty} \left| \frac{f_{\sigma}}{f_0}\right|_{\infty} \left|\frac{f_{\sigma}}{f_{2\sigma}}\right|_{\infty}.
\end{equation*}
As in Proposition~\ref{contr int}, we write $\Delta_0=\tilde{B}\sqcup \tilde{B}^c$. Let $\mathcal{H}_c$ be the set of inverse branches given by Lemma~\ref{lem:uni}. If $y\in \tilde{B}$, then there exists $\gamma_i\in \mathcal{H}_{c}$ such that
\begin{align*}
\LL^{n_0}_{2\sigma}(\chi^2_{m})(y)\leq & \lambda^{-n_0}_{2\sigma} f_{2\sigma} (y)^{-1} \left(\eta^2 |\gamma'_i(y)|^{\delta+2\sigma} f_{2\sigma}(\gamma_iy)+\sum_{\gamma\in \calH^{n_0}\backslash \{\gamma_i\}} |\gamma'(y)|^{\delta+2\sigma} f_{2\sigma}(\gamma y) \right)\\
=& \LL^{n_0}_{2\sigma} (1)(y)-(1-\eta^2) \lambda^{-n_0}_{2\sigma} f_{2\sigma}(y)^{-1} |\gamma'_i(y)|^{\delta+2\sigma}f_{2\sigma}(\gamma_iy)\\
\leq & 1-(1-\eta^2) 2^{-(n_0+2)} \inf f_0 \cdot |f_0|_{\infty}^{-1} \cdot \inf_{\{\gamma_i\in \mathcal{H}_c\}} |\gamma'_i|^{\delta+2\sigma}=\eta_1<1,
\end{align*}
In this way we obtain that there exists $\eta_1<1$ such that 
\begin{equation*}
u^2_{m+1}(y)\leq 
\begin{cases}
\eta_1 \xi (\sigma) \LL^{n_0}_0 (u_m^2)(y), & y\in \tilde{B},\\
\xi (\sigma) \LL^{n_0}_0 (u_m^2)(y), & y\in \tilde{B}^c.
\end{cases}
\end{equation*}

Since $(u_m,v_m)\in \mathcal{C}_b$, it follows in particular that $|\log u_m|_{\text{Lip}}\leq C_{\sev} |b|$. Hence by \eqref{cone const 1},
\begin{equation*}
u_m^2(\gamma x)/u_m^2(\gamma y)\leq \exp (2C_{\sev}\lambda^{n_0} |b|d(x,y))\leq \exp (2|b|d(x,y)).
\end{equation*}
Let $w=\LL^{n_0}_0(u_m^2)$. Then
\begin{align*}
\frac{w(x)}{w(y)}=\frac{f_0(y)\sum_{\gamma\in \calH^{n_0}}|\gamma'(x)|^{\delta} f_0(\gamma x)u_m^2(\gamma x) }{f_0(x) \sum_{\gamma\in \calH^{n_0}} |\gamma'(y)|^{\delta} f_0(\gamma y) u_m^2(\gamma y)}\leq \exp \left(\left(2|f_0^{-1}|_{\infty} |f_0|_{\text{Lip}} +\delta C_2 +2|b|\right) d(x,y)\right).
\end{align*}
Hence $|\log w|_{\text{Lip}}\leq K|b|$ with $K=2|f_0^{-1}|_{\infty} |f_0|_{\text{Lip}} +\delta C_2+2$. Using Proposition~\ref{contr int}, we have
\begin{equation*}
(1-\eta_1) \int_{\tilde{B}} w\dd\nu \geq \epsilon_4 (1-\eta_1)\int_{\Delta_b} w \dd\nu. 
\end{equation*}
Setting $\beta'=1-\epsilon_4(1-\eta_1)$, we can further write
\begin{equation*}
\eta_1\int_{\tilde{B}} w\dd\nu+\int_{\Delta_b-\tilde{B}} w\dd\nu\leq \beta' \int_{\Delta_b} w\dd\nu\leq \beta'\int_{\Delta_0} w \dd\nu.
\end{equation*}
Hence
\begin{align}
\int_{\Delta_b} u^2_{m+1}\dd\nu \leq & \xi (\sigma) \left(\eta_1\int_{\tilde{B}} \LL^{n_0}_0(u^2_m)\dd\nu +\int_{\Delta_b-\tilde{B}} \LL^{n_0}_0 (u^2_m)\dd\nu\right)\nonumber\\
\label{bounded contraction 1}
\leq &\xi(\sigma) \beta' \int_{\Delta_0} \LL^{n_0}_0(u^2_m)\dd\nu =\xi (\sigma) \beta' \int_{\Delta_0} u^2_m\dd\nu. 
\end{align}
By \eqref{equ:bou}, \eqref{equ:deltab} and $|u_{m+1}|\leq 1$,
\begin{equation}
\label{bounded contraction 2}
	\int_{\Delta_0-\Delta_b}u_{m+1}^2 \dd\nu\leq \nu(\Delta_0-\Delta_b)\leq C|b|^{-\kappa}.
\end{equation}

Finally we can shrink $\epsilon$ if necessary so that $\xi (\sigma)\beta'<1$ for $|\sigma|<\epsilon$ and then \eqref{bounded contraction 1} and \eqref{bounded contraction 2} imply \eqref{induction eq}.
\end{proof}

\subsection{Proof of Proposition~\ref{L2contracting}}\label{sec:L2contractoin}
\begin{lem}\label{lem:Linfty2}
There exist $\epsilon\in (0,1),\, \tau\in (0,1)$ and $C_{\twi}>0$ such that for all $s=\sigma+ib$ with $|\sigma|<\epsilon$, $n\geq 1$ and $v\in \operatorname{Lip}(\Delta_0)$, we have
\begin{equation*}
|\LL^n_s v|^2_{\infty} \leq C_{\twi} (1+|b|)\tau^n |v|_{\infty} \lVert v\rVert_b+C_{\twi}B^n|v|_{\infty} \int |v| \dd\nu
\end{equation*} 
where $B>1$ is a constant depending on $\epsilon$ and it tends to $1$ as $\epsilon\to 0$.
\end{lem}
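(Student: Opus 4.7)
The plan is to combine a Cauchy--Schwarz reduction with the spectral gap of the real-parameter transfer operator $\LL_\sigma$ acting on $\operatorname{Lip}(\Delta_0)$.

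First, I would exploit that $\LL_\sigma 1=1$ to write $\LL_s^n v(x)=\sum_{\gamma\in\calH^n}c_\gamma(x)e^{ibR_n(\gamma x)}v(\gamma x)$ with positive weights $c_\gamma(x)=\lambda_\sigma^{-n}f_\sigma(x)^{-1}|\gamma'(x)|^{\delta+\sigma}f_\sigma(\gamma x)$ satisfying $\sum_\gamma c_\gamma(x)=1$. Cauchy--Schwarz then yields
\[
|\LL_s^n v(x)|^2\le\LL_\sigma^n(|v|^2)(x)\le |v|_\infty\cdot\LL_\sigma^n(|v|)(x),
\]
so it suffices to bound $|\LL_\sigma^n(|v|)|_\infty$. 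Note that $w:=|v|$ is Lipschitz with $|w|_\infty=|v|_\infty$ and $|w|_{\operatorname{Lip}}\le(1+|b|)\|v\|_b$.

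Second, I would appeal to the spectral gap of $\LL_\sigma$ on $\operatorname{Lip}(\Delta_0)$. At $\sigma=0$ this follows from the Doeblin--Fortet bound (Lemma~\ref{Lasota-Yorke} specialised to $b=0$) combined with Ionescu--Tulcea--Marinescu, together with the fact that $\LL_0$ admits $1$ as a simple isolated eigenvalue with eigenfunction $1$ and preserves the probability measure $\nu$. By continuity of $\sigma\mapsto\LL_\sigma$ (Lemma~\ref{well-defined}), the same structure persists for $|\sigma|<\epsilon$ with a uniform spectral gap $\rho<1$: writing $\nu_\sigma$ for the unique $\LL_\sigma$-invariant probability measure, one obtains
\[
\left|\LL_\sigma^n w-\int w\,\dd\nu_\sigma\right|_\infty\le C\rho^n\|w\|_{\operatorname{Lip}}
\]
with $C,\rho$ independent of $\sigma\in(-\epsilon,\epsilon)$.

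Third, the Radon--Nikodym density $h_\sigma=\dd\nu_\sigma/\dd\nu$ depends continuously on $\sigma$ in the Lipschitz norm with $h_0\equiv 1$, so $B:=\sup_{|\sigma|<\epsilon}|h_\sigma|_\infty$ is finite and tends to $1$ as $\epsilon\to 0$. Since $B\ge 1$, this gives $\int w\,\dd\nu_\sigma\le B\int w\,\dd\nu\le B^n\int w\,\dd\nu$ for every $n\ge 1$. Combining with the spectral-gap estimate applied to $w=|v|$ and multiplying by $|v|_\infty$ yields the lemma with $\tau=\rho$, after shrinking $\epsilon$ if necessary so that $\tau<1$ (and, if desired, $B\tau<1$).

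The hard part will be verifying the uniform persistence of the spectral gap across the strip $|\sigma|<\epsilon$, i.e.\ the continuity in $\sigma$ of the leading eigenprojector and the uniformity of $\rho<1$; this ultimately rests on the perturbation theory of quasi-compact operators applied to the Doeblin--Fortet inequality of Lemma~\ref{Lasota-Yorke}, which itself uses crucially the exponential-tail property of Proposition~\ref{prop:coding}(5).
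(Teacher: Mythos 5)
Your Cauchy--Schwarz reduction and the plan to invoke a spectral gap are reasonable, but the third step of your proposal contains a genuine gap. The operator $\LL_\sigma$ for $\sigma\neq 0$ does \emph{not} have $\nu$ as its invariant measure. Its invariant probability measure $\nu_\sigma$ is the Gibbs measure for the potential $-(\delta+\sigma)R$, whereas $\nu$ is the Gibbs measure for $-\delta R$. Since $R$ is not cohomologous to a constant (it cannot be, as the system mixes exponentially), these two Gibbs measures are mutually singular for $\sigma\neq 0$, so the Radon--Nikodym density $h_\sigma=\dd\nu_\sigma/\dd\nu$ simply does not exist as a function, let alone a Lipschitz one bounded near $1$. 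The inequality $\int w\,\dd\nu_\sigma\leq B\int w\,\dd\nu$ thus fails for general nonnegative Lipschitz $w$. The best you can extract from perturbation theory is $\big|\int w\,\dd\nu_\sigma-\int w\,\dd\nu\big|\leq c(\sigma)\|w\|_{\mathrm{Lip}}$ with $c(\sigma)\to 0$, but this introduces a term $c(\sigma)(1+|b|)|v|_\infty\|v\|_b$ in the final estimate that does not decay in $n$, which destroys the claimed bound (and in particular makes it useless in the proof of Lemma~\ref{lem:Lipcontracting}, where one must iterate).

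The paper avoids this obstruction by choosing a different, asymmetric Cauchy--Schwarz pairing: write $|\gamma'(x)|^{\delta+\sigma}f_\sigma(\gamma x)|v(\gamma x)|$ as the product of $|\gamma'(x)|^{\delta/2+\sigma}f_\sigma^{1/2}(\gamma x)|v|^{1/2}(\gamma x)$ and $|\gamma'(x)|^{\delta/2}f_\sigma^{1/2}(\gamma x)|v|^{1/2}(\gamma x)$. Cauchy--Schwarz then yields
\[
|\LL_s^n v(x)|^2\leq (\lambda_\sigma^{-2}\lambda_{2\sigma})^n\,\xi(\sigma)\,\LL_{2\sigma}^n(|v|)(x)\cdot\LL_0^n(|v|)(x),
\]
so that only $\LL_0$ receives the spectral-gap estimate --- and $\LL_0$'s invariant measure \emph{is} $\nu$, giving the desired $\int|v|\,\dd\nu$ without any Radon--Nikodym issue. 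The factor $\LL_{2\sigma}^n(|v|)$ is crudely bounded by $|v|_\infty$ using $\LL_{2\sigma}^n 1=1$, and the remaining factor $(\lambda_\sigma^{-2}\lambda_{2\sigma})^n\xi(\sigma)$ is exactly where the harmless $B^n$ comes from. Your argument can be repaired by replacing your naive Cauchy--Schwarz (which lands you on $\LL_\sigma$) with this asymmetric version (which lands you on $\LL_0$), after which you no longer need any comparison between $\nu_\sigma$ and $\nu$, nor the uniform spectral gap for the full family $\{\LL_\sigma\}_{|\sigma|<\epsilon}$.
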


\begin{proof}
We have
\begin{align*}
|\LL^n_s v(x)|\leq & \lambda_{\sigma}^{-n} f_{\sigma}^{-1}(x) \sum_{\gamma\in \calH^n} |\gamma'(x)|^{\delta+\sigma} f_{\sigma}(\gamma x) |v|(\gamma x)\\
=& \lambda^{-n}_{\sigma} f_{\sigma}^{-1}(x) \sum_{\gamma\in \calH^n} \left(|\gamma'(x)|^{\delta/2+\sigma} f_{\sigma}^{1/2}(\gamma x) |v|^{1/2}(\gamma x)\right) \left(|\gamma'(x)|^{\delta/2} f^{1/2}_{\sigma}(\gamma x) |v|^{1/2}(\gamma x)\right).
\end{align*}
Using Cauchy-Schwarz, we obtain
\begin{align*}
|\LL^n_s v(x)|^2 \leq (\lambda_{\sigma}^{-2}\lambda_{2\sigma})^n \xi(\sigma) \LL^n_{2\sigma}(|v|) (x) \cdot \LL^n_0(|v|)(x),
\end{align*}
where $\xi(\sigma)=|f_0/f_{\sigma}|_{\infty} |f_{2\sigma}/f_{\sigma}|_{\infty} |f_{\sigma}/f_0|_{\infty} |f_{\sigma}/f_{2\sigma}|_{\infty}\leq 64$. Hence
\begin{equation}
\label{bound}
|\LL^n_s v|^2_{\infty} \leq 64 B^n |v|_{\infty} |\LL^n_0(|v|)|_{\infty},
\end{equation}
where $B>1$ is a constant depending on $\epsilon$ with $B\to 1$ as $\epsilon\to 0$.

Since $\LL_0$ is a normalized transfer operator for the uniformly expanding map $T$, there exists $\tau_1\in (0,1)$ such that $|\LL^n_0 v|_{\infty}\leq C\tau_1^n\lVert v\rVert_{\text{Lip}}$ for all $v\in \text{Lip}(\Delta_0)$ with $\int v \dd\nu=0$. (This is a consequence of spectral gap of quasi-compact operator $\LL_0$.) Hence by decomposing $|v|$ into $(|v|-\int |v|\dd\nu)+\int |v|\dd\nu$, we obtain
\begin{equation*}
| \LL^n_0(|v|)|_{\infty} \leq 2C\tau_1^n \lVert v\rVert_{\text{Lip}}+\int |v|\dd\nu.
\end{equation*}
Substituting into (\ref{bound}), we have
\begin{equation*}
|\LL^n_s v|_{\infty}^2\leq 128 C(B\tau_1)^n (1+|b|)|v|_{\infty} \lVert v\rVert_b +64 B^n |v|_{\infty} \int |v| \dd\nu.
\end{equation*}
Finally, shrink $\epsilon$ if necessary so that $\tau=B\tau_1<1$.
\end{proof}

\begin{lem}\label{lem:Lipcontracting}
There exist $C>0,\ \epsilon \in (0,1),\, A>0$ and $\beta\in (0,1)$ such that
\begin{equation*}
\lVert \LL^{mn_0}_s v\rVert_b\leq C\beta^m \lVert v\rVert_b
\end{equation*} 
for all $m\geq A\log |b|,\, s=\sigma+ib$ with $|\sigma|<\epsilon$ and $|b|$ large enough, and all $v\in \operatorname{Lip}(\Delta_0)$.
\end{lem}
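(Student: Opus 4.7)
The strategy is to establish a single ``block'' contraction of $\LL_s^{n_0}$ iterated $p:=[A\log|b|]$ times, showing that $\|\LL_s^{pn_0}v\|_b\leq C|b|^{-\bar\alpha}\|v\|_b$ for some $\bar\alpha>0$. Iterating this block contraction then yields the exponential decay $\beta^m$ for all $m\geq A\log|b|$, with $\beta$ independent of $|b|$. The two main inputs are the $L^\infty$ estimate (Lemma~\ref{lem:Linfty2}), which converts an $L^1$ (equivalently $L^2$) bound into an $L^\infty$ bound, and Lemma~\ref{L2bounded}, which supplies $L^2$ contraction for iterations up to $C_{\nin}\log|b|$.

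First I would prove the $L^\infty$ part of the block contraction. Writing $\LL^{pn_0}_s v=\LL^{(p-k)n_0}_s(\LL^{kn_0}_s v)$ with $k=p/2\leq C_{\nin}\log|b|$ (which forces $A\leq 2C_{\nin}$) and applying Lemma~\ref{lem:Linfty2} to $\LL^{(p-k)n_0}_s$ acting on $\LL^{kn_0}_s v$ gives
\begin{equation*}
|\LL^{pn_0}_s v|_\infty^2\leq C_{\twi}(1+|b|)\tau^{(p-k)n_0}\|\LL^{kn_0}_s v\|_b^2+C_{\twi}B^{(p-k)n_0}\|\LL^{kn_0}_s v\|_b\int |\LL^{kn_0}_s v|\dd\nu.
\end{equation*}
Lemma~\ref{lem:Lb} bounds $\|\LL^{kn_0}_s v\|_b\leq 2C_{\sixt}\|v\|_b$, and Cauchy--Schwarz together with Lemma~\ref{L2bounded} (valid because $k\leq C_{\nin}\log|b|$) gives $\int|\LL^{kn_0}_s v|\dd\nu\leq \beta_0^{k/2}\|v\|_b$ for some $\beta_0<1$. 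With $p=[A\log|b|]$ and $k=p/2$, the first term is of order $|b|^{1+(A/2)n_0\log\tau}$, a negative power provided $A>2/(n_0|\log\tau|)$; the second is bounded by $(B^{n_0/2}\beta_0^{1/4})^{p}$, a negative power of $|b|$ provided $\epsilon$ is shrunk so that $B$ is sufficiently close to $1$. This yields $|\LL^{pn_0}_s v|_\infty\leq C|b|^{-\alpha}\|v\|_b$; the identical argument applied at any $m'\in[(A/2)\log|b|,p]$ in place of $p$ gives the same bound, which I record for use in the Lipschitz step.

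Next, for the Lipschitz part I would iterate Lemma~\ref{Lasota-Yorke} with step $n_0$ to obtain
\begin{equation*}
|\LL^{pn_0}_s v|_{\operatorname{Lip}}\leq C_{\sixt}(1+|b|)\sum_{j=0}^{p-1}\lambda^{jn_0}|\LL^{(p-1-j)n_0}_s v|_\infty+C_{\sixt}\lambda^{pn_0}|v|_{\operatorname{Lip}}.
\end{equation*}
For $j\leq p/2$ the intermediate exponent $p-1-j\geq (A/2)\log|b|$, so the $L^\infty$ estimate above gives $|\LL^{(p-1-j)n_0}_s v|_\infty\leq C|b|^{-\alpha}\|v\|_b$; for $j>p/2$ the trivial bound $|\LL^{(p-1-j)n_0}_s v|_\infty\leq 2C_{\sixt}\|v\|_b$ is multiplied by $\lambda^{jn_0}$, and the resulting tail $\lambda^{(p/2)n_0}=|b|^{(A/2)n_0\log\lambda}$ is a negative power of $|b|$ for $A$ large. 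The remainder term obeys $C_{\sixt}\lambda^{pn_0}|v|_{\operatorname{Lip}}\leq C_{\sixt}(1+|b|)|b|^{An_0\log\lambda}\|v\|_b$, again a negative power times $(1+|b|)$ for $A$ large. Dividing through by $(1+|b|)$ combines with the $L^\infty$ bound to give $\|\LL^{pn_0}_s v\|_b\leq C|b|^{-\bar\alpha}\|v\|_b$ for some $\bar\alpha>0$.

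Finally, to obtain the full statement, I would iterate this block contraction $q$ times to get $\|\LL^{qpn_0}_s v\|_b\leq (C|b|^{-\bar\alpha})^q\|v\|_b$. For arbitrary $m\geq A\log|b|$, write $m=qp+r$ with $0\leq r<p$; Lemma~\ref{lem:Lb} yields $\|\LL^{mn_0}_s v\|_b\leq 2C_{\sixt}(C|b|^{-\bar\alpha})^q\|v\|_b$. For $|b|$ large, $C|b|^{-\bar\alpha}\leq |b|^{-\bar\alpha/2}$, and since $q\geq m/(2p)$ for $m\geq 2p$, one gets $|b|^{-\bar\alpha q/2}\leq e^{-(\bar\alpha/(4A))m}$; for the boundary range $A\log|b|\leq m<2A\log|b|$ the inequality follows directly by comparison. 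Setting $\beta=e^{-\bar\alpha/(4A)}<1$ finishes the proof. The main technical obstacle will be coordinating the choice of $A$, the requirement $A\leq 2C_{\nin}$ (so that Lemma~\ref{L2bounded} applies to $k=p/2$), and the shrinkage of $\epsilon$ controlling $B$, all of which must be consistent to keep both the $\tau$-driven and $B$-driven terms as negative powers of $|b|$.
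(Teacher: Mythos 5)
Your overall strategy---split the exponent, use Lemma~\ref{lem:Linfty2} to trade the $L^2$ bound from Lemma~\ref{L2bounded} for an $L^\infty$ bound, then feed that into the Lasota--Yorke inequality---is the same as the paper's, but the issue you flag at the end as a ``technical obstacle'' is a genuine gap that your specific split $k=p/2$ creates. With $k=p/2$, the requirement that Lemma~\ref{L2bounded} apply to the inner block forces $A\leq 2C_{\nin}$, while making the $\tau$-term a negative power of $|b|$ forces $A>2/(n_0|\log\tau|)$; these are simultaneously satisfiable only if $C_{\nin}>1/(n_0|\log\tau|)$, and there is no reason this should hold. The constant $C_{\nin}$ is dictated by the boundary-decay exponent $\kappa$ appearing in \eqref{bounded contraction 2}, while $\tau$ tends to $\tau_1$ (the spectral-gap rate of $\LL_0$ fixed in the proof of Lemma~\ref{lem:Linfty2}) as you shrink $\epsilon$, so shrinking $\epsilon$ cannot enlarge $|\log\tau|$; the two constants are unrelated. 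The paper sidesteps this by taking the inner block to be $N=[C_{\nin}\log|b|]n_0$ (the maximum that Lemma~\ref{L2bounded} permits) and the outer block to be $lN$ for a \emph{free} integer $l$, chosen so that $(1+|b|)\tau^{lN/2}\leq 1$, after which $A=2(l+2)C_{\nin}$ is determined; since $l$ is free there is no upper constraint on $A$. Your version could be salvaged by taking $k=[C_{\nin}\log|b|]$ rather than $k=p/2$ and letting $p-k$ grow with $A$, but then the bookkeeping becomes essentially the paper's.

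A secondary point: in the Lipschitz step you iterate Lemma~\ref{Lasota-Yorke} with step $n_0$ and write $\sum_{j}\lambda^{jn_0}|\LL^{(p-1-j)n_0}_s v|_\infty$, but each application brings in a factor $C_{\sixt}$, so the summand should carry $(C_{\sixt}\lambda^{n_0})^j$ rather than $\lambda^{jn_0}$. Here this is repairable because \eqref{cone const 1} together with $C_{\sev}\geq 6C_{\sixt}$ from \eqref{cone const 3} gives $C_{\sixt}\lambda^{n_0}<1$, but it should be said; the paper avoids the accumulation entirely by applying Lemma~\ref{Lasota-Yorke} only twice, with exponents $N$ and $(l+1)N$, so that only a $C_{\sixt}^2$ appears.
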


\begin{proof}
Let $N=[C_{\nin}\log|b|]n_0$. Using Lemma~\ref{lem:Linfty2} for $\LL_s^N v$ and $n=lN$, Lemma~\ref{lem:Lb} and \eqref{equ:L2Linfinty}, we obtain
\begin{align*}
	|\LL_s^{(l+1)N}v |_\infty^2&\leq C_{\twi}(1+|b|)\tau^{lN}|\LL_s^N v|_\infty\|\LL_s^Nv\|_b+C_{\twi}B^{lN}|\LL_s^Nv|_\infty (\int |\LL_s^Nv|^2\dd \nu)^{1/2}.
	\\
	&\leq 2C_{\sixt}C_{\twi}(1+|b|)\tau^{lN}|v|_\infty\|v\|_b+2C_{\sixt}C_{\twi}B^{lN}|v|_\infty \beta^{N/2}\|v\|_b.
\end{align*}
We fix $l$ depending on $\tau, C_{\nin}$ and $n_0$ such that $(1+|b|)\tau^{lN/2}\leq 1$. Then by shrinking $B$ if necessary, there exists $\beta_1<1$, such that
\begin{equation}\label{equ:l+1}
|\LL_s^{(l+1)N}v|_\infty\leq \beta_1^{(l+1)N}\|v\|_b.
\end{equation}

For Lipschitz norm, we have 
\begin{align*}
	|\LL_s^{(l+2)N}v|_{\operatorname{Lip}}&\leq C_{\sixt}(1+|b|)|\LL_s^{(l+1)N}v|_\infty+C_{\sixt}\lambda^N|\LL_s^{(l+1)N}v|_{\operatorname{Lip}}\\
	&\leq C_{\sixt}(1+|b|)\beta_1^{(l+1)N}\|v\|_b+C_{\sixt}^2\lambda^N((1+|b|)|v|_\infty+\lambda^{(l+1)N}|v|_{\operatorname{Lip}})\\
	&\leq C_{\sixt}^2(1+|b|)\|v\|_b(\beta_1^{(l+1)N}+\lambda^N+\lambda^{(l+2)N})\leq 3C_{\sixt}^2(1+|b|)\beta_2^N\|v\|_b,
\end{align*}
for some $\beta_2<1$, where we use Lemma~\ref{Lasota-Yorke} to get the first inequality and \eqref{equ:l+1} to get the second one. For the infinity norm, by \eqref{equ:l+1} and Lemma~\ref{lem:Lb}, we obtain
\[|\LL_s^{(l+2)N}v|_\infty\leq 2C_{\sixt}\beta_1^{(l+1)N}\|v\|_b. \]
Combining these two norm estimates, we obtain
\begin{equation}\label{equ:l+2}
\|\LL_s^{(l+2)N}v\|_b\leq C_{\sixt}^2(2\beta_1^{(l+1)N}+3\beta_2^{N})\|v\|_b\leq \beta_3^{(l+2)N/n_0}\|v\|_b, 
\end{equation}
for some $\beta_3<1$ if $|b|$ is large enough to absorb the constant $6C_{\sixt}^2$.

Let $A=2(l+2)C_{\nin}$ and $N_1=(l+2)N/n_0=(l+2)[C_{\nin}\log|b|]\leq A\log|b|$. For $m\geq A\log|b|$, we can write $m=dN_1+r$ with $r\in\N$ and $r< N_1$. Therefore by \eqref{equ:l+2} and Lemma~\ref{lem:Lb},
\begin{equation*}
	\|\LL_s^{mn_0}v\|_b=\| \LL_s^{dN_1n_0}(\LL_s^{rn_0}v)\|_b\leq\beta_3^{dN_1}\|\LL_s^{rn_0}v\|_b\leq 2C_{\sixt}\beta_3^{dN_1}\|v\|_b\leq 2C_{\sixt}(\sqrt{\beta_3})^m\|v\|_b.\qedhere
\end{equation*}

\end{proof}

\begin{proof}[\textbf{Proof of Proposition~\ref{L2contracting}}]
	It is sufficient to prove that for all $m\in \mathbb{N}$,
	\begin{equation}\label{LLs}
		\int |\LL_s^{mn_0}v|^2\dd\nu\leq C\beta^m\|v\|^2_b.
	\end{equation}
	Then for any $k\in \N$, suppose $k=mn_0+r$ with $0\leq r<n_0$. We have
	\[\int |L_s^kv|^2\dd\mu_E\leq C\lambda_\sigma^{2k}\int |\LL_s^k(f_{\sigma}^{-1}v)|^2\dd\nu\leq C\lambda_\sigma^{2k} \beta^m\|\LL_s^r(f_{\sigma}^{-1}v)\|_b^2\leq C\lambda_\sigma^{2k}\beta^m\|f_{\sigma}^{-1}v\|_b^2\leq C\lambda_{\sigma}^{2k}\beta^m \|v\|_b^2.\]
	By choosing $\epsilon$ small such that $\lambda_\sigma^{2n_0}\beta<1$ for any $|\sigma|<\epsilon$, we obtain Proposition~\ref{L2contracting}.
	
	It remains to prove \eqref{LLs}. For $m> A\log|b|$, by Lemma~\ref{lem:Lipcontracting}, we obtain
	\[\int|\LL_s^{mn_0}v|^2\dd\nu\leq \|\LL_s^{mn_0}v \|_b^2\leq C\beta^m\|v\|_b^2. \]
	
	For $A\log|b|\geq m\geq C_{\nin}\log|b|$, by \eqref{equ:L2Linfinty} and Lemma~\ref{lem:Lb}, we know
	\[\int|\LL_s^{mn_0}v|^2\dd\nu\leq \beta^{[C_{\nin}\log|b|]}\|\LL_s^{(m-[C_{\nin}\log|b|])n_0}v \|_b^2\leq 2C_{\sixt}\beta^{[C_{\nin}\log|b|]}\|v\|^2_b\leq 2C_{\sixt}\beta_1^m\|v\|^2_b
	\]
	for some $\beta_1=\beta^{C_{\nin}/A}<1$. 
	
	The case when $m\leq C_{\nin}\log |b|$ has been verified in Lemma~\ref{L2bounded}.
\end{proof}

\section{Exponential mixing}
\label{sec:expmix}

In this section, we prove Theorem~\ref{thm:skew}. As a first step, an analogous result concerning expanding semiflow will be proved. Let $T:\Lambda_+\to \Lambda_+$ be the uniformly expanding map and $R:\Lambda_+\to \mathbb{R}_+$ be the roof function as defined in Proposition~\ref{prop:coding}. Set $\Lambda_+^{R}=\{(x,t)\in \Lambda_+\times \mathbb{R}: 0\leq t<R(x)\}$. We define a semi-flow $T_t:\Lambda_+^R\to \Lambda_+^R$ by $T_s(x,t)=(T^nx, t+s-R_n(x))$ where $n$ is the unique integer satisfying $R_n(x)\leq t+s<R_{n+1}(x)$. Recall that $\nu$ is the unique $T$-invariant ergodic probability measure on $\Lambda_+$. Then the flow $T_t$ preserves the probability measure $\nu^{R}=\nu\times \operatorname{Leb}/(\nu\times \operatorname{Leb})(\Lambda_+^R)$. We will also use the probability measure $\mu_E^{R}=\mu_E\times \operatorname{Leb}/(\mu_E\times \operatorname{Leb})(\Lambda_+^R)$ on $\Lambda_+^R$. We show that $T_t$ is exponentially mixing. 


For a bounded function on $\Lambda_+^R$, we define two norms. Set
\begin{align*}
&\|U\|_{\calB_0}=|U|_\infty+\sup_{ (x,a)\neq(x',a')\in\Lambda_+^R}\frac{|U(x,a)-U(x',a')|}{d(x,x')+|a-a'|},\\
&\|V\|_{\calB_1}=|V|_\infty+\sup_{x\in \Lambda_+} \frac{\operatorname{Var}_{(0,R(x))}\{t\mapsto V(x,t) \}}{R(x)},
\end{align*}
where $\operatorname{Var}_{(0,R(x))}\{t\mapsto V(x,t) \}$ is the total variation of the function $t\mapsto V(x,t)$ on the interval $(0,R(x))$.
\begin{thm}\label{semiflow}
There exist $C>0,\ \epsilon>0$ such that for all $t>0$ and for any two functions $U,\ V$ on $\Lambda_{+}^R$ with $\|U\|_{\calB_0},\ \|V\|_{\calB_1}$ finite, we have
\[\left|\int U\cdot V\circ T_t\dd\mu_E^R-\left(\int U\dd\mu_E^R\right)\left(\int V\dd\nu^R\right)\right|\leq Ce^{-\epsilon t}\|U\|_{\calB_0}\|V\|_{\calB_1}. \]
\end{thm}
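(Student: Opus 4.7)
The plan is to follow the standard Pollicott--Ruelle--Dolgopyat method: express the Laplace transform of the correlation function in terms of the resolvent of the twisted transfer operator $L_s$ from Section~\ref{sec:spegap}, and then shift the contour using the spectral gap provided by Proposition~\ref{L2contracting}. After a density approximation the problem reduces to $U, V$ with finite norms as stated, and without loss of generality I may assume $\int V\,\dd\nu^R = 0$. Set $\rho(t) = \int U \cdot V \circ T_t\,\dd\mu_E^R$, which we want to show decays exponentially.

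\textbf{Laplace transform identity.} For $\operatorname{Re} s > 0$ define $\hat\rho(s) = \int_0^\infty e^{-st}\rho(t)\,\dd t$. Unfolding the semiflow on $\Lambda_+^R$ by splitting the time variable according to the successive ceiling pieces $[R_n(x), R_{n+1}(x))$ and reindexing by the inverse branches $\gamma \in \calH_n$, one arrives after a brief computation at an identity of the form
\[
\bar R \cdot \hat\rho(s) \;=\; \int_{\Lambda_+} \tilde U_s(x)\,\sum_{n \ge 0} L_{-s}^{\,n}\tilde V_s(x)\,\dd\mu(x) + E(s),
\]
where $\tilde U_s, \tilde V_s$ are obtained by integrating $U, V$ against $e^{\pm s a}$ along flow orbits inside a single ceiling and $E(s)$ is an entire boundary correction. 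The operator $L_{-s}$ appears naturally because the density $|\gamma'(x)|^\delta e^{sR(\gamma x)} = |\gamma'(x)|^{\delta - s}$ is exactly the weight defining $L_{-s}$. The key point is that $\tilde V_s$ lies in $\operatorname{Lip}(\Delta_0)$ with norm at worst polynomial in $|\operatorname{Im} s|$: the bounded variation of $V$ along the flow (the defining feature of $\|\cdot\|_{\calB_1}$) controls the singular jump that $\tilde V_s$ would otherwise pick up at the top of the ceiling, while the exponential tail of $R$ (Proposition~\ref{prop:coding}(5)) handles integrability over the unbounded roof.

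\textbf{Spectral bound on the strip.} By Lemma~\ref{lem:Lipcontracting} together with Lemma~\ref{lem:Lb}, for $s = \sigma + ib$ with $|\sigma|$ below the $\epsilon$ of Proposition~\ref{L2contracting} and $|b|$ large one has $\|\LL_s^{\,m}\|_b \le C\beta^m$ as soon as $m \ge A\log|b|$, which yields a resolvent bound $\|(I - \LL_{-s})^{-1}\|_b \ll \log |b|$ on the entire strip $\{|\sigma| < \epsilon\} \setminus \{0\}$. Conjugating back by $f_\sigma$ gives the same estimate for $L_{-s}$ itself. Near $s=0$, analytic perturbation of the simple isolated eigenvalue $1$ of $L_0$ produces a meromorphic continuation of $(I-L_{-s})^{-1}$ with a simple pole at $s=0$; a direct calculation identifies the residue as $\mu_E^R(U)\nu^R(V)/\bar R$, which vanishes by our normalization. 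Consequently $\hat\rho(s)$ extends holomorphically to a strip $\{\operatorname{Re} s > -\epsilon'\}$ with at most polynomial growth in $|\operatorname{Im} s|$.

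\textbf{Contour shift and main obstacle.} A quantitative Paley--Wiener argument -- shifting the contour from $\operatorname{Re} s = 1$ to $\operatorname{Re} s = -\epsilon''$ for some $\epsilon'' < \epsilon'$, and using one or two integrations by parts in $t$ to absorb the polynomial growth of $\hat\rho$ along vertical lines -- then gives $|\rho(t)| \le C e^{-\epsilon t}\|U\|_{\calB_0}\|V\|_{\calB_1}$. The main difficulty is bookkeeping rather than spectral: because $R$ is unbounded, the auxiliary functions $\tilde U_s$ and $\tilde V_s$ are not obviously Lipschitz, and the naive estimate of their norms grows linearly in $R$ and in $|b|$. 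Turning this into a genuinely polynomial bound in $|b|$, at a cost that the exponential decay coming from the resolvent bound can absorb, requires a careful decomposition using both the $\calB_1$-variation structure of $V$ in the flow direction and the exponential tail of $R$; this decomposition is the only step where the unboundedness of the cusp excursions enters the mixing argument in an essential way.
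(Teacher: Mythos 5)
Your proposal follows the same route the paper takes: the paper's actual proof is a two-paragraph sketch deferring almost entirely to the machinery of Avila--Gou\"ezel--Yoccoz (expressing $\hat\rho$ via $L_{-s}$, proving analytic continuation to a strip using quasi-compactness near $s=0$ and Proposition~\ref{L2contracting} for large $|b|$, then Paley--Wiener), and you have reproduced that outline correctly.

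One correction on where the new technical work actually sits. You locate the delicate step in the function $\tilde V_s$ built from $V$, claiming its Lipschitz norm in $x$ and the $\calB_1$-variation of $V$ produce the polynomial control in $|b|$. In fact the class $\calB_1$ (sup plus variation in the flow direction) gives $V$ no regularity from which to extract a factor $1/|b|$, and its treatment needs no modification from AGY. The single adjustment the paper makes is the analogue of AGY Lemma~7.18: an estimate for $L_s\hat U_{-s}$ where $\hat U_{-s}(x)=\int_0^{R(x)}e^{st}U(x,t)\,\dd t$. There the $1/\max\{1,|b|\}$ gain comes from an integration by parts in $t$ using $U$'s Lipschitz regularity in the flow direction (which is part of $\|\cdot\|_{\calB_0}$), and the unboundedness of $R$ is absorbed by the exponential tail $\int e^{\epsilon_o R}\,\dd\mu<\infty$ of Proposition~\ref{prop:coding}(5), as in \eqref{boundedeigen}. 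If you had tried to extract the $1/|b|$ gain from $V$ as described, you would find $\calB_1$ does not permit the integration by parts, so it is worth knowing the gain must come from the $U$-side.
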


\begin{rem}
Applying this theorem to the function $U(x,t)\frac{\dd\nu^R}{\dd\mu_E^R}(x)$, we obtain
\begin{equation}
\label{semiflownu}
\left|\int U\cdot V\circ T_t\dd\nu^R-\left(\int U\dd\nu^R\right) \left(\int V\dd\nu^R\right)\right|\leq Ce^{-\epsilon t}\|U\|_{\calB_0}\|V\|_{\calB_1}. 
\end{equation}
\end{rem}

With Proposition~\ref{L2contracting} available, Theorem~\ref{semiflow} can be proved essentially along the same lines as the proof of \cite[Theorem 7.3]{AGY} (see also \cite[Section 7.5]{AGY}). We provide a sketch of the proof here. For a pair of functions $U,V$, let $\rho(t)=\int U\cdot V\circ T_t\dd\mu_E^R$ be the correlation function and the observation is that the Laplace transform of $\rho$, denoted by $\hat{\rho}$, can be expressed as a sum of twisted transfer operators $L_s$ \cite[Lemma 7.17]{AGY}. One shows that $\hat{\rho}$ admits an analytic continuation to a neighborhood of each point $s=ib$ and this part of the argument uses the quasi-compactness of the twisted transfer operators \cite[Lemma 7.21, 7.22]{AGY}. When $|b|$ is large, the Dolgopyat-type estimate (Proposition~\ref{L2contracting}), which is a replacement of~\cite[Proposition 7.7]{AGY} in the current setting, is used to imply that $\hat{\rho}$ admits an analytic extension to a strip $\{s=\sigma+ib\in \mathbb{C}: |\sigma|<\sigma_0\}$ for all sufficiently small $\sigma_0$ \cite[Corollary 7.20]{AGY}. The result of exponential mixing then follows from the classical Paley-Wiener theorem \cite[Theorem 7.23]{AGY}.

The difference between our result and that in~\cite{AGY} is the classes of functions in concern. The only adjustment we need to make is~\cite[Lemma 7.18]{AGY}, which is a norm estimate for $C^1$ functions in their paper, but for functions with finite $\calB_0$ norm in the current setting. The precise statement is as follows. For a function $U:\Lambda_+\to \mathbb{R}$ with $\| U\|_{\calB_0}<\infty$ and $s\in \mathbb{C}$, set $\hat{U}_{s}(x)=\int_0^{R(x)} e^{-ts}U(x,t)\dd t $.
\begin{lem}
	There exists $C>0$ such that for $s=\sigma+ib$ with $|\sigma|\leq\epsilon_o/4$ ($\epsilon_o$ is given as in Proposition \ref{prop:coding} (5)), the function $L_s\hat{U}_{-s}$ is Lipschitz on $\Delta_0$ and
	\[\|L_s\hat{U}_{-s}\|_b\leq \frac{C\|U\|_{\calB_0}}{\max\{ 1,|b|\}}. \]
\end{lem}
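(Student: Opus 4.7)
The plan is to bound $L_s\hat U_{-s}$ via integration by parts in the $t$-variable, exploiting the Lipschitz regularity of $U$ to turn oscillation at frequency $b$ into the desired $1/|b|$ gain. Since $U$ is Lipschitz, $\partial_t U$ exists a.e.\ with $|\partial_t U|_\infty \leq \|U\|_{\calB_0}$, so integration by parts yields, for each $\gamma\in\calH$,
\[
\hat U_{-s}(\gamma x) \;=\; \frac{1}{s}\Bigl[e^{sR(\gamma x)} U(\gamma x,R(\gamma x)) - U(\gamma x,0)\Bigr]
\;-\;\frac{1}{s}\int_0^{R(\gamma x)} e^{ts}\partial_t U(\gamma x,t)\,\dd t.
\]
Multiplying by $|\gamma'(x)|^{\delta+s}$ and using the key identity $e^{sR(\gamma x)}=|\gamma'(x)|^{-s}$ (since $R(\gamma x)=-\log|\gamma'(x)|$) makes the first boundary contribution independent of the imaginary part: $|\gamma'(x)|^{\delta+s}e^{sR(\gamma x)}=|\gamma'(x)|^\delta$. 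This is the decisive point, as it allows all three resulting pieces to be summed over $\calH$ with bounds independent of $b$.

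For the sup-norm, the first two pieces contribute at most $\tfrac{2|U|_\infty}{|s|}\sum_\gamma|\gamma'|^{\delta+\sigma}$, which is finite by the exponential tail \eqref{sum} (for $|\sigma|\leq\epsilon_o/4$). The integral term is bounded by
$\tfrac{\|U\|_{\calB_0}}{|s|}|\gamma'(x)|^{\delta+\sigma}\!\int_0^{R(\gamma x)} e^{\sigma t}\dd t\ll \tfrac{\|U\|_{\calB_0}}{|s|}|\gamma'(x)|^\delta R(\gamma x)$; using $R(\gamma x)|\gamma'|^{\epsilon_o/4}\ll 1$, this sums to $O(\|U\|_{\calB_0}/|s|)$. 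Thus $|L_s\hat U_{-s}|_\infty\ll \|U\|_{\calB_0}/\max\{1,|b|\}$, where for $|b|\leq 1$ one obtains the bound without integration by parts by directly estimating $|\hat U_{-s}(\gamma x)|\leq |U|_\infty R(\gamma x) e^{\sigma R(\gamma x)}$.

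For the Lipschitz seminorm, one differentiates each of the three terms. Each factor $|\gamma'(x)|^{\delta+s}$ contributes $|\delta+s|\cdot|D_x\log|\gamma'(x)||\leq C_2(1+|b|)$ after differentiation, by \eqref{uniform contraction}; combined with the prefactor $1/|s|$, this yields at most $O(1)$, i.e. no loss in $|b|$. The Lipschitz derivative of $U(\gamma x,R(\gamma x))$ is bounded by $\|U\|_{\calB_0}(|\gamma'(x)|+C_2)\ll \|U\|_{\calB_0}$, again using $|D_x R(\gamma x)|=|D_x\log|\gamma'(x)||\leq C_2$ and $|\gamma'|\leq \lambda<1$. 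Summing, we obtain $|L_s\hat U_{-s}|_{\mathrm{Lip}}\ll \|U\|_{\calB_0}\cdot (1+|b|)/|s|$, i.e.\ bounded by $C\|U\|_{\calB_0}$ for $|b|\geq 1$. Dividing by $1+|b|$ per the definition of $\|\cdot\|_b$ then yields the desired $\|L_s\hat U_{-s}\|_b \ll \|U\|_{\calB_0}/\max\{1,|b|\}$.

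The only delicate bookkeeping step is verifying that the Lipschitz derivative of the third (integral) term does not pick up a hidden factor of $|b|$: differentiating under the integral, the problematic contribution comes from differentiating $|\gamma'(x)|^{\delta+s}$ outside, which brings down $\delta+s$ but is paired with the $1/s$ prefactor and the integrand $|\partial_t U|\leq\|U\|_{\calB_0}$ rather than $|b|\|U\|_\infty$; the contribution from differentiating the upper limit $R(\gamma x)$ is the same type of boundary term already handled. Once this is checked, the exponential tail \eqref{sum} again supplies absolute convergence of all sums uniformly in $|\sigma|\leq\epsilon_o/4$, completing the argument.
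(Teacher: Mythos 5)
Your $L^\infty$ estimate is correct and matches the paper's approach in spirit: both rely on integration by parts in $t$ to harvest the $1/|b|$ gain, and both use the exponential tail to sum over $\calH$. Your explicit use of the identity $e^{sR(\gamma x)}=|\gamma'(x)|^{-s}$ to annihilate the $s$-dependence in the boundary term is a clean reorganization of what the paper does implicitly via the bound $L_\sigma(e^{\epsilon_o R/2})<\infty$.

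However, your Lipschitz estimate has a genuine gap. When you ``differentiate under the integral'' in the third term
\[
\frac{-|\gamma'(x)|^{\delta+s}}{s}\int_0^{R(\gamma x)} e^{ts}\,\partial_t U(\gamma x,t)\,\dd t,
\]
you account for the prefactor $|\gamma'(x)|^{\delta+s}$ and the upper limit $R(\gamma x)$, but you omit the contribution from varying $x$ \emph{inside} the integrand, i.e.\ the term coming from $\partial_x\bigl[\partial_t U(\gamma x,t)\bigr]$. Since $U$ is only Lipschitz, $\partial_t U$ is merely an $L^\infty$ function; the mixed derivative $\partial_x\partial_t U$ does not exist, and the difference $\partial_t U(\gamma x,t)-\partial_t U(\gamma y,t)$ cannot be bounded by $\|U\|_{\calB_0}\,d(x,y)$. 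So this step fails as written. The paper avoids this entirely by \emph{not} integrating by parts when bounding $|\hat U_{-s}(\gamma x)-\hat U_{-s}(\gamma y)|$; it estimates the difference directly from the defining integral, getting $\ll (C_1 e^{\sigma R(\gamma x)}+R(\gamma y)e^{\sigma R(\gamma y)})\|U\|_{\calB_0}\,d(x,y)$. No $1/|b|$ gain is needed for the Lipschitz piece, because the definition of $\|\cdot\|_b$ divides the Lipschitz seminorm by $1+|b|$, and $(1+|b|)/\max\{1,|b|\}\leq 2$. The fix is therefore simple: use your integration-by-parts argument only for $|L_s\hat U_{-s}|_\infty$, and estimate the Lipschitz seminorm from the undifferentiated formula for $\hat U_{-s}$ as the paper does.
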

\begin{proof}
	We first prove for $x\in\Lambda_{+}$ we have
	\begin{equation*}
	\label{laplaceU}
	 |\hat{U}_{-s}(x)|\leq\frac{Ce^{\epsilon_oR(x)/2}}{\max\{1,|b|\}}\|U\|_{\calB_0}. 
	 \end{equation*}
	By definition, we have
	\begin{equation*}
		\hat{U}_{-s}(x)=\int_0^{R(x)}U(x,t)e^{ts}\dd t. 
	\end{equation*}	
	The case when $|b|\leq 1$ is easy. When $|b|>1$, one uses integration by parts and the fact that $U$ is Lipschitz with respect to $t$ to obtain
	\[ |\hat{U}_{-s}(x)|\leq (2|U|_\infty e^{\epsilon_oR(x)/4}+|U|_{\operatorname{Lip}}R(x)e^{\epsilon_o R(x)/4})/\max\{1,|b|\}.\]
	Then
	\[|L_s\hat{U}_{-s}|\leq \frac{C\|U\|_{\calB_0}}{\max\{1,|b|\}}L_\sigma(e^{\epsilon_oR/2}). \]
	Observe that by \eqref{sum}
	\begin{equation}
	\label{boundedeigen}
	 L_\sigma(e^{\epsilon_oR/2})=\sum_{\gamma\in\calH}|\gamma'(x)|^{\delta+\sigma}e^{\epsilon_o R(\gamma x)/2}\leq \sum_{\gamma\in\calH}|\gamma'(x)|^{\delta-3\epsilon_o/4}<\infty.
	 \end{equation}
	 So 	$|L_s\hat{U}_{-s}|\leq \frac{C\|U\|_{\calB_0}}{\max\{1,|b|\}} $.
	
	We estimate the Lipschitz norm of $L_s\hat{U}_{-s}$. We have
	\[L_s\hat{U}_{-s}(x)-L_s\hat{U}_{-s}(y)=\sum_{\gamma\in\calH}|\gamma'(x)|^{\delta+s}(\hat{U}_{-s}(\gamma x)-\hat{U}_{-s}(\gamma y))+(|\gamma'(x)|^{\delta+s}-|\gamma'(y)|^{\delta+s})\hat{U}_{-s}(\gamma y). \]
	The term $|\gamma'(x)|^{\delta+s}-|\gamma'(y)|^{\delta+s}$ can be estimated using Proposition~\ref{prop:coding} (4). 
	For the term $\hat{U}_{-s}(\gamma x)-\hat{U}_{-s}(\gamma y)$, suppose that $R(\gamma x)\geq R(\gamma y)$, we use Proposition~\ref{prop:coding} (4) again and get
	\begin{align*}
	|\hat{U}_{-s}(\gamma x)-\hat{U}_{-s}(\gamma y)|&\leq |R(\gamma x)-R(\gamma y)||U|_\infty e^{\sigma R(\gamma x)}+\int_0^{R(\gamma y)}|U(\gamma x,t)-U(\gamma y,t)|e^{t\sigma}\dd t\\
	&\leq (C_1 e^{\sigma R(\gamma x)}+R(\gamma y)e^{\sigma R(\gamma y)})\|U\|_{\calB_0}d(x,y). 
	\end{align*}
	Then we use \eqref{boundedeigen} to conclude that there exists some $C$ (independent of $U$) such that
	\[|L_s\hat{U}_{-s}|_{\operatorname{Lip}}\leq C \|U\|_{\calB_0}.\qedhere \]
\end{proof}

\begin{proof}[\textbf{Proof of Theorem~\ref{thm:skew}}]
Now Theorem~\ref{thm:skew} can be proved using the same lines as the proof of~\cite[Theorem 2.7]{AGY} (see also~\cite[Section 8.2]{AGY}). In particular, in the proof of~\cite[Lemma 8.3]{AGY}, we use \eqref{semiflownu} to replace~\cite[Theorem 7.3]{AGY} and Proposition~\ref{prop:dis} (2) to relate the measures $\hat{\nu}^{R}$ and $\nu^R$.
\end{proof}

\section{Resonance-free region}
\label{sec:res}
Recall that $\Gamma$ is a geometrically finite discrete subgroup in $G=\operatorname{SO}(d+1,1)^{\circ}$. We begin by defining the measures $m^{\operatorname{BR}}$, $m^{\operatorname{BR_*}}$ and $m^{\operatorname{Haar}}$. Recall the definition of the BMS measure on $\operatorname{T}^1(\mathbb{H}^{d+1})\cong \partial^2(\mathbb{H}^{d+1})\times \mathbb{R}$:
\begin{equation*}
\dd\tilde{m}^{\operatorname{BMS}}(x,x_-,s)=e^{\delta \beta_x(o,x_*)} e^{\delta \beta_{x_-}(o,x_*)} \dd\mu(x) \dd \mu(x_-)\dd s,
\end{equation*}
where $x_*$ is the based point of the unit tangent vector given by $(x,x_-,s)$. We define the measures $\tilde{m}^{\operatorname{BR}}$, $\tilde{m}^{\operatorname{BR_*}}$ and $\tilde{m}^{\operatorname{Haar}}$ on $\operatorname{T}^1(\mathbb{H}^{d+1})\cong \partial^2(\mathbb{H}^{d+1})\times \mathbb{R}$ similarly as follows:
\begin{align*}
\dd\tilde{m}^{\operatorname{BR}}(x,x_-,s)&=e^{d\beta_x(o,x_*)} e^{\delta \beta_{x_-}(o,x_*)} \dd m_o(x) \dd \mu(x_-)\dd s;\\
\dd\tilde{m}^{\operatorname{BR_*}}(x,x_-,s)&=e^{\delta \beta_x(o,x_*)} e^{\delta \beta_{x_-}(o,x_*)} \dd\mu(x) \dd m_o(x_-)\dd s;\\
\dd\tilde{m}^{\operatorname{Haar}}(x,x_-,s)&=e^{d \beta_x(o,x_*)} e^{d\beta_{x_-}(o,x_*)} \dd m_o(x) \dd m_o(x_-)\dd s,
\end{align*}
where $m_o$ is the unique probability measure on $\partial(\mathbb{H}^{d+1})$ which is invariant under the stabilizer of $o$ in $G$.

 These measures are all left $\Gamma$-invariant and induce measures on $\operatorname{T}^1(\Gamma\backslash \mathbb{H}^{d+1})$, which we will denote by $m^{\operatorname{BR}}$, $m^{\operatorname{BR_*}}$ and $m^{\operatorname{Haar}}$ respectively. Here we do not normalize the BMS measure to a probability measure, which is different from the previous part.
 
 By \cite[Theorem 5.8]{OhWi}, Theorem \ref{main thm} implies exponential decay of matrix coefficients. 
\begin{thm}
\label{thm:matrix}
	There exists $\eta>0$ such that for any compactly supported functions $\phi, \psi\in C^1(\T^1(M))$, we have
	\begin{equation*}
	e^{(d-\delta)t}\int_{\T^1(M)} \phi\cdot\psi\circ\calG_t\ \dd m^{\operatorname{Haar}}=\frac{m^{\operatorname{BR_*}} (\phi) m^{\operatorname{BR}} (\psi)}{\bms(\T^1(M))}+O(\lVert \phi \rVert_{C^1} \lVert \psi\rVert_{C^1}e^{-\eta t})
	\end{equation*}
	for all $t>0$, where $O$ depends on the supports of $\phi,\psi$.
\end{thm}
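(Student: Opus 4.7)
\textbf{Proof plan for Theorem \ref{thm:matrix}.} Since the authors explicitly cite Roblin's transverse intersection argument \cite{Rob,OS,OhWi} and in particular \cite[Theorem 5.8]{OhWi}, my plan is to carry out the standard thickening / smearing argument that converts exponential mixing of the BMS measure into exponential decay of matrix coefficients against Haar. The starting point is the local product structure of $\T^1(\H^{d+1})\cong \partial^2(\H^{d+1})\times\R$ as strong unstable $\times$ strong stable $\times$ flow, together with the parallel local product structure of the measures: locally $\tilde m^{\operatorname{BMS}}=d\mu^u\otimes d\mu^s\otimes ds$, $\tilde m^{\operatorname{BR}}=dm^u\otimes d\mu^s\otimes ds$, $\tilde m^{\operatorname{BR}_*}=d\mu^u\otimes dm^s\otimes ds$, $\tilde m^{\operatorname{Haar}}=dm^u\otimes dm^s\otimes ds$, where $d\mu^{u,s}$ are the conditional PS/conformal densities of dimension $\delta$ on the strong unstable/stable horospheres and $dm^{u,s}$ are the Lebesgue-type densities of dimension $d$ coming from $m_o$.

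Concretely, I would proceed as follows. Fix a small $\epsilon>0$ and choose, around a point $v$ in the support of $\phi$, a flow box $B_\epsilon(v)$ parametrized by $(\xi^u,\xi^s,r)$ with $|\xi^u|,|\xi^s|,|r|<\epsilon$. Smear $\phi$ along the strong stable direction to define
\[
\phi^\epsilon(v):=\frac{1}{m^s(B^s_\epsilon)}\int_{B^s_\epsilon}\phi(\xi^u,\xi^s,r)\,dm^s(\xi^s),
\]
and analogously smear $\psi$ along the strong unstable direction, with appropriate BMS-normalising weights (built from $e^{(d-\delta)\beta}$) so that
\[
\int\phi\,dm^{\operatorname{BR}_*}\approx\int\phi^\epsilon\,d\bms,\qquad \int\psi\,dm^{\operatorname{BR}}\approx\int\psi^\epsilon\,d\bms,
\]
up to $O(\epsilon\|\phi\|_{C^1})$ and $O(\epsilon\|\psi\|_{C^1})$ respectively. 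Under the geodesic flow, strong unstable horospheres expand by factor $e^t$, so that the $m^u$-mass of $\calG_t B^u_\epsilon$ grows like $e^{dt}$ while its $\mu^u$-mass grows like $e^{\delta t}$; the ratio $e^{(d-\delta)t}$ is precisely the conversion factor between the Haar and BMS normalizations. A similar (but asymmetric) computation converts $\int \phi\cdot\psi\circ\calG_t\,dm^{\operatorname{Haar}}$, multiplied by $e^{(d-\delta)t}$, into $\int \phi^\epsilon\cdot\psi^\epsilon\circ\calG_t\,d\bms$ up to a controlled error. Applying Theorem~\ref{main thm} to the pair $(\phi^\epsilon,\psi^\epsilon)$ gives the main term $\bms(\phi^\epsilon)\bms(\psi^\epsilon)/\bms(\T^1(M))$ with an exponential error $O(\|\phi^\epsilon\|_{C^1}\|\psi^\epsilon\|_{C^1}e^{-\eta t})$, and undoing the smearing converts the main term into $m^{\operatorname{BR}_*}(\phi)m^{\operatorname{BR}}(\psi)/\bms(\T^1(M))$.

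The bookkeeping step I expect to be the main obstacle is the error analysis: the smearing produces errors of order $\epsilon\,\|\phi\|_{C^1}\|\psi\|_{C^1}$, while applying Theorem~\ref{main thm} to $\phi^\epsilon,\psi^\epsilon$ blows up the $C^1$-norm by a factor like $\epsilon^{-1}$ (because the smearing kernel has Lipschitz constant $\approx\epsilon^{-1}$ in the transverse directions), giving an error $\epsilon^{-2}e^{-\eta t}\|\phi\|_{C^1}\|\psi\|_{C^1}$. Choosing $\epsilon=e^{-\eta t/3}$ balances these and yields the final exponential rate with a (possibly smaller) exponent $\eta'>0$. A second subtlety is the measure-theoretic identification of the smeared BMS integral with the Haar integral; this requires comparing the densities $e^{(d-\delta)\beta_{x}(o,x_*)}$ and $e^{(d-\delta)\beta_{x_-}(o,x_*)}$ along the transversals, which is where the compact support of $\phi,\psi$ is used to keep the Busemann terms bounded. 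Since every ingredient—the uniform local product structure, the expansion rate of the geodesic flow on unstable horospheres, the $C^1$-regularity of $\phi,\psi$, and Theorem~\ref{main thm}—is now in place, the full argument is a direct application of \cite[Theorem~5.8]{OhWi} with essentially no modification needed.
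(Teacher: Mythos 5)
Your proposal is correct and matches the paper's approach: the paper's proof of Theorem~\ref{thm:matrix} consists entirely of the one-line citation ``By \cite[Theorem~5.8]{OhWi}, Theorem~\ref{main thm} implies exponential decay of matrix coefficients,'' and your sketch is an accurate unpacking of what that cited theorem does (the Roblin--Oh--Winter thickening/smearing argument using the local product structures of $\bms$, $m^{\operatorname{BR}}$, $m^{\operatorname{BR}_*}$, $m^{\operatorname{Haar}}$, the $e^{(d-\delta)t}$ expansion-rate normalization, and the $\epsilon$ versus $e^{-\eta t}$ balance to absorb the $\epsilon^{-2}$ blow-up in the $C^1$ norms of the smeared functions). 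Since the paper treats \cite[Theorem~5.8]{OhWi} as a black box, nothing further is needed.
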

For $x,y\in\H^{d+1}$ and $T>0$, let 
\[N(T,x,y)=\#\{\gamma\in\Gamma\,|\,d(x,\gamma y)\leq T \}, \]
where $d$ is the hyperbolic distance on $\H^{d+1}$. 
In \cite{MoOh}, it was shown that Theorem \ref{thm:matrix} implies the following:
\begin{cor}\label{cor:counting}
	There exists $\eta>0$ such that for any $x,y\in\H^{d+1}$ and $T>0$, we have
	\[N(T,x,y)=c_{x,y}e^{\delta T}+O(e^{(\delta-\eta)T}), \]
	where $c_{x,y}>0$ is a constant depending on $x,y$.
\end{cor}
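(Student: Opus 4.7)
The plan is to follow the Eskin--McMullen thickening approach, adapted to the geometrically finite setting by Roblin and made quantitative by Mohammadi--Oh. The key idea is to relate the sharp counting function $N(T,x,y)$ to a smoothed matrix coefficient integral, then apply the quantitative mixing in Theorem \ref{thm:matrix} to extract both the main term and a power-saving error term.

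First I would fix $x,y \in \H^{d+1}$ and introduce a small parameter $\epsilon > 0$ (eventually chosen as a small negative power of $e^T$). I would construct $C^1$ bump functions $\phi_\epsilon$ and $\psi_\epsilon$ on the unit tangent bundle, supported in $\epsilon$-neighborhoods of the strong stable/unstable slices through the basepoints $x$ and $y$, normalized so that $m^{\operatorname{BR_*}}(\phi_\epsilon)$ and $m^{\operatorname{BR}}(\psi_\epsilon)$ converge to prescribed positive limits as $\epsilon \to 0$. Via a standard unfolding on the universal cover, one shows that
\begin{equation*}
I(T,\epsilon) := e^{(d-\delta)T} \int_{\T^1(M)} \phi_\epsilon \cdot \psi_\epsilon \circ \calG_T \, \dd m^{\operatorname{Haar}}
\end{equation*}
equals $N(T,x,y)$ up to errors arising from (i) the thin shell $\{\gamma \in \Gamma : d(x,\gamma y) \in [T - O(\epsilon), T + O(\epsilon)]\}$ and (ii) the $\epsilon$-averaging in the transverse horospherical directions. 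The geometric input here is that strong stable horospheres contract exponentially under $\calG_T$, so after flowing for time $T$ the supports of $\phi_\epsilon$ and $\psi_\epsilon \circ \calG_T$ intersect in contributions precisely from those $\gamma$ with $d(x,\gamma y)$ close to $T$.

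Next, applying Theorem \ref{thm:matrix} gives
\begin{equation*}
I(T,\epsilon) = c_{x,y}(\epsilon) + O\bigl(\|\phi_\epsilon\|_{C^1} \|\psi_\epsilon\|_{C^1} e^{(d-\delta)T - \eta T}\bigr),
\end{equation*}
where $c_{x,y}(\epsilon)$ tends to a positive constant $c_{x,y}$ as $\epsilon \to 0$. The $C^1$ norms scale like $\epsilon^{-\alpha}$ for some $\alpha = \alpha(d) > 0$, while the smoothing error in the thickening step is at most $O(\epsilon^{\beta}) \, N(T,x,y)$ for some $\beta > 0$, using the doubling and friendliness properties of the PS measure (Sections \ref{sec:double} and \ref{sec:friendliness}) to control how the normalizations $m^{\operatorname{BR_*}}(\phi_\epsilon)$ and $m^{\operatorname{BR}}(\psi_\epsilon)$ vary with $\epsilon$. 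Setting $\epsilon = e^{-\kappa T}$ for a small $\kappa > 0$ balances the two errors and yields $N(T,x,y) = c_{x,y} e^{\delta T} + O(e^{(\delta - \eta')T})$ for some new $\eta' > 0$.

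The main obstacle is the thickening estimate in the presence of cusps. When $x$ or $\gamma y$ lies deep in a cusp region, the Patterson-Sullivan measure on horospherical slices fails to be uniformly doubling, and the comparison between the sharp count and the smoothed integral becomes delicate; one must carefully bound the contribution from orbit points near parabolic fixed points. Here the friendliness of $\mu$ and the structural description of cusps from Section \ref{sec:cusps} control the measure of $\epsilon$-neighborhoods of cuspidal regions and yield a power-saving dependence of the smoothing error on $\epsilon$, uniformly in the basepoints $x$ and $y$.
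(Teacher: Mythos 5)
The paper proves this corollary by citation alone: it states that the implication from Theorem \ref{thm:matrix} to the effective orbit count was shown in Mohammadi--Oh \cite{MoOh}, and gives no further argument. Your sketch aims to reproduce that cited argument, and the overall framework (thicken, unfold, apply quantitative mixing, optimize $\epsilon$) is the right one and is indeed what \cite{MoOh} does.

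However, there is a genuine gap at the core of your sketch. The quantity $I(T,\epsilon) := e^{(d-\delta)T}\int_{\T^1(M)}\phi_\epsilon\cdot\psi_\epsilon\circ\calG_T\,\dd m^{\operatorname{Haar}}$ is $O(1)$ in $T$ for each fixed $\epsilon$: by Theorem \ref{thm:matrix} it converges to the constant $m^{\operatorname{BR_*}}(\phi_\epsilon)\,m^{\operatorname{BR}}(\psi_\epsilon)/\bms(\T^1(M))$. But $N(T,x,y)\sim c_{x,y}e^{\delta T}\to\infty$. So the assertion that $I(T,\epsilon)$ ``equals $N(T,x,y)$ up to errors'' cannot hold; the two quantities live at entirely different scales. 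Concretely, after unfolding, your integral only receives contributions from $\gamma$ with $d(x,\gamma y)$ in an $O(\epsilon)$-shell around $T$, and each such $\gamma$ contributes a Haar weight decaying like $e^{-dT}$; the normalization $e^{(d-\delta)T}$ exactly compensates, leaving a quantity of order $1$, not of order $e^{\delta T}$. To recover the full ball count one must either unfold the counting function $F_T(\Gamma g)=\sum_{\gamma\in\Gamma}\chi_{B_T}(g^{-1}\gamma h)$ against a bump concentrated at $\Gamma g$, or equivalently integrate the matrix coefficient over the radial parameter $t\in[0,T]$ against the spherical volume factor $\sim e^{dt}$: the integral $\int_0^T e^{dt}\cdot e^{-(d-\delta)t}\bigl(c+O(e^{-\eta t})\bigr)\dd t$ is precisely where the main term $\sim c\,e^{\delta T}/\delta$ and the power-saving error $O(e^{(\delta-\eta')T})$ arise. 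That radial integration is the step missing from your sketch, and it is the crux of the derivation in \cite{MoOh}.
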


\begin{proof}[\textbf{Proof of Corollary~\ref{cor:resonance}}]	
	For $x,y\in\H^{d+1}$ and $s\in\C$ with $\Re s>\delta$, let $P_s(x,y)$ be the Poincar\'e series defined by
	\[P_s(x,y)=\sum_{\gamma\in\Gamma} e^{-sd(x,\gamma y)}. \]
	We first prove that $P_s(x,y)$ is meromorphic on $\Re s>\delta-\eta$ with a unique pole $s=\delta$.
	By Fubini's theorem
	\[P_s(x,y)=\int_{0}^\infty \frac{1}{s}e^{-sT}N(T,x,y)\dd T=\int_{0}^\infty \frac{1}{s}e^{-(s-\delta)T}c_{x,y}\dd T+\int_{0}^\infty \frac{1}{s}e^{-sT}(N(T,x,y)-c_{x,y}e^{\delta T})\dd T. \]
	The first part is a meromorphic function of $s$ having a unique pole at $s=\delta$. The second part, it follows from Corollary~\ref{cor:counting} that it is absolutely convergence if $\Re s>\delta-\eta$, hence it is analytic on $\Re s>\delta-\eta$. Then we use ~\cite[Theorem 7.3]{GM} to deduce that the resolvent $R_M(s)$ is also analytic on $\{s\in \mathbb{C}:\, \delta-\eta<\Re s<\delta\}$. 
\end{proof}

\section{Appendix: proof of Lemma \ref{lem:jpr}}

\begin{proof}[Proof of Lemma \ref{lem:jpr}]
	We divide into the cases when $r$ lies in different intervals. Let $\beta=C\sqrt{\eta}$.
	\begin{itemize}
		\item Case $A$: {{$r\leq\eta h(p)$}}
		
		By Lemma~\ref{lem:explicit}, we have $|(\gamma^{-1})'x|=h(p)/d(x,p)^2$. Using Lemma~\ref{lem:jp}, we have
		\begin{equation}
		\label{location}
		N_r(\partial J_p)\subset B(p, 2\eta h(p))-B(p,c_{\four}\eta h(p)).
		\end{equation}
		Hence for $x\in N_r(\partial J_p)$
		\begin{equation*}
		|(\gamma^{-1})'x|\in [1/(4\eta^2h(p)),1/(c_{\four}^2\eta^2 h(p))].
		\end{equation*}
		Then
		\begin{equation}\label{equ:gammaJ}
		N_{r/(4\eta^2h(p))} (\partial\gamma^{-1}J_p)\subset \gamma^{-1}N_r(\partial J_p) \subset N_{r/(c^2_\four\eta^2h(p))}(\partial\gamma^{-1}J_p).
		\end{equation}
		Notice that $\partial\gamma^{-1} J_p=\partial (B_Y(2/\eta)\times R_{p,\eta})$ and $R_{p,\eta}$ is a parallelotope tiled by the translations of $\overline\Delta_0'$.

		\begin{itemize}
			\item Case $A_1$: {$r\leq \eta^2h(p)$}
			
			Recall that Lemma \ref{lem:boud} is proved using Lemma \ref{lem:part}. Using the same argument, we obtain an analog of Lemma \ref{lem:boud} for $N_r(\partial \gamma^{-1} J_p)$. Using this version of Lemma~\ref{lem:boud} with $\epsilon=4\beta/c_{\four}^2$, the inequality $r/(4\eta^2 h(p))< 1$ and \eqref{equ:gammaJ}, we have
			\begin{align*}
			\mu( \gamma^{-1}N_{\beta r}(\partial J_p))\leq \mu(N_{\beta r/(c_{\four}^2 \eta^2 h(p))}(\partial\gamma^{-1}J_p))\leq \lambda \mu(N_{r/(4\eta^2 h(p))}(\partial\gamma^{-1}J_p))\leq \lambda\mu( \gamma^{-1}N_r(\partial J_p)).
			\end{align*}
			Using Lemma~\ref{lem:annulusquasi}, we obtain
			\begin{equation*}
			\frac{\mu(N_{\beta r}(\partial J_p))}{\mu(N_r(\partial J_p))}\leq C\frac{\mu(\gamma^{-1}N_{\beta r}(\partial J_p))}{\mu(\gamma^{-1} N_{r}(\partial J_p))} \leq C\lambda,
			\end{equation*}
			where $\lambda$ tends to zero as $\eta$ tends to zero.
			
			\item Case $A_2$: {$\eta^{3/2} h(p)<r\leq \eta h(p)$}
			
					We compute the measure by counting the number of translations of $\Delta_0$. By (\ref{location}) and Lemma \ref{lem:explicit}, we obtain $\gamma^{-1}N_r(\partial J_p)\subset B(p',1/(c_{\four}\eta))-B(p',1/(2\eta))$. Let $\gamma' \Delta_0$ be any fundamental domain contained in $\gamma^{-1}N_r(\partial J_p)$ with $\gamma'\in \Gamma_{\infty}$. Using Lemma~\ref{lem:quasi-gammainfinity}, we obtain that $\mu(\gamma'\Delta_0)\approx \eta^{2\delta}\mu(\Delta_0)$.
		By Lemma~\ref{lem:annulusquasi}, \eqref{equ:gammaJ}
		\begin{equation*}
		\mu(N_{\beta r}(\partial J_p))\ll h(p)^{\delta}\mu(N_{\beta r/(c_{\four}^2 \eta^2 h(p))}(\partial\gamma^{-1}J_p)).
		\end{equation*}
		By counting the number of fundamental domains, we obtain that the region $N_{\beta r/(c_{\four}^2 \eta^2 h(p))}(\partial\gamma^{-1}J_p)$ can be covered by $(1/\eta)^{k-1} \cdot (\beta r/(c_{\four}^2 \eta^2 h(p)))$ disjoint rectangles $\gamma'\Delta_{0}$ with $\gamma'\in\Gamma_{\infty}$. So we have
		\begin{equation*}
		\mu(N_{\beta r}(\partial J_p))\ll h(p)^{\delta}\cdot(1/\eta)^{k-1} \cdot (\beta r/(c_{\four}^2 \eta^2 h(p)))\cdot\eta^{2\delta}\mu(\Delta_0).
		\end{equation*}
		By Lemma~\ref{lem:annulusquasi}, \eqref{equ:gammaJ}
		\begin{equation*}
		\mu(N_r(\partial J_p))\gg h(p)^{\delta}\mu(N_{r/(4\eta^2 h(p))}(\partial\gamma^{-1}J_p)).
		\end{equation*}
		Meanwhile, as $r/(4\eta^2h(p))\geq 1/4\eta^{1/2}$, the number of rectangles $\gamma'\Delta_{0}$ inside $N_{r/(4\eta^2 h(p))}(\partial\gamma^{-1}J_p)$ is greater than $(1/\eta)^{k-1}\cdot (r/(4\eta^2 h(p)))$. Hence we have
		\begin{equation*}
		\mu(N_r(\partial J_p))
		\gg h(p)^{\delta}\cdot (1/\eta)^{k-1}\cdot (r/(4\eta^2 h(p)))\cdot \eta^{2\delta}\mu(\Delta_0).
		\end{equation*}
		Therefore,
		\[ \mu(N_{\beta r}(\partial J_p)\ll \beta \mu(N_r(\partial J_p)). \]
		
		\end{itemize}
		\item Case $B$: {$\eta^{1/2} h(p)\leq r\leq h(p)$}
		
		We handle this case using \eqref{equ:bpr}. By Lemma~\ref{lem:jp} and the inequality $h(p)/2>\beta r\geq \eta h(p)$, we have
		\begin{equation*}
		\mu(N_{\beta r}(\partial J_p))\leq \mu(J_p\cup N_{\beta r}(\partial J_p))\leq \mu (B(p,\eta h(p)+\beta r))\leq \mu (B(p,2\beta r))\ll (2\beta r)^{2\delta-k}h(p)^{k-\delta}. 
		\end{equation*}
		Meanwhile, we have 
		\begin{equation*}
		\mu(J_p\cup N_r (\partial J_p))\geq \mu (B(p,r))\gg r ^{2\delta-k}h(p)^{k-\delta}. 
		\end{equation*}
		Hence 
		\begin{equation*}
		\frac{\mu(N_r(\partial J_p)-N_{\beta r}(\partial J_p))}{\mu (N_{\beta r}(\partial J_p))}=\frac{\mu (J_p\cup N_r(\partial J_p))-\mu (J_p\cup N_{\beta r}(\partial J_p))}{\mu (N_{\beta r}(\partial J_p))} \gg \frac{r ^{2\delta-k}h(p)^{k-\delta}-(2\beta r)^{2\delta-k}h(p)^{k-\delta} }{(2\beta r)^{2\delta-k}h(p)^{k-\delta}}. 
		\end{equation*}
		Therefore
		\begin{equation*}
		\mu(N_{\beta r}(\partial J_p))\ll \beta^{2\delta-k} \mu(N_r(\partial J_p)).
		\end{equation*}
	\end{itemize}
	Now to prove (\ref{recdouble})
	we consider $\eta^{1/2} r$ and $r$. Then one of them belongs to $(0,\eta^2 h(p))\cup [ \eta^{3/2} h(p),\eta h(p)]\cup [\eta^{1/2} h(p),h(p)]$. Inequality (\ref{recdouble}) follows from the observation that 
	\begin{equation*}
	\frac{\mu(N_{C\eta r}(\partial J_p))}{\mu(N_r(\partial J_p))}
	\leq \min\left\{\frac{\mu(N_{C\eta r}(\partial J_p))}{\mu(N_{\beta r}(\partial J_p))}, \frac{\mu(N_{\beta r}(\partial J_p))}{\mu(N_r(\partial J_p))}\right \}. \qedhere
	\end{equation*}
\end{proof}

\bigskip
 \noindent 
	\it{Institut f\"ur Mathematik, Universit\"at Z\"urich, 8057 Z\"urich}  \\
	email: {\tt lijialun36@gmail.com} 
		
		\bigskip   
		
		\noindent 
		\it{Department of Mathematics, University of Chicago, Chicago, IL, 60637}  \\
			email: {\tt panwenyu08@gmail.com}

\end{document}